\providecommand{\keywords}[1]
{
  \small	
  \textbf{\textit{Keywords---}} #1
}
\title{How to reduce dimension with PCA and random projections?}
\author{
Fan Yang,\footnote{Wharton Statistics Department, University of Pennsylvania. E-mail: \texttt{fyang75@wharton.upenn.edu}.}
\,
Sifan Liu,\footnote{Department of Statistics,
Stanford University. E-mail: \texttt{sfliu@stanford.edu}.}
\,
Edgar Dobriban,\footnote{Wharton Statistics Department, University of Pennsylvania. E-mail: \texttt{dobriban@wharton.upenn.edu}.}
\,
and David P. Woodruff\footnote{Department of Computer Science, Carnegie Mellon University. E-mail: \texttt{dwoodruf@cs.cmu.edu}.}
}
\begin{document}

\maketitle

\begin{abstract}
In our ``big data" age, the size and complexity of data is steadily increasing. Methods for dimension reduction are ever more popular and useful. Two distinct types of dimension reduction are ``data-oblivious" methods such as random projections and sketching, and ``data-aware" methods such as principal component analysis (PCA). Both have their strengths, such as speed for random projections, and data-adaptivity for PCA. In this work, we study how to combine them to get the best of both. We study ``sketch and solve" methods that take a random projection (or sketch) first, and compute PCA after. We compute the performance of several popular sketching methods (random iid projections, random sampling, subsampled Hadamard transform, count sketch, etc) in a general ``signal-plus-noise" (or spiked) data model. Compared to well-known works, our results (1) give asymptotically exact results, and (2) apply when the signal components are only slightly above the noise, but the projection dimension is non-negligible. We also study stronger signals allowing more general covariance structures. We find that (a) signal strength decreases under projection in a delicate way depending on the structure of the data and the sketching method, (b) orthogonal projections are more accurate, (c) randomization does not hurt too much, due to concentration of measure, (d) count sketch can be improved by a normalization method. Our results have implications for statistical learning and data analysis. We also illustrate that the results are highly accurate in simulations and in analyzing empirical data.
\end{abstract}

\keywords{dimension reduction, Principal component analysis, sketching, random projection, random matrix theory}
\section{Introduction}

In our ``big data" age, the size and complexity of data is steadily increasing. Methods for data reduction are used ever more commonly. Among these, dimension reduction methods are used to summarize many features into a small set \citep[see e.g., the reviews][and references therein]{jolliffe2002principal,burges2010dimension,cunningham2015linear}.

Two prominent and very different classes of dimension reduction exist: \emph{data-oblivious} methods such as random projections and sketching \citep[e.g.,][etc]{vempala2005random,mahoney2011randomized,woodruff2014sketching,erichson2016randomized}, and \emph{data-aware} methods such as principal component analysis (PCA) \citep[see e.g., the textbooks and reviews][and references therein]{anderson1958introduction,jolliffe2002principal,fan2014principal,johnstone2018pca}. This is of course just one way to classify the different methods, as there are also linear and nonlinear approaches, etc.

Both data-oblivious and data-aware methods have their strengths. Data-oblivious methods can be very fast and convenient to implement. 
Data-aware methods on the other hand can better exploit the structure of the data; e.g., PCA can be statistically optimal under certain conditions \citep[e.g.,][]{anderson1958introduction}.

{
\begin{figure}[!h]
\centering
\begin{tikzpicture}[
            > = stealth, 
            shorten > = 1pt, 
            auto,
            thick 
        ]

\tikzstyle{ann} = [draw,fill=none, minimum size=3em]
\begin{scope}[node distance = 0.5cm and 2cm]
        \node[ann,rectangle] (a) {Data matrix $Y$};
        \node[ann,rectangle] (b) [right=of a] 
        {\begin{tabular}{c} 
        	Sketched data $\wt Y = SY$
        	\end{tabular}};
        \node[ann,rectangle] (c) [right=of b]
        {\begin{tabular}{c} 
         PCA $\wt Y = VDU^\top$
        	\end{tabular}};           
        \path[->] (a) edge node {} (b);
        \path[->] (b) edge node {} (c);
\end{scope}
    \end{tikzpicture}
    \caption{Flowchart of the algorithm we analyze: We sketch or project the data $Y$ into $SY$, then perform PCA.
}
\label{fig-flow}
\end{figure}
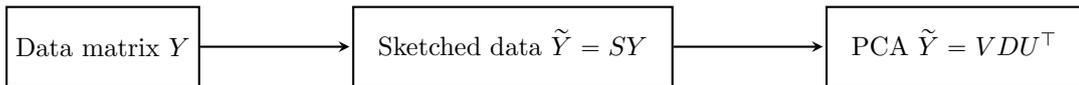
}

In this work, we study how to \emph{combine} them to get the best of both. We study ``sketch and solve" methods that take a random projection (or sketch) first, and compute PCA after (see Figure \ref{fig-flow}). Various versions of such algorithms have been proposed  \citep{rokhlin2009randomized,halko2011algorithm,halko2011finding} (see the related work section). In applied areas, such methods are starting to be used in economics \citep{ng2017opportunities}, forecasting \citep{schneider2016forecasting} and genomics \citep{galinsky2016fast}. 
In particular, such algorithms are ``state of the art" for dealing with extremely large genomics datasets, where the number of samples (people) is in the thousands to hundreds of thousands, and the number of features (genetic variants, basepairs) is on the order of millions to billions \citep{galinsky2016fast}.

However, it is not well understood how they perform in all regimes. How can we choose the dimension of sketches? What sketching method---e.g., subsampling or random Gaussian projections---to use? How does their performance depend on the characteristics of the data? Increasing the dimension always increases the accuracy. But that comes with an increased computational and memory cost. While the existing works do provide theoretical guarantees, they leave some regimes unstudied \citep{rokhlin2009randomized,halko2011algorithm,halko2011finding}. They typically focus on the regime where the signal components are ``dominant", and the sketching dimension is very small. 
As we will see, in a natural model of low-rank data we can get precise results even when the signal components are barely above the noise level, if the sketching dimension is sufficiently large. 

In our work we take a systematic approach to this problem.
We compute the performance of the most popular sketching methods, such as uniform orthogonal projections, random projections with iid entries, random sampling of the datapoints, subsampled Hadamard transform \citep{sarlos2006improved,ailon2006approximate}, and count sketch \citep{charikar2002finding,clarkson2017low}. We work in a general ``low-rank signal plus noise" model, sometimes called the ``spiked model", which has been widely used to study PCA \citep[e.g.,][etc]{spikedmodel,couillet2011random,yao2015large,johnstone2018pca,gavish2014optimal,nadakuditi2014optshrink,gavish-donoho-2017,dobriban2017sharp,dobriban2017factor,dobriban2019deterministic}. {In the spiked model, the entries of the noise matrix are independent random variables, and the signal is an arbitrary low rank matrix that is independent of the noise.} This is also a special type of a linear factor model.

Compared to well-known classical works \citep{rokhlin2009randomized,halko2011algorithm,halko2011finding}, our results (1) give asymptotically exact results under more specific assumptions, and (2) apply when the signal components are arbitrarily close to the noise level, provided the sketching dimension is sufficiently large.  We are able to accomplish that by building on the analysis of recent works in random matrix theory, such as \citep{yang2019spiked,ding2019spiked}. We also study stronger signals allowing more general covariance structures. In addition, we illustrate that the results are accurate in simulations and in analyzing empirical data. The computer code reproducing the numerical results in the paper is available from \url{https://github.com/liusf15/sketching-svd}.

\subsection{Related work}
\label{relw}
In this section we review some related work. Due to space limitations, we can only consider the most closely related work. For overviews of sketching and random projection methods, we refer the reader to \cite{vempala2005random,halko2011finding,mahoney2011randomized,woodruff2014sketching,drineas2017lectures}.  A cornerstone result is the Johnson-Lindenstrauss lemma. This states that norms, and thus also relative distances between points, are \emph{approximately} preserved after sketching i.e., $(1-\delta)\|x_i\|^2\leq\|Sx_i\|^2\leq(1+\delta)\|x_i\|^2$ for $x_1,\ldots,x_n\in\mathbb{R}^p$. {This is further extended to the {\it subspace embedding property}, that is, for all $x$ in a subspace of relatively small dimension, the norm of $x$ is preserved up to a $\delta$ factor.} Each projection studied in this paper has the embedding property, and this can be used to derive bounds for the accuracy of PCA. However, our results are much more refined, because they quantify the precise value of the error in an asymptotic setting, while the bounds above are inequalities up to the constant $\delta$.

Compared to well-known classical works \citep{rokhlin2009randomized,halko2011algorithm,halko2011finding} on random projection + PCA, our results are in a different data model. Our results give asymptotically exact results when the sample size $n$ and dimension $p$ increase to infinity at the same rate in a spiked model, while the previous results are bounds up to constants. Our results are accurate in simulations and are sharp even when the signal components are only slightly above the noise. 
{  In additional related work, \cite{homrighausen2016nystrom} study the Nyström and column-sampling methods for approximate PCA.}

For instance, a typical result, Theorem 1.1 in \cite{halko2011finding} states that if $Q$ is the $r\times p$ orthogonal matrix projecting into the row space of $SX$, then 
$$\E \|X (I-Q^\top Q)\|\le \left [1+ 4 \frac{\sqrt r}{r-k-1} \sqrt{\min{(n,p)}}\right]\sigma_{k+1}.$$ Here $\sigma_{k+1}$ is the $(k+1)$-st singular value of $X$. These bounds are sharp if the $(k+1)$-st singular value is small, and $r$ is slightly larger than $k$. In contrast, our bounds are also applicable to the setting where the $(k+1)$-st singular value of $X$ is only slightly smaller than the $k$-th one, but we take $r$ to be much larger than $k$. Thus, our results cover a different regime. {  In more detail, we consider the regime where $n,p,r$ are all large and proportional to each other, while $\sigma_{k+1}$ is lower bounded by a constant. In this case, the above bound is also lower bound by a constant, and hence has limited implications.}

Another comparison to prior work is that worst-case bounds for CountSketch are significantly weaker \citep{clarkson2017low}, whereas here we get much tighter bounds. For instance, we can effectively show that count-sketch reduces the signal strength by a factor of approximately $\zeta_n (1-\exp(-\zeta_n))$, where $\zeta_n=r/n$ is the ratio of sketched and original sample size. Our bounds more accurately model what is observed in experiments.

However, our results only concern one-step ``sketch-and-solve" methods, while there are also other more sophisticated methods. For instance, \cite{frieze2004fast} proposed randomized SVD using non-uniform row and column sampling. {  After early work by \cite{sarlos2006improved,liberty2007randomized,halko2011algorithm} introducing methods
based on random projections, \cite{halko2011finding} developed a unified framework, including iterative algorithms.}  \cite{woolfe2008fast} improved the speed via fast matrix multiplications on structured matrices. {Musco and Musco proposed a Randomized Block Krylov Iteration methods for fast SVD \cite{Musco2015}.}  \cite{tropp2017practical} studied the scenario where we can only access the $A$ via a linear map $SA$. In future work, it will be interesting to extend our approach to those algorithms. \cite{7008533} studies how to recover a sparse matrix $X$ from observations $AXB$.

Random projection based approaches have been studied for other problems too, including linear regression \citep{sarlos2006improved,drineas2011faster,raskutti2014statistical,dobriban2018new}, ridge regression \citep{lu2013faster,chen2015fast,wang2017sketched,liu2019ridge}, two sample testing \citep{lopes2011more,srivastava2016raptt}, classification \citep{cannings2017random}, convex optimization \citep{pilanci2015randomized,pilanci2016iterative,pilanci2017newton}, etc, see \cite{woodruff2014sketching} for a more comprehensive list of applications.

{  Compared with prior theoretical work on one-step sketching in linear regression \citep{dobriban2018new}, our perspective is similar, in that we want to develop a unified framework to analyze and compare the statistical performance of various sketching methods. In addition, some of the conclusions are also consistent: orthogonal sketches are more accurate, and subsampled randomized Hadamard transform is the best overall method. However, the similarity stops there. First, this paper is about PCA, a different problem from prior work on linear regression in \citep{dobriban2018new}. Second, this paper considers a theoretical approach leveraging recent results on local laws in random matrix theory \citep{yang2019spiked,ding2019spiked}, while the prior work \citep{dobriban2018new} uses tools such as properties of Wishart matrixes, the generalized Lindeberg principle \citep{chatterjee2006generalization}, and liberating sequences from free probability \citep{anderson2014asymptotically}. Thus, the tools are quite different. Finally, an additional difference is that this paper also discusses CountSketch, which has not appeared in \citep{dobriban2018new}.}

\section{Sketching in PCA}\label{sec sketchPCA}

In this section we explain our main results. We have an $n\times p$ data matrix $Y$ containing $p$ features of $n$ data points, such as $p$ different measurements on $n$ sensors. We want to perform an approximate PCA of the data. To study the performance of dimension reduction methods, we assume that the data follows ``signal-plus-noise" or ``spiked covariance" matrix model \citep[e.g.,][etc]{spikedmodel,couillet2011random,paul2014random,yao2015large,johnstone2018pca}:
$$ Y =WDU^{\top} +  X = \sum_{i=1}^k d_i w_iu_i ^{\top}+ X.$$
Here $WDU^{\top}=\sum_{i=1}^k d_i w_iu_i ^{\top}$ is the signal component, $\{d_i\}_{1\le i \le k}$ are the signal strengths (also known as population spikes), and $\{w_i\}_{1\le i \le k}$ and $\{u_i\}_{1\le i \le k} $ are the left and right singular vectors of the signals, respectively. They are arranged into the left and right matrices of eigenvectors $W$ and $U$ ($n\times k$ and $p\times k$), and the $k\times k$ diagonal matrix $D$ of population spikes. The matrices $U,W$ are orthogonal: $U^{\top}U = W^{\top} W=I_k$.
On the other hand, $X$ is the noise component, where the entries $x_{ij}$, $1 \leq i \leq n$, $1 \leq j \leq p$, are real independent random variables with zero mean and variance
$\mathbb{E} \vert x_{ij} \vert^2  = n^{-1}$.
We assume that any randomness in the signal is independent of the noise matrix $X$. {Other than that, the signal strengths and the singular vectors $w_i$ and $u_i$ can be completely arbitrary.} Such signal plus noise or spiked models have been widely studied. When $w_i$ have iid entries, this model can be viewed as a specific \emph{factor model}, and thus has a long history see e.g., \cite{spearman1904general,thurstone1947multiple,anderson1958introduction}. 

We consider a setting with large sample size $n$ and dimension $p$. We place ourselves in a setting where doing a full PCA on $Y$ is too expensive. As an alternative, we are instead interested in PCA on the \emph{sketched} data matrix 
\beqs\wt Y=  S Y \eeqs
where $S$ is an $r\times n$ ($r<n$) random \emph{sketching matrix} that is independent of both the signal and the noise. This can be written as 
\beqs 
  \wt Y =  VDU^{\top} + S X  = \sum_{i=1}^k d_i v_iu_i ^{\top} + \wt X, \quad \text{where } \ V:= SW, \ v_i:= Sw_i, \ \wt X:= SX.
\eeqs 
A similar spiked separable model has been studied in \cite{ding2019spiked}, although the setting there is somewhat different, because the spikes are added to the population covariance matrices instead of to the data. However, we will build on their analysis in our work.

\subsection{Heuristics}
\label{heur}

Here we explain heuristically what the expected behavior of sketching should be. For simplicity, we consider a one-spiked case, and write the data as $Y = d \cdot w u^{\top}+ X$. Let $S$ be an $r \times n$ partial orthogonal matrix such that $SS^\top=I_r$. Then, we have 
$SY = d\cdot Sw u^{\top} + SX$.
Suppose $X$ has iid Gaussian entries with mean zero and variance $n^{-1}$. Then $\wt X= SX$ also has iid Gaussian entries. After projection, the distribution of the noise is unchanged. The low-rank signal changes from $d \cdot w u^{\top}$ to $\widetilde  d \cdot v u^{\top}=  d \cdot Sw u^{\top}$, where $\widetilde  d := d\cdot \|Sw\|$, $v: = Sw/\| Sw\|$. 

Given the orthogonal invariance of the noise, only the singular values---and not the singular vectors---of the signal govern the behavior of the SVD of the data (i.e., signal-plus-noise). Thus, sketching effectively changes $n\to r$, $d\to d\cdot \|Sw\|$. These fully describe the effect of the projection matrix (which in this case was deterministic). Since $\|Sw\|\le \|S\|\|w\|=1$, both the sample size and the signal strength get reduced. 

However, since we do not know $w$ or $\|Sw\|$, we cannot use the above results to quantify or get insight into the reduction in signal strength. Taking a random $S$ allows us to characterize average behavior, and thus to get useful predictions about the behavior of the algorithm. Suppose $S$ is an $r \times n$ random partial orthogonal matrix, i.e., $S$ is uniformly random over the set of matrices such that $SS^\top=I_r$. Then we expect that the norm of $v = S w$ is $\|v\| \approx (r/n)^{1/2} \|w\|$. This is because we can construct $v$ by randomly rotating $w$ and choosing its first $r$ coordinates. A random rotation makes all coordinates exchangeable, and thus choosing the first $r$ will approximately capture about $r/n$ of the squared norm of $w$.

Let us write $\xi_n = r/n$ for the reduction in sample size due to sketching. The matrix $\xi_n^{-1/2} \wt X$ has iid entries of variance $r^{-1}$. Then the projected matrix $SY$ should be \emph{equivalent to a spiked model with the same spike strength but in a reduced dimension $r$}:
\be\label{Heuristicf}\xi_n^{-1/2} \wt Y  = d \cdot v u^{\top}     +   \xi_n^{-1/2}\wt X.\ee
Heuristically, after projection into $r$-dimensional space, both the sample size and the signal strength go down by a factor of $\xi_n  = r/n$. We will later show rigorously that this is indeed true.  For higher dimensional signals, the sketched signal no longer has orthonormal columns, and so the singular values of the signal slightly change. However, since we are dealing with the 1-dimensional case in this section, we do not need to worry about this. This shows how taking a random $S$ can simplify the results. More generally, without Gaussian noise and for other sketching methods, the randomness in $S$ becomes even more crucial to get interpretable results.

\subsection{Key takeaways}

\begin{table}[]
\renewcommand{\arraystretch}{1}
\centering
\caption{Informal summary of some of our results. For simplicity, suppose we have a {  single-spike} model $Y = d \cdot wu^\top + X$ of size $n \times p$, where $d \cdot wu^\top$ is the signal and $X$ is the noise. We do PCA after sketching on data $SY$, where $S$ is an $r\times n$ sketching matrix. We show the effective decrease of the signal strength 
due to sketching. The assumptions needed on $X$ and $w$ depend on the sketching method. The results for iid random $S$ are involved and only presented in the text.   Finally, for a more general multi-spike model $Y =  \sum_{i=1}^k d_i w_iu_i ^{\top} + X$, we have similar results for the eigenvalues corresponding to each single spike $d_i w_i u_i^\top$.\nc}
\label{summaryresults}
\begin{tabular}{>{\centering\arraybackslash}m{1.75cm}>{\centering\arraybackslash}m{2cm}>{\centering\arraybackslash}m{2cm}>{\centering\arraybackslash}m{5cm}>{\centering\arraybackslash}m{2cm}}
\toprule[1pt]
Assumption on $X$ & Gaussian & independent entries & independent entries & independent entries \\
\midrule[1pt] 
Assumption on $S$ & {  Partial orthonormal} & Haar/ Hadamard 
& 
\begin{tabular}{@{}c@{}}
Uniform sampling (US)/ 
\\
CountSketch (CS)
\end{tabular}
& iid random \\
\midrule[1pt]
Assumption on $w$ & Fixed & Fixed & Delocalized & Fixed \\
\midrule[1pt]
Effect on signal 
& $d\to d \cdot \|Sw\|$    
& $d\to d \cdot \sqrt{r/n}$ 
& 
\begin{tabular}{@{}c@{}}
US: $d\to d \cdot \sqrt{r/n}$ 
\\
CS: $d\to d \sqrt{r/n(1-\exp(-n/r))}$ 
\end{tabular}
& see Theorem \ref{sketchthm4}  
\\
\bottomrule[1pt]
\end{tabular}
\end{table}
We summarize our key takeaways as follows. Clearly, the signal strength goes down under projection, and the amount of decrease depends on the type of projection. Moreover:

\benum

\item {\bf Separations between sketching methods}:  Our analysis reveals precise separations: subsampled randomized Hadamard transform (SRHT) and subsampling are more accurate than CountSketch, which is typically more accurate than projections with Gaussian or iid entries. The superiority of orthonormal projections is consistent with previous observations in different contexts \citep{dobriban2018new,lacotte2020limiting}, but our work goes much beyond to include CountSketch and also considers a different problem.

\item {\bf Precise quantitative results}: Our results precisely quantify the locations of the sketched spikes. See Table \ref{summaryresults} for an informal summary of some of our results. We show the effective decrease of the signal strengths (i.e., spike strength $d$) for various sketching methods. However, we state our formal results in terms of the empirical eigenvalues and eigenvectors, because for some cases (especially for projections $S$ with iid entries), there seems to be no simple way to state them in terms of the decrease in signal strength.

For large signal strengths, we can handle general noise covariance structures, and get simpler results (cf. Section \ref{strong}).

\item {\bf Additional randomness does not hurt}: A key limitation and drawback of randomized algorithms is that they introduce additional variability in the data. This is an undesireable phenomenon, because the additional variability may lead to vastly different results every time the algorithm is run, and may reduce reproducibility. 

In our case, we see that the top eigenvalues and correlations between true and empirical eigenvectors are asymptotically concentrated around definite limits. This means that the additional randomness introduced by the sketching algorithm is relatively limited, for large data sets and for those particular functionals. However, we should still be cautious, in particular about interpreting results obtained from other functionals.

\item {\bf Implications for learning}: Our results have implications for statistical learning and signal processing. In particular, by following the methods from \citep{donoho2018,donoho2018optimal}, they can be used to derive optimal eigenvalue shrinkage estimators for the covariance matrix. Recall that the optimal shrinkage operations depend on the overlap between true and empirical eigenvectors. We find those formulas for various sketching methods, and so it becomes possible to use the shrinkage formulas. 

Very briefly, \cite{donoho2018} estimate the covariance matrix $\Sigma$ optimally using eigenvalue shrinkage estimators of the sample covariance matrix. We can replace this with the covariance matrix $\widehat \Sigma_s = r^{-1} \widetilde  Y^\top \widetilde  Y$ of the sketched data. Let $\widehat \Sigma_s = \sum_{i=1}^{\min(r,p)}\widetilde \lambda_i \wt{\bxi}_i\wt{\bxi}_i^\top$ be the spectral decomposition of $\widehat \Sigma_s$, with $\widetilde \lambda_i$ sorted in non-increasing order. Then we consider eigenvalue estimators $\widehat \Sigma_\eta = \sum_{i=1}^{\min(r,p)}\eta(\widetilde \lambda_i) \wt{\bxi}_i\wt{\bxi}_i^\top$ for some fixed shrinker $\eta:\R\to\R$. We evaluate the estimator based on a loss $L(\widehat \Sigma_\eta,\Sigma)$, where $\Sigma = I_p + \sum_{i=1}^k d_i^2 u_iu_i ^{\top}$ is the covariance matrix of the original data. For instance, we can have $L(A,B)=\|A-B\|_{\op}$ be the operator norm loss, or $L(A,B)=\|A-B\|_{\Fr}$ be the Frobenius norm loss. 

Based on the theory from \cite{donoho2018}, we can deduce that there is an asymptotically optimal shrinker $\eta$ for these losses (and a number of others). For instance, for uniform orthogonal random projection, uniform sampling and subsampled randomized Hadamard transform,  the optimal shrinkers for operator and Frobenius losses are, respectively,
\beqs
\eta_{\op}(x)=\lambda^{-1}(x^2, r/n),\,\,\quad \eta_{\Fr}(x)=\lambda^{-1}(x^2, r/n)\cdot c^2(\lambda^{-1}(x^2, r/n), r/n)+ s^2(\lambda^{-1}(x^2, r/n), r/n).
\eeqs
Here $\lambda$ is the functional inverse of the spike forward map from equation \eqref{outlierevalue}, and its expression can be found in \cite{donoho2018}, as well as implemented in software in \cite{Dobriban2015}. Also, $c^2$ is the cosine forward map from \eqref{outlierevector}, and $s^2$ is the squared sine, defined as $s^2 = 1-c^2$.



\eenum

{  \subsection{How to use our results?}

In this section, we give some additional illustration and guidance on how to use our results. Suppose we are interested to compute the SVD or PCA of a massive dataset. Suppose that we are in a setting where we need to use a single machine (possibly after dividing up the data into smaller pieces and distributing them onto different machines---our results can be used at various steps of a broader processing pipeline). Then, a natural approach may be to subsample the $n \times p$ data matrix to $r \le n$ samples. However, this has a chance to miss some datapoints with large entries. Fortunately, there are sketching methods that take linear combinations of each data point, and are thus more likely to pick up these large entries. Which sketching method to use and what projected dimension do we need to get a desired accuracy? What is the appropriate notion of accuracy?

Using our results, we can give some insight into this. First, we suggest that we can use a statistical notion of accuracy. Suppose the data is noisy, and we believe that the empirical principal components (PCs) are only estimators of ``true" unobserved PCs that one could recover from much more data. Then it makes sense to consider how much sketching reduces the ``signal strength" of the PCs in the data at hand. Intuitively, by subsampling $r$ out of $n$, the signal strength should decrease by a factor of $r/n$. It turns out this intuition is correct, but not at all trivial: it only holds when the ``true" principal components are suitably ``non-sparse", and requires a somewhat delicate argument. Thus, confirming our intuition, subsampling is only guaranteed to work in a suitable non-sparse setting. However, we show that other orthogonal sketches enjoy the same signal strength reduction, while also working under sparsity. Moreover, some orthogonal sketches, such as randomized Hadamard/Fourier sketches, can be applied in nearly the same time as subsampling. In addition, popular non-orthogonal sketches such as Gaussian projections have strictly worse signal preservation properties than orthogonal ones. Thus, fast orthogonal sketches emerge as the best choice. While this may not be extremely surprising based on prior work, we do believe that it is not commonly discussed in the literature; and in fact we are not aware of a specific work that makes this point for sketched PCA. {  Furthermore, in Section \ref{sec count}, we propose a normalized version of CountSketch, which modifies the original CountSketch. Our results suggest that this normalized version has slightly better signal preservation properties than the un-normalized version. Hence our work can be a guide as to when to use the newly proposed normalized CountSketch.}

Our results can be also used as a guide to choose the projection dimension. First, suppose we decide that we can tolerate at most a certain factor $f<1$ (say $f=1/2$) of decrease in the signal strength. Then, one should use projection dimension $r$ such that $r  = f n$ (say $r=n/2$). To compute the estimation error for estimating the true PCs, we can simply use the well-known formulas from spiked covariance models, see Section \ref{Gaussian}.  This illustrates how we may use our results. In addition, we believe that we can use our results as a tool to develop and analyze more complicated data analysis methods. However, this is beyond the scope of our current work.
}

\subsection{Details}

Our results require a few more technical assumptions, which are stated in detail in Section \ref{sec_defspike}. We use the notion of empirical spectral distribution (ESD) of a matrix $M$, which is the empirical distribution function of the eigenvalues of $M$.

In the end, we obtain the following steps for finding the values of the spikes of the sketched matrix $\wt  Y = SY$ (recall $S$ is $r \times n$, $r\le n$):
\begin{enumerate}
\item For any $x$, we find a fundamental quantity, the pair of weighted \emph{Stieltjes transforms} $(m_{1c}(x),m_{2c}(x))$, as the solution of a certain system of \emph{self-consistent equations} \eqref{separa_m12}. Recall that for a distribution $F$, its Stieltjes transform is defined for any $z\in \mathbb{C}$ away from the support as $m_F(z) = \E_{X\sim F} (X-z)^{-1}$ \citep[e.g.,][]{bai2009spectral,couillet2011random,yao2015large}. In our case, $(m_{1c},m_{2c})$ are the classical limits of certain weighted Stieltjes transforms $(m_1,m_2)$ of the ESDs of $\wt X^{\top}\wt X$ and  $\wt X\wt X^{\top}$ for $\wt X= SX$ (see Section \ref{resll}); and their importance, described below, is in how to use them. 

Let us denote by $\gamma_n=p/n$ the aspect ratio, by $\xi_n=r/n$ the sample size reduction factor ($<1$), and by $\pi_{B}:= \frac{1}{r} \sum_{i=1}^r \delta_{s_i}$ the empirical spectral distribution of $B=SS^{\top}$. The self-consistent equation shows that for any $z\in \mathbb{C_+}$ (complex numbers with positive imaginary parts), $(m_{1c}, m_{2c})$ are determined by the following pair of equations:
\beq\label{sc}
\begin{split}
& {m_{1c}(z)} =\gamma_n\frac{1}{-z\left[1+m_{2c}(z) \right]},\\
& {m_{2c}(z)} = \xi_n \int\frac{x}{-z\left[1+xm_{1c}(z) \right]} \pi_B (\dd x) .
\end{split}
\eeq
This is a \emph{general Marchenko-Pastur} or Silverstein equation; and can also be expressed as a fixed point equation for $m_{1c}$. It can be solved explicitly in certain special cases. There are also fast numerical solvers available, based on fixed-point methods and ODE solvers see e.g., \cite{couillet2011deterministic,dobriban2015efficient,cordero2018numerical}. {  In general, this is one of the two mathematically challenging parts of the algorithm (i.e., of using these steps presented here to figure out the behavior of the spikes after sketching).}
\item We combine the above quantities into the $2k\times 2k$ \emph{master matrix} $M(x)$
\be\label{defnMx}
M(x) = 
\left( {\begin{array}{*{20}c}
   -x^{-1/2}(1+m_{2c}(x))^{-1}I_k & {D^{-1}}   \\
   {D^{-1}} & - x^{-1/2} W^{\top}S^{\top} (1+m_{1c}(x)SS^{\top})^{-1}SW  \\
   \end{array}} \right).
\ee

\item We solve for the values $x$ for which this matrix is singular, i.e., 
\be\label{eme}
\det M(x)=0.
\ee
We call this the \emph{eigenvalue master equation}.

Such a condition has appeared in many cases in the literature \citep[e.g.,][and references therein]{couillet2011random,yao2015large}. In general we expect at most $k$ solutions $x$. The theory guarantees that these are all possible candidates for the empirical spikes of the sketched data $\wt  Y^{\top}  \wt  Y $. This step turns out to become feasible in several applications due to the randomness in either the sketching matrix $S$ or the signal matrix $W$. This randomness causes the lower right block to become diagonal, and hence, after rearrangement, the matrix $M$ can be studied as a block matrix with $2\times 2$ blocks.

\item To get the angles between the eigenvectors corresponding to an eigenvalue $\wt\lambda_i$ of $\wt  Y^{\top}  \wt  Y $,  again we follow an approach related to many works in the literature \citep[e.g.,][and references therein]{couillet2011random,yao2015large}. We consider a small contour $\Gamma_i$ which encloses $\wt\lambda_i$ (or its classical limit $\theta_i$, as explained below) but no other eigenvalues of $\wt  Y^{\top}  \wt  Y $. The overlap of the corresponding right singular vector $\wt{\bxi}_i$ with any spike eigenvector $u_j$ of the original data matrix $Y$ is given by the \emph{angle master equation}:
\begin{align}\label{ame}
\begin{split}
|\langle u_j, \wt{\bxi}_i\rangle|^2 & = \frac{1}{2\pi \ii (\wt\lambda_i)^{1/2}} \left( \oint_{\Gamma_i}e_j^{\top}\cal D^{-1}M(z)^{-1} \cal D^{-1}e_j  \dd z\right) .
\end{split}
\end{align}
Again, it turns out that in certain cases we can explicitly calculate these integrals.
\end{enumerate}
This finishes the general description of the procedure for finding the sketched spikes. See Section \ref{sec_defspike} for details. Next we will go over various sketching methods in detail, and show how to use this general procedure.

\section{Types of random projections}\label{sec5types}

In this section, we go over the various types of random projections, and explain the behavior of the sketched eigenvalues and eigenvectors. Fix any signal  strengths $d_1 > d_2 > \cdots > d_k >0$. Without projections, when $S=I_{n}$, it is well-known that a signal of strength $d_i$ leads to an \emph{outlier} if and only if
$d_i^2 > \sqrt{ {\gamma_n} }$ \citep{baik2005phase,baik2006eigenvalues}.
Here outliers are the eigenvalues of the sample covariance matrix separated and above the ``bulk" of the noise eigenvalues, which is described by a standard Marchenko-Pastur distribution \citep{marchenko1967distribution,bai2009spectral}. Moreover, the $i$-th spiked sample eigenvalue converges to its ``classical value" 
\be\label{know111} \wt\lambda_i \to
\lambda(d_i^2,\gamma) :=  \left( 1+d_i^2\right) \left(\frac{\gamma}{d_i^2} + 1 \right) \quad \text{in probability},\ee
if $\gamma_n\to \gamma$, see e.g., \citep{baik2005phase,baik2006eigenvalues}. The map $\ell \to \lambda(\ell, \gamma)$ between the population and sample spikes is sometimes referred to as the spiked forward map.  The overlaps between population and sample eigenvectors converge to \cite[e.g., ][]{Benaych2012}
\be\label{know222}  |\langle u_j, \wt{\bxi}_i\rangle|^2 \to 
\delta_{ij}c^2(d_i^2,\gamma)
:=\delta_{ij} \frac{1 - \frac{\gamma}{d_i^4}}{1+ \frac{\gamma}{d_i^2}} \quad \text{in probability},\ee
{where $\delta_{ij}=1$ if $i=j$, and $\delta_{ij}=0$ otherwise.}
The expression $c^2(d^2,\gamma)$ may be referred to as the squared cosine forward map, giving the asymptotic squared cosines between the population and sample eigenvectors.
Now we consider several choices of $S$, and compare the corresponding results to the above. We restrict to a certain high probability event $\Omega$ (given formally in \eqref{aniso_outstrong}), where the so-called ``local law" holds, and certain empirical quantities are close to their population verions.  So ``with high probability" means with high probability on $\Omega$.

\subsection{Uniform orthogonal random projections}\label{sec unifproj}

\label{sec: orthogonal}

We take $S$ to be $r\times n$ partial orthonormal, such that $S S^{\top}=I_r$. 

\subsubsection{Results known from prior work, Gaussian data}
\label{Gaussian}

There are a few results that can be readily deduced from known work. They are not our main point (as they are limited to Gaussian data); and our main results can handle much more general data distributions and sketching distributions in a unified framework.  However, as they are not available in prior work, we present them here for the reader's convenience. 

When the noise $X$ is has iid Gaussian entries, we have seen in Section \ref{heur} that for fixed dimensions $n$ and $p$, the data $ Y = X + WDU^{\top}$ transforms into $ SY = SX + (SW)DU^{\top}$, with the distribution of $SX$ still being Gaussian. The signals are transformed into $(SW)DU^{\top}$, and we let its SVD be $\wt W\wt  D\wt U^{\top}$. As is well known from the classical theory of spiked models, \citep{baik2005phase}, the singular values of the signal control the behavior the data SVD. This shows that we are in a new spiked model with new signal strengths $\wt D$, which can be checked to have been reduced compared to $D$.

If in addition we assume that $S$ is distributed uniformly over the Stiefel manifold of partial orthonormal matrices, then it is not hard to check that $\widetilde  D \approx (r/n)^{1/2} D$, so the signal reduces by a factor of $r/n$. In addition, the sample size is also reduced. To quantify the change of the outlier eigenvalues and eigenvectors, we recall that after scaling by $\xi_n^{-1/2}$, $ SY$ is equivalent to the model in \eqref{Heuristicf}, which has the same spike strength but in a reduced dimension $r$. In this model, we have that the aspect ratio changes as $\gamma_n\to \frac{n}{r}=\frac{\gamma_n}{\xi_n}$. Thus by \eqref{know111} and \eqref{know222}, if $\gamma_n\to \gamma$ and $\xi_n\to \xi$, then we have
\be\label{know333}\wt\lambda_i \to \theta_i:= \xi \left( 1+d_i^2\right) \left(\frac{\gamma/\xi}{d_i^2} + 1 \right)=  \left( 1+d_i^2\right) \left(\frac{\gamma}{d_i^2} + \xi \right), \quad \text{in probability},\ee
and
\be\label{know444}|\langle u_j, \wt{\bxi}_i\rangle|^2 \to \delta_{ij} \frac{1 - \frac{\gamma/\xi}{d_i^4}}{1+ \frac{\gamma/\xi}{d_i^2}}=\delta_{ij}\frac{\xi - \frac{\gamma}{d_i^4}}{\xi + \frac{\gamma}{d_i^2}}, \quad \text{in probability},\ee
for the eigenvalues and eigenvectors of $SY$. One can compare them to the results in \eqref{outlierevalue} and \eqref{outlierevector}, and see how the location of the spikes decreases.

The same logic applies to all distributions of partial orthonormal sketching matrices $S$ and all signal matrices $W$ for which $(SW)^\top SW \approx r/n \cdot W^\top W = r/n \cdot I_k$. We will discuss this for each case separately. 

\subsubsection{New results, general data}

When the distribution of the data is general, the above direct argument cannot be used. We will instead use our general framework. We assume $S$ is distributed uniformly over the Stiefel manifold of partial orthonormal matrices.

Then the self-consistent equation for the Stieltjes transforms \eqref{sc} becomes
\begin{equation}\label{self_ortho}
\begin{split}
& {m_{1c}(z)} = \gamma_n \frac{1}{-z\left[1+m_{2c}(z) \right]} ,\quad {m_{2c}(z)} =  \xi_n  \frac{1}{-z\left[1+m_{1c}(z) \right]}  .
\end{split}
\end{equation}
We can obtain the following equation for $m_{2c}:=m_{2c}(z)$:
\begin{equation}\label{g_{2c}}
\begin{split}
& \frac{\xi_n}{m_{2c}(z)} =  -z+ \frac{\gamma_n }{1+m_{2c}(z)}   \Rightarrow zm_{2c}^2 + (z-\gamma_n +\xi_n)m_{2c} + \xi_n =0.
\end{split}
\end{equation}
This equation has a unique solution with non-negative imaginary part for $z\in \C_+$, that is, 
\be\label{solv m2c}
m_{2c}(z)=\frac{-(z-\gamma_n +\xi_n) + \sqrt{(z-\lambda_+)(z-\lambda_-)} }{2z}, \quad \lambda_{\pm}= (\sqrt{\gamma_n} \pm \sqrt{\xi_n})^2.
\ee
Moreover, $m_{2c}$ is injective on the right half complex plane $\{z: \re z > \lambda_+ \}$, and we denote its inverse function as $g_{2c}$, which takes the form
\be\label{solv g2c}
g_{2c}(m)= \frac{\gamma_n }{1+m} -  \frac{\xi_n}{m}.
\ee

Given $m_{1c}$ and $m_{2c}$, we now study the master matrix $M(z)$ in \eqref{defnMx}. We can write the matrix $W$ of eigenvectors as
$$W=\mathbb W \begin{pmatrix}I_k \\ 0 \end{pmatrix},$$ 
where $\mathbb W$ is an $n\times n$ orthogonal matrix. Now recall that $V= S W$. Since the distribution of $S$ is rotationally invariant, we have 
$$V^{\top} V= \begin{pmatrix}I_k , 0 \end{pmatrix} \wh S^{\top} \wh S \begin{pmatrix}I_k \\ 0 \end{pmatrix} ,$$
where $\wh S= S \mathbb W$ is, like $S$, also an $r\times n$ uniformly distributed partial orthogonal matrix. We claim that 
\be\label{claim unifV}
V^{\top} V = \xi_n I_k + \oo(1)\quad \text{in probability}.
\ee
This can be easily verified by a simple variance calculations using exchangeability of the rows or columns of $\wh S$; see the calculations in Appendix \ref{pf sketch1}. 
With \eqref{claim unifV}, the eigenvalue master equation \eqref{eme} 
becomes
\be\label{master_evalue1}
\det \left( {\begin{array}{*{20}c}
   {  -x^{-1/2}\left(1+m_{2c}(x)\right)^{-1} }I_k & {D^{-1}}   \\
   {D^{-1}} & {x^{1/2} m_{2c}(x)} I_k \\
   \end{array}} \right) = \oo(1)
\ee
in probability. Ignoring the small (random) error on the right-hand side, the above matrix equation holds if and only if one of the following equations holds: for $i=1,\ldots,k$, 
\be\label{getthetai}\det\begin{pmatrix} -x^{-1/2}\left(1+m_{2c}(x)\right)^{-1}  & d_i^{-1} \\  d_i^{-1} & x^{1/2} m_{2c}(x)\end{pmatrix} =0 \Leftrightarrow m_{2c}(x) = -\frac{1}{1+d_i^2}  \Leftrightarrow x= g_{2c}\left(-\frac{1}{1+d_i^2}\right) ,\ee
where in the second step we used that $g_{2c}$ is the inverse function of $m_{2c}$. 
This gives an equation for any potential outlier $x$. However, in order to have an outlier, we need to have that 
\be\label{cond outlier}-\frac{1}{1+d_i^2} > m_{2c}(\lambda_+) = -\frac{\sqrt{\xi_n}}{\sqrt{\xi_n} + \sqrt{\gamma_n}} \Leftrightarrow d_i^2 > \sqrt{\frac{\gamma_n}{\xi_n}} .\ee
This is because the Stieltjes transform $m_{2c}$ is increasing on $[\lambda_+,\infty)$ outside of the bulk of eigenvalues, and so having $\lambda_+<x$  is equivalent to $m_{2c}(\lambda_+)<m_{2c}(x)=-1/(1+d_i^2)$. Then we use the known formula for $m_{2c}(\lambda_+)$ as given in \eqref{solv m2c}.
Using \eqref{solv g2c} and \eqref{getthetai}, we obtain that the classical location for the outlier caused by $d_i$ is
$$ \theta_i = \left( 1+d_i^2\right) \left(\frac{\gamma_n}{d_i^2} + \xi_n \right). $$
This formula is very similar to the well known one for the location of the empirical spike in spiked models, presented above, and described in  \citep{baik2005phase,baik2006eigenvalues}. However, we have a different setting in this paper, and so the formula cannot be deduced from the classical one.

See Figure \ref{f_sp_or} for simulation results illustrating the accuracy of this formula. In our simulation, we follow the above model. We generate the data $Y= X + WDU^{\top} $ {  with i.i.d. entries $x_{ij}\sim \mathrm{Unif}(-\sqrt{3/n}, \sqrt{3/n})$ (so $X_{ij}$ has variance $1/n$)}, 
and take the rank $k=1$, with $W,U$ being independent uniformly distributed partial orthogonal random matrices. We choose some specific values for the spike $D$, and compute a random projection $\wt  Y=SY$ with a uniformly random partial orthogonal matrix $S$. We then compute its SVD and find its first eigenvalue and eigenvector. We compare them to the theoretical formulas above. See the caption to Figure \ref{f_sp_or} for more details. In Section \ref{sec: simulation k=5}, we will provide simulation results where the rank $k$ is greater than one.

Next we turn to finding the formula for the angle between projected and true spike. 
Using \eqref{ame}, we obtain that $|\langle u_j, \wt{\bxi}_i\rangle|^2 =\oo(1)$ if $j\ne i$. If $j=i$, we have that the inverse of $M(z)$ is also a block matrix with $2\times2$ blocks, and so its $(i,i)$-th entry can be recovered as the appropriate entry of the inverse of the $2\times2$ block it belongs to. Therefore, we have in probability,
\begin{align} 
|\langle u_i, \wt{\bxi}_i\rangle|^2 &= \frac{1}{2\pi \ii \sqrt{\theta_i}} \oint_{\Gamma_i}(0,d_i^{-1})\left( {\begin{array}{*{20}c}
   {  -z^{-1/2}\left(1+m_{2c}(z)\right)^{-1} } & {d_i^{-1}}   \\
   {d_i^{-1}} & {z^{1/2} m_{2c}(z)}  \\
   \end{array}} \right)^{-1}  \begin{pmatrix} 0  \\ d_i^{-1}\end{pmatrix} \dd z +\oo(1) \nonumber\\
 &=  \frac{1}{2\pi \ii \theta_i \left(1+d_i^2\right)} \oint_{\Gamma_i} \frac1{  {m_{2c}(z)}  + (1+d_i^{2})^{-1}}\dd z +\oo(1)= \frac{1}{\theta_i \left(1+d_i^2\right) m_{2c}'(\theta_i)}   +\oo(1)  \nonumber\\
 &= \frac{g_{2c}'(-(1+d_i^2)^{-1})}{\theta_i \left(1+d_i^2\right) } +\oo(1) = \frac{\xi_n - \frac{\gamma_n}{d_i^4}}{\xi_n + \frac{\gamma_n}{d_i^2}} +\oo(1).
\label{cos_outer}
\end{align}

\begin{figure}[htb!]
\centering
\begin{subfigure}{.45\textwidth}
\includegraphics[width=\textwidth]{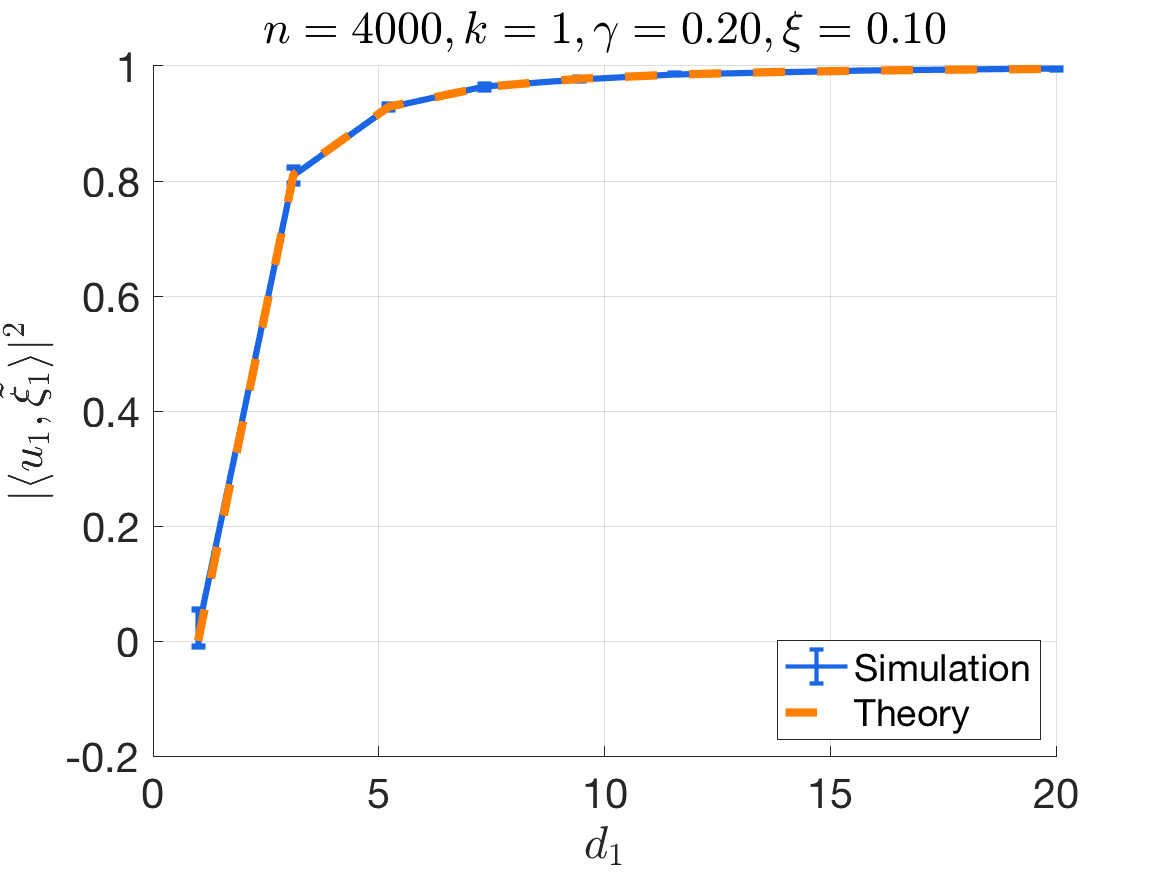}
\end{subfigure}
\begin{subfigure}{.45\textwidth}
\includegraphics[width=\textwidth]{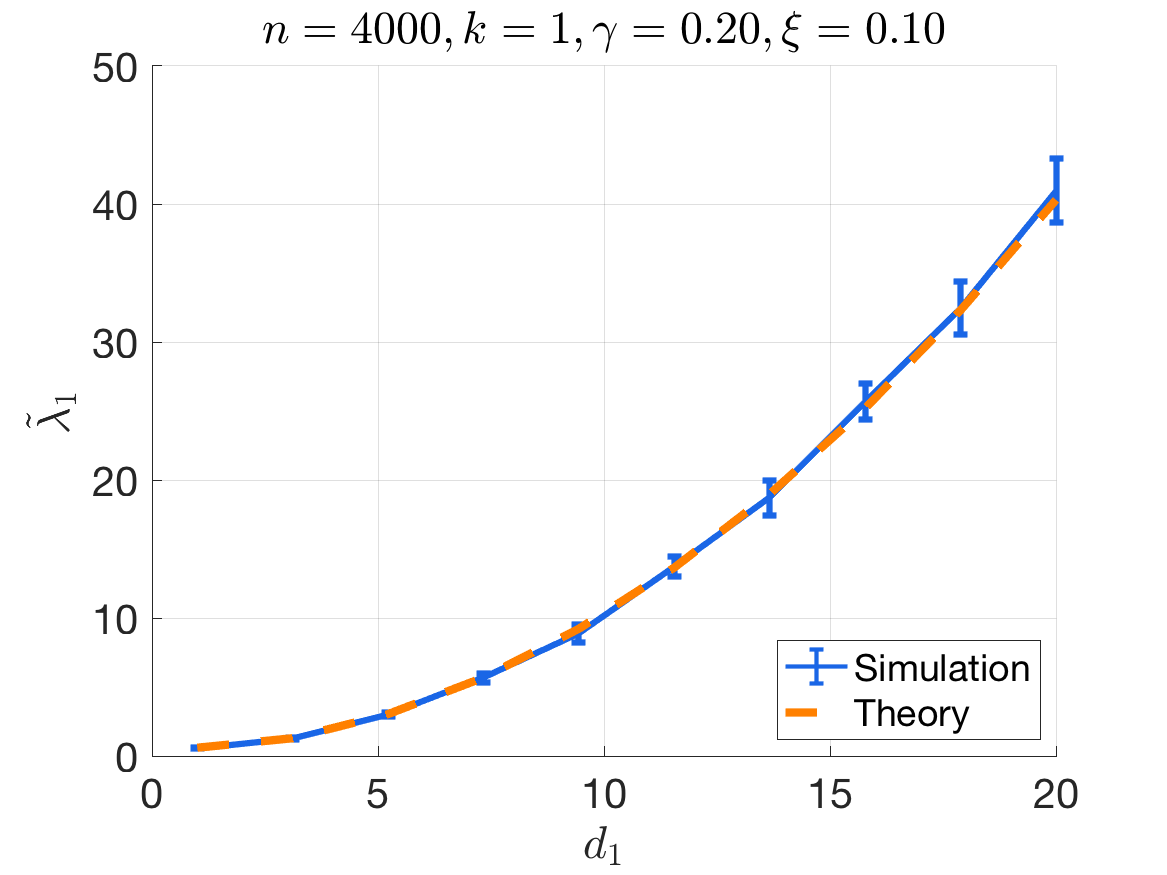}
\end{subfigure}
\caption{
Checking the accuracy of the spiked eigenvalue and eigenvector formulas for orthogonal projections. We show results  with $n = 4000$, $p = 800$, $r = 400$, where the rank is $k = 1$ and we vary the signal strength $d_1$ between 1 and 20. {  The entries $X_{ij}$ are i.i.d. random variables sampled from the  $\mathrm{Unif}(-\sqrt{3/n}, \sqrt{3/n})$ distribution.}
We show the mean and one standard deviation over 20 Monte Carlo simulations (note that the SD is very small). Left: $d_1$ against overlap between the spiked population eigenvector and the sample eigenvector after sketching.  Right: $d_1$ against $\widetilde  \lambda_1$, the first sample eigenvalue after sketching. The theoretical and empirical results agree well.}
\label{f_sp_or}
\end{figure}

Now we state the above results as the following theorem. We shall prove it rigorously in Appendix \ref{pf sketch1}. In the following statements, ``$\to_p$" means ``converges in probability".

\begin{theorem}[Uniform orthogonal random projection]\label{sketchthm1}
Consider the $r\times p$ sketched data matrix $\wt Y=  S Y$, where $S$ is $r\times n$ partial orthonormal, distributed uniformly over the Stiefel manifold of partial orthonormal matrices. Also, $X=(x_{ij})$ is an $n\times p$ random matrix where the entries $x_{ij}$ are real independent random variables with mean zero, variance $n^{-1}$, satisfying that their higher moments are bounded as in \eqref{eq_highmoment}. Also the number of signals $k$ is a finite fixed integer and the strengths $d_1 > d_2 > \cdots > d_k >0$ are fixed constants; $ \{u_i\}_{1\le i \le k}$ and  $\{w_i\}_{1\le i \le k}$ are deterministic sets of orthonormal unit vectors in $\R^p$ and $\R^n$, respectively. Let $\gamma_n:= p /n \to \gamma$ and $\xi_n:= r / n \to \xi$ as $n\to \infty$. Then for any $1\le i \le k$, if $d_i > \sqrt{{\gamma}/{\xi}} $, we have
\be\label{outlierevalue}
\wt\lambda_i \to_p \theta_i:=  \left( 1+d_i^2\right) \left(\frac{\gamma}{d_i^2} + \xi \right), 
\ee
and 
\be\label{outlierevector}
|\langle u_j, \wt{\bxi}_i\rangle|^2 \to_p \delta_{ij}\frac{\xi - \frac{\gamma}{d_i^4}}{\xi + \frac{\gamma}{d_i^2}} .
\ee
Otherwise, if $d_i \le \sqrt{{\gamma}/{\xi}} $, we have
\be\label{outlierevalue2}
\wt\lambda_i \to_p \lambda_+,
\ee
and  
\be\label{outlierevector2}
|\langle u, \wt{\bxi}_i\rangle|^2 \to_p 0
\ee
 for any deterministic unit vector $u$.
\end{theorem}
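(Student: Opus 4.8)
The plan is to promote the formal derivation already displayed in \eqref{self_ortho}--\eqref{cos_outer} to a rigorous argument by feeding it through the general master-equation machinery of Section~\ref{sec_defspike}. That machinery supplies a high-probability event $\Omega$ (the ``local law'' event) on which: the empirical spectral distribution of $\wt X^\top\wt X$ is governed by the self-consistent system \eqref{sc}; every eigenvalue of $\wt Y^\top\wt Y$ lying outside a small neighborhood of the bulk is within $\oo(1)$ of a solution of the eigenvalue master equation $\det M(x)=0$, and conversely each such solution is approximated by an eigenvalue; the non-outlier eigenvalues are rigid at the right edge $\lambda_+$; and each outlier eigenvector overlap equals the angle master equation \eqref{ame} up to $\oo(1)$. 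Conditioning on $\Omega$, the proof then has only three ingredients specific to the uniform orthogonal case: solving \eqref{sc} explicitly, establishing the concentration $V^\top V=\xi_n I_k+\oo(1)$, and reducing $\det M=0$ and the contour integral in \eqref{ame} to the scalar computations already shown.

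First I would solve the self-consistent equations. Since $SS^\top=I_r$ the measure $\pi_B$ equals $\delta_1$, so \eqref{sc} collapses to \eqref{self_ortho}; eliminating $m_{1c}$ gives the quadratic $zm_{2c}^2+(z-\gamma_n+\xi_n)m_{2c}+\xi_n=0$, whose branch with non-negative imaginary part on $\C_+$ is the explicit $m_{2c}$ of \eqref{solv m2c}, with edges $\lambda_\pm=(\sqrt{\gamma_n}\pm\sqrt{\xi_n})^2$ and functional inverse $g_{2c}(m)=\gamma_n/(1+m)-\xi_n/m$ on $\{\re z>\lambda_+\}$. I would also record that $m_{2c}$ increases on $(\lambda_+,\infty)$ from $m_{2c}(\lambda_+)=-\sqrt{\xi_n}/(\sqrt{\xi_n}+\sqrt{\gamma_n})$ up to $0$, which is what turns the outlier condition into an honest threshold. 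For the concentration, write $W=\mathbb W\begin{pmatrix}I_k\\ 0\end{pmatrix}$ with $\mathbb W$ an $n\times n$ orthogonal matrix; by rotational invariance of the Haar measure $\wh S:=S\mathbb W$ is again a uniform $r\times n$ partial orthonormal matrix, so $V^\top V$ is the top-left $k\times k$ block of $\wh S^\top\wh S$. Exchangeability of the columns of $\wh S$ gives diagonal entries with mean $\xi_n$, off-diagonal entries with mean $0$, and (by a routine second-moment calculation, as in Appendix~\ref{pf sketch1}) all variances $O(n^{-1})$; a union bound over the $O(1)$ entries yields $\|V^\top V-\xi_n I_k\|=\oo(1)$ with high probability. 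This is the only step that uses that the $w_i$ are deterministic and that $k$ is fixed.

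Next, substituting $V^\top V=\xi_n I_k+\oo(1)$ into the master matrix \eqref{defnMx} and using \eqref{self_ortho}, the lower-right block becomes $x^{1/2}m_{2c}(x)I_k+\oo(1)$, so after permuting rows and columns $M(x)$ is, up to $\oo(1)$, block diagonal with the $2\times2$ blocks appearing in \eqref{getthetai}. The determinant of the $i$-th block vanishes iff $m_{2c}(x)=-1/(1+d_i^2)$, i.e. $x=g_{2c}(-1/(1+d_i^2))=(1+d_i^2)(\gamma_n/d_i^2+\xi_n)=:\theta_i$; and by the monotonicity above $\theta_i>\lambda_+$ iff $d_i^2>\sqrt{\gamma_n/\xi_n}$, which is exactly \eqref{cond outlier}. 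When this holds I evaluate \eqref{ame}: for $j\ne i$ the integrand is analytic inside $\Gamma_i$, so that overlap is $\oo(1)$; for $j=i$, inverting the $2\times2$ block and extracting the $(2,2)$ entry produces an integrand proportional to $[m_{2c}(z)+(1+d_i^2)^{-1}]^{-1}$ with a simple pole at $\theta_i$ of residue $1/m_{2c}'(\theta_i)=g_{2c}'(-(1+d_i^2)^{-1})$, and the residue theorem gives $(\xi_n-\gamma_n/d_i^4)/(\xi_n+\gamma_n/d_i^2)+\oo(1)$. Passing to the limit $\gamma_n\to\gamma$, $\xi_n\to\xi$ yields \eqref{outlierevalue}--\eqref{outlierevector}. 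If instead $d_i^2\le\sqrt{\gamma/\xi}$, no root of $\det M=0$ exceeds $\lambda_+$, so edge rigidity forces $\wt\lambda_i\to_p\lambda_+$, and delocalization of the edge eigenvectors gives $|\langle u,\wt{\bxi}_i\rangle|^2\to_p0$ for any fixed unit vector $u$, which is \eqref{outlierevalue2}--\eqref{outlierevector2}.

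The genuine difficulty is not in any step above but in the general framework they rest on: the anisotropic local law and edge rigidity for the sketched separable ensemble $\wt X=SX$ with $X$ having independent, not identically distributed, entries (where the moment bound \eqref{eq_highmoment} is used), together with the perturbation analysis that pins the outliers of $\wt Y^\top\wt Y$ and their eigenvectors to the master equations with $\oo(1)$ control and rules out spurious extra outliers. This is precisely what is imported from \cite{yang2019spiked,ding2019spiked} and assembled in Section~\ref{sec_defspike}; conditional on it, the uniform orthogonal case reduces to the explicit computation sketched here, whose only real probabilistic content is the elementary variance bound for $V^\top V$. A minor additional care point is making the ``up to $\oo(1)$'' errors uniform over the $k$ blocks, which again follows from the counting part of the general framework.
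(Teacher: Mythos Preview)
Your proposal is correct and follows essentially the same route as the paper. However, you attribute to the ``general framework'' of Section~\ref{sec_defspike} several facts that are not black-box outputs of that section but are instead proved explicitly in Appendix~\ref{pf sketch1}. Specifically: (i) the claim that outlier eigenvalues are within $\oo(1)$ of roots of $\det M(x)=0$ and conversely is established in the paper by first showing $M(x)$ is uniformly non-singular away from the $\theta_i$ (Lemma~\ref{lem_gapI}), and then applying Rouch\'e's theorem on small circles around each $\theta_i$ to get the exact count; (ii) in the sub-critical case the lower bound $\wt\lambda_i\ge\lambda_+-\oo(1)$ does not come from edge rigidity of $\wt{\mathcal Q}_1$ directly, but from Cauchy interlacing $\wt\lambda_i\ge\lambda_{i+k}$ (Lemma~\ref{lem_weylmodi}) combined with edge rigidity \eqref{rigiditye} for the \emph{non-spiked} eigenvalues; (iii) your appeal to ``delocalization of the edge eigenvectors'' for \eqref{outlierevector2} glosses over a concrete computation: the paper picks $z_i=\wt\lambda_i+\ii n^{-\e}$, uses the spectral bound $|\langle u,\wt{\bxi}_i\rangle|^2\le\eta_i\,\im\langle u,\wt G(z_i)u\rangle$, expands $\wt G$ via Woodbury in terms of $G$ and $(\mathcal D^{-1}+z_i^{1/2}A^\top G(z_i)A)^{-1}$, and then bounds the latter by $(\im m_{2c}(z_i))^{-1}$ using \eqref{diag_big}. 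None of these gaps are fatal---the missing arguments are standard---but they constitute the real analytic content beyond the scalar reduction you outline, and the paper does them by hand rather than importing them.
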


Recall our heuristic thoughts that this sketched spiked model should be \emph{equivalent to a spiked model \eqref{Heuristicf} with the same spike strengh but in a reduced dimension $r$}. The above theory is consistent with that. Moreover, the theory is also consistent with the results readily deduced from prior work for Gaussian data presented in \eqref{know333} and \eqref{know444}.

\subsection{Projections with i.i.d. entries} \label{sec Gauss}

Now we pick $S$ to be an $r\times n$ random matrix with i.i.d. entries of zero mean, variance $n^{-1}$, and with bounded moments, as in \eqref{eq_highmoment}. In particular, $S$ can be a random Gaussian projection if its entries are i.i.d.\;Gaussian. Then $B= SS^{\top}$ is a sample covariance matrix with identity population covariance.   Before giving the main result, Theorem \ref{sketchthm4}, we first introduce the notations that are used in its statement.  \nc

 First, we define two functions $m_{1c}^S$ and $m_{2c}^S$, which are the Stieltjes transforms of the well-known Marchenko-Pastur law ($m_{1c}^S$ is for $SS^{\top}$ and $m_{2c}^S$ is for $S^{\top}S$):
\beqs 
\begin{split}
& m_{1c}^S(z)=\frac{-(z-1 +\xi) + \sqrt{(z-\lambda_+^S)(z-\lambda_-^S)} }{2z\xi}, \quad m_{2c}^S(z)=\frac{-(z +1 - \xi) + \sqrt{(z-\lambda^S_+)(z-\lambda^S_-)} }{2z},
\end{split}
\eeqs
where $\lambda^S_{\pm}$ are the edges of the support of the MP law,
$\lambda^S_{\pm}= (1 \pm \sqrt{\xi})^2.$ Then $g_{1c}$ is defined as
\beqs 
g_{1c}(m)=- \frac{\gamma}{m}   + \frac{\xi}{m}\left( 1- \frac{1}{m}m^{S}_{1c}(-m^{-1}) \right).
\eeqs
In fact, $g_{1c}$ is the inverse function of $m_{1c}(z)$, which is the unique solution to the cubic equation
\beqs z^2m_{1c}^3 - z(1+\xi_n-2\gamma_n)m_{1c}^2 - \left[ z + (1-\gamma_n)(\gamma_n-\xi_n)\right] m_{1c} - \gamma_n =0, \eeqs
that satisfies $\im m_{1c}(z)>0$ for any $z$ with $\im z > 0$. It is possible to give an explicit expression of $m_{1c}(z)$ using the roots formulas for cubic equations, but we do not state it here. Taking $\im z$ down to zero, we get a continuous function $\rho_{1c}(x): = \lim_{\eta\downarrow 0} \pi^{-1}\Im\, m_{1c}(x+\ii \eta)$. $\xi^{-1}\rho_{1c}$ is a probability density function compactly supported on
 $\mathbb R_+:=[0,\infty)$, and we denote the rightmost edge of its support by $\lambda_+$ following the convention in random matrix theory. Then we define $\al_i\equiv \al(d_i) $ as
\beqs
\al_i\equiv\al(d_i):= -\frac{\gamma d_i^{-2}}{\left(1+\gamma d_i^{-2}\right)\left(\xi+\gamma d_i^{-2}\right)},
\eeqs
and $d_c>0$ is defined as the unique solution to equation 
$$\al(d_c)= m_{1c}(\lambda_+).$$ 

 We have the following result for sketching with i.i.d. projection, which will be proved rigorously in Appendix \ref{pf sketch1.5}. 

\begin{theorem}[Random projection with iid entries]\label{sketchthm4}
Suppose that the assumptions in Theorem \ref{sketchthm1} hold except that $S$ is an $r\times n$ i.i.d.\;random projection matrix whose entries are of zero mean, variance $n^{-1}$, and with bounded moments, as in \eqref{eq_highmoment}. Then for any $1\le i \le k$, if $d_i > d_c $, we have
\be\label{outlierevalueiid}
\wt\lambda_i \to_p \theta_i:= g_{1c}  \left( \al_i\right), 
\ee
and 
\be\label{outlierevectoriid}
|\langle u_j, \wt{\bxi}_i\rangle|^2 \to_p  \delta_{ij}\frac{\al_i^2}{d_i^{2}} \frac{g_{1c}'(\al_i)}{\left[(m_{2c}^S)' (-\al_i^{-1})\right]\al_i^{-2}-(1+\gamma d_i^{-2}) } .
\ee
Otherwise, if $d_i \le d_c $, then \eqref{outlierevalue2} and \eqref{outlierevector2} hold. 
\end{theorem}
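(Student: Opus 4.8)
The plan is to apply the general master-matrix framework of Section~\ref{sec_defspike}, whose rigorous foundations are laid there building on \citep{ding2019spiked,yang2019spiked}, and to supply for an i.i.d.\ sketch the two case-specific ingredients it needs: solving the self-consistent equations~\eqref{sc}, and reducing the master matrix~\eqref{defnMx}. Once those are in place, the argument follows the same template as the proof of Theorem~\ref{sketchthm1}.

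\textbf{Step 1 (Stieltjes transforms).} For an i.i.d.\ sketch, $B=SS^{\top}$ is a sample covariance matrix with identity population covariance, so $\pi_B$ converges (weakly, and in the sense of a local law) to the Marchenko--Pastur law with ratio $\xi$ and edges $\lambda^S_{\pm}=(1\pm\sqrt{\xi})^2$. Substituting the MP Stieltjes transform into the integral in~\eqref{sc} and eliminating $m_{2c}$ gives, after routine algebra, the cubic equation for $m_{1c}$ stated just before the theorem; one also reads off the identity $m_{2c}(z)=-\phi(m_{1c}(z))/z$, where $\phi(m):=\xi\int t(1+mt)^{-1}\,\mathrm{MP}_\xi(\dd t)$ has the closed form $\phi(m)=\frac{\xi}{m}\bigl(1-\frac1m m^{S}_{1c}(-m^{-1})\bigr)$. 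I would then verify the analytic claims in the excerpt: the cubic has a unique root with $\im m_{1c}(z)>0$ for $\im z>0$; this root is the Stieltjes transform of a measure with density $\xi^{-1}\rho_{1c}$ on $[0,\lambda_+]$; $m_{1c}$ is real and strictly increasing on $(\lambda_+,\infty)$; and its inverse there is $g_{1c}$, the displayed formula being exactly $g_{1c}(m)=-\gamma/m+\phi(m)$. This is one of the two pieces the excerpt flags as mathematically involved, but it is a one-variable analysis of a cubic.

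\textbf{Step 2 (reducing the master matrix).} The upper-left block of $M(x)$ is already $-x^{-1/2}(1+m_{2c}(x))^{-1}I_k$. The crux is the lower-right block $-x^{-1/2}W^{\top}S^{\top}(1+m_{1c}(x)SS^{\top})^{-1}SW$, which---unlike in the orthogonal case---is not automatically a scalar multiple of $I_k$, since an i.i.d.\ sketch is not rotationally invariant. Here I would invoke the (anisotropic) local law for sample covariance matrices, the technology behind \citep{yang2019spiked,ding2019spiked}: using the identity $S^{\top}(SS^{\top}-z)^{-1}S=I_n+z(S^{\top}S-z)^{-1}$ together with the deterministic equivalent $w_i^{\top}(S^{\top}S-z)^{-1}w_j\approx m^{S}_{2c}(z)\langle w_i,w_j\rangle$ for the fixed vectors $w_i,w_j$, one gets $w_i^{\top}S^{\top}(1+m_{1c}(x)SS^{\top})^{-1}Sw_j=\delta_{ij}\,\phi(m_{1c}(x))+\oo(1)$ in probability. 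By the identity of Step~1, $\phi(m_{1c}(x))=-x\,m_{2c}(x)$, so the lower-right block equals $x^{1/2}m_{2c}(x)I_k+\oo(1)$---exactly the reduced form appearing in the orthogonal case~\eqref{master_evalue1}. Thus, after a permutation, $M(x)$ becomes a block-diagonal matrix of $2\times2$ blocks up to an $\oo(1)$ error, identical in shape to the orthogonal case; only the self-consistent equations behind $m_{1c},m_{2c}$ differ.

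\textbf{Step 3 (master equations).} The $i$-th $2\times 2$ block has diagonal entries $-x^{-1/2}(1+m_{2c}(x))^{-1}$ and $x^{1/2}m_{2c}(x)$ and off-diagonal entries $d_i^{-1}$, so $\det M(x)=0$ reduces to $m_{2c}(\theta_i)=-1/(1+d_i^2)$, the same equation as in the orthogonal case; converting to $m_{1c}$ via $m_{2c}=-\phi(m_{1c})/z$ and the cubic (and using the MP self-consistent relation for $m^{S}_{1c}$) turns this into $m_{1c}(\theta_i)=\al_i$, whence $\theta_i=g_{1c}(\al_i)$. The outlier exists iff $\al_i$ lies to the right of $m_{1c}(\lambda_+)$---i.e.\ iff $d_i>d_c$ by definition of $d_c$---by monotonicity of $m_{1c}$ on $(\lambda_+,\infty)$. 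For the eigenvector overlap I would plug the block-diagonal $M$ into~\eqref{ame}; the relevant $2\times2$ inverse has a simple pole at $z=\theta_i$, and the residue---combined with $m_{1c}'(\theta_i)=1/g_{1c}'(\al_i)$ and the expression of $\phi'$ through $(m^{S}_{2c})'$---yields~\eqref{outlierevectoriid}, while the off-diagonal overlaps $|\langle u_j,\wt{\bxi}_i\rangle|^2$ for $j\ne i$ vanish by the block structure.

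\textbf{Step 4 (rigor; subcritical regime; main obstacle).} To make this rigorous I would (a) use the local laws to control all $\oo(1)$ errors uniformly on the relevant spectral domain and on the event $\Omega$; (b) run a continuity/winding-number argument, as in \citep{ding2019spiked}, to show the $\theta_i$ are the \emph{only} outliers of $\wt Y^{\top}\wt Y$ and each is simple; and (c) for $d_i\le d_c$, use edge rigidity of $\wt X=SX$ to conclude that the corresponding eigenvalue is pinned at $\lambda_++\oo(1)$, giving~\eqref{outlierevalue2}, and that its eigenvector has asymptotically vanishing overlap with any fixed unit vector, giving~\eqref{outlierevector2}. The main obstacle is Step~2: showing that the lower-right block of $M$ concentrates on a scalar multiple of $I_k$ for a general, non-rotationally-invariant i.i.d.\ sketch genuinely requires the anisotropic local law for sample covariance matrices and has no counterpart in the soft argument available for orthogonal projections; a secondary difficulty is the cubic analysis of $m_{1c}$ in Step~1, in particular pinning down the shape and right edge $\lambda_+$ of its support.
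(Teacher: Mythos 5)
Your proposal is correct and follows essentially the same route as the paper's proof in Appendix~\ref{pf sketch1.5}: the same push-through/Woodbury identity converts the lower-right block into a functional of $(S^{\top}S-z)^{-1}$ evaluated at $z=-m_{1c}^{-1}(x)$, the same isotropic local law for sample covariance matrices turns it into $\phi(m_{1c}(x))I_k+\oo(1)$, and the cubic for $m_{1c}$, the inverse function $g_{1c}$, the $2\times2$ determinant equations, the contour integral for the overlap, and the Rouch\'e/rigidity arguments are all as in the paper (your reformulation of the reduced master matrix in terms of $m_{2c}$ is algebraically identical to the paper's formulation in terms of $m_{1c}$, via the relation $\phi(m_{1c}(z))=-z\,m_{2c}(z)$). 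The one substantive item your plan glosses over is precisely what the paper's displayed proof consists of: two domain verifications that are automatic for orthogonal sketches but must be checked for an i.i.d.\ one. Namely, (i) the edge-regularity condition \eqref{assm_gap} is an \emph{assumption} of the anisotropic local laws (Theorems \ref{lem_localout} and \ref{lem_localin}) and has to be established for $B=SS^{\top}$ a Wishart-type matrix --- the paper does this by contradiction, using eigenvalue rigidity for $SS^{\top}$ in the stationarity relation at $\lambda_+$; and (ii) the deterministic equivalent $w_i^{\top}(S^{\top}S-z)^{-1}w_j\approx m_{2c}^S(z)\delta_{ij}$ that your Step~2 invokes is only valid for $z$ at distance $\ge\tau$ from the Marchenko--Pastur support, so one must show $-m_{1c}^{-1}(z)\in S_{\tau}$ uniformly over the relevant spectral domain (the paper does this using $|m_{1c}(\lambda_+)|\le s_1^{-1}$, monotonicity of $m_{1c}$ beyond $\lambda_+$, and the square-root estimates of Lemma \ref{lem_complexderivative}). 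Both checks succeed, so your plan would go through, but they are genuine steps rather than routine error bookkeeping and should be added to Step~4.
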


We can get explicit expressions for the right-hand sides of \eqref{outlierevalueiid} and \eqref{outlierevectoriid} using the formulas for $m_{1c}^S$, $m_{2c}^S$ and $g_{1c}$. But they are very complicated, so we do not state them here. 


Algorithmically, given $\gamma_n,\xi_n,d_i^2$, we find the spike by calculating the value of \eqref{outlierevalueiid}  using the above formula for $g_{1c}$ (which involves $m_{1c}^S$.) See Figure \ref{f_sp_g} for simulations checking the accuracy of these results.


\begin{figure}[h]
\centering
\begin{subfigure}{.45\textwidth}
\includegraphics[width=\textwidth]{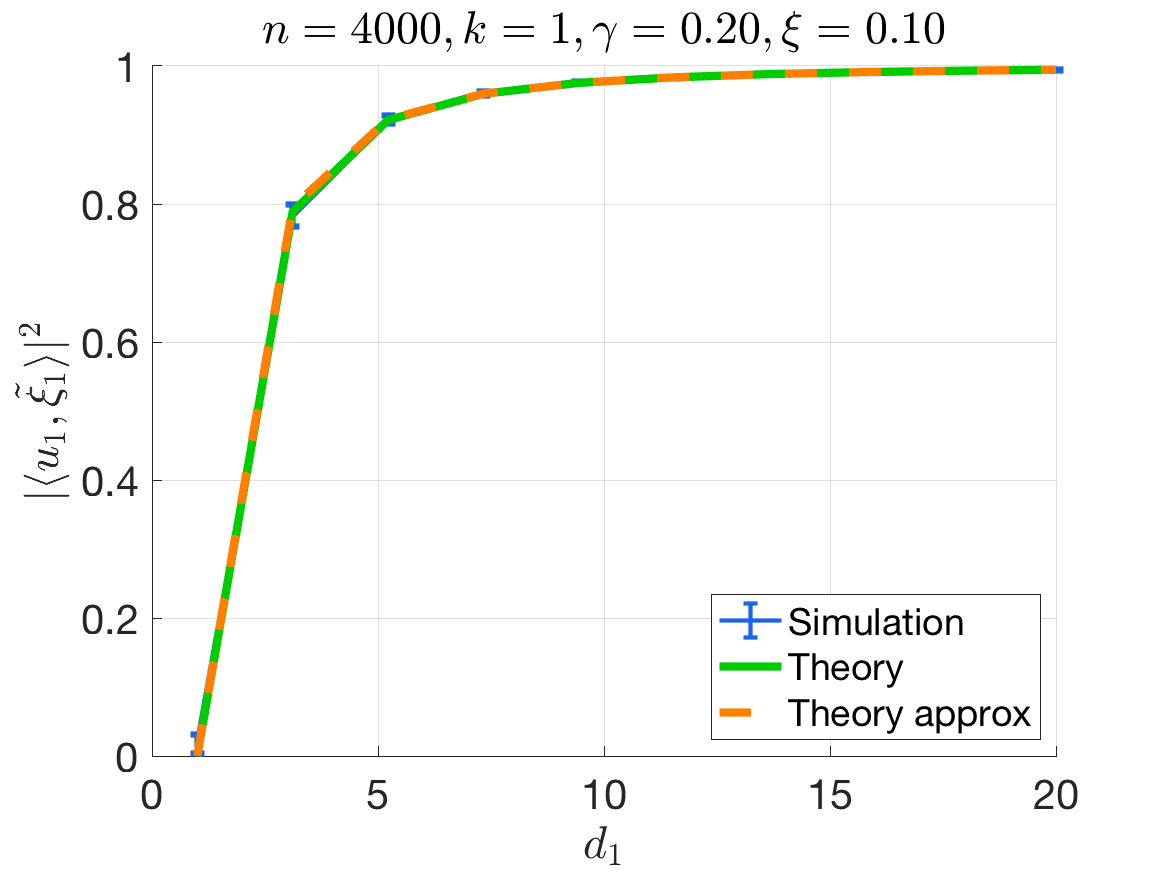}
\end{subfigure}
\begin{subfigure}{.45\textwidth}
\includegraphics[width=\textwidth]{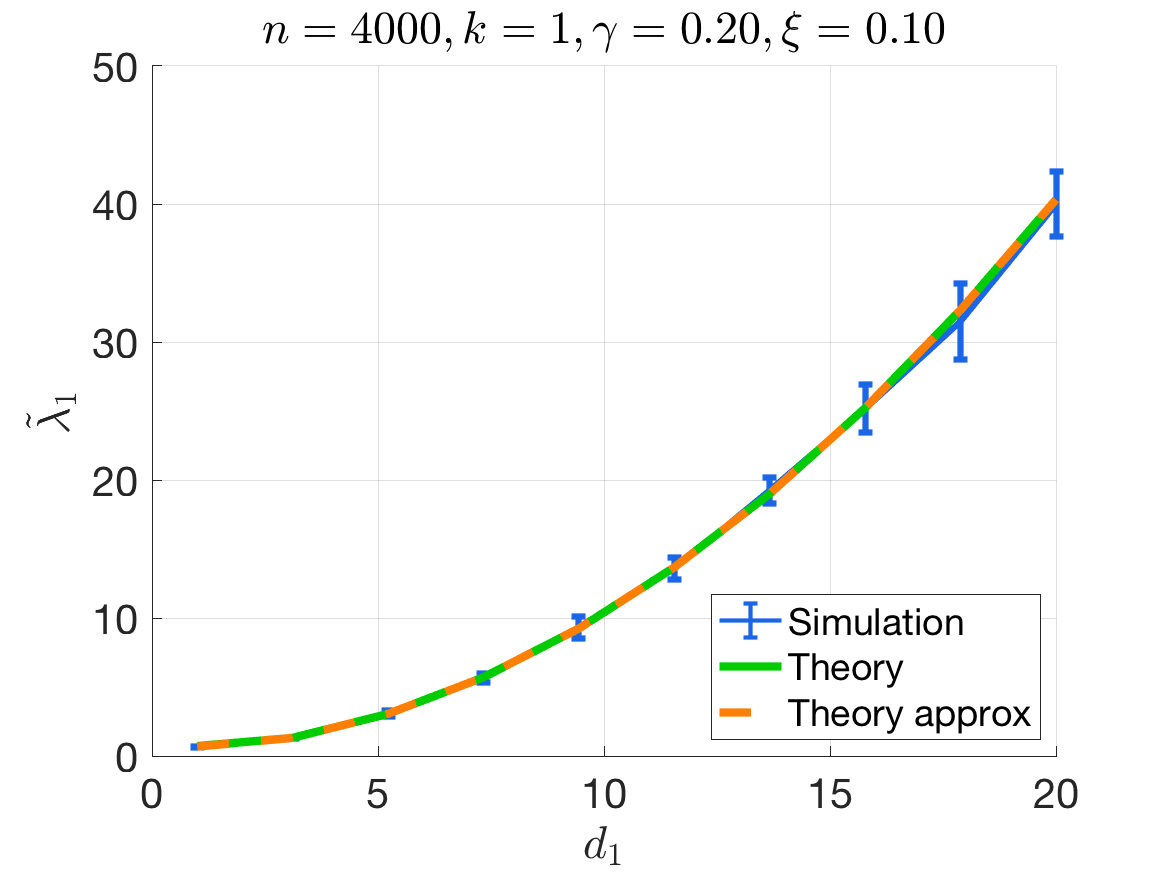}
\end{subfigure}
\caption{Checking the accuracy of the spiked eigenvalue and eigenvector formulas for Gaussian projections. We follow the protocol from the experiment in Figure \ref{f_sp_or}.}
\label{f_sp_g}
\end{figure}



Now we compare the i.i.d.\;projection with the uniform random projection in previous section. The explicit expressions for $\theta_i$ and $|\langle u_i, \wt{\bxi}_i\rangle|^2$ are pretty cumbersome. To simplify the expression, we consider the large signal case where $d_i$ is a large constant, and develop asymptotic expressions of $\theta_i$ and $|\langle u_i, \wt{\bxi}_i\rangle|^2$ in terms of $d_i^{-2}$. Through direct calculation, we get that 
$$ (m_{1c}^S)' (-\al_i^{-1}) = \al_i^2\left( 1 - 2\al_i + (3+3\xi) \al_i^2 + \OO(\al_i^3)\right), $$  
$$(m_{2c}^S)' (-\al_i^{-1}) = \al_i^2\left( 1 - 2\xi\al_i + (3+3\xi) \xi_n\al_i^2 + \OO(\al_i^3)\right),$$
and
$$ m_{1c}^S (-\al_i^{-1}) = \al_i\left(1-\al_i +(\xi+1)\al_i^2  + \OO(\al_i^3)\right), \quad g'_{1c}(\al_i)= \frac{\gamma}{\al_i^2} - \xi(\xi+1)+ \OO(\al_i) .$$
Plugging them into \eqref{outlierevalueiid} and \eqref{outlierevectoriid}, we obtain that
\beq  \label{evaluecorr} 
\theta_i=  \xi d_i^2+ (\xi \gamma + \gamma  +\xi )+(\gamma+\xi+1) \gamma d_i^{-2}  + \OO(d_i^{-4}), 
\eeq
and
\be \label{evectorcorr}
\begin{split}
|\langle u_i, \wt{\bxi}_i\rangle|^2 &\to_p \frac{\gamma - \xi(1+\xi)\al_i^2 +\OO(\al_i^3)}{d_i^{2}} \frac{1}{  - 2\xi\al_i + (3+3\xi) \xi\al_i^2 + \OO(\al_i^3)- \gamma d_i^{-2} } \\
&= \frac{\xi - \frac{(1+\xi)\gamma}{ d_i^4} +\OO(d_i^{-6})}{\xi + \frac{(1+\xi)\gamma}{ d_i^2} +\OO(d_i^{-4})} . 
\end{split}
\ee
Compared with \eqref{outlierevector}, one can see that, at least in the large signal regime, the correlation \eqref{evectorcorr} is smaller, and thus worse than  random uniform projection. Moreover, we have the simple relation
$$ \frac{(1- |\langle u_i, \wt{\bxi}_i\rangle|^2)_{\text{i.i.d. projection}} }{(1- |\langle u_i, \wt{\bxi}_i\rangle|^2)_{\text{uniform projection}} } = 1+\xi + \OO(d_i^{-2})$$
for these two cases, where the notations are self-explanatory. 

One can see Figure \ref{f_sp_g} for simulations checking the accuracy of the results \eqref{evaluecorr} and \eqref{evectorcorr}. Surprisingly, even for small $d_i$, they are already sufficiently precise.  



\subsection{Uniform random sampling}\label{sec unifsample}

Next, we take $S$ to be an $n\times n$ diagonal sampling matrix, where the entries $S_{ii}$ are i.i.d. with 
\be\label{defnSsampling}S_{ii} = \epsilon_i ,\ee
where $\e_i \sim Bernoulli(r/n)$. 
This is closely related to sampling $r$ out of $n$ datapoints uniformly at random, as for large $r$ and $n$ the number sampled concentrates around $r\pm \OO(\sqrt{r(1-r/n)})\approx r$. 
Then we find the following result.

\begin{theorem}[Uniform random sampling]\label{sketchthm2}
Suppose that the assumptions in Theorem \ref{sketchthm1} hold, but $S$ is a random sampling matrix as in \eqref{defnSsampling}. 
We also make the extra assumption that the vectors $w_i$ are delocalized in the following sense:
\be\label{delocal}
\max_{1\le i \le k} \|w_i\|_\infty \to 0 \quad \text{as} \quad n\to \infty. 
\ee
Then the results 
\eqref{outlierevalue}-\eqref{outlierevector2} hold. 
\end{theorem}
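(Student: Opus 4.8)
The plan is to run the general four–step procedure of Section~\ref{sec sketchPCA} and to show that, thanks to the delocalization hypothesis \eqref{delocal}, every relevant quantity coincides with its uniform–orthogonal counterpart, so that the derivation of Section~\ref{sec unifproj} and the conclusions of Theorem~\ref{sketchthm1} carry over essentially verbatim. Here $S=\mathrm{diag}(\epsilon_i)$, so $B=SS^{\top}=\mathrm{diag}(\epsilon_i)$ has $N:=\sum_{i=1}^n\epsilon_i$ eigenvalues equal to $1$ and $n-N$ equal to $0$; by the law of large numbers (or Bernstein's inequality) $N/n=\xi_n+o(1)$ with high probability, and the zero eigenvalues correspond to discarded samples and play no role, so effectively $\pi_B=\delta_1$ with sample–size reduction factor $N/n\to\xi$. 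Plugging $\pi_B=\delta_1$ into \eqref{sc} turns the self–consistent equations into exactly \eqref{self_ortho} (with $\xi_n$ replaced by $N/n$); hence $m_{1c},m_{2c}$ are the same functions as in Section~\ref{sec unifproj}, and in particular \eqref{solv m2c}, \eqref{solv g2c} and the rearrangement of \eqref{g_{2c}} are all available.

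Next I would analyze the master matrix \eqref{defnMx}. Since $S$ is diagonal with $\epsilon_i^2=\epsilon_i$, its lower–right block simplifies:
\[
-x^{-1/2}\,W^{\top}S^{\top}\bigl(1+m_{1c}(x)SS^{\top}\bigr)^{-1}SW
\;=\;-\frac{x^{-1/2}}{1+m_{1c}(x)}\,W^{\top}\mathrm{diag}(\epsilon_i)\,W .
\]
The crucial step — the analogue of \eqref{claim unifV} — is the claim that $W^{\top}\mathrm{diag}(\epsilon_i)W=\xi_n I_k+o(1)$ in probability. This follows from a first/second moment computation: the $(j,l)$ entry has mean $\xi_n(W^{\top}W)_{jl}=\xi_n\delta_{jl}$, and variance at most $\tfrac{r}{n}\sum_i w_{j,i}^2w_{l,i}^2\le\xi_n\,\|w_l\|_\infty^2\,\|w_j\|_2^2\to 0$ by \eqref{delocal} (and $k$ is fixed). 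Using the identity $\xi_n/(1+m_{1c}(x))=-x\,m_{2c}(x)$, obtained by rearranging \eqref{g_{2c}}, the block becomes $x^{1/2}m_{2c}(x)I_k+o(1)$, so the eigenvalue master equation \eqref{eme} reduces precisely to \eqref{master_evalue1}. From here the argument is identical to Section~\ref{sec unifproj}: the candidate outliers are $\theta_i=g_{2c}(-(1+d_i^2)^{-1})=(1+d_i^2)(\gamma_n/d_i^2+\xi_n)$, they actually separate iff $-1/(1+d_i^2)>m_{2c}(\lambda_+)$, i.e.\ $d_i^2>\sqrt{\gamma_n/\xi_n}$, the angle master equation \eqref{ame} produces the overlap \eqref{outlierevector} via the same contour integral, and subthreshold spikes obey \eqref{outlierevalue2}–\eqref{outlierevector2}. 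Letting $n\to\infty$ with $\gamma_n\to\gamma$, $\xi_n\to\xi$ gives \eqref{outlierevalue}–\eqref{outlierevector2}.

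Finally, one must verify that the general procedure applies at all, i.e.\ that the local law underlying the high–probability event $\Omega$ holds for $\wt X=SX$. Here I would condition on the index set $\mathcal I=\{i:\epsilon_i=1\}$: conditionally, $\wt X$ equals, up to the $n-N$ identically–zero rows, the $N\times p$ submatrix $X_{\mathcal I}$ whose entries are still i.i.d.\ with variance $n^{-1}$ and bounded moments, and $\wt X^{\top}\wt X=X^{\top}SX$, $\wt X\wt X^{\top}=S(XX^{\top})S$ are of sample–covariance type, for which the (anisotropic) local laws of \cite{ding2019spiked,yang2019spiked} are available; one then combines these with $N/n=\xi_n+o(1)$. (Equivalently, rescaling the submatrix model by $(N/n)^{-1/2}$ lands one in a standard spiked model with unchanged signal strengths, $N$ samples and aspect ratio $p/N\to\gamma/\xi$, recovering \eqref{know111}–\eqref{know222} — but since the signal vectors are deterministic rather than random/invariant, this route also needs the anisotropic local law.) \textbf{The main obstacle} I anticipate is twofold: (i) upgrading the concentration $W^{\top}\mathrm{diag}(\epsilon_i)W=\xi_n I_k+o(1)$ to a high–probability estimate with a rate compatible with the local–law bounds — this is exactly where \eqref{delocal} is indispensable, and it genuinely fails without it (e.g.\ $w_1=e_1$ makes the $(1,1)$ entry a single Bernoulli variable, far from $\xi_n$); and (ii) handling the degenerate geometry of $SX$ (a positive fraction of exactly zero rows), i.e.\ making sure the framework designed for ``generic'' sketches goes through, which the conditioning–on–$\mathcal I$ reduction is meant to resolve.
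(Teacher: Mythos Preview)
Your proposal is correct and follows essentially the same approach as the paper: reduce the self-consistent equations to \eqref{self_ortho} via $\pi_B=\delta_1$ and $N/n\to\xi$, simplify the lower-right block of the master matrix to $-\frac{x^{-1/2}}{1+m_{1c}(x)}W^{\top}S^2W$, prove $W^{\top}S^2W\to\xi_n I_k$ in probability by a first/second moment computation using \eqref{delocal}, and then invoke the remainder of the argument for Theorem~\ref{sketchthm1} verbatim. One small remark: you need not worry about upgrading the concentration of $W^{\top}\mathrm{diag}(\epsilon_i)W$ to a high-probability rate compatible with the local-law bounds, since the paper only claims convergence in probability (cf.\ the discussion in Section~\ref{sec_disc}); likewise, the local law is stated for general $S$ satisfying \eqref{assm3} and Assumption~\ref{ass:unper}, so the conditioning-on-$\mathcal I$ reduction, while a valid alternative, is not needed.
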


The proof of Theorem \ref{sketchthm2} is a minor modification of the one for Theorem \ref{sketchthm1} in Appendix \ref{pf sketch1}, and we highlight the differences in Section \ref{pfunif}.  See Figure \ref{f_sp_samp} for experimental results supporting these theoretical results.

We remark that the delocalization condition \eqref{delocal} is necessary for uniform random sampling, because it means that for any $w_i$, $1\le i \le k$, it does not contain an entry that is particularly significant. For example, consider the case where $w_i$ only contains one non-zero entry. Then uniform random sampling has a positive probability of missing this entry, so that we have $Sw_i=0$. In this case, the principal components of the sketched matrix $SY$ will deviate greatly from those of $Y$. 
\nc

{In fact, when $w_i$ is not delocalized, it is more natural to use some non-uniform sampling methods.
We refer the reader to Section \ref{sec_disc} for a discussion of some more advanced sampling methods.}

\begin{figure}[htb!]
\centering
\begin{subfigure}{.45\textwidth}
\includegraphics[width=\textwidth]{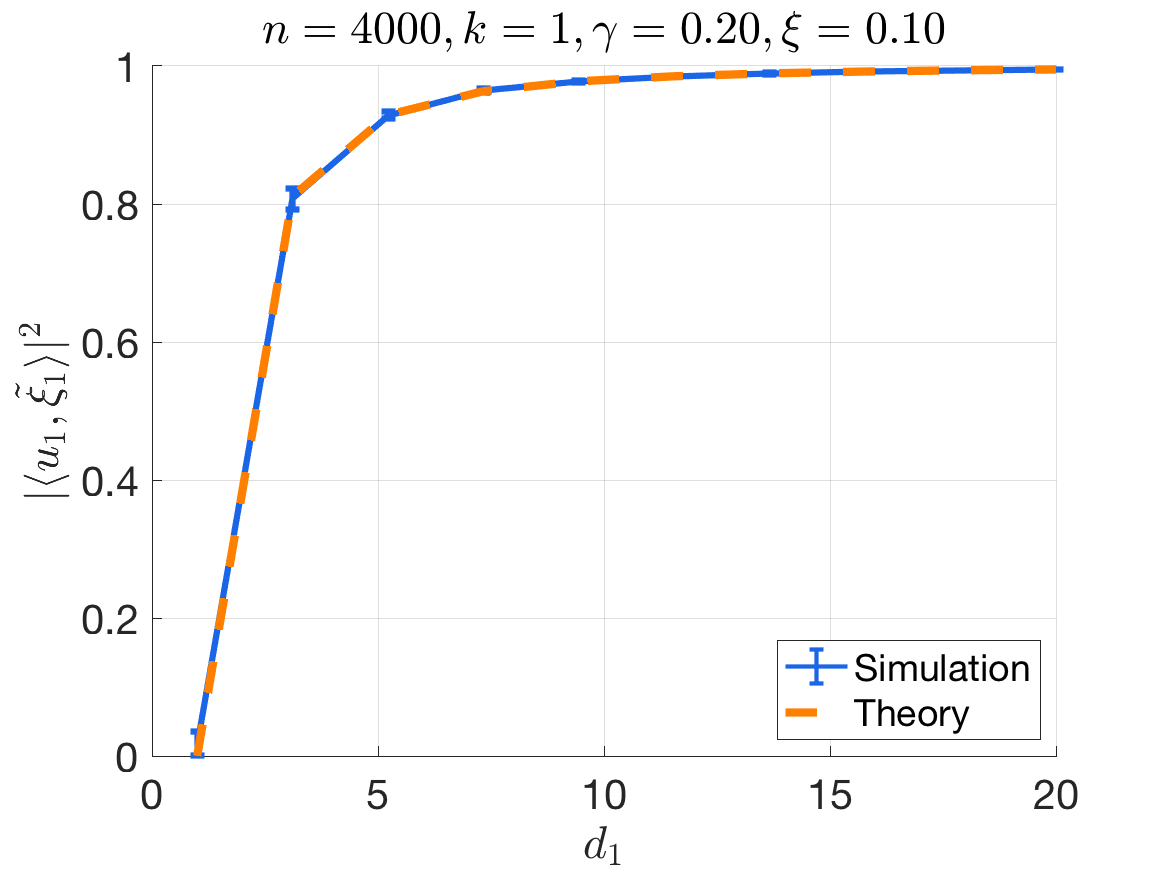}
\end{subfigure}
\begin{subfigure}{.45\textwidth}
\includegraphics[width=\textwidth]{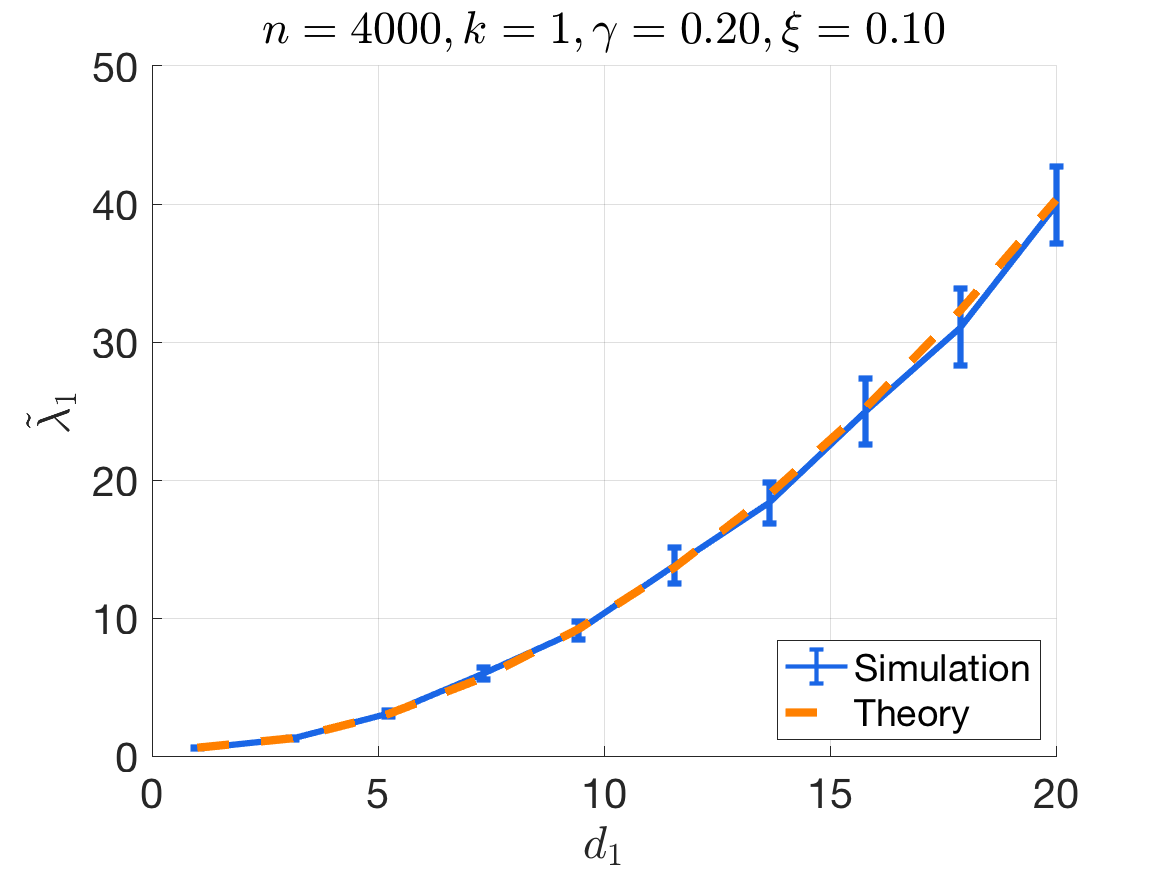}
\end{subfigure}
\caption{Checking the accuracy of the spiked eigenvalue and eigenvector  formulas for uniform random sampling. We follow the protocol from the experiment in Figure \ref{f_sp_or}.} 
\label{f_sp_samp}
\end{figure}

{\bf Gaussian data.} Recall from Section \ref{Gaussian} that we can readily get the same results for Gaussian noise from known spiked model results if we can show $W^\top S^\top SW \approx r/n \cdot I_k$. Now $W^\top S^\top SW = \sum_i \ep_i w[i]^\top w[i] $, where $w[i]$ are the rows of $W$. The claim follows by standard matrix concentration results \citep[e.g.,][etc]{vershynin2010introduction,tropp2012user}. Since this is not our main point, we will not elaborate it in more detail.

\subsection{Randomized Hadamard sampling}\label{sec hadamard}

We consider the subsampled randomized Hadamard transform. Define the $n\times n$ subsampled randomized  
Hadamard matrix as
\be\label{defnSHard} S=\frac1{\sqrt{n}}B_r HD,\ee
where $B_r $ is a diagonal sampling matrix with i.i.d. $Bernoulli(r/n)$ diagonal entries, $H$ is the \emph{Walsh-Hadamard matrix} and $D$ is a diagonal matrix of i.i.d. sign random variables, equal to $\pm 1$ with probability 1/2. 
Recall that the Walsh-Hadamard matrix is defined recursively by
\begin{align*}
H_n=\left(\begin{array}{cc}H_{n/2} & H_{n/2}\\
H_{n/2} & -H_{n/2}\end{array}
\right),
\end{align*}
with $H_1=(1)$. This requires $n$ to be a power of 2. For $n$ that is not a power of 2, one may take a random subsample of rows and columns of $H_{n'}$, for some $n'>n$ that is a power of 2.

For $S$ defined in \eqref{defnSHard}, we denote the action of the Walsh-Hadamard matrix $H$ and the signflip matrix $D$ on a vector $w_i$ as
$$z_i : =\frac1{\sqrt{n}} HD w_i, \quad 1\le i \le k.$$
Note that each entry $z_i(l)$ is of the form
$$z_i(l) = \sum_{j=1} a^{(l)}_j w_i(j),$$
where $a_j^{(l)}= \pm n^{-1/2}$ is chosen independently and uniformly. Then a Chernoff type bound gives that the $z$ vectors are delocalized, i.e., 
\be\label{chernoff}
\|z_i\|_\infty \le C\frac{\log n}{\sqrt n}
\ee
with high probability. Moreover, $\{z_i\}$ are orthonormal since $HD$ is orthogonal. Then the result for uniform random sampling can be applied here without the delocalization assumption in \eqref{delocal}, because \eqref{chernoff} already gives the desired delocalization for $z_i$-s after acting $HD$ on $w_i$-s.


The argument above clearly applies more broadly to general \emph{Hadamard matrices}.
An $n\times n$ possibly complex-valued matrix $H$ is called a Hadamard matrix if $H/\sqrt{n}$ is orthogonal and the absolute values of its entries are unity, $|H_{ij}|=1$ for $i,j=1,\ldots,n$. 
The Walsh-Hadamard matrix above clearly has these properties. Another construction is the discrete Fourier transform (DFT) matrix with the $(u,v)$-th entry equal to 
$H_{uv}= e^{-2\pi i(u-1)(v-1)/n}$.
Multiplying this matrix from the right by $X$ is equivalent to applying the discrete Fourier transform to each column of $X$, up to scaling. The time complexity for the matrix-matrix multiplication for both transforms is $\OO(np\log n)$ using the Fast Fourier Transform.

To summarize, we have the following theorem as a corollary of Theorem \ref{sketchthm2} and the delocalization property in \eqref{chernoff}.




\begin{theorem}[Randomized Hadamard sampling]\label{sketchthm3}
Suppose that the assumptions in Theorem \ref{sketchthm1} hold except that $S$ is now a random sampling matrix as in \eqref{defnSHard}, where $H$ is a general $n\times n$ Hadamard matrix. Then the results 
\eqref{outlierevalue}-\eqref{outlierevector2} hold. 
\end{theorem}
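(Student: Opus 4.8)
The plan is to reduce Theorem~\ref{sketchthm3} to the \emph{proof} of Theorem~\ref{sketchthm2} (equivalently, of Theorem~\ref{sketchthm1}) by absorbing the Hadamard and sign-flip factors of $S$ into an orthogonal change of basis. Write $S=\frac1{\sqrt n}B_rHD=B_rQ$ with $Q:=\frac1{\sqrt n}HD$. Since $H/\sqrt n$ is orthogonal (or unitary, for complex Hadamard matrices such as the DFT matrix, in which case one uses conjugate transposes throughout) and $DD=I_n$, the matrix $Q$ is an $n\times n$ orthogonal (unitary) matrix, deterministic once $D$ is fixed; and $Q$ has maximally delocalized entries, $|Q_{lj}|=n^{-1/2}$. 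Conditioning on $D$, the sketch of $Y=\sum_{i=1}^kd_iw_iu_i^{\top}+X$ becomes $SY=B_r\bar Y$, the uniform sampling sketch \eqref{defnSsampling} of the rotated data $\bar Y:=QY=\sum_{i=1}^kd_iz_iu_i^{\top}+\bar X$, where $z_i:=Qw_i$ and $\bar X:=QX$. Two features make this helpful: the $z_i$ are again orthonormal (as $Q$ is orthogonal and the $w_i$ orthonormal), and by the Hoeffding/Chernoff bound \eqref{chernoff} they are \emph{automatically} delocalized, $\max_i\|z_i\|_\infty\le C\log n/\sqrt n$ on an event $\mathcal E$ of probability $1-\oo(1)$, for \emph{any} orthonormal $w_i$. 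Thus the delocalization hypothesis \eqref{delocal} of Theorem~\ref{sketchthm2} is supplied for free by the randomized Hadamard step, now applied to the $z_i$ in place of the $w_i$.

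Next I would rerun the argument of Appendix~\ref{pf sketch1} essentially verbatim, with $z_i$ in the role of $w_i$ and $\bar X=QX$ in the role of the noise. Conditionally on $D$ and $B_r$, the sketch $S=B_rQ$ is deterministic with orthonormal rows ($SS^{\top}=B_rQQ^{\top}B_r=B_r$, which equals $I_{r'}$ on its $r'\approx r$ nonzero coordinates), so (i) its ESD $\pi_B=\delta_1$ feeds the self-consistent system \eqref{sc} to yield exactly the pair $(m_{1c},m_{2c})$ of \eqref{self_ortho} --- the same functions as in the uniform orthogonal case --- and (ii) the local laws of Section~\ref{resll} (from \cite{ding2019spiked,yang2019spiked}) that the proof of Theorem~\ref{sketchthm1} invokes apply to $\wt X=SX$ for this structured $S$, just as they did for a Haar-distributed partial orthonormal $S$. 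The one input that genuinely uses the sampling randomness is the analogue of \eqref{claim unifV}: with $Z:=[z_1,\dots,z_k]$, $V^{\top}V=W^{\top}S^{\top}SW=Z^{\top}B_rZ\to_p\xi_nI_k$, which is a one-line variance bound, since $(Z^{\top}B_rZ)_{ij}=\sum_l\epsilon_lz_i(l)z_j(l)$ with $\epsilon_l$ i.i.d.\ $\mathrm{Bernoulli}(r/n)$ has conditional mean $\xi_n(Z^{\top}Z)_{ij}=\xi_n\delta_{ij}$ and conditional variance at most $\sum_l\tfrac rn|z_i(l)|^2|z_j(l)|^2\le\|z_j\|_\infty^2\le C^2\log^2n/n\to0$ on $\mathcal E$.

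With these facts the rest is identical to Section~\ref{sec unifproj}: $V^{\top}V\approx\xi_nI_k$ forces the lower-right block of $M(x)$ in \eqref{defnMx} to collapse to $-x^{-1/2}(1+m_{1c}(x))^{-1}\xi_nI_k+\oo(1)$ (equivalently $x^{1/2}m_{2c}(x)I_k+\oo(1)$ by \eqref{self_ortho}), so $\det M(x)=0$ factors into the $k$ scalar equations \eqref{getthetai}, giving \eqref{outlierevalue} and \eqref{outlierevalue2}; and the contour integral \eqref{ame} evaluates as in \eqref{cos_outer}, giving \eqref{outlierevector} and \eqref{outlierevector2}. The argument uses nothing about $H$ beyond $H/\sqrt n$ being orthogonal (unitary) --- needed so the $z_i$ are orthonormal and $SS^{\top}=B_r$ --- and $|H_{ij}|=1$ --- needed only for the delocalization \eqref{chernoff}; hence the conclusion holds for every $n\times n$ Hadamard matrix, in particular the Walsh--Hadamard and DFT constructions.

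The main obstacle I anticipate is item~(ii): making it rigorous that the local-law machinery behind the proof of Theorem~\ref{sketchthm1}, which was set up for a Haar-distributed partial orthonormal sketch, applies to the specific structured, non-Haar sketch $S=\frac1{\sqrt n}B_rHD$ --- equivalently, that the ``rotate the i.i.d.\ noise, then subsample'' matrix $B_rQX$ is within the scope of the separable-covariance local laws, even though $QX$ no longer has independent entries. This is essentially known (isotropic local laws are stable under deterministic orthogonal conjugation, and subsampling is a standard deformation) and is aided by $S$ having maximally delocalized rows; but it is the one place where the reduction to Theorem~\ref{sketchthm2} is not literal, so this verification needs care. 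Everything else is a routine transcription of Appendix~\ref{pf sketch1}.
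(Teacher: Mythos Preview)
Your approach is correct and matches the paper's: just before Theorem~\ref{sketchthm3} the paper says it is ``a corollary of Theorem~\ref{sketchthm2} and the delocalization property in \eqref{chernoff}'', i.e., rotate by $Q=\tfrac1{\sqrt n}HD$, observe the $z_i=Qw_i$ are orthonormal and automatically delocalized, and rerun the uniform-sampling argument verbatim with $z_i$ in place of $w_i$.

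The obstacle you flag under (ii) is not an obstacle, and your framing of it is slightly off. The local laws (Theorems~\ref{lem_localout} and~\ref{lem_localin}, imported from \cite{yang2019spiked,ding2019spiked}) are stated for $\wt X=SX\Sigma^{1/2}$ with \emph{any} sketching matrix $S$ independent of $X$ whose $B=SS^{\top}$ satisfies \eqref{assm3} and Assumption~\ref{ass:unper}; the Haar distribution on $S$ plays no role in those theorems and enters the proof of Theorem~\ref{sketchthm1} only through \eqref{claim unifV}. Hence there is no need to reinterpret $SX$ as ``$B_r$ applied to dependent noise $QX$'': keep $X$ (with its independent entries) as the noise, take the full $S=B_rQ$ as the sketch, condition on $D$ and $B_r$, note $B=SS^{\top}=B_r$ is diagonal $0$--$1$ exactly as for uniform sampling, and the local law applies directly. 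The reduction to Theorem~\ref{sketchthm2} is then literal, and the only model-specific input is the concentration $Z^{\top}B_rZ\to_p\xi_nI_k$ you already verified.
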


See Figure \ref{f_sp_sampH} for experimental results supporting this theorem.


\begin{figure}[htb!]
\centering
\begin{subfigure}{.45\textwidth}
\includegraphics[width=\textwidth]{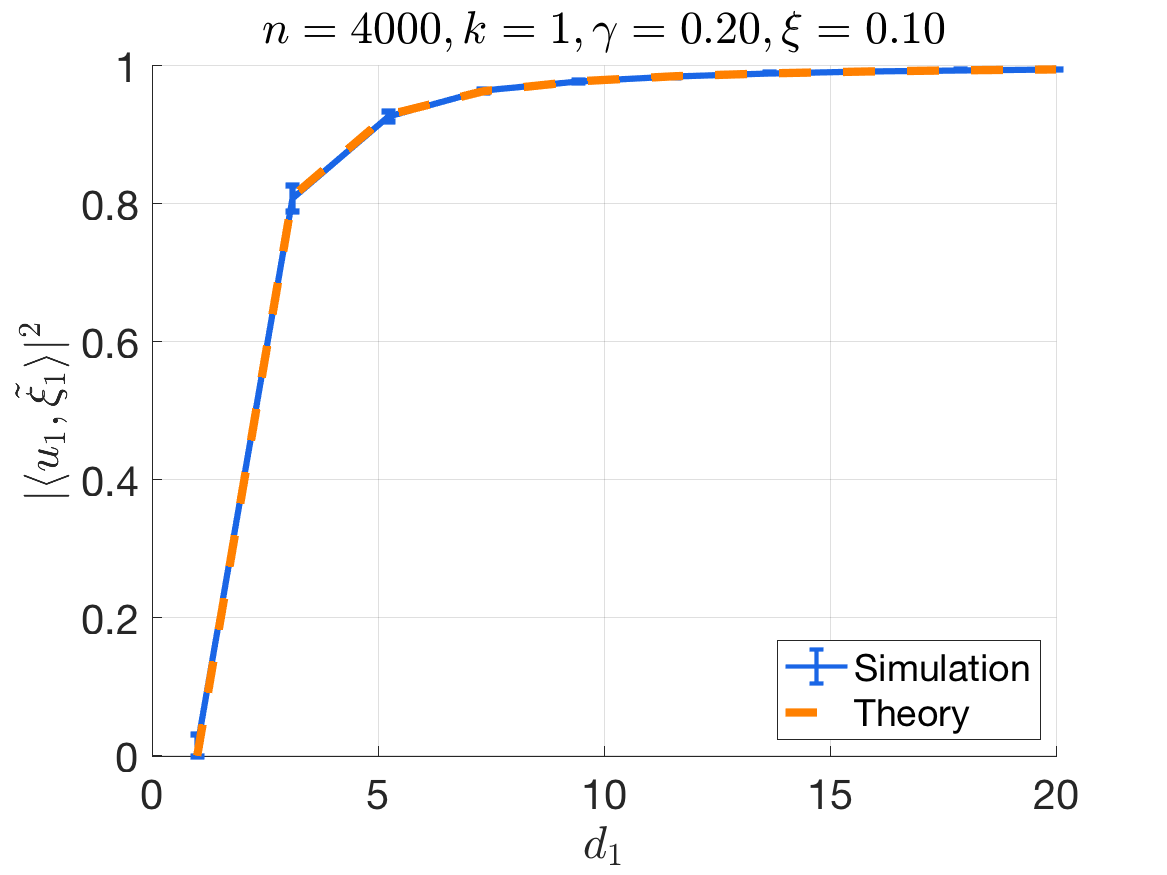}
\end{subfigure}
\begin{subfigure}{.45\textwidth}
\includegraphics[width=\textwidth]{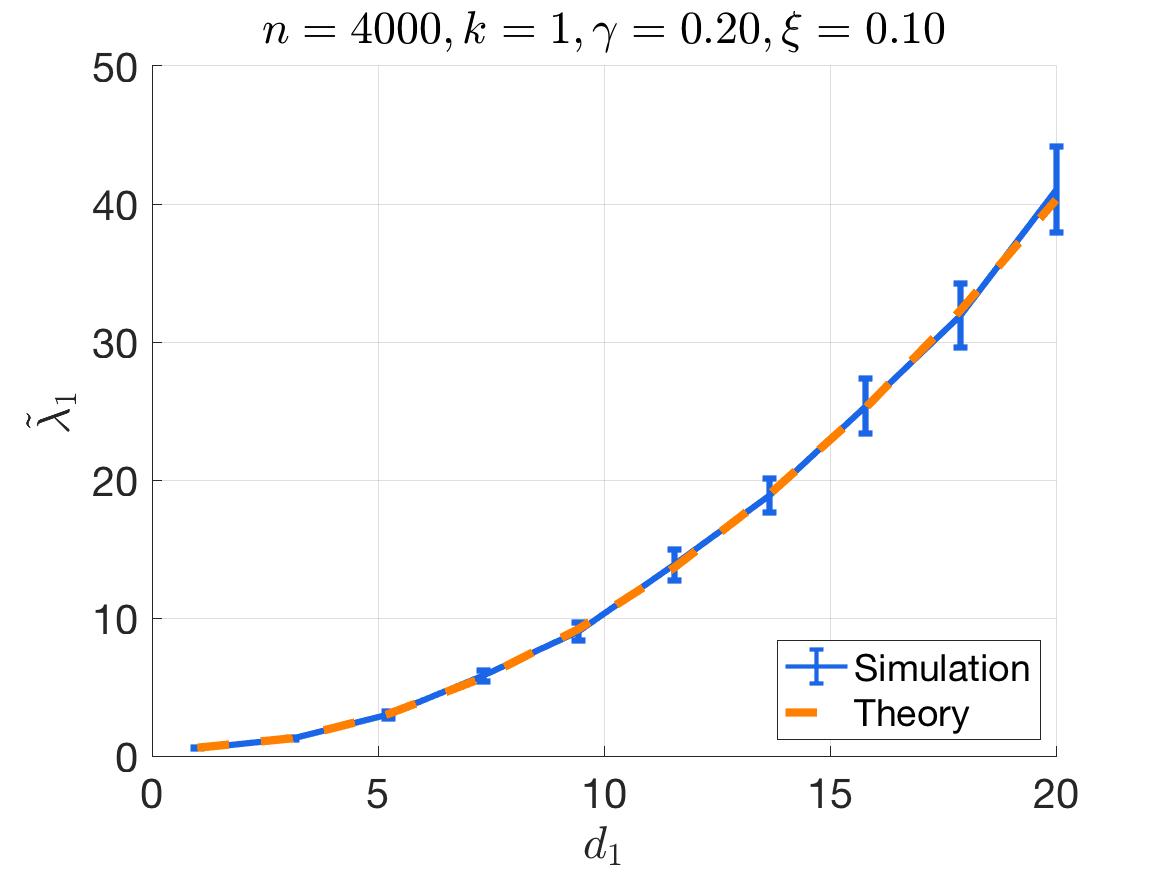}
\end{subfigure}
\caption{Checking the accuracy of the spiked eigenvalue and eigenvector  formulas for the subsampled randomized Hadamard transform. We follow the protocol from the experiment in Figure \ref{f_sp_or}.} 
\label{f_sp_sampH}
\end{figure}

{\bf Gaussian data.} From Section \ref{Gaussian}, we get the same results for Gaussian noise if we can show $W^\top S^\top SW \approx r/n \cdot I_k$. This follows from the same argument as for uniform sampling.

\subsection{CountSketch}\label{sec count}
Another popular sketching method is CountSketch \cite{charikar2002finding}, 	also known as Clarkson-Woodruff sketch \cite{clarkson2017low}.  Here $S$ is an $r\times n$ matrix that has a single randomly chosen non-zero entry $S_{h(j),j}$ in each column $j$, for a random mapping $h:\{1,\ldots,n\}\to \{1,\ldots, r\}$. Moreover, each $S_{h(j),j}$ is a Rademacher random variable, i.e., $S_{h(j),j} = \pm 1$ with probability 1/2. In other words, we have
\be\label{defnScount}
S_{ij} = \delta_{i h(j)} a_j,
\ee
where $a_j$ are i.i.d. Rademacher random variables that are independent of $h$. Intuitively, $S$ maps the vector $x$ to a random partition of its entries (mapping into random buckets), and takes randomly signed sums of the entries in each partition (or in each bucket).

	When applied to an $n\times p$ matrix $X$, $SX$ computes an $r\times p$ matrix, such that each row is a randomly signed sum of some rows of $X$. This is similar to random sampling. However, the advantage is that no rows of $X$ are ``left out", and thus we automatically get a type of adaptive leverage score sampling, see e.g., \cite{clarkson2017low}. The only constraint is that we need $r$ to be large enough so that we avoid collisions of rows with large leverage score.

In our case, it turns out it is advantageous to study a slightly modified ``normalized" CountSketch. To see, this, we denote 
	$$SS^{\top} = \diag(c_1,\ldots,c_r),$$
	that is, $c_i$ is the number of coordinates from $1,\ldots, n$ that map into the $i$-th bucket. Then $(c_1,\ldots,c_r)$ has the exact joint distribution
	$$(c_1,\ldots,c_r)\sim Multinomial(n; 1/r,\ldots,1/r).$$
	Each $c_i$ has a marginal distribution equal to $Binomial(n,1/r)$, with mean $n/r$, and variance $\frac{n}{r}(1-\frac1r)$. As $n,r\to\infty$, $r/n\to\xi>0$, this tends to a Poisson distribution with constant rate.  Thus we know that for any constant $C>0$,
	$$ \mathbb P(\|SS^{\top}\| \ge C)\ge c$$
	for some constant $c>0$ depending on $C$. Hence \emph{the operator norm of the sketching matrix is unbounded} (i.e., the first bound in \eqref{assm3} fails). This is a problem because, theoretically, the spikes may be ``covered up" by the noise eigenvalues. {  Moreover, the spectral distribution of $SS^\top$ is spread over many points, but we will see that it is better to have a more concentred spectral distribution (cf. Remark \ref{rem extra}).} Hence we propose a simple normalization, in which we divide each bucket by the square root of the number of entries mapped into it. Formally, we define $\wh S:=(SS^{\top})^{-1/2}S$, such that $\wh S\wh S^{\top}=I_r$. Then we shall use $\wh S$ as our sketching matrix.  With $\xi_n$ converging to a constant, there is a significant number of zeros among the counts. Hence $(SS^{\top})^{-1/2}$ should be understood as a pseudo-inverse. Alternatively, we can discard the buckets of size zero at the beginning.
	
Experiments show that the simple normalized version of CountSketch works similarly to uniform projection. As discussed above, we normalize $SX$ as $B^{-1/2}SX$, where $B=SS^{\top} $ is the matrix of counts mapped into each bucket. In the regime where $n/r$ is a constant, 
the probability of getting a zero count is approximately 
$$\mathbb P(Poisson(1/\xi_n)=0)=\exp(-1/\xi_n)= \exp(-n/r).$$ 
We discard those rows. {  From Figures \ref{f_sp_cs}, \ref{fig: count compare}, \ref{fig: check k=5 cos} and \ref{fig: compare all}, we find that the value of $|\langle u_i, \wt{\bxi}_i\rangle|^2$ for normalized CountSketch is larger than the one for CountSketch. This shows that normalized CountSketch is more accurate than the original CountSketch, in the sense that the principal components of the sketched matrix approximate the principal components of the signal matrix in a better way. The reason is that CountSketch has some large buckets, and the sum of the rows mapped into them can sometimes dominate the eigenvectors, leading to a loss of precision. In Remark \ref{rem extra} below, we will also give another heuristic explanation. (Note that in Figure \ref{f_sp_cs}, normalized CountSketch and the original one have similar accuracy. This is because $r/n=0.1$ is small and hence $SS^\top$ concentrates well around $\frac nr I_r$.) }

\begin{figure}[h!]
\centering
\begin{subfigure}{.45\textwidth}
\includegraphics[width=\textwidth]{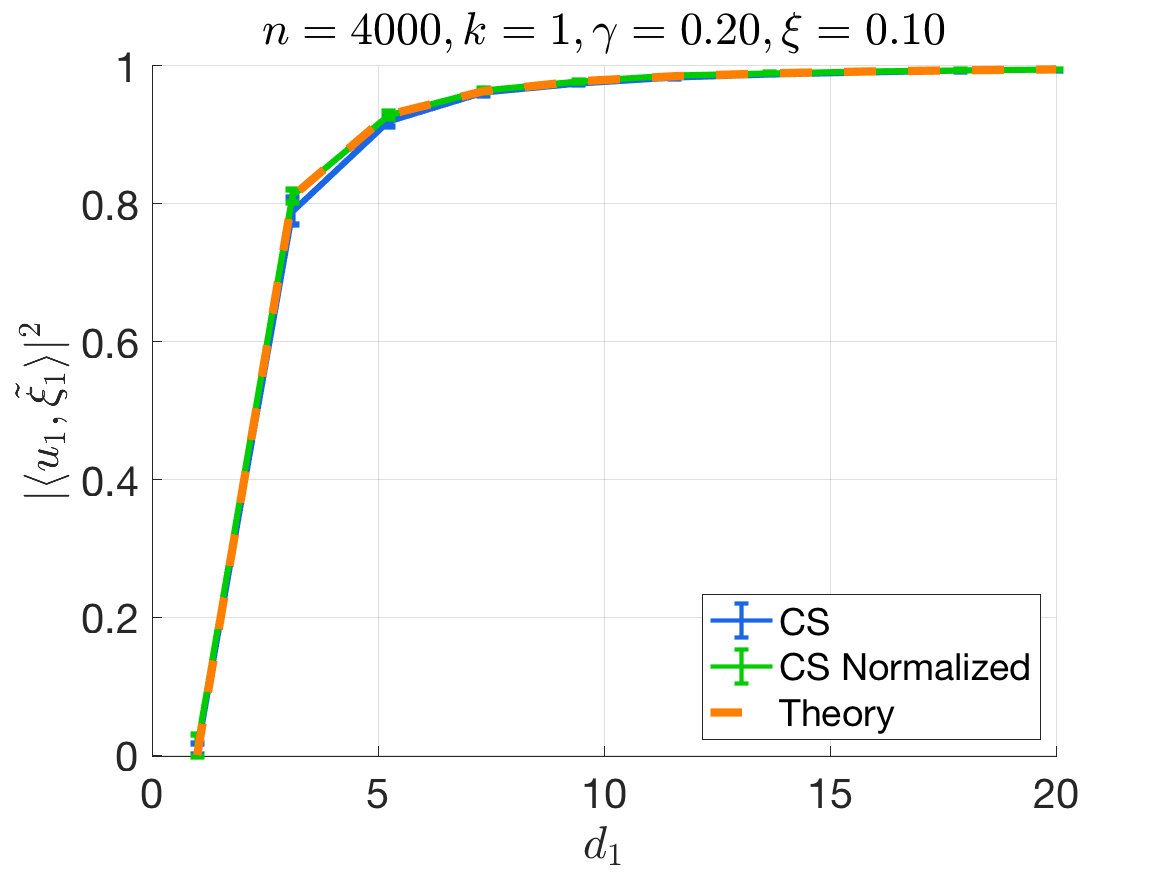}
\end{subfigure}
\begin{subfigure}{.45\textwidth}
\includegraphics[width=\textwidth]{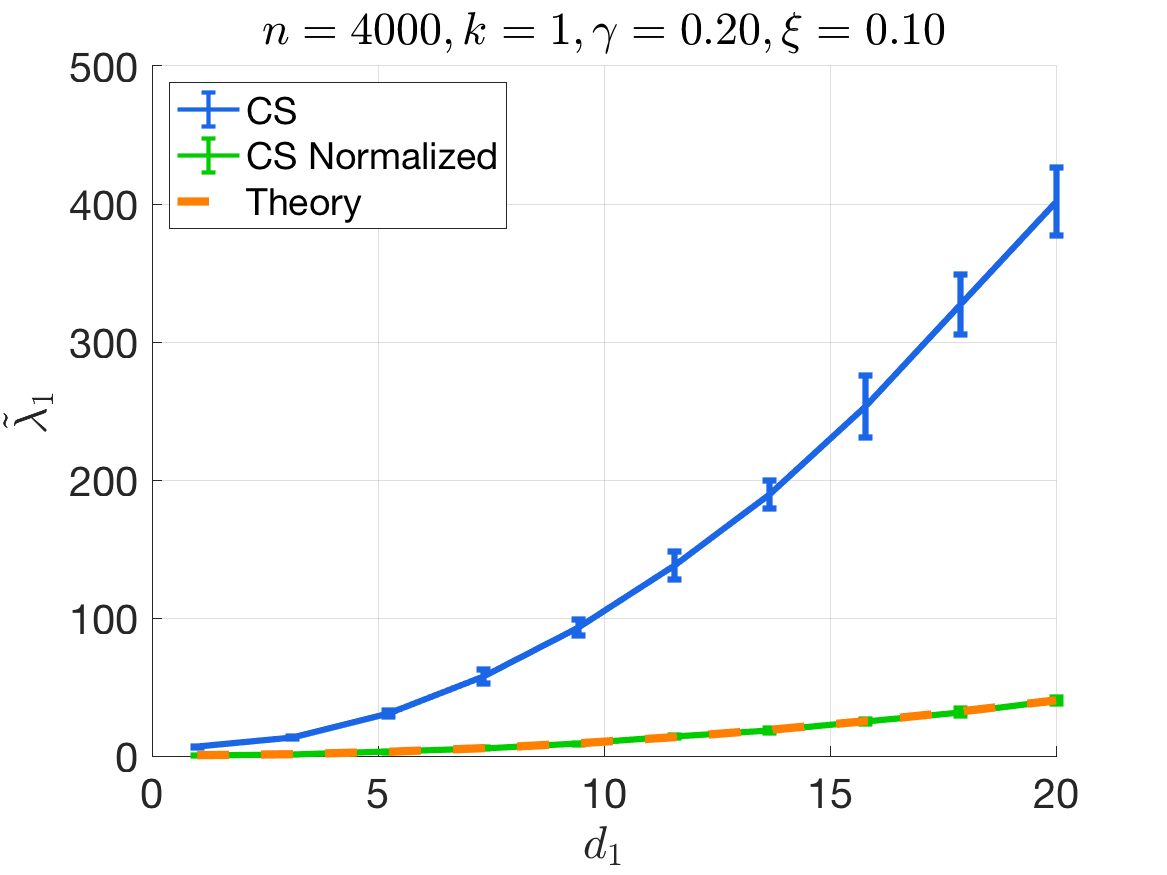}
\end{subfigure}
\caption{Checking the accuracy of the spiked eigenvalue and eigenvector formulas for CountSketch. We follow the protocol from the experiment in Figure \ref{f_sp_or}. 
}
\label{f_sp_cs}
\end{figure}

{  In Figure \ref{fig: count compare}, we compare the accuracy of CountSketch and normalized CountSketch where $p=500$, $n\in\{20, 50, 100, 500\}$, and $\xi=0.2$. We see that normalized CountSketch is more accurate than the unnormalized version, especially for small $n$ and large $p$. However, the standard errors overlap, so one must exercise some caution when reading these figures. This simulation also shows that our theoretical formula is accurate even when $n,\,p$, and $r$ are relatively small.
}
\begin{figure}[h!]
\centering
\begin{subfigure}{.45\textwidth}
\includegraphics[width=\textwidth]{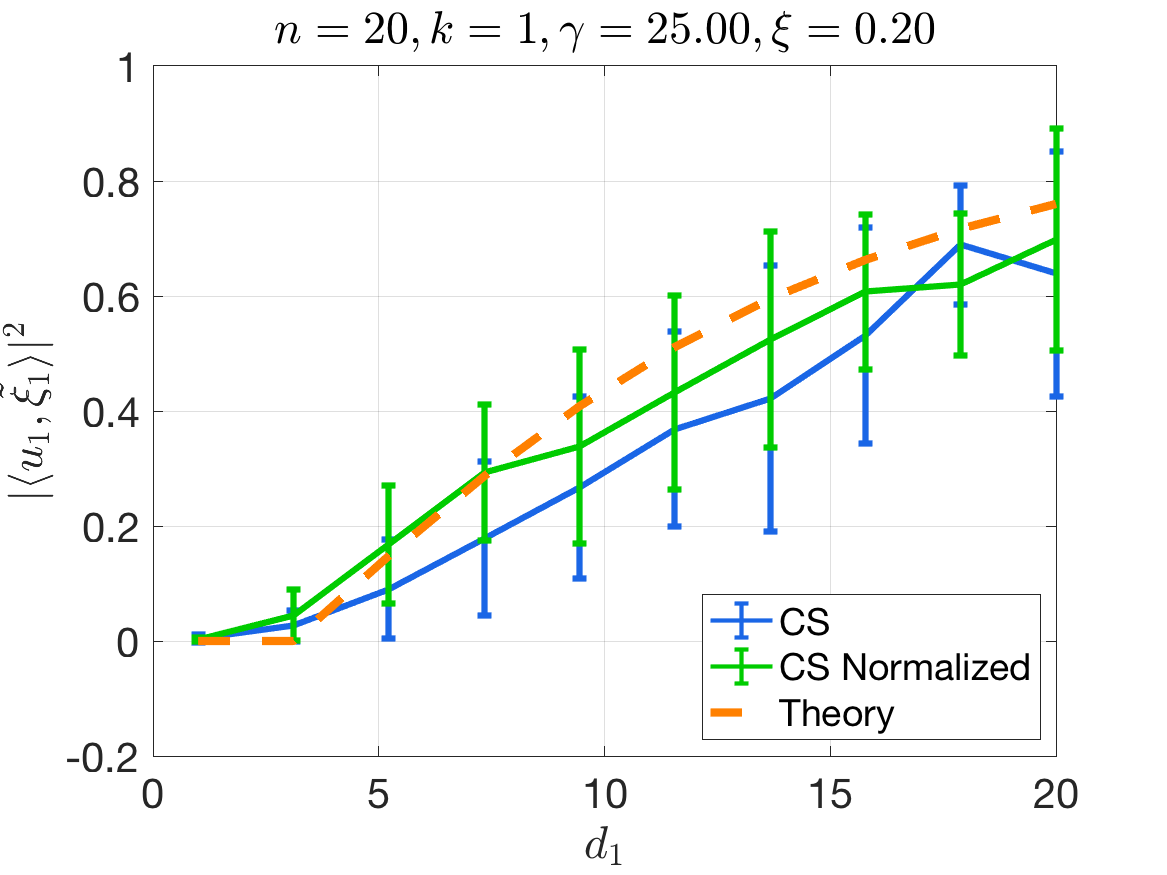}
\end{subfigure}
\begin{subfigure}{.45\textwidth}
\includegraphics[width=\textwidth]{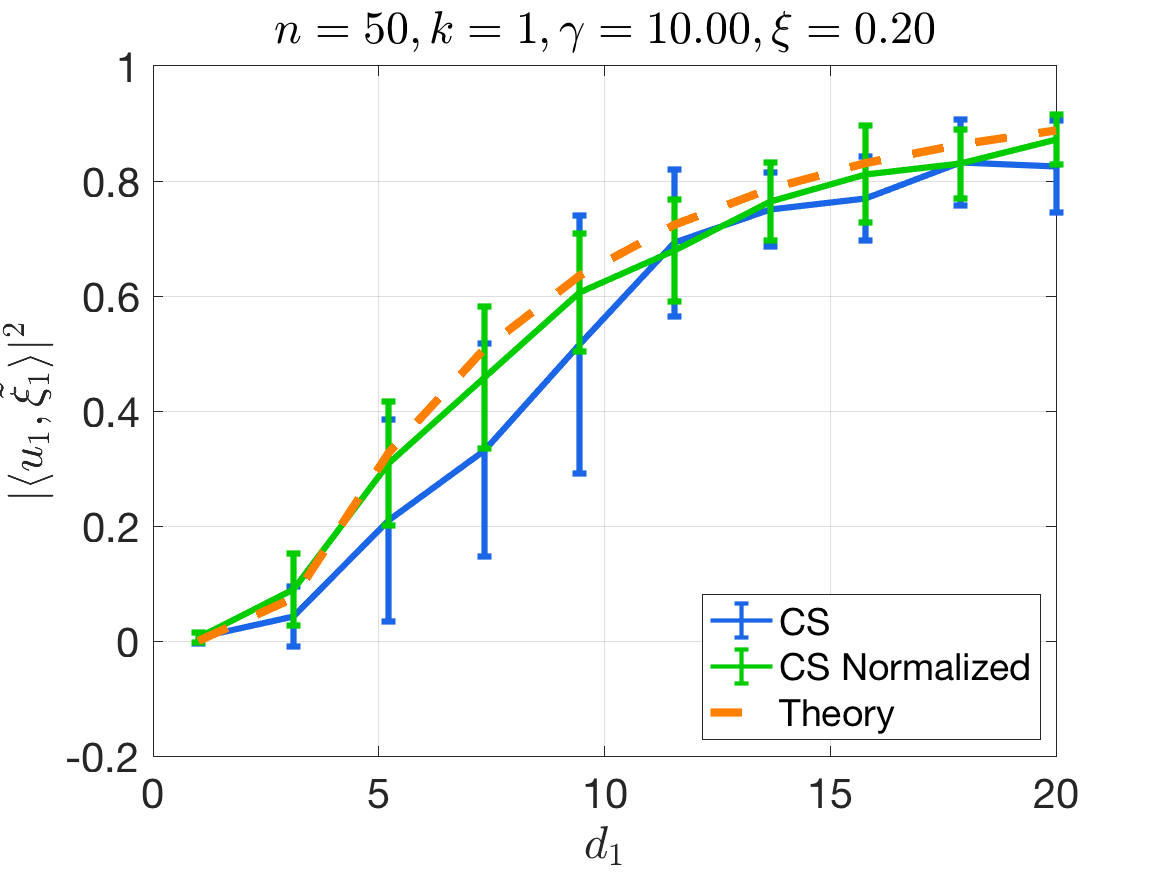}
\end{subfigure}
\begin{subfigure}{.45\textwidth}
\includegraphics[width=\textwidth]{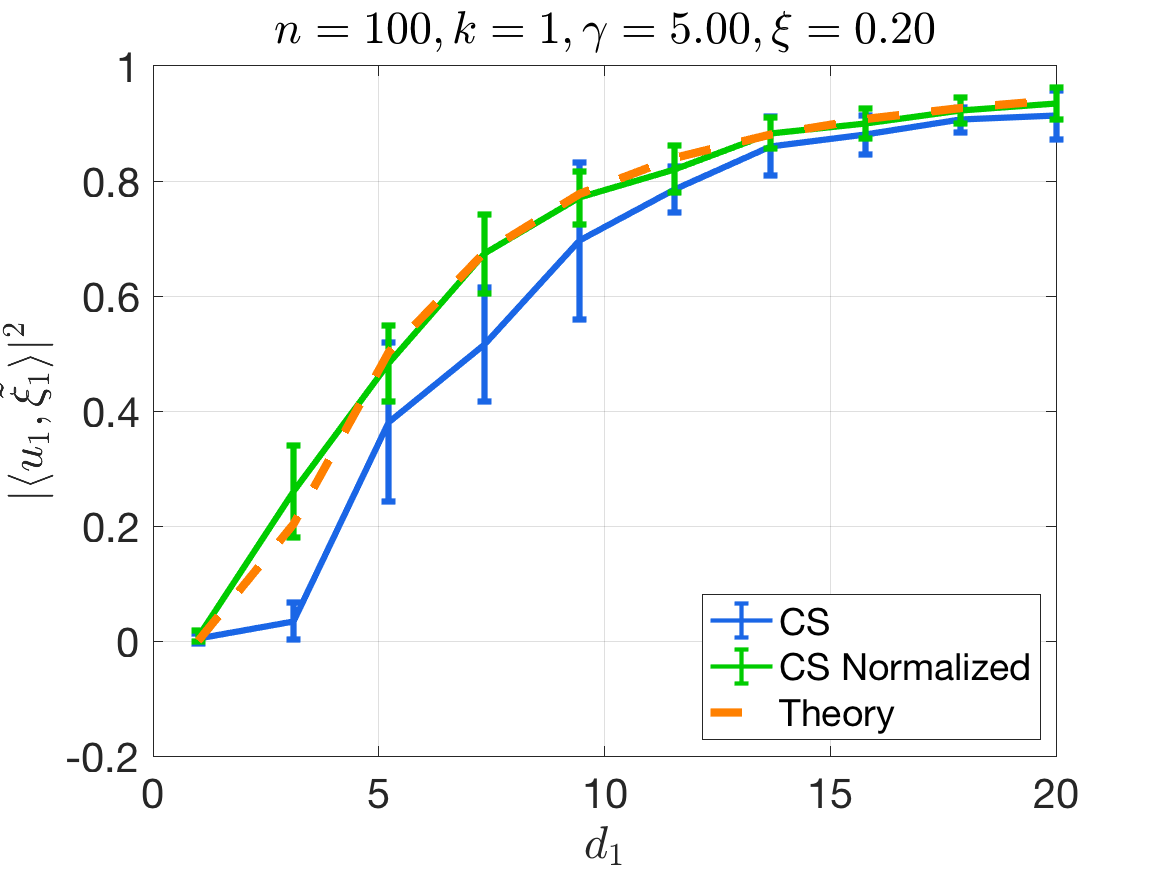}
\end{subfigure}
\begin{subfigure}{.45\textwidth}
\includegraphics[width=\textwidth]{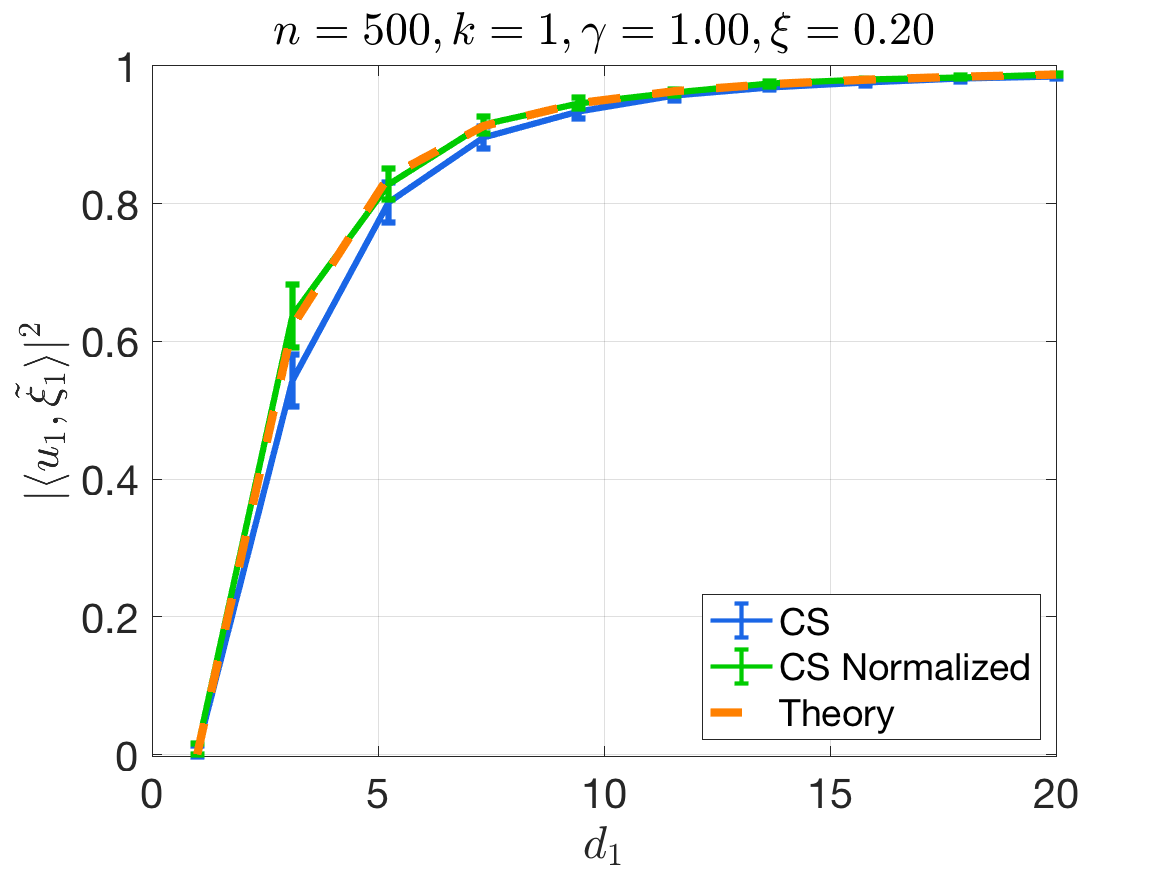}
\end{subfigure}
\caption{Comparing CountSketch and normalized CountSketch accuracy. We take $n\in\{20, 50, 100, 500\}$, $p=500$, and $r/n=0.2$. The setting is the same as in Figure \ref{f_sp_or}.}
\label{fig: count compare}
\end{figure}

	 If $n\gg r$, say $n\ge Cr\log n$ for some large constant $C>0$, then each $c_i$ concentrates around $n/r$. In much of the literature on sketching, this is a common assumption \cite{mahoney2011randomized,woodruff2014sketching}. In this case, we have
	 $$\wh S \approx \xi_n S,$$
	 and hence $\wh S$ is simply a rescaling of the CountSketch matrix $S$. 

We collect the above results for CountSketch below. The proof is presented in Section \ref{pfunif}. 

\begin{theorem}[CountSketch]\label{sketchthm5}
Suppose that the assumptions in Theorem \ref{sketchthm1} hold, except we assume the delocalization condition \eqref{delocal} and that $S$ is a random sampling matrix as in \eqref{defnScount}. Then \eqref{outlierevalue}-\eqref{outlierevector2} hold if we replace $\xi$ with $\wh \xi = \xi \left[1-\exp(-1/\xi)\right]$.
\end{theorem}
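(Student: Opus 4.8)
The plan is to reduce the statement to the general procedure of Section~\ref{sec sketchPCA} (steps 1--4), following the proof of Theorem~\ref{sketchthm1}, after replacing the raw CountSketch $S$ of \eqref{defnScount} by its normalized version $\wh S := (SS^{\top})^{-1/2}S$ from Section~\ref{sec count}. This replacement is essential: $\|SS^{\top}\|$ is unbounded (the bucket counts have a Poisson tail), so the first bound in \eqref{assm3} fails for $S$, whereas $\wh S\wh S^{\top}$ is a coordinate projection of operator norm one and the local law of Section~\ref{resll} applies to $\wh X := \wh S X$. Accordingly, set $B := \wh S\wh S^{\top} = \diag(\mathbf{1}\{c_1\ge1\},\dots,\mathbf{1}\{c_r\ge1\})$ and let $\wh r := \#\{i : c_i\ge1\}$ be the number of non-empty buckets, so that $\pi_B = \frac{\wh r}{r}\delta_1 + \frac{r-\wh r}{r}\delta_0$.

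The first step is the count $\wh r$. Since $(c_1,\dots,c_r)\sim\mathrm{Multinomial}(n;1/r,\dots,1/r)$, each $\mathbf{1}\{c_i\ge1\}$ has mean $1-(1-1/r)^n\to 1-\exp(-1/\xi)$, and either a second-moment bound on $\sum_i\mathbf{1}\{c_i\ge1\}$ or McDiarmid's inequality for the bounded-difference function $h\mapsto\wh r$ gives $\wh r/r\to 1-\exp(-1/\xi)$ in probability. Hence $\wh\xi_n := \wh r/n \to \wh\xi = \xi[1-\exp(-1/\xi)]$, and in probability $\pi_B$ converges to $(1-\exp(-1/\xi))\delta_1 + \exp(-1/\xi)\delta_0$. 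Plugging this two-point $\pi_B$ into the self-consistent system \eqref{sc}, the integral $\int x\,(-z(1+xm_{1c}))^{-1}\,\pi_B(\dd x)$ collapses to $\frac{\wh r}{r}\cdot(-z(1+m_{1c}))^{-1}$, so \eqref{sc} becomes \emph{exactly} the uniform-orthogonal system \eqref{self_ortho} with $\xi_n$ replaced by $\wh\xi_n$; consequently $m_{2c}$, its inverse $g_{2c}$, and the band edges $\lambda_\pm$ are given by \eqref{solv m2c}--\eqref{solv g2c} with $\xi\mapsto\wh\xi$. In the master matrix \eqref{defnMx}, the lower-right block equals $-x^{-1/2}W^{\top}\wh S^{\top}(1+m_{1c}\wh S\wh S^{\top})^{-1}\wh S W = -x^{-1/2}(1+m_{1c})^{-1}(\wh S W)^{\top}(\wh S W)$, since $\wh S\wh S^{\top}$ is the identity on the range of $\wh S$.

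The heart of the argument is therefore the estimate
\[
(\wh S W)^{\top}(\wh S W) = \wh\xi_n\,I_k + \oo(1)\quad\text{in probability},
\]
which plays the role of \eqref{claim unifV} and is where the delocalization hypothesis \eqref{delocal} enters. Writing $P := \wh S^{\top}\wh S$, which is block-diagonal over buckets with block $b$ equal to the rank-one projection $c_b^{-1}a_{(b)}a_{(b)}^{\top}$ on the coordinates sent to bucket $b$ (here $a_{(b)}$ collects the Rademacher signs of those coordinates), one has $W^{\top}PW = \sum_{b:\,c_b\ge1} c_b^{-1}(W_{(b)}^{\top}a_{(b)})(W_{(b)}^{\top}a_{(b)})^{\top}$ with $W_{(b)}$ the rows of $W$ indexed by bucket $b$. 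Averaging first over the signs and then over the assignment $h$: the sign-average of block $b$ is $c_b^{-1}\sum_{j:\,h(j)=b}w[j]w[j]^{\top}$, and since $\{j:h(j)=b\}$ is a uniformly random set of size $c_b$, this equals $\frac{1}{n}I_k$ up to an error controlled by $\max_i\|w_i\|_\infty$; summing over the $\wh r$ non-empty buckets yields the leading term $\wh\xi_n I_k$, while the fluctuations (over both $a$ and $h$) are bounded in variance by $O(\max_i\|w_i\|_\infty^2)=\oo(1)$ under \eqref{delocal}, exactly as for the uniform-sampling estimate $W^{\top}S^{\top}SW\approx(r/n)I_k$ used in Theorem~\ref{sketchthm2}. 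I expect this two-stage concentration, together with verifying that it simultaneously controls the convergence of $\pi_B$ uniformly enough, to be the main obstacle; the rest is bookkeeping.

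It remains to feed these ingredients into the general procedure. Conditionally on $(h,a)$ the matrix $\wh S$ is fixed and $\wh X = \wh S X$ has independent, mean-zero entries of variance $n^{-1}$ with uniformly bounded moments (entry $(b,\ell)$ is $c_b^{-1/2}\sum_{j:\,h(j)=b}a_j x_{j\ell}$, whose $q$-th moment is $\OO(n^{-q/2})$ by a Rosenthal-type bound since $c_b\ge1$), so the local law of Section~\ref{resll} holds on the event $\Omega$ of \eqref{aniso_outstrong}; integrating out $(h,a)$ preserves these as high-probability statements by the concentration above. With $\pi_B$, the self-consistent equations, and $(\wh S W)^{\top}(\wh S W)$ all identified as in the uniform-orthogonal case but with $\xi$ replaced by $\wh\xi$, the eigenvalue master equation \eqref{eme} and the angle master equation \eqref{ame} are solved by the identical block-$2\times2$ computation as in \eqref{getthetai}--\eqref{cos_outer}. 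This produces \eqref{outlierevalue}--\eqref{outlierevector2} verbatim with $\xi\mapsto\wh\xi$ (in particular the phase transition at the threshold of \eqref{cond outlier} with $\xi\mapsto\wh\xi$, i.e.\ $d_i^2>\sqrt{\gamma/\wh\xi}$), which is the assertion of the theorem.
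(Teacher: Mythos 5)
Your proposal is correct and follows essentially the same route as the paper's proof: normalize CountSketch so that $B=\wh S\wh S^{\top}$ is a $0$--$1$ diagonal matrix, use the Poisson/multinomial limit to get $\wh r/n\to\wh\xi$ and hence the two-point $\pi_B$ collapsing the self-consistent equations to the orthogonal case with $\xi\mapsto\wh\xi$, and establish $W^{\top}\wh S^{\top}\wh SW=\wh\xi_n I_k+\oo(1)$ by the same two-stage (signs, then random assignment) mean--variance computation with delocalization controlling the fluctuations, before feeding everything into the block-$2\times2$ master-equation analysis of Theorem~\ref{sketchthm1}. The only cosmetic difference is your aside that, conditionally on $(h,a)$, the entries of $\wh SX$ are themselves independent with variance $n^{-1}$; the paper instead invokes the general local law for $SX\Sigma^{1/2}$ with $B=\wh S\wh S^{\top}$ satisfying \eqref{assm3}, but both justifications are valid.
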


CountSketch can be regarded as an interpolation between the uniform random sampling and randomized Hadamard sampling. For the time complexity, we have
$$ \text{uniform random sampling} < \text{CountSketch} < \text{randomized Hadamard sampling}.$$
However, unfiorm random sampling and CountSketch are much closer in complexity (within a constant), while randomized Hadamard sampling has an additional logarithmic factor in the cost. On the other hand, uniform random sampling and the CountSketch requires the delocalization condition \eqref{delocal}, while randomized Hadamard sampling does not. 

{
One of the advantages of CountSketch is that it is extremely fast for sparse datasets. For example, we can consider the sparse sample covariance matrices which are Hadamard products of the form $\wt X=A\circ X$, where $X$ is a random matrix considered in this paper and $A$ is a random matrix with i.i.d. ${Beroulli}(p_n)$ entries. Then $0<p_n<1$ controls the sparsity of the sample covariance matrices. We expect that CountSketch will perform well in the sparse case with $p_n\ll 1$. Unfortunately, this case is beyond our current setting---the moment condition \eqref{eq_highmoment} will be violated if the entries of $\wt X$ are scaled to have variance $n^{-1}$. However, we expect that our results will still hold under the sparse setting, although we need to rebuild the whole theory in \cite{yang2019spiked} from scratch using the methods in \cite{ER1} for sparse Erd{\H o}s-R{\'e}nyi graphs. This is beyond the scope of the current paper, and we will explore this topic in future work.
}

We also remark that the delocalization condition \eqref{delocal} is needed for CountSketch because we are considering the setting where $r$ is of the same order as $n$. In the conventional setting where $n\gg r \gg p$, this condition is not needed \citep{clarkson2017low}. In our setting, we can recover this result. With a simple Chernoff estimate and a union bound, we know that if $n\ge Cr\log r$ for a large enough constant $C>0$, then with probability $1-\oo(1)$ all the $c_i$-s are concentrated around $n/r$. Moreover, as for Hadamard sampling in Section \ref{sec hadamard}, $\wh Sw_i$ will be delocalized, i.e.,
$$\|\wh Sw_i\|_\infty \to 0 \ \ \ \text{in probability}$$
as $n\to \infty$. This estimate holds for the same reason as \eqref{chernoff}, because we take random averages over roughly $n/r$ many entries of $w_i$. 

\subsection{Strong signals}
\label{strong}

Finally, in this subsection, we consider a more general spiked covariance matrix model
\be\label{generalSigma}Y = \sum_{i=1}^k d_i w_iu_i ^{\top}+ X \Sigma^{1/2},\ee
where the covariance matrix $\Sigma$ can be non-identity. In this case, the covariance matrix of the spiked model is of the form
\be\label{generalSigma2} \wt{\Sigma}=\Sigma+ \sum_{i=1}^k d_i^2 u_i u_i^{\top}. \ee
Since $u_i $-s are not necessarily the eigenvectors of $\Sigma$, they are also not the eigenvectors of $\wt {\Sigma}$ in general. However, if we assume the signal strengths to be sufficiently strong and well-separated, then we can regard $u_i$ as an approximate eigenvector of $\wt{\Sigma}$. This is the setting we shall consider in this subsection.


Suppose we focus on the $i$-th spiked eigenvalue. We assume that
$$ l_i:= d_i^2 \wedge \min_{j\ne i} |d_i^2 - d_j^2|$$
is sufficiently large compared with $\|\Sigma\|$. {  Here we use $a\wedge b$ as a shorthand notation for $\min(a,b)$.} Combining the arguments in the proof for Theorem \ref{sketchthm1} with standard perturbation theory for matrices, we can obtain the following theorem. In the statement, we shall use the notation $x=\OO(y)$ if
$|x| \le C|y|$ for some constant $C>0$ that does not depend on $n$ or $l_i$. The proof of this theorem will be given in Appendix \ref{sec_pflarge}. 

\begin{theorem}[Large signals]\label{sketchthmlarge}
Suppose that the assumptions in Theorem \ref{sketchthm1} hold, so we consider uniform orthogonal random projections. Moreover, assume that for some fixed $k_+\le k$,
\be\label{largegap} \max_{1\le i \le k_+}l_i \ge C_0\|\Sigma\|\ee for a sufficiently large constant $C_0>0$. 
Then for any $1\le i \le k_+$, we have
\beq\label{largethetai}
\wt\lambda_i \to_p \theta_i=  \xi (d_i^2 + E_{ii}) + \gamma \rho_1+ \OO(l_i^{-1}). 
\eeq
Here $E :=U^{\top} \Sigma U$, and  $\rho_{i}$ are the moments of the spectral distribution of $\Sigma$, 
\be\label{rhoisigma}\rho_i:= \int x^i \pi_{\Sigma}(x).\ee
Also,
\be\label{precision_rho2}
|\langle u_i, \wt{\bxi}_i\rangle|^2 \to_p  \frac{\xi -\frac{\gamma}{d_i^{4}}  \rho_2 }{\xi + \frac{\gamma}{d_i^2} \left[ \rho_1 + d_i^{-2} (\rho_2 - \rho_1 E_{ii}) \right]} + \OO (l_i^{-3}),
\ee
and for $j\ne i$,
\beqs
|\langle u_j, \wt{\bxi}_i\rangle|^2 \to_p     \frac{\xi -\frac{\gamma}{d_i^{4}}  \rho_2 }{\xi + \frac{\gamma}{d_i^2} \left[ \rho_1 + d_i^{-2} (\rho_2 - \rho_1 E_{ii}) \right]}  \left|  \frac{E_{ji}}{d^2_i - d^2_j} \right|^2+ \OO(l_i^{-3}) .
\eeqs

Similarly, if the assumptions in Theorem \ref{sketchthm2} (uniform random sampling) or Theorem \ref{sketchthm3} (Hadamard transform) hold, then the same results hold; if the assumptions in Theorem \ref{sketchthm5} (CountSketch) hold, then the same results hold if we replace $\xi$ with $\wh \xi$.
\end{theorem}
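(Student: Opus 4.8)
The plan is to combine the master-matrix machinery of Section~\ref{sec sketchPCA} (the self-consistent equations \eqref{sc}, the master matrix \eqref{defnMx}, and the master equations \eqref{eme}--\eqref{ame}), now for the separable noise $X\Sigma^{1/2}$ rather than white noise, with classical matrix perturbation theory to handle the fact that $U$ need not diagonalize $\Sigma$. After sketching, $\wt Y=VDU^\top+\wt X\Sigma^{1/2}$ with $V=SW$ and $\wt X=SX$, and for uniform orthogonal $S$ the estimate \eqref{claim unifV} still gives $V^\top V=\xi_n I_k+\oo(1)$ on the high-probability event $\Omega$. It is cleanest to record the heuristic first. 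Write $N:=\wt Y^\top\wt Y-\xi_n UD^2U^\top$ for the ``non-spike'' part; by the rank-$k$ resolvent identity, an eigenvalue $\theta\notin\mathrm{spec}(N)$ of $\wt Y^\top\wt Y$ solves $\det\big(I_k+\xi_n D^2\,U^\top(N-\theta)^{-1}U\big)=0$. When $\theta$ has order $\xi_n d_i^2\gg\|\Sigma\|$---which is exactly the regime \eqref{largegap}---the Neumann series $(N-\theta)^{-1}=-\sum_{m\ge0}\theta^{-m-1}N^m$ converges geometrically with ratio $\OO(\sqrt{\|\Sigma\|}/(\xi_n d_i))$, and the relevant quadratic forms concentrate around their means: $U^\top NU\to_p\xi_n E$ and $[U^\top N^2U]_{ii}=\xi_n\gamma_n\rho_1 d_i^2+\OO(\|\Sigma\|^2)$, because $\E[\wt X^\top\wt X]=\xi_n I_p$ and $\E[\wt X\Sigma\wt X^\top]=\gamma_n\rho_1 I_r$. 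Substituting and keeping the Neumann terms through $m=2$, then solving for $\theta$ by iteration starting from $\theta\approx\xi_n d_i^2$, produces $\theta=\xi_n(d_i^2+E_{ii})+\gamma_n\rho_1+\OO(l_i^{-1})$, while the off-diagonal couplings to coordinates $j\ne i$ enter only through the Schur complement and are $\OO(l_i^{-1})$. This already exhibits every term of \eqref{largethetai}.

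To make this rigorous in the style of the proof of Theorem~\ref{sketchthm1}, I would run the master-matrix framework for the separable model. The only structural change from \eqref{defnMx} is that the $k\times k$ block attached to $U$ picks up a dependence on $\Sigma$, of the schematic form $-x^{-1/2}U^\top(I+m_{2c}(x)\Sigma)^{-1}U$ (which collapses to $-x^{-1/2}(1+m_{2c}(x))^{-1}I_k$ when $\Sigma=I_p$), with $(m_{1c},m_{2c})$ solving the separable analogue of \eqref{sc}---$\pi_B=\delta_1$, but $\pi_\Sigma$ now enters---while the block attached to $W,S$ remains $V^\top V$-proportional and hence diagonal, $\approx x^{1/2}m_{2c}(x)I_k$. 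For $x=\theta_i$ of order $\xi_n d_i^2$ one has $m_{1c}(x),m_{2c}(x)=\OO(1/x)$, so $(I+m_{2c}\Sigma)^{-1}=I-m_{2c}\Sigma+m_{2c}^2\Sigma^2-\cdots$ and the $U$-block equals $-x^{-1/2}(I_k-m_{2c}(x)E+\OO(x^{-2}\|\Sigma\|^2))$; the moments $\rho_1,\rho_2$ of \eqref{rhoisigma} enter through the Taylor expansion at $0$ of $g_{2c}$, the inverse of $m_{2c}$, whose coefficients are polynomials in the $\rho_j$ since $m_{2c}$ sees $\Sigma$ only through $\pi_\Sigma$. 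In \eqref{eme} this gives a $2k\times2k$ determinant; because $D^{-1}$ is small, standard eigenvalue perturbation (Schur complement of the $2\times2$ block carrying coordinate $i$, with the off-diagonal entries of $E$ as the perturbation) reduces it, up to $\OO(l_i^{-1})$, to the scalar equation $-m_{2c}(\theta_i)+m_{2c}(\theta_i)^2E_{ii}+\cdots=d_i^{-2}$, and $\theta_i=g_{2c}(m_{2c}(\theta_i))$ then yields \eqref{largethetai}.

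For the eigenvectors I would use the angle master equation \eqref{ame}: $|\langle u_j,\wt{\bxi}_i\rangle|^2$ is the contour integral around $\theta_i$ of the appropriate diagonal entry of $\cal D^{-1}M(z)^{-1}\cal D^{-1}$, evaluated by the same Schur-complement inversion of the perturbed master matrix. The $i$-th coordinate yields the residue, which after the large-$x$ expansion above equals the ratio in \eqref{precision_rho2} up to $\OO(l_i^{-3})$ (one checks it reduces to \eqref{outlierevector} when $E=I_k$, $\rho_1=\rho_2=1$); the coordinates $j\ne i$ enter only through the off-diagonal coupling, whose leading value is $\mp E_{ji}/(d_i^2-d_j^2)$, so its square multiplies the diagonal answer, giving the $j\ne i$ formula. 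Once the uniform-orthogonal case is established, the other sketches follow exactly as in the proofs of Theorems~\ref{sketchthm2}, \ref{sketchthm3} and \ref{sketchthm5}: under the delocalization inputs \eqref{delocal} (resp.\ \eqref{chernoff}), the block $W^\top S^\top(1+m_{1c}SS^\top)^{-1}SW$ still concentrates on its scalar limit, so uniform sampling and randomized Hadamard sampling reproduce the orthogonal statement verbatim, while for CountSketch the same computation applies with $\xi$ replaced throughout by $\wh\xi=\xi(1-e^{-1/\xi})$.

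I expect the main obstacle to be controlling two distinct error sources simultaneously and uniformly over the contour $\Gamma_i$: first, the random error between the empirical ingredients of $M(z)$ ($V^\top V$, the quadratic forms in $\wt X$, and the Stieltjes transforms $m_1,m_2$ of $\wt X^\top\wt X$ and $\wt X\wt X^\top$) and their deterministic limits, which is where the local laws of \cite{yang2019spiked,ding2019spiked} on the event $\Omega$ must be invoked just as in the proof of Theorem~\ref{sketchthm1}; and second, the deterministic perturbation error from the non-commutation of $U$ and $\Sigma$, which forces one to carry the full non-diagonal matrices $E$ and $U^\top\Sigma^mU$ and the couplings $E_{ji}$ through the $2k\times2k$ determinant and its inverse rather than working coordinatewise. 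Keeping these two mechanisms separate, and checking that the Neumann/perturbation expansion truncates precisely at the order producing the $\rho_1,\rho_2,E_{ii},E_{ji}$ dependence with errors $\OO(l_i^{-1})$ for the eigenvalue and $\OO(l_i^{-3})$ for the overlaps, is the technical core; the remainder is a minor modification of Appendix~\ref{pf sketch1}.
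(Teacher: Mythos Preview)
Your proposal is correct and follows essentially the same route as the paper's proof in Appendix~\ref{sec_pflarge}: you expand the $U$-block of the master matrix as $-x^{-1/2}(I-m_{2c}(x)E+\OO(l_i^{-2}))$ using $m_{2c}(\theta_i)=\OO(l_i^{-1})$, take a Schur complement to reduce \eqref{eme} to a determinant of the form $\det\big((D^2+E)+m_{2c}^{-1}(x)+\OO(l_i^{-1})\big)$, invoke standard eigenvalue/eigenvector perturbation for $D^2+E$ to isolate $d_i^2+E_{ii}$ and the off-diagonal coupling $E_{ji}/(d_i^2-d_j^2)$, and then expand $g_{2c}$ in moments $\rho_1,\rho_2$ of $\pi_\Sigma$ to read off \eqref{largethetai} and \eqref{precision_rho2}; the extension to the other sketches is indeed verbatim. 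Your preliminary heuristic via a direct Neumann expansion of $(N-\theta)^{-1}$ is an extra ingredient the paper does not include, but it is consistent with and complementary to the master-matrix argument.
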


 We check the formulas in simulations. {  In the first example, we take $\Sigma=O^\top \Lambda O$, where $O$ is a $p\times p$ orthogonal matrix, $\Lambda$ is a diagonal matrix with $\Lambda_{11}=5$, $\Lambda_{ii}=2$ for $2\leq i\leq p/2$, $\Lambda_{ii}=1$ for $p/2<i\leq p$; see Figure \ref{fig: large signal 1}.}
\begin{figure}[!htb]
\centering
\begin{subfigure}{.45\textwidth}
\includegraphics[width=\textwidth]{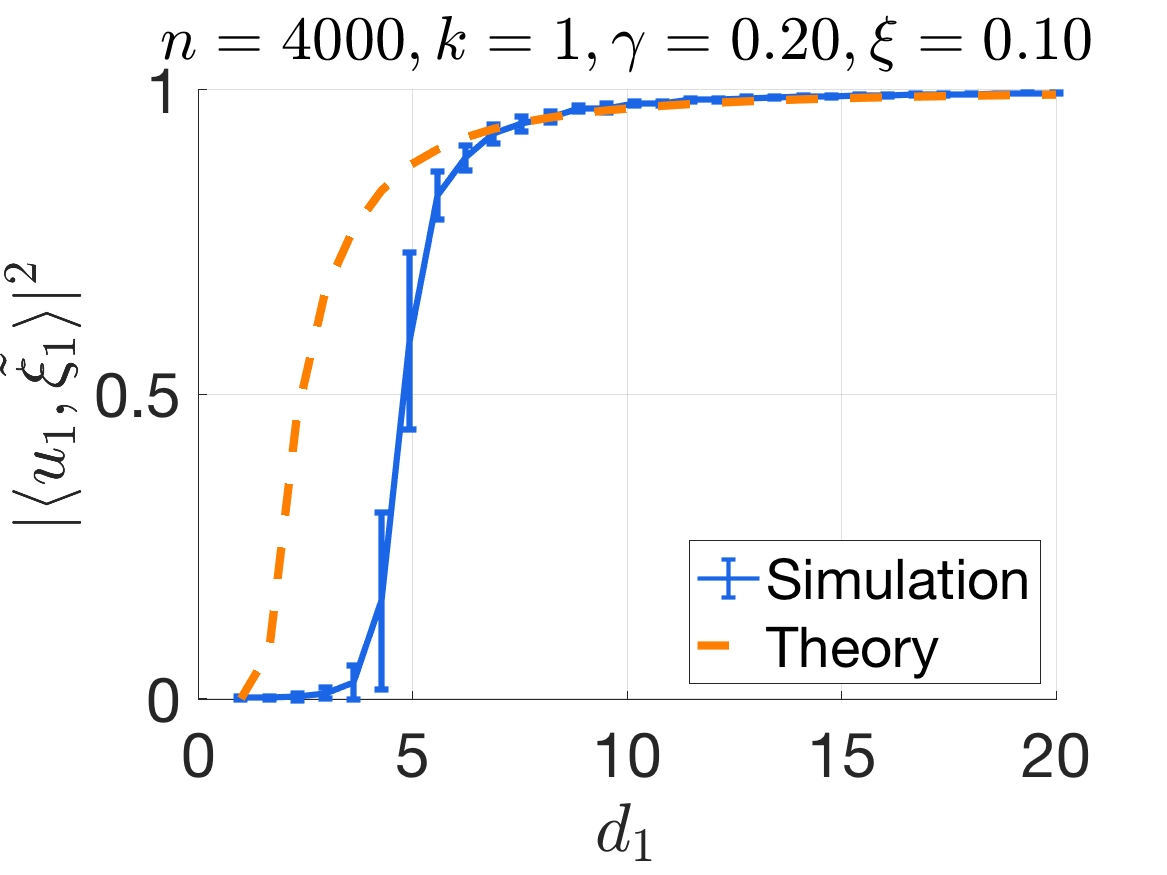}
\end{subfigure}
\begin{subfigure}{.45\textwidth}
\includegraphics[width=\textwidth]{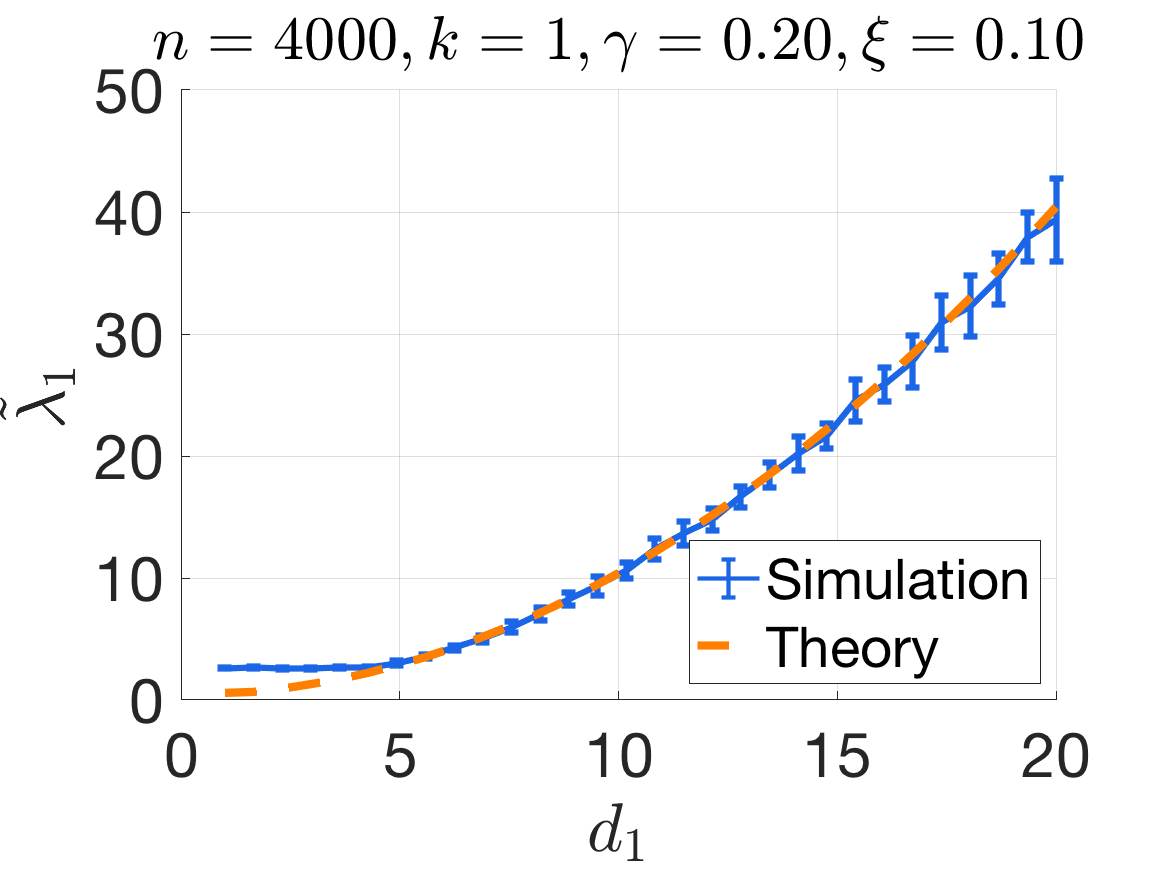}
\end{subfigure}
\caption{Checking the accuracy of the spiked eigenvalue and eigenvector formulas for large signals in a single-spiked model. We follow the protocol from the experiment in Figure \ref{f_sp_or}. Here $\Sigma=O^\top \Lambda O$, where $O$ is a $p\times p$ orthogonal matrix, $\Lambda$ is a diagonal matrix with $\Lambda_{11}=5$, $\Lambda_{ii}=2$ for $2\leq i\leq p/2$, $\Lambda_{ii}=1$ for $p/2<i\leq p$, and $d$ ranges from 1 to 20 with equal spaces. }
\label{fig: large signal 1}
\end{figure}
In the second example, we take $\Sigma$ to be the Toeplitz matrix with $\Sigma_{ij}=0.9^{|i-j|}$; see Figure \ref{fig: toep 0.9}. 
 {  In general, for a Toeplitz matrix whose $(i,j)$-th entry is $q^{|i-j|}$, we have $\rho_1=p^{-1}\tr(\Sigma)=1$ because the diagonal entries are all ones. For $\rho_2$, we have 
\begin{align*}
\rho_2=\frac{1}{p}\tr(\Sigma^2)=\frac{1}{p}\|\Sigma\|_{F}^2=\frac{1}{p} \sum_{i,j}\Sigma_{ij}^2.
\end{align*}
Among the $p^2$ entries of $\Sigma$, the $p$ diagonal entries are equal to 1; and for $i=1,\ldots,p-1$, there are $2(p-i)$ entries that are equal to $q^i$. Thus,
\begin{align*}
\frac{1}{p}\sum_{i,j}\Sigma_{ij}^2=1+\frac{1}{p}\sum_{i=1}^{p-1}2(p-i)q^{2i}=1+\frac{2}{p}\left[\frac{pq^2}{1-q^2}-\frac{q^2(1-q^{2p})}{(1-q^2)^2}\right],
\end{align*}
which converges to $\frac{1+q^2}{1-q^2}$ as $p$ goes to infinity.} We see a good match between the simulation and the theoretical result, as long as the signal strength $d_i$ is reasonably large. 
\begin{figure}[!htb]
\centering
\begin{subfigure}{.45\textwidth}
\includegraphics[width=\textwidth]{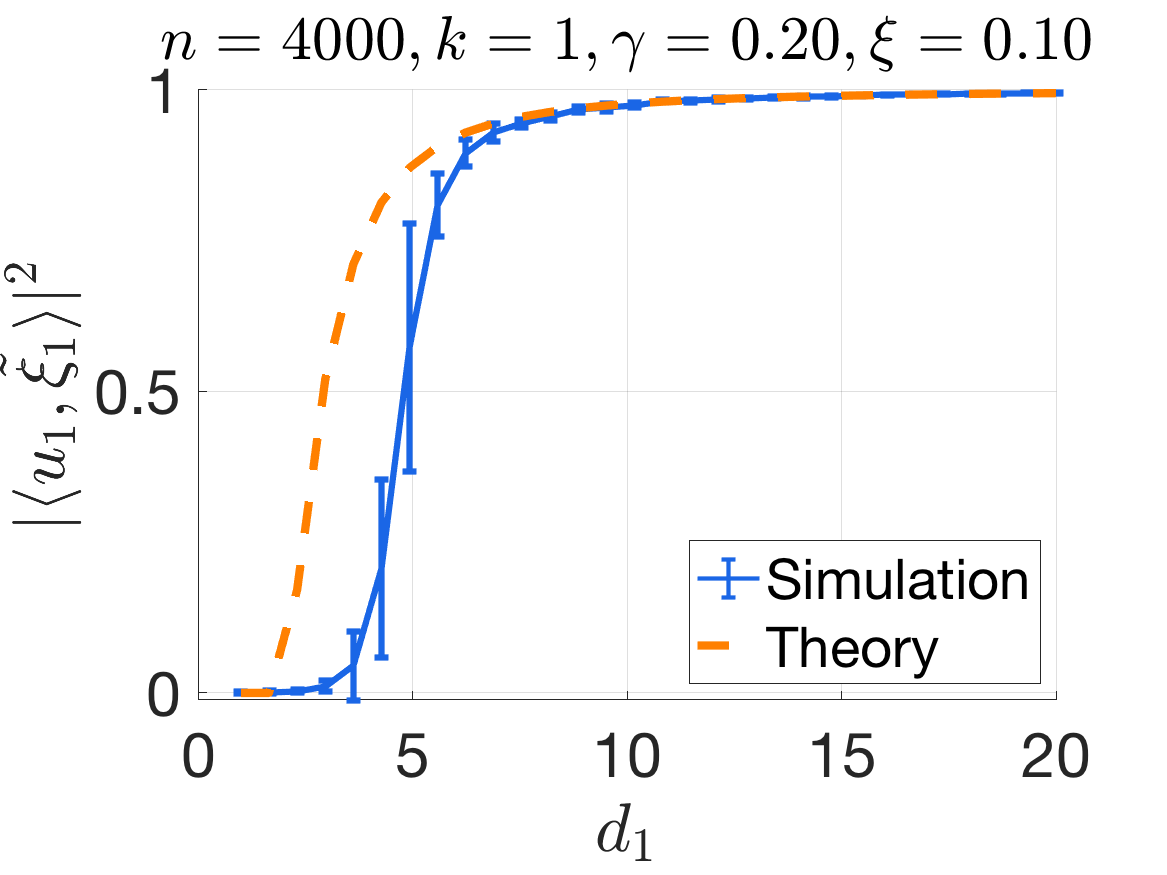}
\end{subfigure}
\begin{subfigure}{.45\textwidth}
\includegraphics[width=\textwidth]{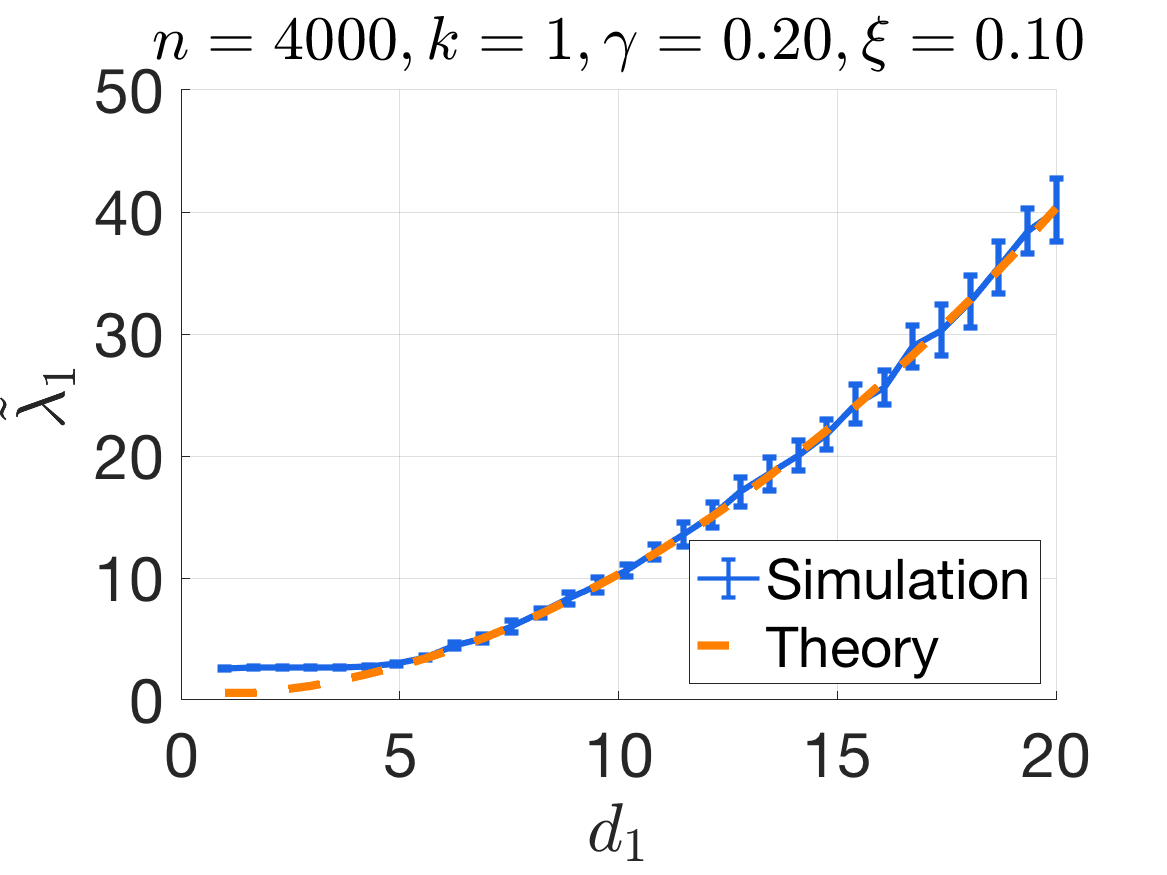}
\end{subfigure}
\caption{The protocol is the same as in Figure \ref{fig: large signal 1}, except that $\Sigma$ is the Toeplitz matrix with the $(i,j)$-th entry equal to $0.9^{|i-j|}$. }
\label{fig: toep 0.9}
\end{figure}


In fact, we can get more precise results by deriving higher order asymptotic expansions in terms of $l_i^{-1}$. The calculations are pretty straightforward, but tedious. We do not pursue this direction here. 

\begin{remark}\label{rem extra}
The two terms on the right-hand side of \eqref{largethetai} can be understood heuristically as follows. First, based on standard perturbation theory, the $i$-th largest eigenvalue of $\smash{\wt\Sigma}$ is about $ d_i^2 + u_i^{\top} \Sigma u_i= d_i^2 + E_{ii}$. As discussed in Section \ref{heur}, heuristically after projection into $r$-dimensional subspace, the signal strength should go down by a factor of $\xi$, which leads to the term $\xi( d_i^2 + E_{ii})$ in \eqref{largethetai}.

For the $\gamma\rho_1$ term, we consider the extreme case where $\xi\to 0$ and hence the signal strength goes down to zero. Without loss of generality, we assume that $S$ is random sampling. Then by concentration, one can see that  
$$(SX\Sigma^{1/2})(\Sigma^{1/2}X^{\top}S^{\top}) \approx \left(\frac{1}{n}\sum_{i=1}^p \Sigma_{ii}\right)I_{r\times r} = \gamma \rho_1 I_{r\times r}.$$
This leads to the  $\gamma\rho_1$ term that does not depend on $\xi$.

  Finally, for simplicity, suppose $U$ is a uniform partial orthonormal matrix. Then $E_{ii}$ is well-concentrated around $\rho_1$. Now we notice that for a fixed $\rho_1$, the right-hand side of \eqref{precision_rho2} becomes smaller as $\rho_2$ increases. In particular, the second moment of the spectral distribution of $\Sigma$ is minimized when it is degenerate (i.e. concentrates on one point). With a similar method, in the setting with $\Sigma=I_p$, a general sketching matrix $S$ and a uniform partial orthonormal $W$, we can derive that
$$|\langle u_i, \wt{\bxi}_i\rangle|^2 \to_p  \frac{ \xi- \frac{\rho_2}{\rho_1^2}\frac{ \gamma}{ d_i^{4}}}{\xi+\frac{\rho_2}{\rho_1^2} \frac{\gamma}{ d_i^{2}}}   + \OO (l_i^{-3}),$$
where $\rho_1$ and $\rho_2$ are the first and second moments of the spectral distribution of $ SS^\top $. We omit the details, since the derivation is similar to the one in Appendix \ref{sec_pflarge}. One can also compare it with \eqref{evectorcorr}. Hence it is better to use a sketched matrix with smaller $\rho_2/\rho_1^2$, which is minimized at $1$ when the spectral distribution of $ SS^\top $ is degenerate. This heuristically explains why projections with i.i.d. entries and CountSketch are slightly worse than other methods---they have less concentrated spectrum compared to other methods.
\end{remark}

To our knowledge, such a general model as in \eqref{generalSigma} has not been studied in the literature, even in the strong signal regime. In the classical setting, it is usually assumed that $\Sigma$ is identity or a finite rank perturbation of identity matrix; see \citep[e.g.,][etc]{spikedmodel, baik2005phase,baik2006eigenvalues, Benaych2012,ding2019distri,Ding2020}. This is also our setting in Sections \ref{sec unifproj}-\ref{sec count}. Another type of spiked covariance model has the spikes added to the population covariance matrices directly; see \cite[e.g.,][etc]{paul2007asymptotics,benaych2011eigenvalues,principal,ding2019spiked}. That model is $ X\wh\Sigma^{1/2}$, where $\smash{\wh\Sigma}$ is a spiked covariance matrix of the form \eqref{generalSigma2}. By diagonalizing the matrix $\smash{\wh\Sigma}$, one can assume that $u_i$-s are also eigenvectors of $\smash{\wh\Sigma}$, which is more restrictive than our model \eqref{generalSigma}. Thus we believe that the Theorem \ref{sketchthmlarge} and the methods used in its proof may be of independent theoretical interest.

\section{Empirical verification}\label{sec_emp}
\subsection{Proposed method}
We aim to verify our results empirically. In previous work for linear regression \citep{dobriban2018new} we developed formulas for the behavior of the OLS residuals under sketching. We predicted the behavior of the ratio of residuals, as a function of the known quantities $n,p,r$ only. This idea is similar to constructing a pivotal random variable, whose behavior does not depend on un-measured quantities. We tested these formulas empirically. Surprisingly, we found that the ratio of residuals can be close to the predicted value in empirical datasets.

Here we do not have the direct analogue of the residuals. However, we can work from first principles to derive a similar method. We know that the top eigenvalues in standard spiked models follow the spiked forward map $\ell \to \lambda(\ell, \gamma)$ from equation \eqref{know111}, see e.g., \cite{baik2005phase,baik2006eigenvalues}. A well known method to estimate the spike is to invert this. These methods have been implemented in the \verb|EigenEdge| package \citep{Dobriban2015}.

Thus, we propose to calculate the inverse 
both for the original and sketched data. In our model, both should be close to $\ell$.
Then we divide them, leading to the statistic

$$T=\frac{\lambda^{-1}\left(\sigma_1(X)^2,p/n\right)}{\lambda^{-1}\left(\sigma_1(SX)^2,p/r\right)}.$$
Our theoretical results predict that this should be close to unity, i.e., we should have $T\approx 1$.

\subsection{Datasets tested}

We consider three data sets to test our theoretical results: the Human Genome Diversity Project (HGDP) dataset \cite[e.g.,][]{cann2002human,li2008worldwide}, the Million Song Dataset (MSD, \cite{Bertin-Mahieux2011}) and New York Flight Dataset (\cite{nycflights13}). 
For each, we take uniform orthogonal random projections on the data with {  $r = \lfloor \xi n \rfloor$, with $\xi = 0.8, 0.5, 0.3$.} For HGDP, we repeat this while subsampling (1) every 20th column; (2) every 10th row and 20th column.

{  For some context, the purpose of collecting the HGDP dataset was to evaluate the diversity in the patterns of genetic variation across the globe. We use the CEPH panel, in which single nucleotide polymorphism (SNP) data was collected for 1043 samples representing 51 different populations from Africa, Europe, Asia, Oceania and the Americas. We obtained the data from \url{www.hagsc.org/hgdp/data/hgdp.zip}. We provide the data and processing pipeline on this paper's GitHub page.

The data has $n=1043$ samples, and we focus on the $p=9730$ SNPs on chromosome 22. Thus we have an $n\times p$ data matrix $X$, where $X_{ij}\in\{0,1,2\}$ is the number of copies
of the minor allele of SNP~$j$ in the genome of individual~$i$. We standardize the data SNP-wise, centering each SNP by its mean, and dividing by its standard error. For this step, we ignore missing values. Then,  we impute the missing values as zeroes, which are also equal to the mean of each SNP.}

First, for the HGDP dataset, we have seen in previous work, that it is not well modeled by a matrix with iid Gaussian entries \citep{dobriban2019deterministic}. In particular, there are correlations both between the columns as well as between the rows. Despite this model mismatch,  for $\xi=0.8$ we get values of $T$ between 1.2 and 1.4 on this dataset, which are quite close to the expected value of unity under correct model specification. This suggests that our theory may sometimes be applicable and relevant even when the data do not follow the theoretical model. 

We also consider the Million Song Dataset \citep{Bertin-Mahieux2011} and New York Flights Dataset \citep{nycflights13}. For $\xi=0.8$, we get $T=1.26$ and $T=1.24$, respectively. {  However, for both datasets, if we use $r = 0.5 n$, then $T$ is about 2; if $r = 0.3 n$, $T$ is about 3. This also suggests that, in these datasets where the assumptions does not hold, the theoretical results become somewhat less accurate as $r/n$ decreases.}

\subsection{Simulation for multi-spike model}
\label{sec: simulation k=5}

From Section \ref{sec: orthogonal} to Section \ref{strong}, the simulations all concern the single-spiked model with rank $k=1$.  Here we verify our theoretical results on a multi-spiked model with $k=5$. The results for $|\langle u_i,\widetilde \xi_i\rangle|^2$ and $\widetilde \lambda_i$ are shown in Figures \ref{fig: check k=5 cos} and \ref{fig: check k=5 lambda} respectively. We see that the formulas are very accurate for $\widetilde \lambda_i$ , but less accurate for $|\langle u_i,\widetilde \xi_i\rangle|^2$ for large signals. Heuristically, this is because the variance of  $|\langle u_i,\widetilde \xi_i\rangle|^2$ increases compared to the single-spike case due to the repulsion between different spikes, see e.g. \cite{paul2007asymptotics}.
\begin{figure}
\centering
\begin{subfigure}{.3\textwidth}
\includegraphics[width=\textwidth]{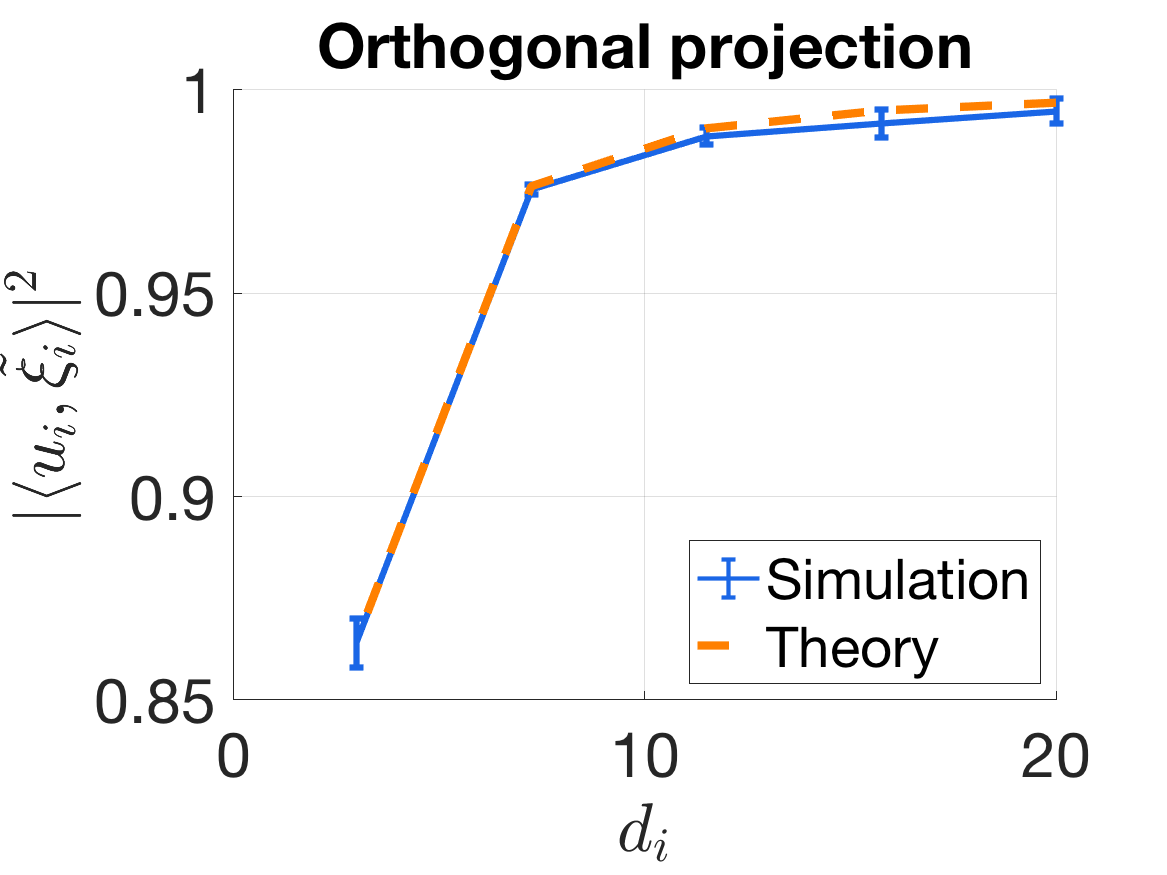}
\end{subfigure}
\begin{subfigure}{.3\textwidth}
\includegraphics[width=\textwidth]{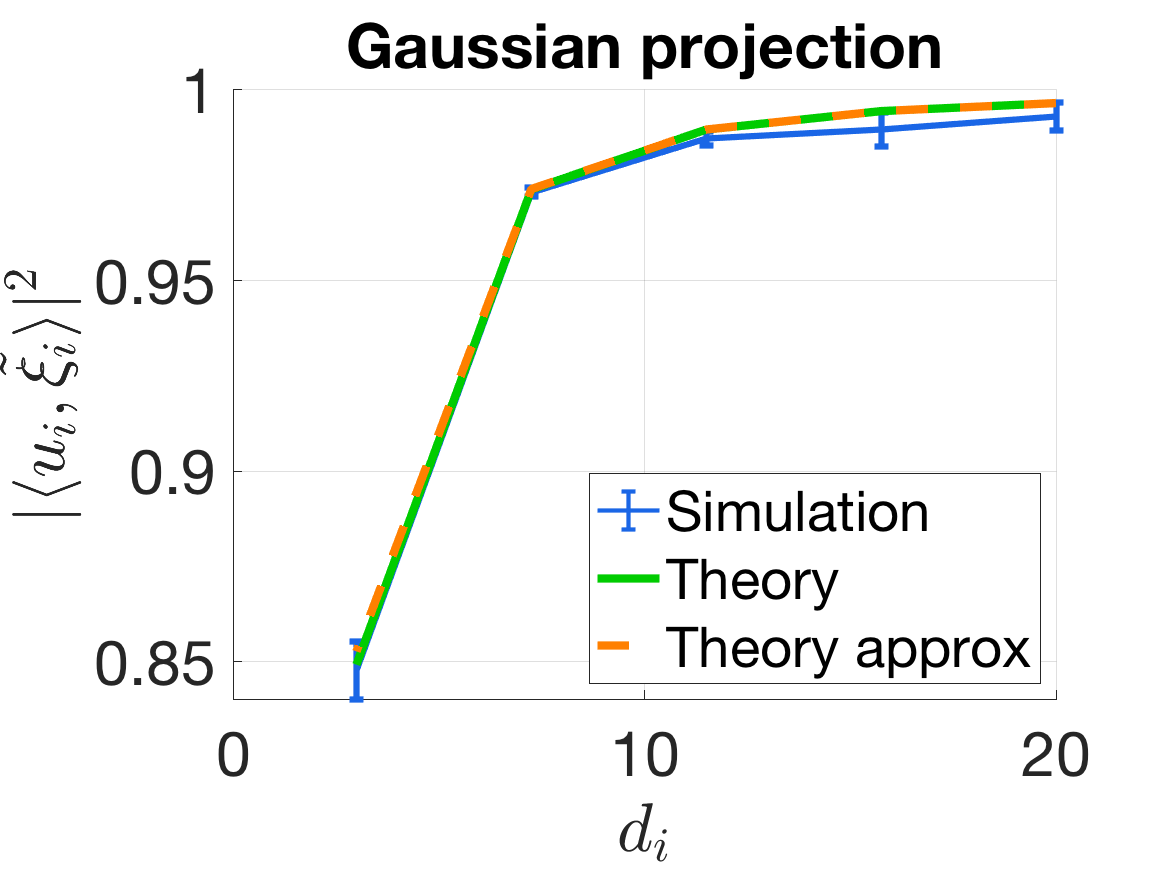}
\end{subfigure}
\begin{subfigure}{.3\textwidth}
\includegraphics[width=\textwidth]{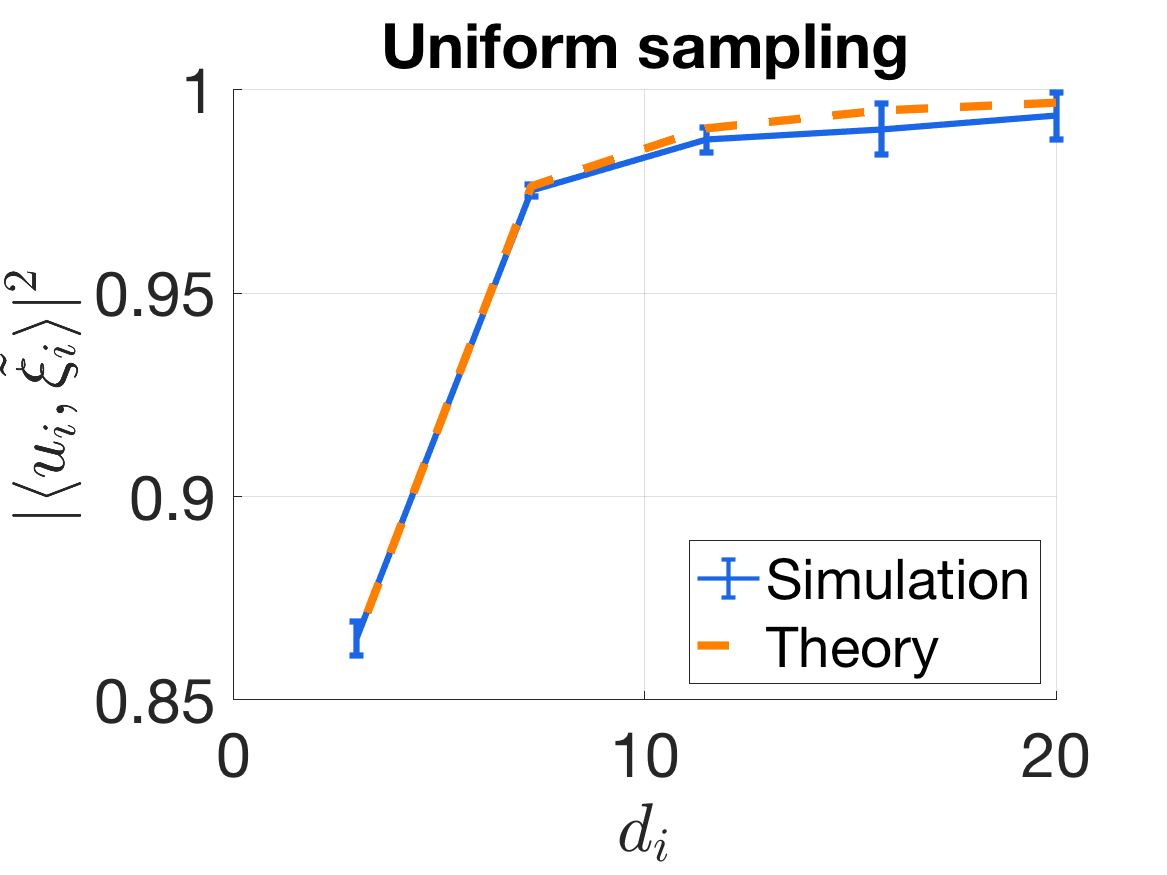}
\end{subfigure}
\begin{subfigure}{.3\textwidth}
\includegraphics[width=\textwidth]{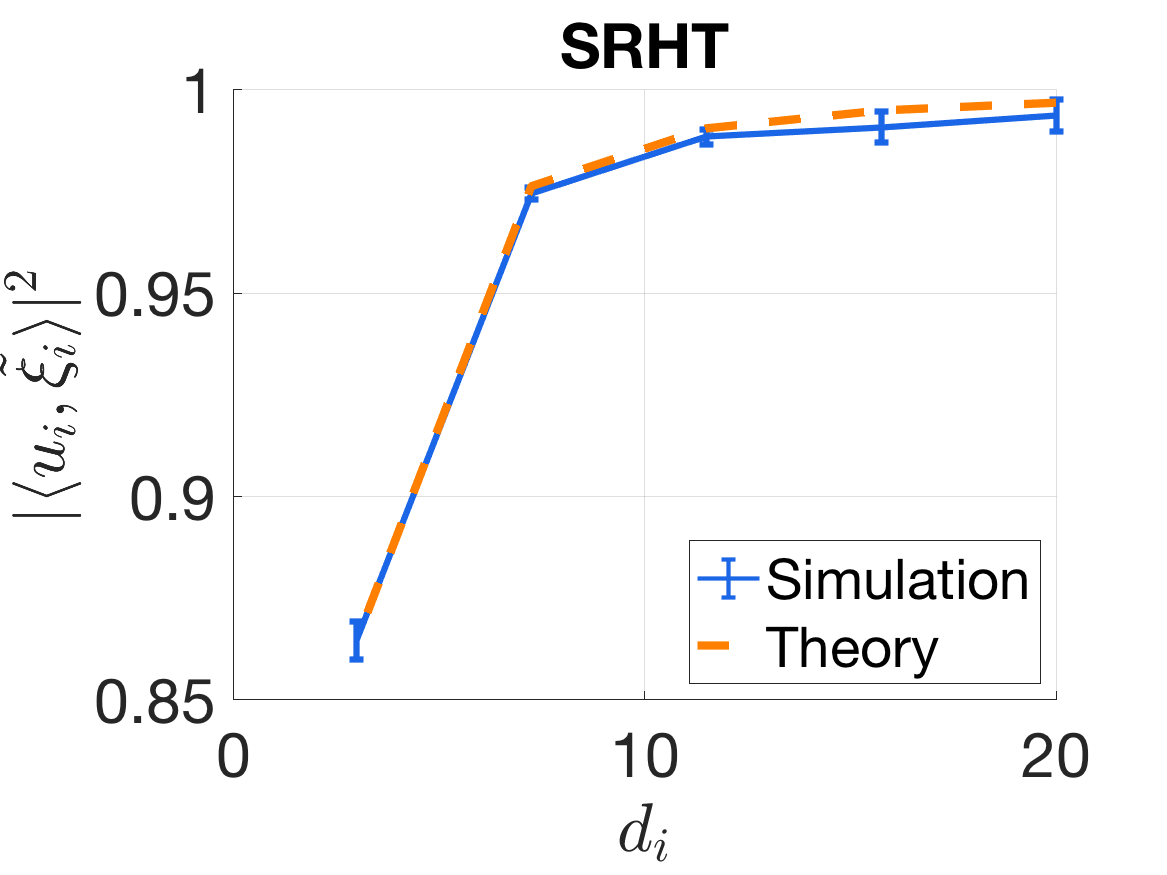}
\end{subfigure}
\begin{subfigure}{.3\textwidth}
\includegraphics[width=\textwidth]{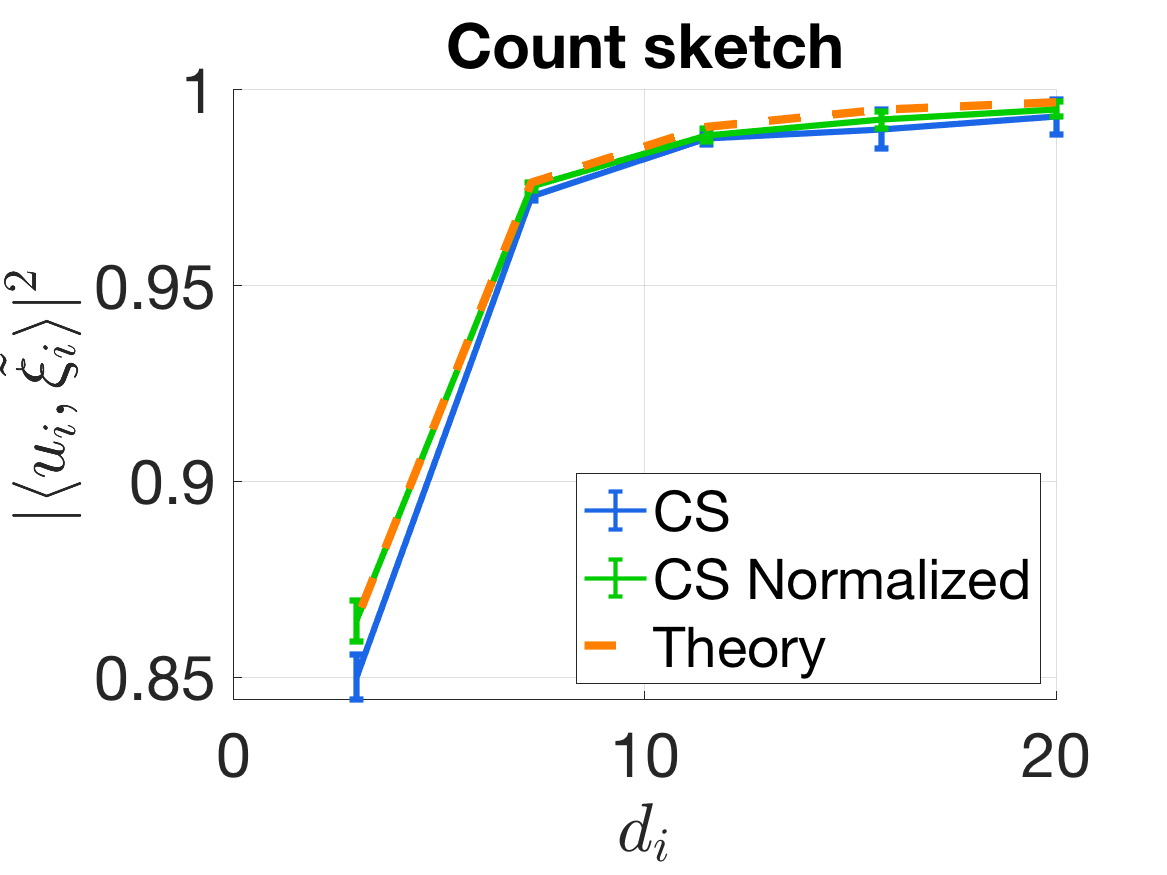}
\end{subfigure}
\caption{Checking the accuracy of the formulas for $|\langle u_i,\widetilde \xi_i\rangle|^2 $ for different sketching methods. We take $n=20000$, $p=2500$, $r=2000$, $k=5$. We plot the mean and standard error over 20 repeated experiments.}
\label{fig: check k=5 cos}
\end{figure}
\begin{figure}
\centering
\begin{subfigure}{.3\textwidth}
\includegraphics[width=\textwidth]{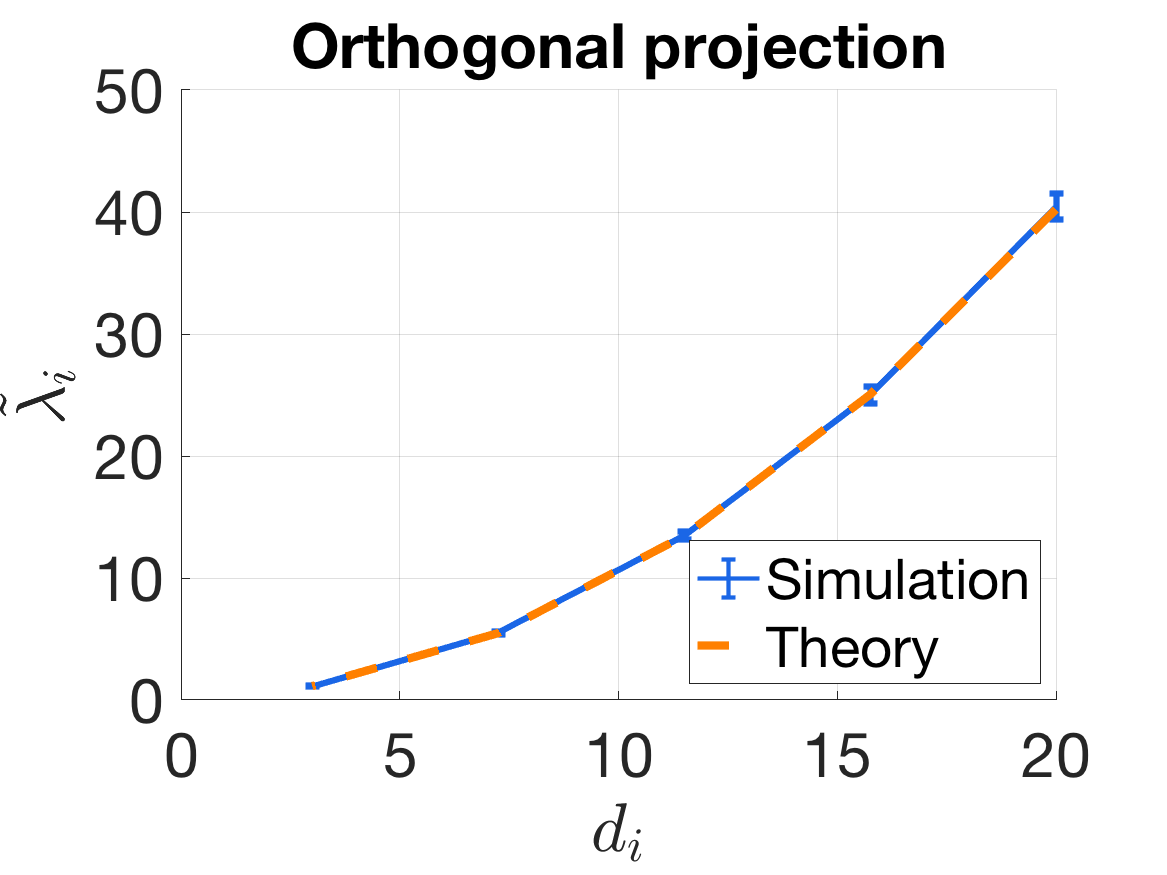}
\end{subfigure}
\begin{subfigure}{.3\textwidth}
\includegraphics[width=\textwidth]{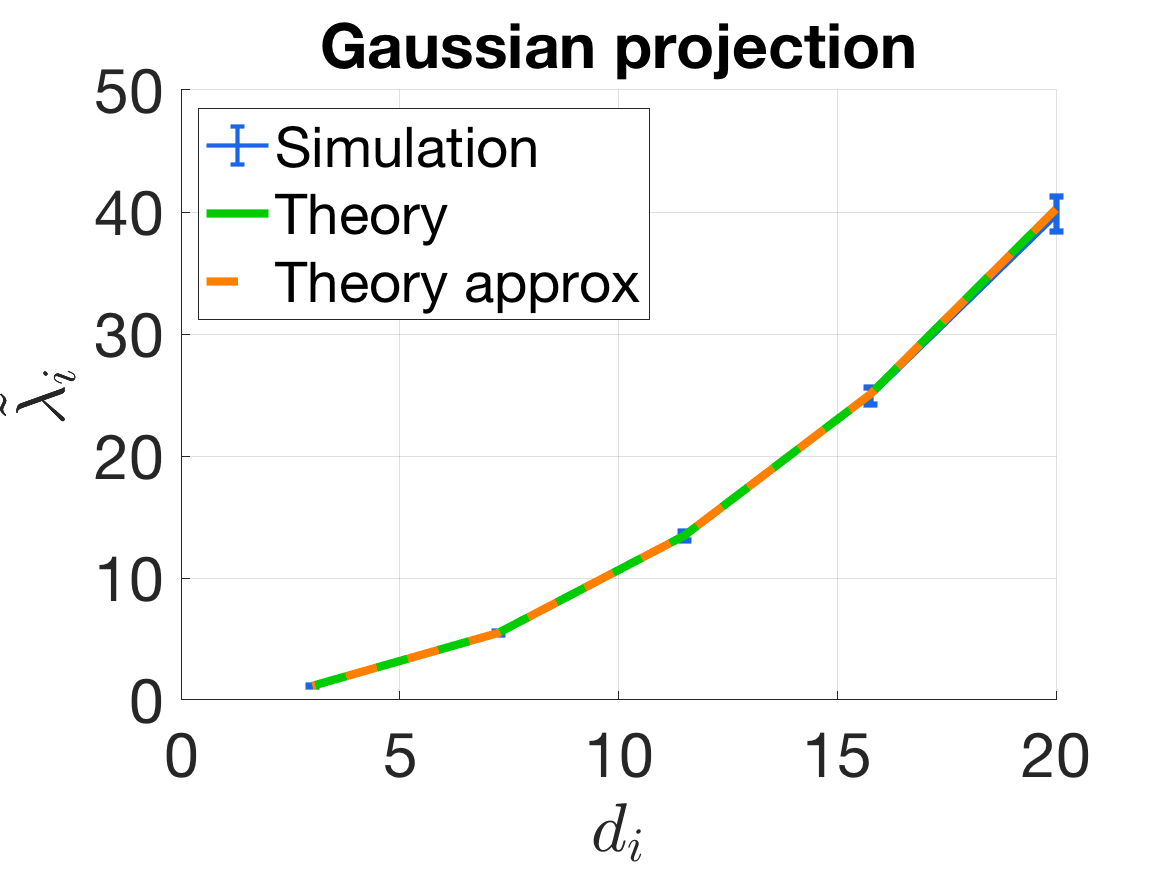}
\end{subfigure}
\begin{subfigure}{.3\textwidth}
\includegraphics[width=\textwidth]{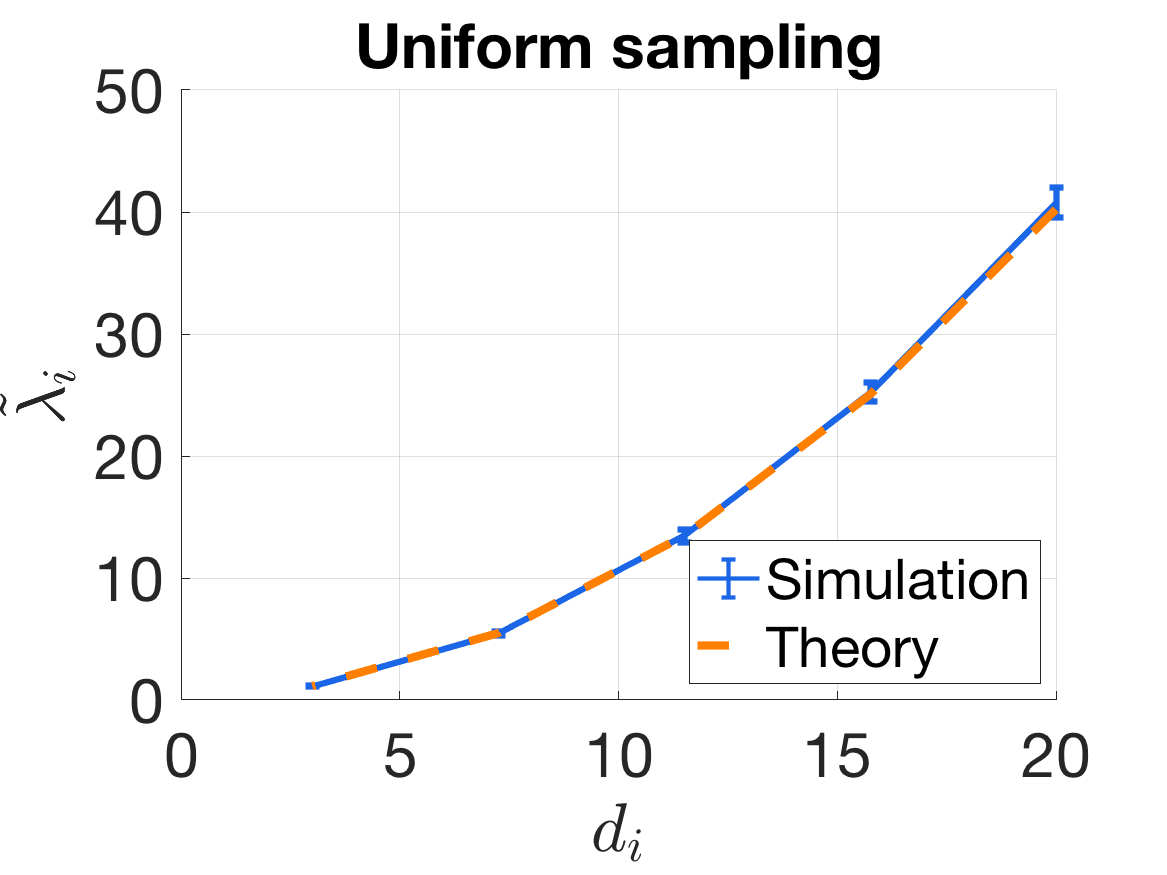}
\end{subfigure}
\begin{subfigure}{.3\textwidth}
\includegraphics[width=\textwidth]{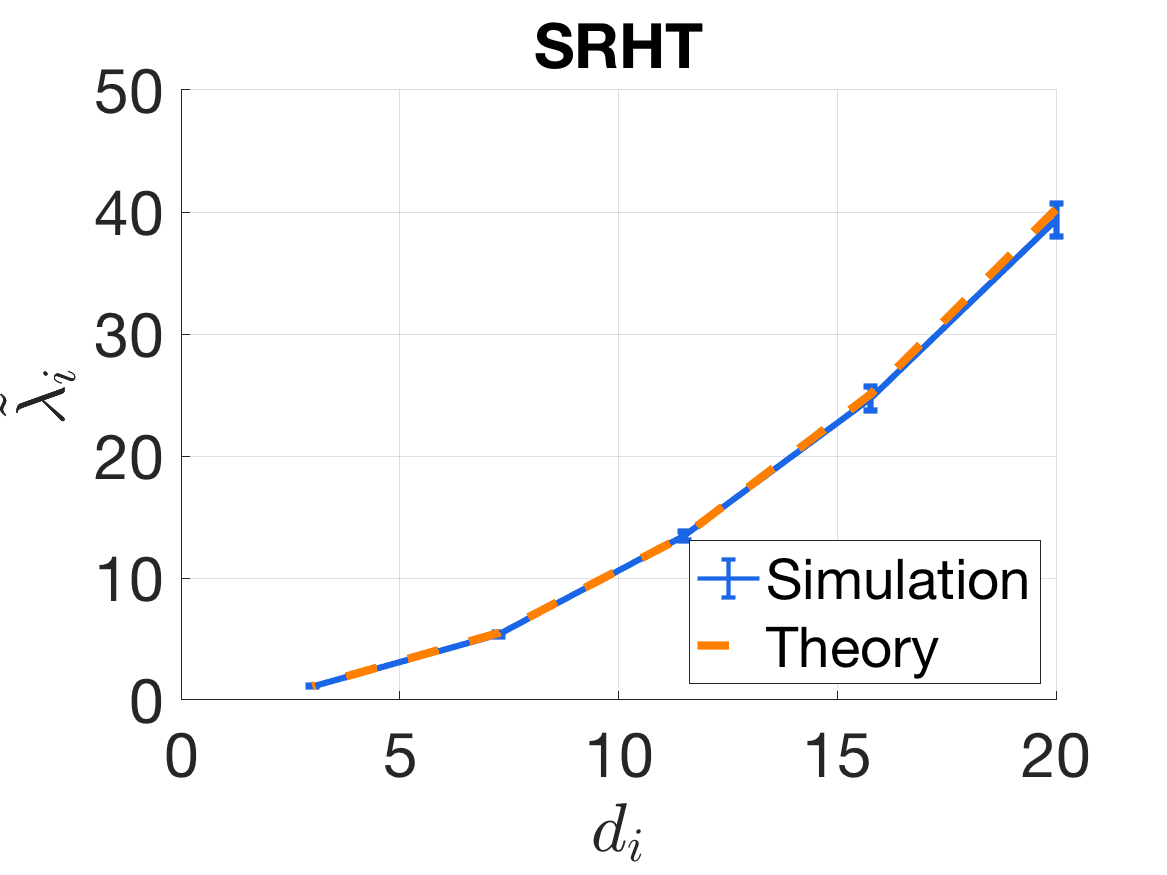}
\end{subfigure}
\begin{subfigure}{.3\textwidth}
\includegraphics[width=\textwidth]{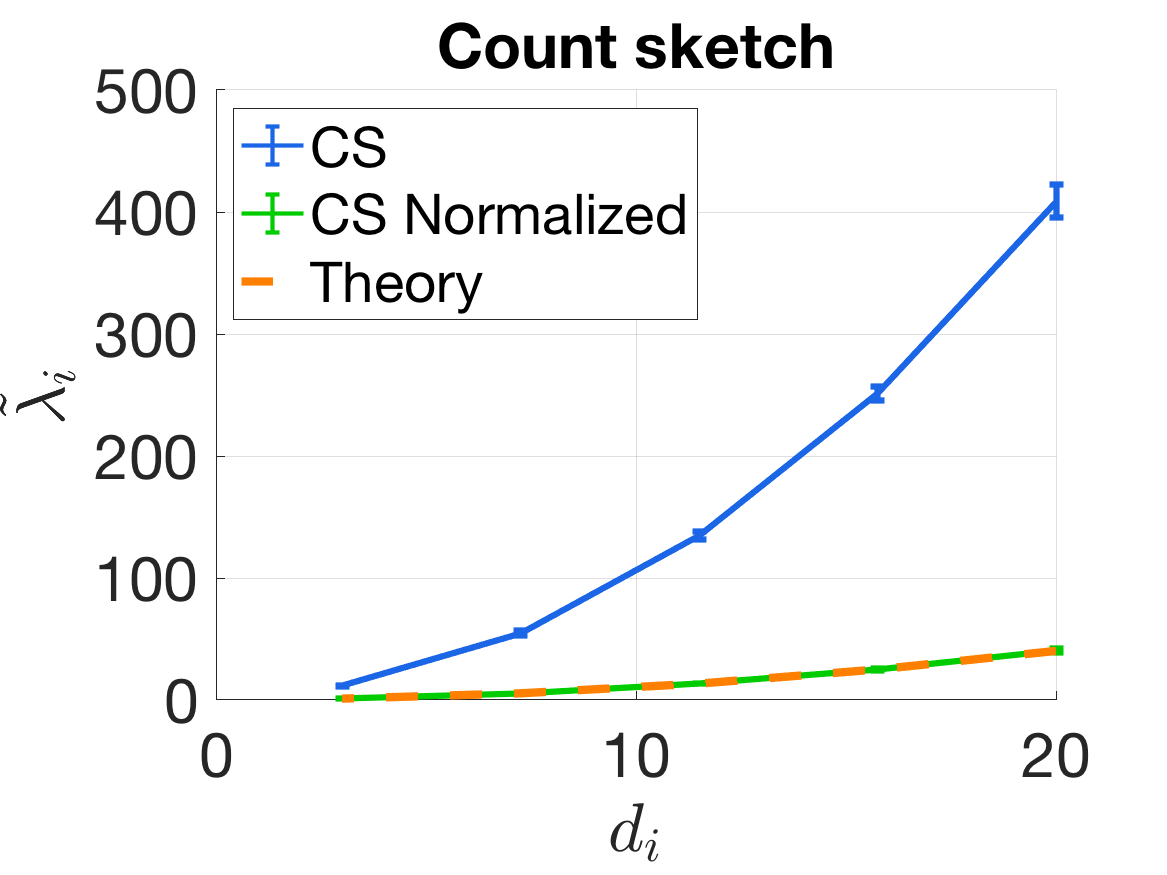}
\end{subfigure}
\caption{Checking the accuracy of the formulas for $\widetilde \lambda_i$ for different sketching methods. The protocol is the same as Figure \ref{fig: check k=5 cos}.}
\label{fig: check k=5 lambda}
\end{figure}

\section{Discussion}\label{sec_disc}

{  We have chosen to present results with convergence in probability in this paper, because we want our assumptions to be as general as possible. However, with certain stronger assumptions, it is possible to improve the results to almost sure convergence.

One obstacle for improving the result is the convergence of the block $W^{\top}S^{\top} (1+m_{1c}(x)SS^{\top})^{-1}SW $ of the master matrix $M(x)$ in equation \eqref{defnMx}. In the manuscript, we only show the in probability convergence of this matrix. To have a stronger convergence for uniform random sampling and CountSketch, we need stronger conditions on the vectors $w_i$ than \eqref{delocal}. Take uniform random sampling as an example. Suppose we assume a stronger delocalization condition on the vectors $w_i$,
$$\max_{1\le i \le k}\|w_i\|_\infty \le c(\log n)^{-1} \quad \text{for a small enough constant $c>0$.}$$
Then with a Chernoff type matrix concentration bound (or Bernstein inequality), we can obtain that 
$$\left\| W^\top S^\top SW - \frac{r}{n}I_k\right\|={\rm O}((\log n)^{-1/2}),\quad \text{with probability } 1-{\rm O}(e^{-C\log n}),$$
for a large constant $C>0$. Then using Borel-Cantelli lemma, we can show that $W^\top S^\top SW$ converges almost surely to $ \frac{r}{n}I_k$.

Another obstacle is the moment assumption on the matrix entries of $X$. Right now, we assume that the entries of $X$ have finite $(4+\epsilon)$-th moment as in equation \eqref{eq_highmoment}. Then we can truncate the entries as in \eqref{defwhX} such that for a small constant $\delta,$ $\max_{i,j} |x_{ij}|\le n^{-\delta}$ on an event with probability $1-{\rm O}(n^{-\delta})$. The bounded entry condition $\max_{i,j} |x_{ij}|\le n^{-\delta}$ is necessary for our Theorems \ref{lem_localout}  and \ref{lem_localin}, but the probability $1-{\rm O}(n^{-\delta})$ is not good enough. To improve this probability, we need to assume a stronger moment assumption. For example, if the entries of $X$ have finite $(6+\epsilon)$-moment, then  we can truncate the entries of $X$ such that for a small constant $\delta,$ $\max_{i,j} |x_{ij}|\le n^{-\delta}$ on an event with probability $1-{\rm O}(n^{-1-\delta})$. Then we can use Borel-Cantelli lemma to improve the results to almost sure convergence.}

In addition, in future work, it may be of interest to investigate other sketching methods that have been proposed. In particular, uniform sampling can work poorly when the data are highly non-uniform, because some datapoints are more influential than others for the PCs. There are more advanced sampling methods that sample each row of $X$ with some non-uniform probability $\pi_i$ which relates to the importance of the $i$th sample, such as $d_2$ sampling \citep{drineas2006sampling}, where $\pi_i$ is proportional to the squared norm of the $i$th row, or leverage score sampling, where the scores are proportional to the leverage scores \citep{mahoney2011randomized,ma2015statistical,chen2016statistical}. 

Another frequently used type of random projections are the so-called oblivious sparse norm-approximating projections (OSNAPs) \citep{Kane2010,OSNAP}. More precisely, an $r\times n$ random projection matrix $S$ is an OSNAP if $S_{ij}=\delta_{ij}\sigma_{ij}/\sqrt{s}$, where $s\ge 1$ is a fixed integer, $\sigma_{ij}$ are random signs, and $\delta_{ij}$ are indicator random variables satisfying the following properties:
\begin{itemize}
\item fixed number of nonzeros per column: for any $1\le j \le n$, $\sum_{i=1}^r \delta_{ij}=s$ with probability 1;

\item negative correlation between the nonzeros: for any $E\subset \{1,\cdots, r\}\times \{1,\cdots, n\}$, $\E \prod_{(i,j)\in E}\delta_{ij}\le (s/r)^{|E|}$.
\end{itemize}
A concrete example is when we independently choose $s$ nonzero locations for each column, uniformly at random over all possible subsets of size $s$. 

The difficulty in analyzing leverage score sampling and OSNAP lies in a complete understanding of the exact ESD of $SS^\top$, which is needed in both the study of the self-consistent equations in \eqref{sc} and the matrix \eqref{defnMx}. However, if the signal strengths are strong, then it is possible to obtain some approximate results using the argument in Theorem \ref{sketchthmlarge}, where only the first few moments of the ESD of $SS^\top$ will be needed.

We can compare leverage score sampling and OSNAP with the sketching methods analyzed in Section \ref{sec5types} through simulations; see Figure \ref{fig: compare all}. We take $n=4000$, $p=800$, $k=8$ with varying signal strengths and different $r$. The error bars are the standard deviations over 20 independent repetitions. We plot the overlap between the spiked population eigenvector and the sample eigenvector after sketching. Large overlaps show the methods are better at preserving the eigenspace of the signal. {  From the figures in Figure \ref{fig: compare all}, we observe the following common phenomena}:
\begin{itemize}
\item Haar projection, uniform sampling, subsampled randomized Hadamard transform, and normalized CountSketch have roughly the same efficiency, and they are all better than i.i.d. Gaussian projection, as discussed in Section \ref{sec Gauss}. Moreover, unnormalized CountSketch is less accurate than normalized CountSketch. 
(However, of course, CountSketch can have other advantages like running time adapted to input sparsity.)

\item Leverage score sampling behaves similarly to uniform sampling. Note that this is related to the choice of model in this paper. 
When the data model is highly non-uniform, we expect that leverage score sampling will be better than uniform sampling.

\item OSNAP is less accurate than all the other methods. Again, this method can have other advantages, like near-optimally small $r$ to ensure oblivious subspace embedding for sparse inputs. But the gap between OSNAP and other sketching methods gets smaller as $r$ increases.

\end{itemize} 
For Haar projection, uniform sampling, subsampled randomized Hadamard transform, and normalized CountSketch, the ESD of $SS^\top$ is a singleton at $1$. On the other hand, for i.i.d. Gaussian projection, unnormalized CountSketch and OSNAP, the ESD of $SS^\top$ is supported on an interval around 1. Thus based on the simulations and the discussion in Remark \ref{rem extra}, we see that in order to better preserve the eigenspace of the signal, it is better to have a more ``concentrated" ESD for $SS^\top$.

\begin{figure}
\centering
\begin{subfigure}{.45\textwidth}
\includegraphics[width=\textwidth]{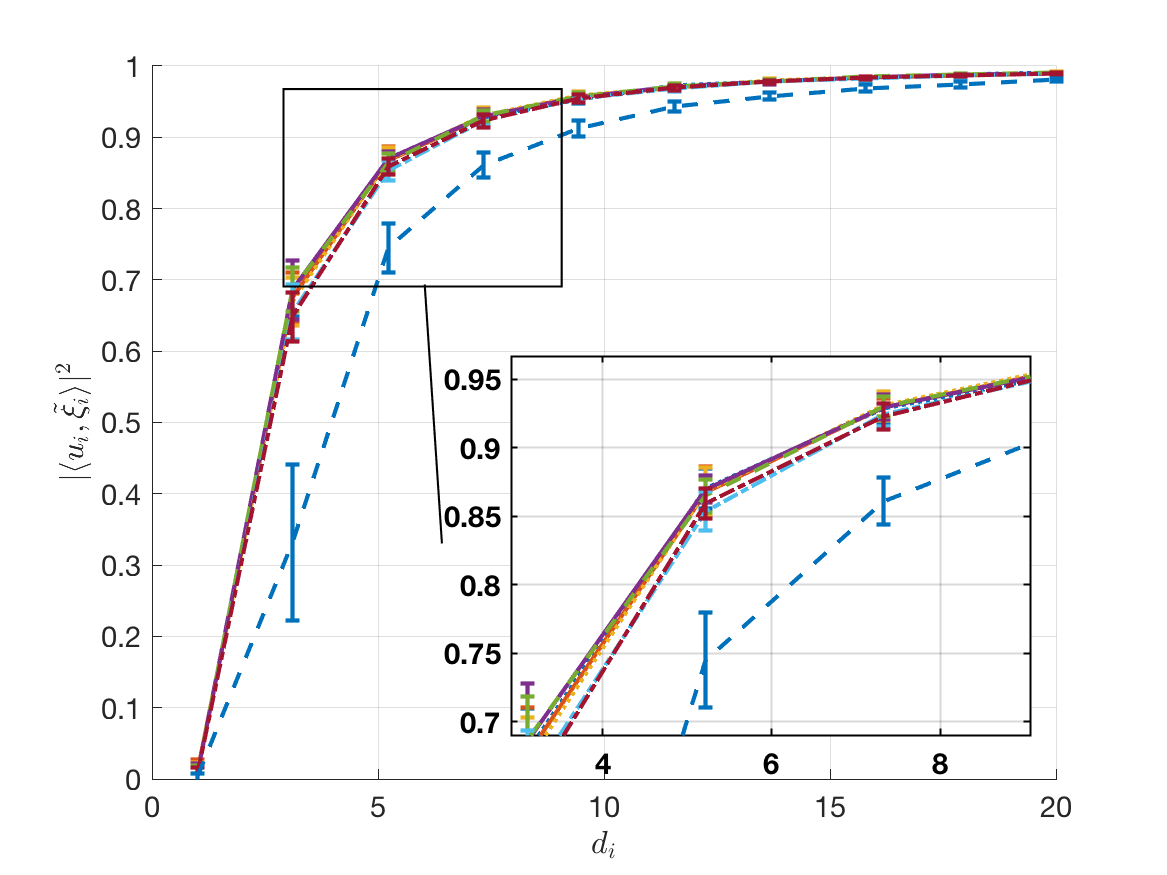}
\end{subfigure}
\begin{subfigure}{.45\textwidth}
\includegraphics[width=\textwidth]{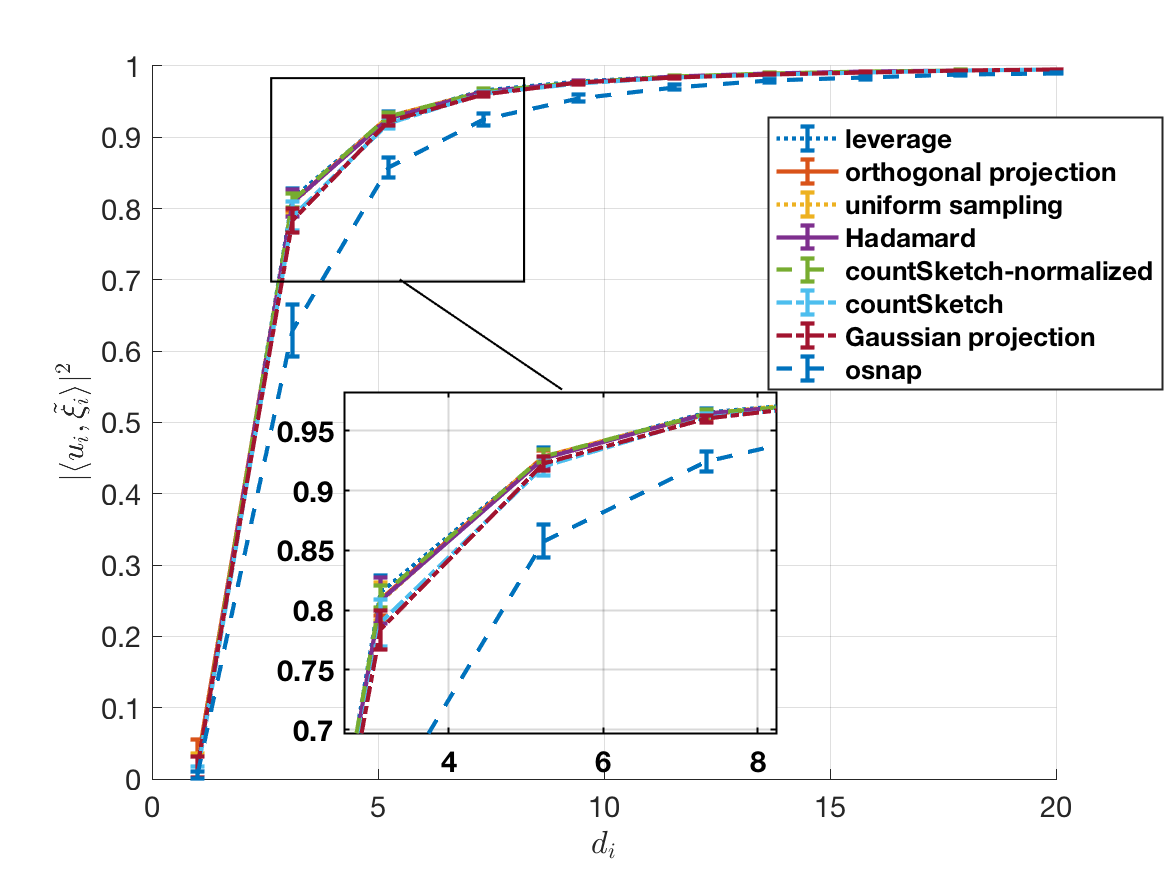}
\end{subfigure}
\begin{subfigure}{.45\textwidth}
\includegraphics[width=\textwidth]{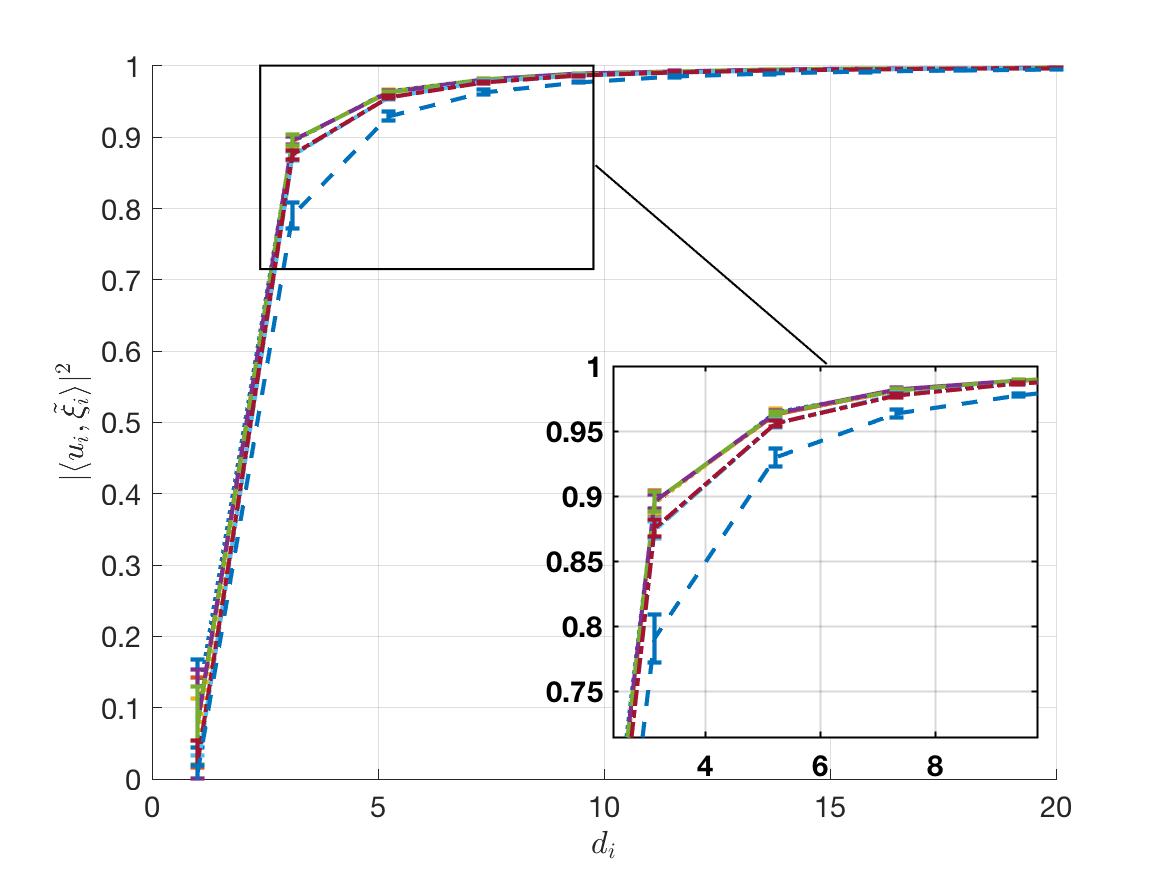}
\end{subfigure}
\begin{subfigure}{.45\textwidth}
\includegraphics[width=\textwidth]{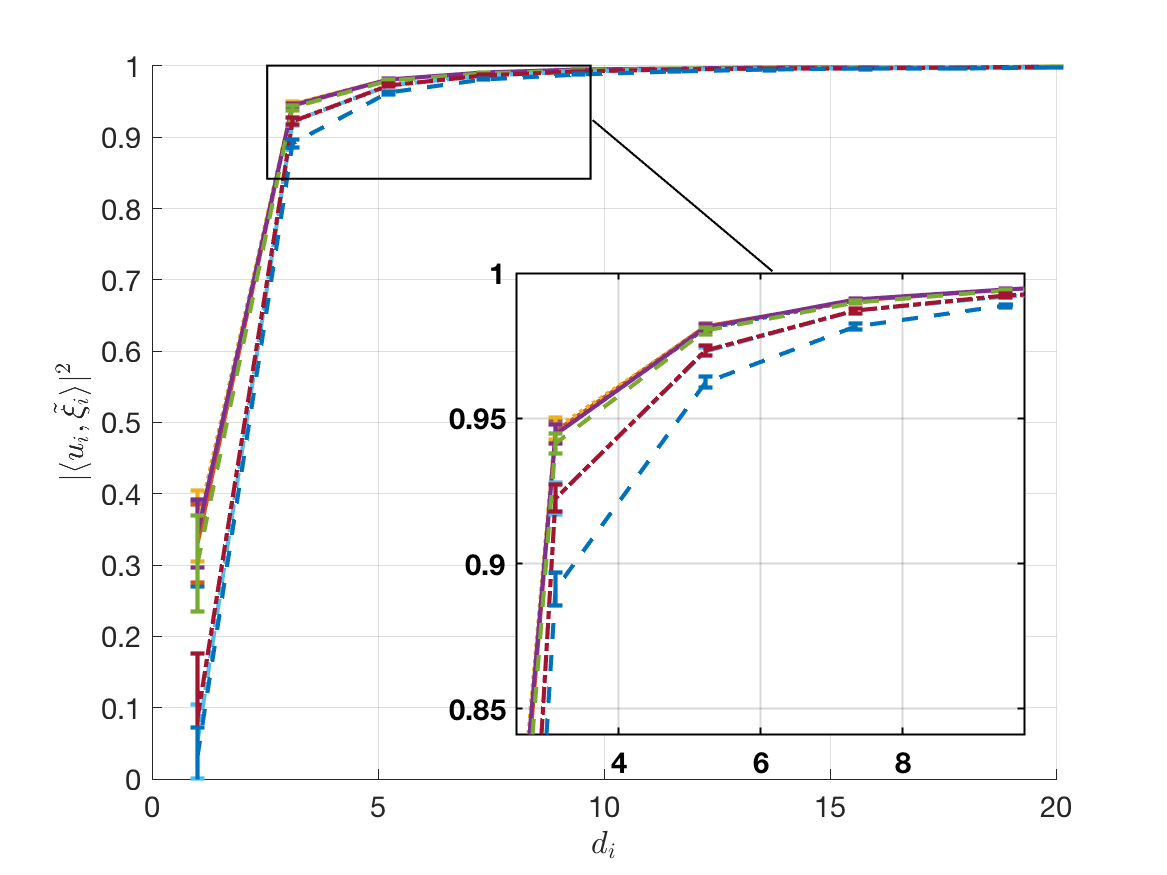}
\end{subfigure}
\caption{Comparing different sketching methods. Here $n=4000$, $p=800$, $k=8$, $r\in\{200, 400, 800, 1600\}$, and $X_{ij}\stackrel{iid}{\sim}\mathrm{Unif}(-\sqrt{3/n},\sqrt{3/n})$. The error bars are the standard deviations over 20 independent repetitions. We order the methods according to their accuracy.  
}
\label{fig: compare all}
\end{figure}

Finally, it could be of interest to generalize the argument to methods designed for the streaming data setting, such as core sketching \citep{tropp2017practical}, and iterative methods that can achieve arbitrary accuracy, such as blanczos \citep{rokhlin2009randomized,halko2011finding} and randomized block Krylov iteration \citep{Musco2015}.

\appendix
\section*{Acknowledgements}

We are grateful to the associate editor and referees, whose feedback has resulted in a significant improvement. We thank Ricardo Guerrero for valuable discussions. This work was supported in part by NSF BIGDATA grant IIS 1837992 and NSF TRIPODS award 1934960.

\section{The sketched spiked model}\label{sec_defspike}

\subsection{The model}\label{sec_defmodel}

Consider the ``signal plus noise" or ``spiked covariance" matrix model
$$ Y= \sum_{i=1}^k d_i w_iu_i ^{\top}+ X \Sigma^{1/2}.$$
Here $\sum_{i=1}^k d_i w_iu_i ^{\top}$ is the signal component, $\{d_i\}_{1\le i \le k}$ give the strengths of the signals, and $\{w_i\}_{1\le i \le k}$ and $\{u_i\}_{1\le i \le k} $ are the left and right singular vectors of the signals, respectively. Also, $X \Sigma^{1/2}$ is the noise component, where $\Sigma$ is a $p\times p$ deterministic covariance matrix, and $X=(x_{ij})$ is an $n\times p$ random matrix, where the entries $x_{ij}$, $1 \leq i \leq n$, $1 \leq j \leq p$, are real independent random variables satisfying
\begin{equation}\label{eq_12moment} 
\mathbb{E} x_{ij} =0, \ \quad \ \mathbb{E} \vert x_{ij} \vert^2  = n^{-1}.
\end{equation} 
We assume that the signal is independent of the noise matrix $X$.  Such signal plus noise or spiked models have been widely studied.  The special case $\Sigma=I_p$ is known as the \emph{standard} or (\emph{Johnstone's}) spiked model, \citep{spikedmodel}, and more general spiked models have been proposed and studied, see \cite{yao2015large,paul2014random,couillet2011random} and references therein. When $\Sigma$ is diagonal, and when $w_i$ have iid entries, this model can be viewed as a specific \emph{factor model}, and thus has a long history see e.g., \cite{spearman1904general,thurstone1947multiple,anderson1958introduction,dobriban2017factor}. This model is fundamental for understanding principal component analysis (PCA), and has been thoroughly studied under high-dimensional asymptotics. Its understanding will serve as a baseline in our study.

In this paper, we are interested in the PCA of the \emph{sketched} data matrix 
\beqs\wt Y=  SY \eeqs
where $S$ is an $r\times n$ random \emph{sketching matrix} that is independent of both the signal and the noise. This can be written as 
\be\label{sketchspike} \wt Y=  S X \Sigma^{1/2} + \sum_{i=1}^k d_i v_iu_i ^{\top} , \quad v_i\equiv Sw_i.\ee 
A similar spiked separable model has been studied in \cite{ding2019spiked}, although the setting there is a little different from our current setting, 
because the spikes are added to the population covariance matrices there. 
However, we will still follow the presentation from \cite{ding2019spiked} to some extent. 
We will study the spiked eigenvalues and eigenvectors of 
$$\wt{\cal Q}_1:= \wt Y^{\top}\wt Y,
\quad \mathrm{ a}
\quad p\times p 
\quad \mathrm{matrix,\quad and} 
\quad \wt{\mathcal Q}_2:=\wt Y \wt Y^{\top}
\quad\mathrm{ an } 
\quad r\times r 
\quad \mathrm{matrix}.$$
We denote their (nontrivial) eigenvalues in descending order as $\wt \lambda_1 \geq \ldots \geq \wt \lambda_{p\wedge r}$. On the other hand, we will also use the non-spiked matrix 
\be\label{sketchnonspike} 
\wt X=  SX \Sigma^{1/2} .
\ee
 We denote the corresponding non-spiked matrices as 
$${\cal Q}_1:=\wt X^{\top}\wt X,
\quad \mathrm{ a}
\quad p\times p 
\quad \mathrm{matrix,\quad and} 
\quad {\mathcal Q}_2:=\wt X\wt X^{\top}
\quad\mathrm{ an } 
\quad r\times r 
\quad \mathrm{matrix},$$
 with eigenvalues $ \lambda_1 \geq \ldots \geq \lambda_{p\wedge r}$. 
If we study the centered sample covariance matrices, then we can still adopt the current setting by using  
$$ \wt Y= (I-ee^{\top}) S  X  \Sigma^{1/2} + (I-ee^{\top})\sum_{i=1}^k d_i v_i u_i^{\top}, \quad e:=r^{-1/2}(1,\cdots, 1)^{\top} \in \R^r.$$
or
$$\wt Y:= S (I-ee^{\top})  X  \Sigma^{1/2} + S\sum_{i=1}^k d_i  (I-ee^{\top})w_i u_i^{\top},\quad e:=n^{-1/2}(1,\cdots, 1)^{\top} \in \R^n.$$
For simplicity, we put aside this issue right now and assume that all the entries of $X$ are centered for most part of this paper. {In Appendix \ref{sec_centermodel}, we will show that introducing $(I-ee^{\top}) $ does not affect our results.}


We assume that the number of signals $k$ is a finite fixed integer, the strengths $d_1 > d_2 > \cdots > d_k >0$ are fixed constants, and $ u_i$,  $w_i$ are deterministic unit vectors. We shall consider the \emph{high-dimensional} setting in this paper. More precisely, we assume that the aspect ratios 
\be\label{eq_ratio}\gamma_n:= p /n \to \gamma,\quad \xi_n:= r / n \to \xi, \quad \text{as } \ n\to \infty \ee
for some constants $\gamma\in (0,\infty)$ and $\xi \in (0,1)$. 

We assume that the noise covariance $\Sigma$ and the outer product of the sketcing matrix $B:=SS^{\top}$ (an $r\times r$ matrix) have eigendecompositions
\be \label{eigen}
\Sigma= O_1 \Sig_1 O_1^{\top} , \quad B= O_2 \Sig_2 O_2^{\top}  ,\quad \Sig_1=\text{diag}(\si_1, \cdots, \si_p), \quad \Sig_2=\text{diag}( s_1, \cdots,  s_r),
\ee
where the eigenvalues of $\Sigma$ and $B$ are
$$\si_1 \ge \si_2 \ge \ldots \ge \si_p \ge 0, \quad s_1 \ge s_2 \ge \ldots \ge s_r \ge 0.$$
We denote the empirical spectral distributions (ESDs) of $\Sigma$ and $B=SS^{\top} $ by
\begin{equation}\label{sigma_ESD}
\pi_{\Sigma} := \frac{1}{p} \sum_{i=1}^p \delta_{\si_i} \ ,\quad \pi_{B}:= \frac{1}{r} \sum_{i=1}^r \delta_{s_i}\ .
\end{equation}
We assume that there exists a small constant $0<\tau<1$ such that for all $n$ large enough,
\begin{equation}\label{assm3}
\max\{\si_1, s_1\} \le \tau^{-1}, \quad \max\left\{\pi_{\Sigma}([0,\tau]), \pi_{B}([0,\tau])\right\} \le 1 - \tau .
\end{equation}
Both of these conditions are natural: the first condition means that the operator norms of $\Sigma$ and $B$ are bounded by $\tau^{-1}$, and the second condition means that the spectra of $\Sigma$ and $B$ do not concentrate at zero. Moreover, we assume that $\pi_{\Sigma}$ and $\pi_B$ converge to certain probability distributions as $n\to \infty$. We will also assume some regularity conditions on $\pi_{\Sigma}$ and $\pi_B$ later.

Finally we assume that the random variables $x_{ij}$ have finite (4+$\e$)-moments, in the following sense: there exists a constant $\tau>0$ such that 
\begin{equation}\label{eq_highmoment} 
\max_{i,j} \mathbb E |\sqrt{n}x_{ij}|^{4+\tau} \le \tau^{-1}.  
\end{equation}

\subsection{Resolvents and limiting laws}
\label{resll}
As usual in random matrix theory dating back to the seminal work of \cite{marchenko1967distribution}, we study the eigenvalue statistics of $\mathcal Q_{1,2}$ and $\wt {\mathcal Q}_{1,2}$ through their {\it{resolvents}} (or  {\it{Green's functions}}). 
Throughout the following, we shall denote the upper half complex plane and the right half real line by 
$$\mathbb C_+:=\{z\in \mathbb C: \im z>0\}, \quad \mathbb R_+:=[0,\infty).$$ 

\begin{definition}[Resolvents]\label{defn_resolvent}
For $z = E+ \ii \eta \in \mathbb C_+,$ we define the following resolvents as 
\begin{equation}\label{def_green}
\mathcal G_{1,2}(X,z):=\left({\mathcal Q}_{1,2}(X) -z\right)^{-1} , \ \ \ \wt{\mathcal G}_{1,2} (X,z):=\left(\wt{\mathcal Q}_{1,2}(X)-z\right)^{-1} .
\end{equation}
Note that the subscript $``1"$ indicates $p\times p$ matrices, while the subscript $``2"$ indicates $r\times r$ matrices. We denote the ESD $\rho^{(p)}$ of ${\mathcal Q}_{1}$ and its Stieltjes transform as 
\be\label{defn_m}
\rho\equiv \rho^{(p)} := \frac{1}{p} \sum_{i=1}^p \delta_{\lambda_i({\mathcal Q}_1)},\quad m(z)\equiv m^{(n)}(z):=\int \frac{1}{x-z}\rho^{(p)}(dx)=\frac{1}{p} \mathrm{Tr} \, \mathcal G_1(z).
\ee
We also introduce the following quantities, which can be viewed as weighted Stieltjes transforms of ${\mathcal Q}_1$ and ${\mathcal Q}_2$, respectively:
\begin{equation}\label{defn_m1m2}
m_1(z)\equiv m_1^{(n)}(z):= \frac{1}n\tr \left(\Sigma \mathcal G_1(z)\right) ,\quad m_2(z)\equiv m_2^{(n)}(z):=\frac{1}{n}\tr\left( B\mathcal G_2(z)\right). 
\end{equation}
\end{definition}


Notice that in equation \eqref{defn_m1m2S} below, different from \eqref{defn_m1m2}, we used the factor $p^{-1}$ instead of $n^{-1}$ in the definition of $m^S_1$. We adopted that notation there because $m_{1}^S(z)$ in that case actually plays the role of $m(z)$ in \eqref{defn_m}. 

We now describe the limiting laws of the density $\rho$ and its Stieltjes transform $m(z)$.
We define $(m_{1c}(z),m_{2c}(z))$ $\in \mathbb C_+^2$ as the unique solution to the following system of \emph{self-consistent equations}
\begin{equation}\label{separa_m12}
\begin{split}
& {m_{1c}(z)} = \frac1n\sum_{i = 1}^p \frac{\sigma_i}{-z(1+\sigma_i m_{2c})} = \gamma_n  \int\frac{x}{-z\left[1+xm_{2c}(z) \right]} \pi_{\Sigma}(dx) ,\\
& {m_{2c}(z)} = \frac{1}{n} \sum_{\mu= 1}^r \frac{s_\mu}{-z(1+s_\mu m_{1c})} = \xi_n \int\frac{x}{-z\left[1+xm_{1c}(z) \right]} \pi_B (dx) .
\end{split}
\end{equation}
It is known that this system admits a unique solution, see e.g., \cite{Zhang_thesis,el2009concentration}. Then we define the fundamental quantity $m_c$ in terms of the solution $m_{2c}$ found above:
\begin{equation}\label{def_mc}
m_c(z):= \frac{1}{p}\sum_{i = 1}^p \frac{1}{-z(1+\sigma_i m_{2c})}   =\int\frac{1}{-z\left[1+xm_{2c}(z) \right]} \pi_{\Sigma}(dx).
\end{equation}
It is easy to verify that $m_c(z)\in \mathbb C_+$ for $z\in \mathbb C_+$. It turns out that this is \emph{the Stieltjes transform of the limiting spectral distribution} of the non-spiked sketched matrix $\cal Q_1= \wt X^{\top} \wt X$, where recall that $\wt X$ is defined in \eqref{sketchnonspike}. We can recover the distribution of eigenvalues in the usual way, by inverting the Stieltjes transform. Letting $\eta \downarrow 0$, we obtain the probability measure $\rho_{c}$ which describes the limiting distribution of the eigenvalues with the inverse formula
\begin{equation}\label{ST_inverse}
\rho_{c}(E) = \lim_{\eta\downarrow 0} \frac{1}{\pi}\Im\, m_{c}(E+\ii \eta).
\end{equation}
Moreover, under the assumption \eqref{assm3}, the supremum of the support of $\rho_c(E)$ is at a finite value $\lambda_+$, known as ``the right edge", which is also the ``classical location" of the largest eigenvalue of $\cal Q_1$. This means that 
the largest eigenvalue will actually converge almost surely to $\lambda_+$.

The known results on the existence and uniqueness of the solution $(m_{1c}(z),m_{2c}(z))$ to \eqref{separa_m12}, the continuous density of $\rho_c$ and the rightmost edge $\lambda_+$ are collected into the following lemma.

\begin{lemma} [Existence, uniqueness, and continuous density]\label{lambdar}
For any $z\in \mathbb C_+$, there exists a unique solution $(m_{1c},m_{2c})\in \mathbb C_+^2$ to the systems of equations in (\ref{separa_m12}), such that both functions $m_{1c},m_{2c}$ are Stieltjes transforms of two measures (not necessarily probability measures) $\mu_{1c}$ and $\mu_{2c}$ supported on $\mathbb R_+$. The function $m_c$ in (\ref{def_mc}) is the Stieltjes transform of a probability measure $\mu_c$ supported on $\mathbb R_+$. Moreover, $\mu_c$ (resp. $\mu_{1,2c}$) has a continuous density $\rho_c(x)$ (resp. $\rho_{1,2c}(x)$) on $(0,\infty)$, which is defined by \eqref{ST_inverse}. The densities $\rho_{c}$ and $\rho_{1,2c}$ all have the same support on $(0,\infty)$, which is a union of intervals:  
\begin{equation}\label{support_rho1c}
{\rm{supp}} \, \rho_{1,2c} \cap (0,\infty) ={\rm{supp}} \, \rho_{c} \cap (0,\infty) = \bigcup_{k=1}^\fa [e_{2k}, e_{2k-1}] \cap (0,\infty),
\end{equation}
where the number of components $\fa\in \mathbb N$ depends only on $\pi_{\Sigma}$ and $\pi_{B}$. Here we order the components so that $e_{2k}<e_{2k-1}<e_{2k-2}$, hence $e_1$ is the supremum of the support--or ``right edge"--of the $\rho$-s.  Under the first assumption in \eqref{assm3} (i.e., when the top eigenvalues of the population spectra are bounded), we have that the right edge of the empirical spectrum is also bounded, i.e., $e_1=\OO(1)$. Finally we know that evaluating the Stieltjes transforms $m_{1c},m_{2c}$ at this right edge places us in a certain special range, so that $m_{1c}(e_1) \in (- s_1^{-1}, 0)$ and $m_{2c}(e_1) \in (-\sigma_1^{-1}, 0)$. 
\end{lemma}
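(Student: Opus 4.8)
The plan is to follow the standard route for Marchenko--Pastur/Silverstein-type systems: reduce \eqref{separa_m12} to a single self-consistent equation, quote the known existence/uniqueness theory, verify the Stieltjes-transform (Nevanlinna) representations by inspection of the equations, and then run the classical inverse-function analysis of the Marchenko--Pastur law, adapted to the separable covariance model, to obtain the continuous density, the support structure, the edge bound, and the location of $(m_{1c},m_{2c})$ at the right edge. For the first part I would cite and adapt \cite{Zhang_thesis,el2009concentration}, and for the edge analysis \cite{ding2019spiked,yang2019spiked,bai2009spectral}. Concretely, substituting the first equation of \eqref{separa_m12} into the second exhibits $m_{2c}$ as a fixed point of the analytic map $m\mapsto \xi_n\int x\{-z[1+x\,\Psi_z(m)]\}^{-1}\pi_B(dx)$, where $\Psi_z(m):=\gamma_n\int x\{-z[1+xm]\}^{-1}\pi_\Sigma(dx)$; for $z\in\mathbb{C}_+$ this composition is a self-map of a suitable invariant domain (after passing, if convenient, to the companion transforms), so a normal-families fixed-point argument gives existence with $(m_{1c},m_{2c})\in(\mathbb{C}_+)^2$. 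For uniqueness, given two solutions one subtracts the corresponding copies of \eqref{separa_m12}, uses $\im z>0$ to keep the denominators $1+\sigma_i m_{2c}$ and $1+s_\mu m_{1c}$ away from zero, and uses the a priori bound $|m_{1c}|,|m_{2c}|=\OO((\im z)^{-1})$ to get a strict contraction for $|m_{2c}-m_{2c}'|$, forcing equality and then $m_{1c}=m_{1c}'$; this is essentially the content of the cited references.

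Next, from the two equations one reads off that $m_{1c},m_{2c}$, and hence $m_c$, are analytic on $\mathbb{C}_+$, map $\mathbb{C}_+$ into $\mathbb{C}_+$, are real on $(-\infty,0)$, and satisfy $\sup_{\eta\ge1}|\ii\eta\, m(\ii\eta)|<\infty$; by the Nevanlinna representation theorem each is therefore the Stieltjes transform of a finite positive measure supported on $[0,\infty)$, which produces $\mu_{1c}$ and $\mu_{2c}$ (not necessarily probability measures). For $m_c$ one computes in addition $\ii\eta\, m_c(\ii\eta)\to-1$ as $\eta\to\infty$ directly from \eqref{def_mc} and \eqref{separa_m12}, so $\mu_c$ is a probability measure.

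The technical core is the density and support statement. First, the imaginary parts $\im m_{1c},\im m_{2c},\im m_c$ at a boundary point $x+\ii 0$ with $x>0$ vanish, or stay positive, simultaneously, because the self-consistent equations couple them; hence $\mu_c$ and $\mu_{1,2c}$ have a common support on $(0,\infty)$, and $\rho_c(x)=\pi^{-1}\im m_c(x+\ii 0)$ there. To get continuity up to the edges I would invoke the implicit function theorem: at an interior spectral point the Jacobian of the map defined by \eqref{separa_m12} is invertible, which gives a real-analytic extension, while at an edge the Jacobian degenerates and one extracts the square-root vanishing profile. The support is then characterized, exactly as for the Marchenko--Pastur law but with the extra correlation, via the inverse function $m_{2c}\mapsto z_2(m_{2c})$, a rational-type function with poles at $-1/\sigma_i$ and, through $m_{1c}$, at $-1/s_\mu$: $\mathrm{supp}\,\rho_c\cap(0,\infty)$ is the complement of the union of the images of the increasing branches of $z_2(\cdot)$, which is a union $\bigcup_{k=1}^{\fa}[e_{2k},e_{2k-1}]$ with $\fa$ depending only on $\pi_\Sigma,\pi_B$ (and the aspect ratios), and the $e_k$ are the critical values $\{z_2(m):z_2'(m)=0\}$. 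Since $m_{2c}(e_1)$ is real, negative, and lies on the increasing branch producing the rightmost interval, it must sit strictly between $0$ and the nearest pole, i.e.\ $m_{2c}(e_1)\in(-\sigma_1^{-1},0)$; the analogous statement for the companion inverse gives $m_{1c}(e_1)\in(-s_1^{-1},0)$. Finally $e_1=\OO(1)$: under \eqref{assm3} the operator norms of $\Sigma$ and $B$ are bounded by $\tau^{-1}$, so a crude a priori bound on \eqref{separa_m12} at large real $z$ (equivalently, on $\|\mathcal Q_1\|$) shows the right edge is bounded in terms of $\tau,\gamma,\xi$.

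I expect this last step --- simultaneously establishing the continuous density up to the finitely many edges, the square-root profile there, and the fact that the support is a finite union of intervals --- to be the part requiring real work; the existence/uniqueness and the Nevanlinna representations are either direct manipulations of \eqref{separa_m12} or straight quotations from \cite{Zhang_thesis,el2009concentration}.
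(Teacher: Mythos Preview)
Your proposal is correct and follows essentially the same route as the references the paper cites; in fact the paper does not give a self-contained proof at all but simply defers to \cite[Theorem 1.2.1]{Zhang_thesis}, \cite[Theorem 2.4]{Hachem2007deterministic}, and \cite[Section 3]{Separable_solution}. Your sketch (reduction to a single fixed-point equation, Nevanlinna representation, inverse-function analysis of the support and edges) is exactly the content of those works, so there is nothing to add beyond noting that the specific references used in the paper are \cite{Hachem2007deterministic} and \cite{Separable_solution} rather than \cite{el2009concentration}.
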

\begin{proof}
The proof of this lemma is contained in {\cite[Theorem 1.2.1]{Zhang_thesis}}, {\cite[Theorem 2.4]{Hachem2007deterministic}} and {\cite[Section 3]{Separable_solution}}.
\end{proof}

 We shall call $e_k$ the \emph{spectral edges}. In particular, we will only focus on the rightmost edge $\lambda_+ := e_1$. 
Now we make the following assumption. It guarantees a regular square-root behavior of the spectral densities $\rho_{c}$ near $\lambda_+$ and rules out the existence of spikes for $\cal Q_{1,2}$. In other words, the spikes of $\wt{\cal Q}_{1,2}$ are only caused by the signals in \eqref{sketchspike}. We note that this is a mild condition, and holds in particular when the ESDs of $\Sigma$ and $B$ are well behaved. Specifically, when $B=I_n$ (i.e., when there is no projection), then it is known that the square root behavior holds as long as the limit of the ESD of $\Sigma$ is sufficiently ``regular" at its right edge. For instance, it is enough if the right edge of the limiting ESD is a point mass, or the end of a uniform distribution, see e.g., \cite{silverstein1995analysis,Bai2006}. This is a mild condition that, while hard to check in applications, does not appear to be a significant limitation. 


\begin{assumption}[Right edge regularity] \label{ass:unper} 
There exists a constant $\tau>0$ such that 
\begin{equation}\label{assm_gap}
1+m_{1c}(\lambda_+) s_1 \geq \tau, \quad 1+m_{2c}(\lambda_+) \sigma_1 \geq \tau.
\end{equation}
\end{assumption}
 Under this assumption, we have the following lemma.
 
\begin{lemma}[Lemma 2.6 of \cite{yang2019spiked}, square root density at edge]\label{lambdar_sqrt}
Under assumptions \eqref{eq_ratio}, \eqref{assm3} and \eqref{assm_gap}, there exists a constant $a >0$ such that
\be\label{sqroot3}
\rho_{c}(\lambda_+ - x) = a  x^{1/2} + \OO(x), \quad \mathrm{as} \quad x\downarrow 0,
\ee
and
\be\label{sqroot4}
\quad m_{c}(z) = m_{c}(\lambda_+) + \pi  a (z-\lambda_+)^{1/2} + \OO(|z-\lambda_+|), \quad z\to \lambda_+ , \ \ \im z\ge 0.
\ee
The estimate \eqref{sqroot4} also holds for $m_{1,2c}$ with possibly different constants $a_{1,2}>0$. 
\end{lemma}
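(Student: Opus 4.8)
The statement is purely about the solution of the algebraic system \eqref{separa_m12} — no randomness enters — so the plan is the classical ``square root at a regular edge'' argument, specialized to \eqref{separa_m12} and made uniform in $n$. First I would reduce the system to a single scalar equation: writing $A(m):=\int\frac{y\,\pi_\Sigma(\dd y)}{1+ym}$ and using the first line of \eqref{separa_m12} to eliminate $m_{1c}$ (note the identity $z(1+x m_{1c}(z))=z-\gamma_n x A(m_{2c}(z))$), the pair $(m_{1c},m_{2c})$ is determined by $G(m_{2c}(z),z)=0$ with
\[
G(m,z):=m+\xi_n\int\frac{x\,\pi_B(\dd x)}{\,z-\gamma_n x A(m)\,}.
\]
Set $m_0:=m_{2c}(\lambda_+)$. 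Lemma \ref{lambdar} gives $m_0\in(-\sigma_1^{-1},0)$ and $m_{1c}(\lambda_+)\in(-s_1^{-1},0)$, and Assumption \ref{ass:unper} (i.e.\ \eqref{assm_gap}) upgrades these to the uniform lower bounds $1+x m_0\ge\tau$ on $\mathrm{supp}\,\pi_\Sigma$ and $z-\gamma_n x A(m_0)=\lambda_+(1+x m_{1c}(\lambda_+))\ge\tau\lambda_+$ on $\mathrm{supp}\,\pi_B$. Hence $G$ is holomorphic on a bidisc around $(m_0,\lambda_+)$ of radius bounded below in $n$, and $\partial_z G(m_0,\lambda_+)=-\xi_n\int x\,[z-\gamma_n xA(m_0)]^{-2}\,\pi_B(\dd x)$ is strictly negative and bounded away from $0$ (using that $\int x\,\pi_B\ge\tau^2$ by \eqref{assm3}). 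By the holomorphic implicit function theorem there is an analytic $z_c$ near $m_0$ with $z_c(m_0)=\lambda_+$ and $G(m,z_c(m))\equiv 0$; on $(\lambda_+,\infty)$ its real branch is the functional inverse of the strictly increasing map $z\mapsto m_{2c}(z)$ (increasing since $m_{2c}$ is a Stieltjes transform of a positive measure).

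Next I would show $\lambda_+$ is a non-degenerate critical point of $z_c$. That $z_c'(m_0)=0$ follows because otherwise, for real $m$ slightly below $m_0$, $z_c(m)$ would be real, strictly less than $\lambda_+$, and would solve the self-consistent equation — impossible, since just to the left of the right edge $\lambda_+$ the limiting density is positive (Lemma \ref{lambdar}), forcing $m_{2c}$ to be non-real there. Using $z_c'(m_0)=0$ to cancel cross terms when differentiating $G(m,z_c(m))\equiv 0$ twice, $z_c''(m_0)=-\partial_m^2 G(m_0,\lambda_+)/\partial_z G(m_0,\lambda_+)$, so it remains to show $\partial_m^2 G(m_0,\lambda_+)\neq 0$ with a lower bound uniform in $n$ — this is the main obstacle. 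A direct computation expresses $\partial_m^2 G(m_0,\lambda_+)$ as $\gamma_n\xi_n A''(m_0)\int x^2[z-\gamma_n xA(m_0)]^{-2}\pi_B(\dd x)+2\gamma_n^2\xi_n A'(m_0)^2\int x^3[z-\gamma_n xA(m_0)]^{-3}\pi_B(\dd x)$, where $A''(m_0)=\int 2y^3(1+y m_0)^{-3}\pi_\Sigma(\dd y)$; for the rightmost edge every factor is nonnegative ($1+y m_0>0$, $z-\gamma_n xA(m_0)>0$, $x\ge 0$) and the first summand is bounded below using \eqref{assm3} and \eqref{assm_gap}, so $\partial_m^2 G(m_0,\lambda_+)>0$ and $z_c''(m_0)>0$, both uniformly in $n$. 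The genuinely delicate point is tracking these lower bounds uniformly as $\pi_\Sigma,\pi_B,\gamma_n,\xi_n$ vary with $n$; Assumption \ref{ass:unper} is exactly what rules out a degenerate (non-square-root) right edge and makes the uniform control possible. Alternatively one may quote the regular-edge analysis for the general Marchenko--Pastur / separable-covariance equation from the literature, whose hypotheses follow from \eqref{assm3}--\eqref{assm_gap}.

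Finally, from $z-\lambda_+=\tfrac12 z_c''(m_0)(m-m_0)^2+\OO(|m-m_0|^3)$ with $z_c''(m_0)>0$, inverting the quadratic on the branch that maps $\mathbb C_+$ into the range of $m_{2c}$ yields $m_{2c}(z)=m_0+\sqrt{2/z_c''(m_0)}\,(z-\lambda_+)^{1/2}+\OO(|z-\lambda_+|)$ as $z\to\lambda_+$, $\im z\ge 0$, which is \eqref{sqroot4} for $m_{2c}$. Since $m_{1c}(z)=-\gamma_n A(m_{2c}(z))/z$ and $m_c(z)=\int[-z(1+x m_{2c}(z))]^{-1}\pi_\Sigma(\dd x)$ are holomorphic in $(z,m_{2c})$ on a uniform neighborhood of $(\lambda_+,m_0)$, composing with the expansion of $m_{2c}$ transfers the same $(z-\lambda_+)^{1/2}$ singularity to $m_{1c}$ and $m_c$, with leading coefficients of the form $\partial_m(\cdot)\big|_{(\lambda_+,m_0)}\cdot\sqrt{2/z_c''(m_0)}$; these partials are strictly positive (their integrands $\tfrac{x}{z(1+xm)^2}$, etc., are nonnegative), so the constants $a,a_1,a_2$ come out positive. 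Setting $z=\lambda_+-x+\ii 0$ and applying \eqref{ST_inverse}, i.e.\ $\rho_c=\pi^{-1}\Im m_c$, converts these into $\rho_c(\lambda_+-x)=a\,x^{1/2}+\OO(x)$ and the analogous expansions for $\rho_{1,2c}$, which is \eqref{sqroot3} and its counterparts.
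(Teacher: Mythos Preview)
The paper does not actually prove this lemma: it is imported verbatim as Lemma~2.6 of \cite{yang2019spiked}, so there is no in-paper proof to compare against. Your argument is the standard regular-edge analysis (reduce \eqref{separa_m12} to a scalar equation $G(m,z)=0$, invoke the implicit function theorem to get an analytic inverse $z_c$, show $z_c'(m_0)=0$ and $z_c''(m_0)>0$, then invert the quadratic), and this is essentially the route taken in \cite{yang2019spiked} and the separable-covariance literature it builds on (e.g.\ \cite{Separable_solution}). Your computation of $\partial_m^2 G(m_0,\lambda_+)$ and its sign is correct, and you are right that \eqref{assm3}--\eqref{assm_gap} are exactly what make the positivity and the lower bounds uniform in $n$.

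One point to tighten: your justification of $z_c'(m_0)=0$ appeals to the density being \emph{positive} immediately to the left of $\lambda_+$, which is close to what you are ultimately trying to prove. A cleaner, non-circular version is: if $z_c'(m_0)\neq 0$ then, by the implicit function theorem applied to $G$ in the $m$-variable, $m_{2c}$ extends to a real-analytic function on a full real neighborhood of $\lambda_+$, hence $\Im m_{2c}\equiv 0$ there and $\rho_{2c}$ vanishes on that neighborhood. This contradicts $\lambda_+\in\mathrm{supp}\,\rho_{2c}$ (given by Lemma~\ref{lambdar}), since every neighborhood of a support point must carry mass. Equivalently, you can cite directly that $\partial_m G(m_0,\lambda_+)=0$ is the characterization of $\lambda_+$ as an edge (this is Lemma~2.5 of \cite{yang2019spiked}, also used later in the paper in \eqref{separa_m2capp}).
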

 

We introduce a convenient and classical self-adjoint linearization trick. This idea dates back at least to Girko, see e.g., the works \cite{girko1975random,girko1985spectral,girko2012theory} and references therein. Define the \emph{linearization} matrix as the following $(p+r)\times (p+r)$ self-adjoint block matrix, which is a linear function of $X$:
 \begin{equation}\label{linearize_block}
   H \equiv H(X,z): = z^{1/2} \left( {\begin{array}{*{20}c}
   { 0 } & {\wt X^{\top}}   \\
   {\wt X} & {0}  \\
   \end{array}} \right),  \quad z\in \mathbb C_+ .
 \end{equation}
where recall that $\wt X=S X \Sigma^{1/2}$ is the projected non-spiked matrix, and $z^{1/2}$ is taken to be the branch cut with positive imaginary part. Then we define its resolvent (Green's function) as
 \begin{equation}\label{eq_gz} 
 G \equiv G (X,z):= \left(H(X,z)-z\right)^{-1} .
 \end{equation}
By the Schur complement formula, we can verify that (recall that by \eqref{def_green}, $\mathcal G_1$ is the resolvent of $\wt X^{\top}\wt X$, and $\mathcal G_2$ is the resolvent of $\wt X\wt X^{\top}$)
\begin{align} 
G(z) = \left( {\begin{array}{*{20}c}
   { \mathcal G_1} & z^{-1/2}\mathcal G_1 \wt X^{\top} \\
   {z^{-1/2}\wt X \mathcal G_1} & { \mathcal G_2 }  \\
\end{array}} \right) = \left( {\begin{array}{*{20}c}
   { \mathcal G_1} & z^{-1/2}\wt X^{\top} \mathcal G_2   \\
   {z^{-1/2} \mathcal G_2 \wt X} & { \mathcal G_2 }  \\ 
\end{array}} \right). \label{green2} 
\end{align}
Thus a control of $G$ yields directly a control of the resolvents $\mathcal G_{1,2}$. Similarly, we can define $\wt H$ and $\wt G$  by replacing $\wt X$ with the spiked version $\wt Y$. 
For simplicity of notation, we will sometimes use the index sets
\[\mathcal I_1:=\{1,...,p\}, \quad \mathcal I_2:=\{p+1,...,p+r\}, \quad \mathcal I:=\mathcal I_1\cup\mathcal I_2, \quad \cal I_2^{n}:=\{p+1, \cdots, p+n\}.\]
Then we shall label the indices of the matrices in the natural way. For instance, since $X$ is an $n\times p$ matrix, we will label its row indices according to $\cal I_2^{n}$, and its column indices according to $\mathcal I_1$:
$$X= (X_{\mu i})_{\mu \in \cal I_2^n,i\in \cal I_1}, \quad \Sigma=(\Sigma_{ij})_{i,j \in \cal I_1},\quad S=(S_{\mu\nu})_{\mu \in \cal I_2, \nu \in \cal I_2^n}.$$  
In the rest of this paper, 
we will consistently use the latin letters $i,j\in\mathcal I_1$ and greek letters $\mu,\nu\in\mathcal I_2$ or $\mathcal I_2^{n}$. 


We define the following matrix, which turns out to be the deterministic limit of the resolvent $G$ of the linearization matrix $H$, as 
\begin{equation}\label{defn_pi}
\Pi (z):=\left( {\begin{array}{*{20}c}
   { \Pi_1} & 0  \\
   0 & { \Pi_2}  \\
\end{array}} \right), \quad \Pi_1:  =-z^{-1}\left(1+m_{2c}(z)\Sigma\right)^{-1},\quad \Pi_2:=- z^{-1} (1+m_{1c}(z)B )^{-1}.
\end{equation}
Note that from (\ref{separa_m12}) we can express the Stieltjes transforms $m_c$ and $m_{1,2c}$ (which determine the limiting spectral distribution), as the following weighted traces of the functionals of  $\Pi$:
\be\label{mcPi}
\frac1{n}\tr \Pi_{1} =m_c, \quad  \frac1{n}\tr \left(\Sigma \Pi_{1}\right) =m_{1c}, \quad \frac1{n}\tr \left(B\Pi_{2}\right) =m_{2c}.
\ee
In \cite{yang2019spiked,ding2019spiked}, an anisotropic local law away from the support of $\rho_{c}$ was proved in the form of Theorem \ref{lem_localout} below. 
Roughly speaking, the local law means that the random resolvent matrix $G$ is well approximated by the deterministic matrix $\Pi$ defined above. This holds in the sense that linear combinations of entries of $G$ can be approximated by the same linear combinations of entries of $\Pi$. This has been more formal in work on deterministic equivalents, see e.g., \cite{Hachem2007deterministic,dobriban2018distributed}.

Before stating the local law, for convenience, we introduce the following notion of stochastic domination, which was introduced in \cite{Average_fluc} and subsequently used in many works on random matrix theory, such as \cite{isotropic,principal,local_circular,Delocal,Semicircle,Anisotropic}. It simplifies the presentation of the results and their proofs by systematizing statements of the form ``$\xi$ is bounded by $\zeta$ with high probability up to a small power of $n$".

\begin{definition}[Stochastic domination]\label{stoch_domination}
(i) Let
\[\xi=\left(\xi^{(n)}(u):n\in\bbN, u\in U^{(n)}\right),\hskip 10pt \zeta=\left(\zeta^{(n)}(u):n\in\bbN, u\in U^{(n)}\right)\]
be two families of nonnegative random variables, where $U^{(n)}$ is a possibly $n$-dependent parameter set. We say $\xi$ is stochastically dominated by $\zeta$, uniformly in $u$, if for any fixed (small) $\epsilon>0$ and (large) $D>0$, 
\[\sup_{u\in U^{(n)}}\bbP\left(\xi^{(n)}(u)>n^\epsilon\zeta^{(n)}(u)\right)\le n^{-D}\]
for large enough $n \ge n_0(\epsilon, D)$, and we shall use the notation $\xi\prec\zeta$. 
If for some complex family $\xi$ we have $|\xi|\prec\zeta$, then we will also write $\xi \prec \zeta$ or $\xi=\OO_\prec(\zeta)$.

(ii) We extend the definition of $\OO_\prec(\cdot)$ to matrices in the operator norm sense as follows. Let $A$ be a family of random matrices and $\zeta$ be a family of nonnegative random variables. Then $A=\OO_\prec(\zeta)$ means that $\|A\|\prec \zeta$.

(iii) We say an event $\Xi$ holds with high probability if for any constant $D>0$, $\mathbb P(\Xi^c)\le n^{-D}$ for large enough $n$. We say an event $\Xi$ holds with high probability on an event $\Omega$ if for any constant $D>0$, $\mathbb P(\Omega\setminus \Xi)\le  n^{-D}$ for large enough $n$.
\end{definition}

The following lemma collects basic properties of stochastic domination $\prec$, which will be used repeatedly in the proof.

\begin{lemma}[Lemma 3.2 in \cite{isotropic}, Closure properties of stochastic domination]\label{lem_stodomin}
Let $\xi$ and $\zeta$ be families of nonnegative random variables.
\begin{itemize}

\item[(i)] {\bf Sums.} Suppose that $\xi (u,v)\prec \zeta(u,v)$ uniformly in $u\in U$ and $v\in V$. If $|V|\le n^C$ for some constant $C$, then $\sum_{v\in V} \xi(u,v) \prec \sum_{v\in V} \zeta(u,v)$ uniformly in $u$.

\item[(ii)] {\bf Products.} If $\xi_1 (u)\prec \zeta_1(u)$ and $\xi_2 (u)\prec \zeta_2(u)$ uniformly in $u\in U$, then $\xi_1(u)\xi_2(u) \prec \zeta_1(u)\zeta_2(u)$ uniformly in $u$.

\item[(iii)] {\bf Taking expectations.} Suppose that $\Psi(u)\ge n^{-C}$ is deterministic and $\xi(u)$ satisfies $\mathbb E\xi(u)^2 \le n^C$ for all $u$. Then if $\xi(u)\prec \Psi(u)$ uniformly in $u$, we have $\mathbb E\xi(u) \prec \Psi(u)$ uniformly in $u$.
\end{itemize}
\end{lemma}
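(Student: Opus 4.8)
The plan is to obtain all three closure properties directly from the definition of stochastic domination (Definition \ref{stoch_domination}), using nothing beyond elementary union bounds and the Cauchy--Schwarz inequality; no random-matrix structure enters, so this is purely a matter of manipulating the quantifiers in the definition of $\prec$.

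For Part (i) (sums), I would fix $\epsilon>0$ and $D>0$, apply the hypothesis $\xi(u,v)\prec\zeta(u,v)$ with the exponent $D+C$ in place of $D$, and union bound over the at most $n^{C}$ indices $v\in V$. This shows that, uniformly in $u$, with probability at least $1-n^{-D}$ one has $\xi(u,v)\le n^{\epsilon}\zeta(u,v)$ simultaneously for all $v\in V$; summing these inequalities on that event gives $\sum_{v}\xi(u,v)\le n^{\epsilon}\sum_{v}\zeta(u,v)$. Since $\epsilon,D$ were arbitrary, this is exactly $\sum_v\xi(u,v)\prec\sum_v\zeta(u,v)$, uniformly in $u$.

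For Part (ii) (products), I would again fix $\epsilon,D$, apply each hypothesis with the exponent $\epsilon/2$ and probability exponent $D+1$, and work on the intersection of the two resulting good events, which has probability at least $1-2n^{-D-1}\ge 1-n^{-D}$ for large $n$; on this intersection $\xi_1(u)\xi_2(u)\le n^{\epsilon}\zeta_1(u)\zeta_2(u)$. This gives the claim.

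The only step that needs a little care is Part (iii) (taking expectations). Here I would split $\E\xi(u)=\E[\xi(u)\mathbf{1}\{\xi(u)\le n^{\epsilon}\Psi(u)\}]+\E[\xi(u)\mathbf{1}\{\xi(u)>n^{\epsilon}\Psi(u)\}]$. The first piece is trivially at most $n^{\epsilon}\Psi(u)$. For the second piece I would apply Cauchy--Schwarz together with the second-moment bound $\E\xi(u)^2\le n^{C}$ to get a bound of order $n^{C/2}\,\mathbb P(\xi(u)>n^{\epsilon}\Psi(u))^{1/2}$, and then invoke $\xi(u)\prec\Psi(u)$ with the chosen $\epsilon$ but with a \emph{large} probability exponent, say $D'=3C$: this forces the second piece to be at most $n^{C/2}n^{-3C/2}=n^{-C}\le\Psi(u)$, where the last inequality uses the assumed lower bound $\Psi(u)\ge n^{-C}$. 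Combining, $\E\xi(u)\le n^{\epsilon}\Psi(u)+\Psi(u)\le n^{2\epsilon}\Psi(u)$ for $n$ large, uniformly in $u$, which gives $\E\xi(u)\prec\Psi(u)$. The main (mild) obstacle is thus the bookkeeping in (iii): one must exploit both quantitative hypotheses --- the polynomial second moment of $\xi$ and the polynomial lower bound on $\Psi$ --- and use the freedom in the definition of $\prec$ to take the probability exponent as large as needed (here exceeding $3C$), whereas parts (i) and (ii) are routine union bounds.
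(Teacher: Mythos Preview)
Your proof is correct. The paper does not actually prove this lemma; it simply cites it as Lemma~3.2 of \cite{isotropic}, so there is no approach to compare against, and your argument is the standard elementary one.
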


In this paper, given (possibly complex) vectors $u,v$ and a matrix $A$ of conformable dimensions, we denote the inner product by
$$\langle u, A v\rangle :=u^{\top} Av,$$
where $u^{\top}$ is the complex conjugate of $u$. For simplicity, we shall also write $\langle u, A v\rangle $ as a generalized entry  $A_{uv}\equiv \langle u, A v\rangle$.

Now we are ready to state the anisotropic local law for $G$, which will be the main tool of this paper. It essentially follows from Theorem 4.10 of \cite{ding2019spiked}. However, our setting is a little different from the setting there, so we will give the necessary details in Appendix \ref{sec_pflocallaw} to adapt the proof in \cite{ding2019spiked} to our setting. As mentioned, this shows that linear combinations of entries of $G$ can be approximated by the same linear combinations of entries of $\Pi$. 

\begin{theorem}[Anisotropic local law outside of the spectrum]\label{lem_localout}  
Suppose the setting in Section \ref{sec_defmodel} and Assumption \ref{ass:unper} hold. Let $\mathscr A$ be any set of (complex) deterministic unit vectors of cardinality $|\mathscr A| \le n^C$ for some constant $C>0$. Fix any small constant $c_0>0$ and large constant $C_0>0$, define the spectral parameter domain
\begin{equation}\label{eq_paraout}
z\in S_{out}(c_0,C_0):=\left\{ E+ \ii\eta: \lambda_+ + c_0 \le E \le C_0,  \eta\in [0,C_0]\right\}.
\end{equation}
There there exists a set $\Omega$ with $\mathbb P(\Omega) \ge 1-n^{-\delta}$ for some constant $0<\delta \le 1/2$ depending on $\tau$ in \eqref{eq_highmoment} only, such that the following anisotropic local law holds: 
\begin{equation}\label{aniso_outstrong}
1(\Omega) \max_{u,v\in \mathscr A} \left| \langle u, G(X,z) v\rangle - \langle u, \Pi (z)v\rangle \right|  \prec n^{-\delta} 
\end{equation}
uniformly in $z\in S_{out}(c_0,C_0)$.
\end{theorem}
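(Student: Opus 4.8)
The plan is to recognize the non-spiked sketched matrix $\wt X = SX\Sigma^{1/2}$ of \eqref{sketchnonspike} as an instance of the \emph{separable sample covariance} model analyzed in \cite{ding2019spiked,yang2019spiked}, and then to transfer their anisotropic local law away from the spectrum; the genuine work is (i) a truncation to accommodate the bare $(4+\tau)$-th moment hypothesis \eqref{eq_highmoment}, and (ii) a translation of their deterministic equivalent into the matrix $\Pi$ of \eqref{defn_pi}. Concretely, put $T:=S^{\top}S$, an $n\times n$ positive semidefinite matrix whose nonzero eigenvalues are exactly $s_1,\dots,s_r$, the eigenvalues of $B=SS^{\top}$. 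Then $\mathcal Q_1 = \wt X^{\top}\wt X = \Sigma^{1/2}X^{\top}TX\Sigma^{1/2}$ is precisely a separable covariance matrix built from the iid-entry noise $X$ (of variance $n^{-1}$) and the two deterministic ``population'' matrices $\Sigma$ and $T$, while $\mathcal Q_2 = \wt X\wt X^{\top}$ shares the nonzero spectrum of its companion $T^{1/2}X\Sigma X^{\top}T^{1/2}$. The ESD of $T$ equals $\xi_n\pi_B+(1-\xi_n)\delta_0$, so the prefactor $\xi_n$ in the second line of \eqref{separa_m12} is just the mass that $\pi_B$ contributes inside $\pi_T$, and the self-consistent system \eqref{separa_m12} is exactly the one from \cite{ding2019spiked}. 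By \eqref{mcPi} the matrix $\Pi$ of \eqref{defn_pi} is the deterministic equivalent produced there, and the linearization \eqref{linearize_block} with the Schur identity \eqref{green2} is the standard device reducing control of $\mathcal G_{1,2}$ to control of $G$. Assumptions \eqref{eq_ratio} and \eqref{assm3} together with the edge-regularity Assumption \ref{ass:unper}, via Lemmas \ref{lambdar} and \ref{lambdar_sqrt}, supply all the structural input on $\rho_c$ and $\lambda_+$ that analysis requires.

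Next I would truncate. Write $x_{ij}=\widehat x_{ij}+\mathring x_{ij}$ with $\widehat x_{ij}:=x_{ij}\mathbf{1}(|x_{ij}|\le n^{-1/2+\epsilon_0})$ recentered and rescaled to restore mean $0$ and variance $n^{-1}$, for a constant $\epsilon_0=\epsilon_0(\tau)\in(0,1/2)$ chosen so that $\epsilon_0(4+\tau)>2$ (possible since $\tau>0$). By \eqref{eq_highmoment} and a union bound over the $np\asymp n^2$ entries, the event $\Omega$ on which no entry exceeds the truncation level---so that $X$ agrees with $\widehat X$ up to the negligible deterministic recentering and rescaling---satisfies $\mathbb P(\Omega)\ge 1-n^{-\delta}$ for some $\delta=\delta(\tau)\in(0,1/2]$. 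The truncated entries obey $|\sqrt n\,\widehat x_{ij}|\le n^{\epsilon_0}$ deterministically, so the moment conditions required in \cite{ding2019spiked,yang2019spiked} hold; and since $z\in S_{out}(c_0,C_0)$ stays at distance $\ge c_0$ from $\operatorname{supp}\rho_c$, all resolvents in sight are $\OO(1)$ in operator norm, so replacing $X$ by $\widehat X$ changes $\langle u,Gv\rangle$ by $\OO_\prec(n^{-\delta})$ by the resolvent identity. It therefore suffices to establish \eqref{aniso_outstrong} for the truncated matrix, and for that I would invoke the anisotropic local law away from the spectrum of \cite{ding2019spiked} (their Theorem 4.10; see also \cite{yang2019spiked}), which gives $\max_{u,v\in\mathscr A}\bigl|\langle u,G(X,z)v\rangle-\langle u,\Pi(z)v\rangle\bigr|\prec n^{-\delta}$ uniformly on $S_{out}(c_0,C_0)$.

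Two adaptations are needed, both of bookkeeping type and both carried out in Appendix \ref{sec_pflocallaw}. First, the cited results are phrased for spikes added to the \emph{population covariance}, whereas \eqref{sketchspike} adds the spikes to the \emph{data}; but Theorem \ref{lem_localout} concerns the non-spiked $\wt X$, so the spikes play no role here. Second, one must check that the large null space of $T=S^{\top}S$ (of co-rank $r<n$) is admissible: $\pi_T$ places mass $1-\xi_n$ at $0$, hence $\pi_T([0,\tau])=1-\xi_n\bigl(1-\pi_B([0,\tau])\bigr)\le 1-\xi_n\tau$, which stays bounded away from $1$ for large $n$ by \eqref{eq_ratio} and \eqref{assm3}, so the ``no accumulation at $0$'' hypothesis underlying the separable-model local law holds for $\pi_T$ with a slightly worse constant. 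When $S$ is itself random, the local law is applied conditionally on $S$; since Section \ref{sec_defmodel} imposes \eqref{assm3} as a deterministic requirement on $B$, no further exceptional set is incurred.

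The main obstacle is not a new idea but this last step: confirming that the stability analysis of the self-consistent equation \eqref{separa_m12} and the fluctuation-averaging estimates of \cite{ding2019spiked} go through unchanged for $T=S^{\top}S$ and under our normalization conventions. The truncation, while requiring some care to land the bound on $\mathbb P(\Omega^{c})$ from only $(4+\tau)$ moments, is otherwise routine.
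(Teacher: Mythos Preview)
Your outline is essentially the paper's own: truncate using the $(4+\tau)$-moment hypothesis to produce the high-probability event $\Omega$, recognize $\wt X=SX\Sigma^{1/2}$ as a separable covariance matrix, and invoke the anisotropic local law of \cite{yang2019spiked,ding2019spiked} for the truncated matrix. The identification via $T=S^{\top}S$ and the check that $\pi_T([0,\tau])\le 1-\xi_n\tau$ are both correct, and your truncation level $n^{-1/2+\epsilon_0}$ with $\epsilon_0(4+\tau)>2$ is equivalent to the paper's choice.

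One step is glossed over. The local laws in \cite{yang2019spiked,ding2019spiked} are stated on domains with $\eta\ge n^{-1/2+c_1}$ (what the paper calls $S_{edge}$), not directly on $S_{out}$ down to $\eta=0$. Your assertion that ``all resolvents in sight are $\OO(1)$ in operator norm'' because $z$ is far from $\operatorname{supp}\rho_c$ presupposes that the empirical spectrum of $\mathcal Q_1$ lies near $\operatorname{supp}\rho_c$, i.e.\ eigenvalue rigidity, which is a \emph{consequence} of the local law and not an a priori input. The paper runs the argument in the correct order: first obtain the local law on $S_{edge}$ for the truncated and recentered matrix $Z=\wh X-\mathbb E\wh X$ (citing \cite{yang2019spiked}, and noting the $\OO(n^{-2-\delta})$ variance defect is harmless), deduce rigidity \eqref{rigidityev}, remove the centering via a resolvent expansion using only the trivial bound $\|\wh G\|=\OO(\eta^{-1})$, and only then (Proposition~\ref{lem_localoutapp}) push $\eta$ down to $0$ by comparing $G(z)$ to $G(z_0)$ with $z_0=E+\ii n^{-1/2+c_1}$ and bounding the difference through the spectral decomposition \eqref{spectral1}--\eqref{spectral2} and $\operatorname{Im}G_{uu}(z_0)$. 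This last extension is routine once rigidity is available, but it is a genuine step that the cited theorems do not hand you directly on $S_{out}$.
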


We remark that Theorem 2.4 of \cite{isotropic} is actually a special case of our Theorem \ref{lem_localout} by replacing $\wt X=S$ (i.e. we replace $X\to S$, $\Sigma\to I_p$ and $S\to I_n$), but on a bigger domain of $z$. In fact, our Theorem \ref{lem_localout} can be also generalized to such a bigger domain of $z$ by Theorem 3.6 of \cite{yang2019spiked}, and the reader can check that \eqref{localmS0} below holds due to the claim in \eqref{aniso_outstrong}.

Moreover, we have the following local law for $z$ near the edge $\lambda_+$ of the spectrum, which will be used to study the non-spiked eigenvalues and eigenvectors. It is a corollary of Theorem 3.6 of \cite{yang2019spiked}, and we shall give the proof in Appendix \ref{sec_pflocallaw}. The only difference between the local law outside the spectrum and the one near the edge is that the argument $z= E+ \ii\eta$ of the resolvent $G(X,z)$ is restricted to have real part $E$ strictly larger than the right edge $\lambda_+$ for the law outside the spectrum, and there are no restrictions on the imaginary part $\eta$. For the law near the edge, $z$ is restricted to have real part $E$ around the right edge $\lambda_+$, but the imaginary part $\eta$ must have absolute value at least of the order of $n^{-1/2+c_1}$ for some $c_1>0$.

\begin{theorem}[Anisotropic local law near the edge]\label{lem_localin}  
Suppose the assumptions of Theorem \ref{lem_localout} hold. Fix any small constants $c_0,c_1>0$ and large constant $C_0>0$, and define the spectral parameter domain
\begin{equation}\label{eq_parain}
z\in S_{edge}(c_0,C_0,c_1):=\left\{ E+ \ii\eta: \lambda_+ - c_0 \le E \le C_0,  \eta\in [n^{-1/2+c_1},C_0]\right\}.
\end{equation}
There there exists a set $\Omega$ with $\mathbb P(\Omega) \ge 1-n^{-\delta}$ for some constant $0<\delta \le 1/2$ depending on $\tau$ in \eqref{eq_highmoment} only, such that the following anisotropic local law holds: 
\begin{equation}\label{aniso_in}
1(\Omega) \max_{u,v\in \mathscr A} \left| \langle u, G(X,z) v\rangle - \langle u, \Pi (z)v\rangle \right|  \prec n^{-\delta} 
\end{equation}
uniformly in $z\in S_{edge}(c_0,C_0,c_1)$. Moreover, fixing any $\varpi \in \N$, we have that
\be\label{rigiditye}
1(\Omega)\max_{1\le i \le \varpi}|\lambda_i - \lambda_+| \prec n^{-\delta}.
\ee
\end{theorem}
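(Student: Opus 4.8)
The plan is to reduce the statement, after a standard truncation of the entries of $X$ and conditioning on the sketching matrix $S$, to the edge local law and edge rigidity for separable sample covariance matrices established as Theorem~3.6 of \cite{yang2019spiked}.

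First I would truncate: using the $(4+\tau)$-moment bound \eqref{eq_highmoment}, replace $X$ by a matrix $\widehat X$ whose entries are bounded by $n^{-\delta}$, have the correct first two moments up to a negligible error, and which coincides with $X$ on an event of probability $1-\OO(n^{-\delta})$. This is exactly the truncation carried out for Theorem \ref{lem_localout}, and on the truncation event $G(X,z)$ and $G(\widehat X,z)$ are comparable in the anisotropic sense uniformly on $S_{edge}(c_0,C_0,c_1)$; the constraint $\im z \ge n^{-1/2+c_1}$ is what makes this comparison inexpensive. Hence it suffices to prove \eqref{aniso_in} and \eqref{rigiditye} for $\widehat X$.

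Next I would condition on $S$. Given $S$, the non-spiked sketched matrix $\wt X = S\widehat X\Sigma^{1/2}$ is a separable sample covariance matrix with ``population'' covariances $B = SS^{\top}$ on the $r$-dimensional side and $\Sigma$ on the $p$-dimensional side. The assumptions \eqref{assm3} on the two ESDs together with their convergence, and the right-edge regularity Assumption \ref{ass:unper}, are precisely the hypotheses under which Theorem~3.6 of \cite{yang2019spiked} gives both the anisotropic local law near $\lambda_+$ for the resolvent of the linearization \eqref{linearize_block} and the edge rigidity of the extreme eigenvalues of $\mathcal Q_1$ (in fact at the optimal scale $n^{-2/3}$, which is stronger than \eqref{rigiditye}). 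Since $B$ is itself random, I would work on the high-probability event for $S$ on which \eqref{assm3} and \eqref{assm_gap} hold---the same event underlying Theorem \ref{lem_localout}---and take $\Omega$ to be its intersection with the truncation event and with the good event produced by Theorem~3.6 of \cite{yang2019spiked}. Matching $H$ in \eqref{linearize_block} and the deterministic equivalent $\Pi$ in \eqref{defn_pi} with the corresponding objects in \cite{yang2019spiked} needs only the dictionary already used for the outside-the-spectrum law, while Lemma \ref{lambdar_sqrt} supplies the square-root behavior of $\rho_c$ at $\lambda_+$ on which the edge rigidity rests. This yields \eqref{aniso_in} uniformly on $S_{edge}(c_0,C_0,c_1)$ and \eqref{rigiditye}.

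The main obstacle is the bookkeeping in the conditioning step: checking that our ``data-spike'' separable model (as opposed to the population-spike model of \cite{ding2019spiked}) together with the random $B = SS^{\top}$ fits the framework of \cite{yang2019spiked}, and that Assumption \ref{ass:unper} is exactly the edge-regularity hypothesis needed there---rather than any genuinely new random-matrix estimate. The truncation step is routine, and the target exponent in \eqref{rigiditye} is weaker than what \cite{yang2019spiked} delivers, so no additional work is needed there beyond transporting the statement through the truncation and the conditioning.
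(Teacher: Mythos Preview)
Your proposal is correct and follows essentially the same route as the paper: truncate using the $(4+\tau)$-moment, then invoke the edge local law and rigidity for separable covariance matrices from \cite{yang2019spiked} (Theorems~3.6 and~3.8 there). The only detail the paper makes explicit that you fold into ``bookkeeping'' is an intermediate centering step: after truncating to $\wh X$, the paper sets $Z = \wh X - \E\wh X$, applies the results of \cite{yang2019spiked} to $Z$ (whose entries have exactly mean zero and variance $n^{-1}+\OO(n^{-2-\delta})$), and then passes back from $G(Z,z)$ to $G(\wh X,z)$ by a one-step resolvent expansion using $\|\E\wh X\|_F = \OO(n^{-1-\delta})$; your remark that the first two moments are ``correct up to a negligible error'' covers this, but the paper treats it as a separate proposition rather than absorbing it into the cited theorem.
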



We mention that such local laws are part of a much broader line of work in random matrix theory, going back to the Marchenko-Pastur law \cite{marchenko1967distribution}. See e.g., \cite{erdos2012universality,erdos2017dynamical} for more recent results on related topics such as universality. The topic of deterministic equivalents is also related, see e.g., \cite{Hachem2007deterministic,dobriban2018distributed}. 

\subsection{The spiked eigenvalues and eigenvectors}\label{sec method}

 With the anisotropic local law, we can derive a so-called master equation for the outlier eigenvalues and eigenvectors. We write the sketched signal matrix as
$$\sum_{i=1}^k d_i  v_iu_i^{\top}=VDU^{\top}, \quad D=\text{diag}(d_1,\cdots, d_k),  $$
where $U$, $V$ and $W$ are $p\times k$, $r\times k$ and $n\times k$ matrices:
$$ U=(u_1,\cdots, u_k), \quad V= (v_1,\cdots, v_k)=SW, \quad W= (w_1,\cdots, w_k).$$
Then we define the \emph{linearization of the sketched signal} as the following $(p+r)\times (p+r)$ block matrix:
$$ \Delta H:= z^{1/2}\left( {\begin{array}{*{20}c}
   { 0 } & {UDV^{\top}}   \\
   {VDU^{\top}} & {0}  \\
   \end{array}} \right) =z^{1/2} A\cal DA^{\top}, \quad A:=  \left( {\begin{array}{*{20}c}
   { U } & {0}   \\
   {0} & {V}  \\
   \end{array}} \right) , \quad \cal D:= \left( {\begin{array}{*{20}c}
   { 0 } & {D}   \\
   {D} & {0}  \\
   \end{array}} \right) .$$

\begin{lemma}\label{lem_pertubation} 
If $x>\lambda_+$ is not an eigenvalue of $\mathcal Q_1=\wt X^{\top}\wt X$, then it is an eigenvalue of $\widetilde{Q}_1 = \wt  Y^{\top} \wt  Y$ if and only if the following determinant (of a $2k\times 2k$ matrix) vanishes:
\begin{equation}\label{masterx}
\det\left(\mathcal{D}^{-1}+x^{1/2} A^{\top} G(x) A\right)=0.
\end{equation} 
\end{lemma}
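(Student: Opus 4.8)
The plan is to reduce the eigenvalue condition for $\widetilde{Q}_1 = \widetilde{Y}^{\top}\widetilde{Y}$ to the stated determinant condition on the linearized resolvent $G(x)$, via a short chain of Schur-complement and Sylvester determinant identities, in the spirit of the standard ``master equation'' arguments for spiked models \citep{couillet2011random,yao2015large}. Throughout, since $x > \lambda_+ \ge 0$ we have $x > 0$, so $x^{1/2}$ is the usual positive square root and all the block matrices below are invertible where needed.

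\textbf{Passing to the linearization.} First I would introduce $\widetilde{H}(X,z) := z^{1/2}\begin{pmatrix} 0 & \widetilde{Y}^{\top} \\ \widetilde{Y} & 0\end{pmatrix}$, the linearization of the spiked matrix. For any $z \neq 0$, applying the Schur complement formula to $\widetilde{H}(X,z) - z$ by eliminating the lower-right $r\times r$ block $-zI_r$ gives
\[
\det\!\big(\widetilde{H}(X,z) - z\big) = (-z)^{r}\,\det\!\big(\widetilde{Y}^{\top}\widetilde{Y} - z I_p\big) = (-z)^{r}\det(\widetilde{Q}_1 - z).
\]
Hence, for $z = x$, $x$ is an eigenvalue of $\widetilde{Q}_1$ if and only if $\det(\widetilde{H}(X,x) - x) = 0$. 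The same computation with $\widetilde{X}$ in place of $\widetilde{Y}$ yields $\det(H(X,x) - x) = (-x)^{r}\det(\mathcal{Q}_1 - x)$, so the hypothesis that $x$ is not an eigenvalue of $\mathcal{Q}_1$ guarantees that $H(X,x) - x$ is invertible, with inverse $G(x)$.

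\textbf{Factoring out the unperturbed part and reducing dimension.} Since $\widetilde{Y} = \widetilde{X} + VDU^{\top}$, we have $\widetilde{H}(X,z) = H(X,z) + \Delta H$ with $\Delta H = z^{1/2} A\mathcal{D}A^{\top}$, so at $z = x$,
\[
\widetilde{H}(X,x) - x = \big(H(X,x) - x\big)\Big(I_{p+r} + x^{1/2} G(x) A\mathcal{D}A^{\top}\Big).
\]
Taking determinants and using $\det(H(X,x) - x) \neq 0$, the condition $\det(\widetilde{H}(X,x) - x) = 0$ becomes $\det(I_{p+r} + x^{1/2} G(x) A\mathcal{D}A^{\top}) = 0$. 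Now I would apply Sylvester's determinant identity to the $(p+r)\times 2k$ matrix $x^{1/2}G(x)A$ and the $2k\times(p+r)$ matrix $\mathcal{D}A^{\top}$, then pull out $\mathcal{D}$:
\[
\det\!\big(I_{p+r} + x^{1/2} G(x) A\mathcal{D}A^{\top}\big) = \det\!\big(I_{2k} + x^{1/2}\mathcal{D}A^{\top}G(x)A\big) = \det(\mathcal{D})\,\det\!\big(\mathcal{D}^{-1} + x^{1/2} A^{\top}G(x) A\big).
\]
Since $\mathcal{D} = \begin{pmatrix} 0 & D \\ D & 0\end{pmatrix}$ with $d_i > 0$ is invertible, $\det(\mathcal{D}) \neq 0$, and chaining the equivalences gives exactly \eqref{masterx}.

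\textbf{Main difficulty.} There is no genuine obstacle here; the argument is bookkeeping with block determinants. The two points requiring care are (i) maintaining a consistent branch of $z^{1/2}$ — harmless because we only ever evaluate at the real positive point $z = x$ — and (ii) the size mismatch between $\widetilde{Q}_1$ ($p\times p$) and $\widetilde{Q}_2$ ($r\times r$): one should note that their nonzero eigenvalues coincide and that $x > \lambda_+ \ge 0$ rules out the spurious zero eigenvalue, which is precisely what legitimizes passing back and forth between $\mathcal{Q}_1$, $\mathcal{Q}_2$, and the linearization $H$.
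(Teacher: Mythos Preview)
Your proof is correct and follows essentially the same route as the paper's: both pass to the linearization $\wt H(X,x)-x$, factor out $H(X,x)-x$ (invertible since $x\notin\mathrm{spec}(\mathcal Q_1)$), and then apply the Sylvester identity $\det(I+\mathcal C\mathcal B)=\det(I+\mathcal B\mathcal C)$ together with the invertibility of $\mathcal D$ to reduce to the $2k\times 2k$ determinant. The only cosmetic difference is that you justify the equivalence ``$x$ is an eigenvalue of $\wt Q_1$ iff $\det(\wt H(X,x)-x)=0$'' via an explicit Schur-complement computation, whereas the paper simply observes that the nonzero eigenvalues of $z^{-1/2}\wt H$ are $\pm\sqrt{\lambda_i(\wt Q_1)}$.
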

\begin{proof} 
The proof is similar to the one for Lemma 5.1 of \cite{ding2019spiked}. But our setting is a little different from the one there, so we give a full proof. Note that the non-zero eigenvalues of $z^{-1/2}\wt H$ are given by 
$$\pm \sqrt{\lambda_1(\wt Q_1)}, \ \pm \sqrt{\lambda_2(\ctQ_1)}, \ \cdots \ , \ \pm \sqrt{\lambda_{p\wedge r}(\ctQ_1)}.$$
Hence it is easy to see that $x>0$ is an eigenvalue of $\ctQ_1$ if and only if
\be\label{detH1}
\det\left(\wt H(X,x) - x\right)=0,
\ee
from which we obtain that 
\begin{align*}
0&=\det(H+\Delta H -x)=\det(H-x) \det\Big(1 + G(x) \Delta H \Big)=\det(H-x) \det(1+ x^{1/2}A^{\top} G(x) A \cal D)\\
&=\det(\cal D)\det(H-x) \det(\cal D^{-1}+ x^{1/2}A^{\top} G(x) A ),
\end{align*}
where in the third step we used identity $\det(1+\cal C\cal B)=\det(1+\cal B\cal C)$ for any two matrices $\cal B$ and $\cal C$ of conformable dimensions. The claim then follows if $x$ is not an eigenvalue of $\cal Q_1$, i.e. $\det(H-x)\ne 0$.
\end{proof}

Using Theorem \ref{lem_localout}, up to some small error of order $\OO_\prec(n^{-\delta})$, equation \eqref{masterx} gives approximately the following \emph{eigenvalue master equation} that has to hold for any possible spike $x$. 
\be\label{master_evalue}
\det M(x) \equiv 
\det \left( {\begin{array}{*{20}c}
   {  -x^{-1/2}U^{\top} \left(1+m_{2c}(x)\Sigma\right)^{-1}U } & {D^{-1}}   \\
   {D^{-1}} & {- x^{-1/2} V^{\top} (1+m_{1c}(x)B )^{-1} V}  \\
   \end{array}} \right) = 0.
\ee
To be clear, we have the following steps needed to find the spikes:
\begin{enumerate}
\item We are given the population covariance ${\Sigma}$ ($p\times p$).
\item We have the sketching matrix $S$ ($r \times n$).
\item We are given the left and right matrices of eigenvectors $V$ and $U$ ($r\times k$ and $p\times k$).
\item We have the $k\times k$ diagonal matrix $D$ of population spikes.
\item For any given $x$, we calculate the pair $(m_{1c}(x),m_{2c}(x))$, arising as the solution to the self-consistent equations \eqref{separa_m12}. This is determined entirely by the eigenvalues of ${\Sigma}$ and $S$.
\item We combine the above quantities into the $2k\times 2k$ master matrix $M(x)$ given above. 
\item We solve for the values $x$ for which this matrix is singular, i.e., solve equation \eqref{master_evalue}. In general we expect at most $k$ such values. These are all possible candidates for the empirical spikes of the sketched data.
\end{enumerate}

To get some explicit results, we consider some special cases as in Section \ref{sec5types}. We try to solve this master equation in a case by case manner. 

\vspace{5pt}

Next we discuss the sample eigenvectors for the outliers. For now, suppose we know that the $i$-th largest outlier $\wt\lambda_i$ lies around a classical location $\theta_i$, which is some fixed definitive value. Moreover, assume that these values are well-separated from each other (i.e. there exists a constant $\e>0$ such that $|\theta_i-\theta_j| \ge \e$ for any $i\ne j$). We want to study the overlap between the sample eigenvector and the population eigenvector $u_i$. 
Let
$$\wt Y  = \sum_{k = 1}^{p \wedge r} \sqrt {\wt\lambda_k} \wt{\zeta} _{k} \wt{ \xi}_k^{\top} ,$$
be a singular value decomposition of the sketched spiked matrix, where
$$\wt\lambda_1\ge \wt\lambda_2 \ge \ldots \ge \wt\lambda_{p\wedge r} \ge 0 = \wt\lambda_{p\wedge r+1} = \ldots = \wt\lambda_{p\vee r}$$
are the eigenvalues of $\ctQ_1 = \wt  Y^{\top} \wt  Y,$ while
$\{\wt{ \zeta}_{k}\}_{k=1}^{r}$
and
$\{\wt{ \xi}_{k}\}_{k=1}^{p}$ are the left and right singular vectors of $\wt Y,$ respectively. 
Then using (\ref{green2}) for $\wt G$, we can get that for $i,j\in \mathcal I_1$ and $\mu,\nu\in \mathcal I_2$,
\begin{align}
\wt G_{ij} = \sum_{k = 1}^{p} \frac{\wt{ \xi}_k(i) \wt{ \xi}_k^{\top}(j)}{\wt\lambda_k-z},\ \quad \ &\wt G_{\mu\nu} = \sum_{k = 1}^{r} \frac{\wt{ \zeta}_k(\mu) \wt{ \zeta}_k^{\top}(\nu)}{\wt\lambda_k-z}, \label{spectral1}\\
\wt G_{i\mu} = z^{-1/2}\sum_{k = 1}^{p\wedge r} \frac{\sqrt{\wt\lambda_k}\wt{ \xi}_k(i) \wt{ \zeta}_k^{\top}(\mu)}{\wt\lambda_k-z}, \ \quad \ &\wt G_{\mu i} =  z^{-1/2}\sum_{k = 1}^{p\wedge r} \frac{\sqrt{\wt\lambda_k} \wt{ \zeta}_k(\mu) \wt{ \xi}_k^{\top}(i)}{\wt\lambda_k-z}.\label{spectral2}
\end{align}

We also recall the following well known lemma for matrix perturbation, which follows from a simple algebraic calculation.

\begin{lemma} [Woodbury matrix identity] \label{lem_woodbury} For $\mathcal{A},S,\mathcal{B},T$ of conformable dimensions, we have 
\begin{equation}\label{Woodbury}
(\mathcal A+S\mathcal BT)^{-1}=\mathcal A^{-1}-\mathcal A^{-1}S(\mathcal B^{-1}+T\mathcal A^{-1}S)^{-1}T\mathcal A^{-1}.
\end{equation}
as long as all the operations are well defined. As a special case, we have the following equation, sometimes known as Hua's identity:
\begin{equation}\label{Huaineq}
\mathcal A-\mathcal A(\mathcal A+\mathcal B)^{-1}\mathcal A=\mathcal B-\mathcal B(\mathcal A+\mathcal B)^{-1}\mathcal B
\end{equation}
if $\mathcal A+\mathcal B$ is non-singular.
\end{lemma}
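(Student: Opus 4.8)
The plan is to prove both identities by direct algebraic verification; neither requires anything beyond elementary matrix manipulations, and the statement is recorded here only because the two identities are invoked repeatedly in the sequel (e.g.\ in the resolvent decomposition around \eqref{green2} and in the perturbative master-equation arguments leading to Lemma \ref{lem_pertubation}).

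For the Woodbury identity \eqref{Woodbury}, since $\mathcal{A}$ and $\mathcal{A}+S\mathcal{B}T$ are both square and invertible (``all the operations well defined''), it suffices to check that multiplying the claimed right-hand side on the left by $\mathcal{A}+S\mathcal{B}T$ yields the identity; a one-sided inverse of a square matrix is then automatically a two-sided inverse. Writing $M:=(\mathcal{B}^{-1}+T\mathcal{A}^{-1}S)^{-1}$ and expanding, one collects
\[
(\mathcal{A}+S\mathcal{B}T)\bigl(\mathcal{A}^{-1}-\mathcal{A}^{-1}SMT\mathcal{A}^{-1}\bigr)
= I + S\mathcal{B}T\mathcal{A}^{-1} - S\bigl(I+\mathcal{B}\,T\mathcal{A}^{-1}S\bigr)MT\mathcal{A}^{-1}.
\]
The key cancellation is that $I+\mathcal{B}\,T\mathcal{A}^{-1}S = \mathcal{B}\bigl(\mathcal{B}^{-1}+T\mathcal{A}^{-1}S\bigr) = \mathcal{B}M^{-1}$, so the last term collapses to $S\mathcal{B}M^{-1}MT\mathcal{A}^{-1}=S\mathcal{B}T\mathcal{A}^{-1}$, which exactly cancels the middle term and leaves $I$. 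This establishes \eqref{Woodbury}.

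For Hua's identity \eqref{Huaineq}, I would substitute $\mathcal{A}=(\mathcal{A}+\mathcal{B})-\mathcal{B}$ into the rightmost factor of $\mathcal{A}-\mathcal{A}(\mathcal{A}+\mathcal{B})^{-1}\mathcal{A}$, which gives $\mathcal{A}-\mathcal{A}(\mathcal{A}+\mathcal{B})^{-1}(\mathcal{A}+\mathcal{B})+\mathcal{A}(\mathcal{A}+\mathcal{B})^{-1}\mathcal{B}=\mathcal{A}(\mathcal{A}+\mathcal{B})^{-1}\mathcal{B}$; then substituting $\mathcal{A}=(\mathcal{A}+\mathcal{B})-\mathcal{B}$ once more into the leftmost factor gives $\mathcal{A}(\mathcal{A}+\mathcal{B})^{-1}\mathcal{B} = \mathcal{B}-\mathcal{B}(\mathcal{A}+\mathcal{B})^{-1}\mathcal{B}$, which is precisely the right-hand side of \eqref{Huaineq}. (Alternatively, when $\mathcal{A}$ and $\mathcal{B}$ are both invertible, \eqref{Huaineq} is the case $S=T=I$ of \eqref{Woodbury} after rearrangement, since both sides then equal $(\mathcal{A}^{-1}+\mathcal{B}^{-1})^{-1}$; but the direct substitution above avoids assuming invertibility of $\mathcal{A}$ and $\mathcal{B}$ and needs only that $\mathcal{A}+\mathcal{B}$ is non-singular.)

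I do not expect any genuine obstacle: both are classical identities with two- or three-line proofs. The only point requiring care is the blanket hypothesis that every displayed inverse actually exists — in \eqref{Woodbury} this means $\mathcal{A}$, $\mathcal{B}$, $\mathcal{B}^{-1}+T\mathcal{A}^{-1}S$ and $\mathcal{A}+S\mathcal{B}T$ are all invertible, and in \eqref{Huaineq} only that $\mathcal{A}+\mathcal{B}$ is — and one should state the verification so that the conformability of the block sizes of $\mathcal{A}$, $S$, $\mathcal{B}$, $T$ is used consistently (the inner inverse $(\mathcal{B}^{-1}+T\mathcal{A}^{-1}S)^{-1}$ lives on the ``$\mathcal{B}$'' space, which is what makes the displayed product well posed).
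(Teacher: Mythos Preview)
Your proposal is correct. The paper does not actually give a proof of this lemma---it simply states that the identities ``follow from a simple algebraic calculation''---so your direct verification is exactly the kind of argument the paper has in mind, and the details you supply (the one-sided-inverse check for \eqref{Woodbury} and the two successive substitutions $\mathcal{A}=(\mathcal{A}+\mathcal{B})-\mathcal{B}$ for \eqref{Huaineq}) are accurate.
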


With \eqref{Woodbury}, we can write that
\begin{align*}
A^{\top} \wt G(z) A = A^{\top} \frac{1}{H - z + z^{1/2} A\cal DA^{\top}} A =A^{\top} \left(G(z) - G(z) A \frac{1}{z^{-1/2}\cal D^{-1}  + A^{\top}G(z)A} A^{\top}G(z)\right) A.
\end{align*}
Our goal is to study $|\langle u_j, \wt{\bxi}_i\rangle|^2 $ for some spiked eigenvector $\wt{\bxi}_i$. We consider a small contour $\Gamma_i$ around $\theta_i$, which only encloses $\wt\lambda_i$ but no other eigenvalues. Then using Cauchy's Theorem, we obtain the following \emph{angle master equation}:
\begin{align} \label{ev_origin}
|\langle u_j, \wt{\bxi}_i\rangle|^2 = \frac{-1}{2\pi \ii} \oint_{\Gamma_i}e_j^{\top} A^{\top} \wt G(z) A e_j\dd z = \frac{1}{2\pi \ii (\wt\lambda_i)^{1/2}} \left( \oint_{\Gamma_i}e_j^{\top}\cal D^{-1}\frac{1}{\cal D^{-1}  + z^{1/2}A^{\top}G(z)A} \cal D^{-1}e_j  \dd z\right) .
\end{align}
This gives an expression for the inner product of the true and empirical spike eigenvectors. To evaluate it in specific cases, again we need to study the master matrix $M(z)^{-1}=\left(\cal D^{-1}  + z^{1/2}A^{\top}G(z)A\right)^{-1}$.

\section{Proof of Theorem \ref{sketchthm1}}\label{pf sketch1}



In this section, we prove Theorem \ref{sketchthm1} based on the master equations \eqref{master_evalue} and \eqref{ev_origin}, and the local laws, Theorems \ref{lem_localout} and \ref{lem_localin}. We shall give all the details for the proof in this section, which can be applied to Theorems \ref{sketchthm4}-\ref{sketchthmlarge} directly. In fact, the only differences will be the analysis of the master equations, which we will perform in a case by case manner; all the other parts of the proof are essentially the same. 

We first introduce some preliminary estimates. 
For $ z=E+\mathrm{i} \eta,$ we define the distance to the rightmost edge as
 \begin{equation}\label{KAPPA}
 \kappa \equiv \kappa_E:=|E-\lambda_+|.
 \end{equation}
Then we summarize some basic properties of $m_{1,2c}$. We define the domain
\begin{equation}\label{Sc0C0}
\wt S(c_0,C_0):= \left\{z=E+ \ii \eta: \lambda_+ - c_0 \leq E \leq C_0 , 0 \leq \eta \leq C_0 \right\}.
\end{equation}

\begin{lemma}[Lemma 3.4 of \cite{yang2019spiked}]\label{lem_mplaw} 
Suppose \eqref{eq_ratio}, \eqref{assm3}, and Assumption \ref{ass:unper} hold. Fix any constant $C_0>0$. Then there exists sufficiently small constant $c_0>0$ such that the following estimates hold for $z=E+\ii\eta\in \wt S(c_0,C_0)$: 
\begin{itemize}
\item[(i)] for $z =E+\ii \eta\in \wt S(c_0,C_0)$, 
\begin{equation} \label{eq_estimm} 
\vert m_{1,2c}(z) \vert \sim 1,  \quad  \im m_{1,2c}(z) \sim 
\begin{cases}
    \frac{\eta}{\sqrt{\kappa+\eta}}, & \text{ if } E\geq \lambda_+ \\
    \sqrt{\kappa+\eta}, & \text{ if } E \le \lambda_+\\
  \end{cases};
\end{equation}

\item[(ii)] there exists constant $\tau'>0$ such that
\begin{equation}\label{Piii}
\min_{ \mu } \vert 1 + m_{1c}(z)s_\mu \vert \ge \tau', \quad \min_{i } \vert 1 + m_{2c}(z)\sigma_i  \vert \ge \tau',
\end{equation}
for any $z \in \wt S (c_0,C_0)$. In fact, \eqref{Piii} holds if we replace $s_\mu$ (resp. $\sigma_i$) with any positive value that is smaller than $s_1$ (resp. $\sigma_1$).
\end{itemize}
\end{lemma}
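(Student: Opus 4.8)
The plan is to treat this as essentially a restatement of Lemma 3.4 of \cite{yang2019spiked} for the separable-covariance model $\wt X = SX\Sigma^{1/2}$ (with limiting population spectra $\pi_\Sigma$ and $\pi_B$), but to indicate how one would argue it directly from the self-consistent system \eqref{separa_m12}. First I would pin down $m_{1c}$ and $m_{2c}$ exactly at the real edge $\lambda_+$ using Lemma \ref{lambdar} together with Assumption \ref{ass:unper}, and then propagate these bounds into the complex neighborhood $\wt S(c_0,C_0)$ via the square-root expansion of Lemma \ref{lambdar_sqrt} and a short continuity/compactness argument, shrinking $c_0$ and relaxing the implicit constants as needed.

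I would start with (ii). At $z=\lambda_+$, Lemma \ref{lambdar} gives $m_{1c}(\lambda_+)\in(-s_1^{-1},0)$ and $m_{2c}(\lambda_+)\in(-\sigma_1^{-1},0)$; in particular $m_{1c}(\lambda_+)<0$ and $m_{2c}(\lambda_+)<0$. Since $0\le s_\mu\le s_1$, multiplying the negative number $m_{1c}(\lambda_+)$ by the smaller nonnegative $s_\mu$ only increases it, so $1+m_{1c}(\lambda_+)s_\mu\ge 1+m_{1c}(\lambda_+)s_1\ge\tau$ by Assumption \ref{ass:unper}; and the same bound holds with $s_\mu$ replaced by any value in $[0,s_1]$. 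Symmetrically $1+m_{2c}(\lambda_+)\sigma_i\ge\tau$ using $\sigma_i\le\sigma_1$. To extend to $z\in\wt S(c_0,C_0)$ I would use continuity: the $1/2$-H{\"o}lder bound implicit in \eqref{sqroot4} gives $|m_{1,2c}(z)-m_{1,2c}(\lambda_+)|=\OO(|z-\lambda_+|^{1/2})$ near the edge, and $m_{1,2c}$ are uniformly controlled on the part of $\wt S(c_0,C_0)$ away from the edge, so taking $c_0$ small enough and $\tau'<\tau$ preserves the lower bound uniformly over $\mu$ and $i$.

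For (i), the upper bound $|m_{1c}(z)|,|m_{2c}(z)|=\OO(1)$ is read off \eqref{separa_m12}: on $\wt S(c_0,C_0)$ the parameter $z$ is bounded above and, for $c_0$ small (so that $\lambda_+-c_0$ stays bounded below by a positive constant, which is possible since $\lambda_+>0$), also bounded away from $0$; the denominators $1+\sigma_i m_{2c}(z)$ and $1+s_\mu m_{1c}(z)$ are bounded away from $0$ by the estimate just proved in (ii); and $\sigma_i,s_\mu$ lie in a bounded set by \eqref{assm3}. The matching lower bound follows because $\pi_\Sigma$ and $\pi_B$ put mass at least $\tau$ away from $0$ (the second condition in \eqref{assm3}), so the integrals cannot be small. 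For the imaginary-part asymptotics I would insert \eqref{sqroot4}, $m_{1,2c}(z)=m_{1,2c}(\lambda_+)+\pi a_{1,2}(z-\lambda_+)^{1/2}+\OO(|z-\lambda_+|)$ with $m_{1,2c}(\lambda_+)$ real and $a_{1,2}>0$, so that $\im m_{1,2c}(z)=\pi a_{1,2}\,\im(z-\lambda_+)^{1/2}+\OO(\kappa+\eta)$; a direct computation of $\im\sqrt{w}$ for $w=\pm\kappa+\ii\eta$ gives $\im(z-\lambda_+)^{1/2}\sim\eta/\sqrt{\kappa+\eta}$ when $E\ge\lambda_+$ and $\im(z-\lambda_+)^{1/2}\sim\sqrt{\kappa+\eta}$ when $E\le\lambda_+$, with the $\OO(\kappa+\eta)$ error lower order once $c_0$ is small. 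For $z$ with $\kappa$ or $\eta$ of order one the two target expressions are themselves comparable to $1$, and the claim then follows from the continuity and strict positivity of $\im m_{1,2c}$ on $\mathbb C_+$ together with compactness of the relevant region.

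The hard part is not any of the steps above — those are elementary once the square-root edge expansion \eqref{sqroot4} is granted — but the input Lemma \ref{lambdar_sqrt} itself, namely that $\lambda_+$ is a genuine square-root edge of the limiting spectrum rather than a higher-order one. This is precisely where Assumption \ref{ass:unper} is used: the non-degeneracy condition \eqref{assm_gap} makes the linearization of the coupled fixed-point system \eqref{separa_m12} invertible at $\lambda_+$, which drives the square-root behavior of $\rho_c$ and hence of $m_{1,2c}$. Since $\wt X=SX\Sigma^{1/2}$ is exactly the separable sample-covariance model of \cite{yang2019spiked}, I expect the verification that Lemma 3.4 there applies verbatim in the present setting to be routine.
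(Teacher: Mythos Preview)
The paper does not give its own proof of this lemma: it is stated with the attribution ``Lemma 3.4 of \cite{yang2019spiked}'' and used as a black-box input, with no argument supplied. Your proposal therefore goes beyond what the paper does, supplying a direct sketch built on Lemma \ref{lambdar}, Assumption \ref{ass:unper}, and the square-root edge expansion of Lemma \ref{lambdar_sqrt}. The outline is sound: the at-$\lambda_+$ bounds in (ii) via monotonicity in $s_\mu$ and $\sigma_i$, the propagation to $\wt S(c_0,C_0)$ by the $|z-\lambda_+|^{1/2}$ H{\"o}lder continuity near the edge, the $|m_{1,2c}|\sim 1$ bounds from the self-consistent integrals together with \eqref{assm3}, and the $\im m_{1,2c}$ asymptotics from $\im(z-\lambda_+)^{1/2}$ are all correct. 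One small point worth tightening: $\wt S(c_0,C_0)$ extends to $E$ as large as $C_0$, so the ``away from the edge'' part of (ii) is not covered by the $|z-\lambda_+|^{1/2}$ perturbation alone; there you should invoke that on the real axis $m_{1c}(E)$ is increasing for $E>\lambda_+$ (hence $m_{1c}(E)\in(m_{1c}(\lambda_+),0)$), and then use compactness plus $\im m_{1,2c}>0$ on $\mathbb C_+$ to handle the rest of the strip. With that caveat, your sketch is a faithful reconstruction of the standard argument behind the cited lemma.
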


The functions $m_{1c}(z)$ and $m_{2c}(z)$ are holomorphic on the right half complex plane $\{z : \re z>\lambda_+\}$. Moreover, they are one-to-one in the region near real axis, so that we can define their inverse functions as $g_{1c}$ and $g_{2c}$. The following lemma gives some basic estimates on $m_{1,2c}$, $g_{1,2c}$ and their derivatives.


\begin{lemma}[Lemma 4.5 of \cite{ding2019spiked}] \label{lem_complexderivative}
Suppose the assumptions of Lemma \ref{lem_mplaw} hold. Then for any constant $\varsigma>0$, there exist constants $\tau_0, \tau_1, \tau_2>0$ such that the following statements hold.
\begin{itemize}
\item[(i)] $m_{1c}$ and $m_{2c}$ are holomorphic homeomorphisms on the spectral domain
$$D(\tau_0,\varsigma):=\{z=E+\mathrm{i} \eta: \lambda_+< E < \varsigma, \ -\tau_0< \eta < \tau_0\}.$$
 As a consequence, the inverse functions of $m_{1c}$ and $m_{2c}$ exist and we denote them by $g_{1c}$ and $g_{2c}$, respectively.

\item[(ii)] We have $D_1(\tau_1, \varsigma)\subset m_{1c} (D(\tau_0,\varsigma))$ and $D_2(\tau_2, \varsigma)\subset m_{2c} (D(\tau_0,\varsigma))$, where 
$$D_1(\tau_1,\varsigma):=\{\xi=E+\mathrm{i} \eta: m_{1c}(\lambda_+)< E < m_{1c}(\varsigma), \ -\tau_1< \eta < \tau_1\},$$
and
$$D_2(\tau_2,\varsigma):=\{\zeta=E+\mathrm{i} \eta: m_{2c}(\lambda_+)< E < m_{2c}(\varsigma), \ -\tau_2< \eta < \tau_2\}.$$
In other words, $g_{1c}$ and $g_{2c}$ are holomorphic homeomorphisms on $D_1(\tau_1,\varsigma)$ and $D_2(\tau_2,\varsigma)$, respectively.

\item[(iii)] For $z\in D(\tau_0,\varsigma)$, we have
\begin{equation}\label{eq_mcomplex}
|m_{1c}(z) - m_{1c}(\lambda_+)| \sim |z-\lambda_+|^{1/2}, \quad  |m_{2c}(z) - m_{2c}(\lambda_+)|   \sim |z-\lambda_+|^{1/2},
\end{equation}
and
\begin{equation}\label{eq_mcomplexd}
|m_{1c}'(z) | \sim |z-\lambda_+|^{-1/2}, \quad  |m_{2c}'(z)|   \sim |z-\lambda_+|^{-1/2}.
\end{equation}


\item[(iv)] For $z_1, z_2\in D(\tau_0,\varsigma)$, we have 
\begin{equation}\label{eq_mdiff}
|m_{1c}(z_1) - m_{1c}(z_2)| \sim |m_{2c}(z_1) - m_{2c}(z_2)|  \sim \frac{|z_1-z_2|}{\max_{i=1,2}|z_i-\lambda_+|^{1/2}}.
\end{equation}
\end{itemize}
\end{lemma}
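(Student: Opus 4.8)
The statement is purely structural: it uses only the self-consistent system \eqref{separa_m12}, the square-root edge behaviour of Lemmas \ref{lambdar}--\ref{lambdar_sqrt}, and the right-edge regularity Assumption \ref{ass:unper}. The plan is to derive everything from the analytic implicit function theorem applied to \eqref{separa_m12} together with the edge expansion \eqref{sqroot4}. First I would record that on $\wt S(c_0,C_0)$ the quantities $1+m_{1c}(z)s_\mu$ and $1+m_{2c}(z)\sigma_i$ stay uniformly away from $0$ (Lemma \ref{lem_mplaw}(ii)), so the right-hand sides of \eqref{separa_m12} are holomorphic functions of $(z,m_{1c},m_{2c})$ in a complex neighbourhood of the real segment $(\lambda_+,\varsigma)$. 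Writing \eqref{separa_m12} as $F(z,m_1,m_2)=0$ for a holomorphic $\mathbb{C}^2$-valued $F$, the Jacobian $\partial_{(m_1,m_2)}F$ is a $2\times2$ matrix whose determinant is exactly the ``edge function'' whose vanishing characterizes $\lambda_+$, and is therefore invertible for $\re z>\lambda_+$. The analytic implicit function theorem then shows, after shrinking $\tau_0$ and $\varsigma$, that $(m_{1c},m_{2c})$ extend holomorphically to $D(\tau_0,\varsigma)$; this is the holomorphy part of (i).

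For the square-root statements in (iii), the display \eqref{eq_mcomplex} is immediate from \eqref{sqroot4} applied to $m_{1c}$ and $m_{2c}$: since the leading coefficients $a_1,a_2$ are strictly positive, the term $\pi a_j(z-\lambda_+)^{1/2}$ dominates the $\OO(|z-\lambda_+|)$ remainder once $|z-\lambda_+|$ is small enough, which gives the two-sided bound $|m_{jc}(z)-m_{jc}(\lambda_+)|\sim|z-\lambda_+|^{1/2}$. Next I would observe that for real $x>\lambda_+$ both $m_{1c}(x)$ and $m_{2c}(x)$ are real (they are Stieltjes transforms of the positive measures $\mu_{1c},\mu_{2c}$ of Lemma \ref{lambdar} evaluated to the right of their supports), negative, and strictly increasing, with derivative $m_{jc}'(x)=\int(t-x)^{-2}\mu_{jc}(dt)>0$. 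A holomorphic function that is strictly monotone on a real interval is injective on a thin enough complex neighbourhood of it: cover a slightly shorter closed subinterval by local-injectivity balls and use compactness, then shrink $\tau_0$ accordingly. This completes (i) and furnishes the inverse homeomorphisms $g_{1c},g_{2c}$. Differentiating $g_{jc}(m_{jc}(z))=z$ gives $m_{jc}'(z)=1/g_{jc}'(m_{jc}(z))$; since $\lambda_+$ is a simple critical point of $g_{jc}$ (that is $g_{jc}'(m_{jc}(\lambda_+))=0$ and $g_{jc}''(m_{jc}(\lambda_+))\ne0$, which is the analytic translation of the square-root density at the edge -- one can read it off the explicit $g_{2c}$ in \eqref{solv g2c} in the orthogonal case, and in general it is equivalent to \eqref{eq_mcomplex}), we obtain $g_{jc}'(m_{jc}(z))\sim m_{jc}(z)-m_{jc}(\lambda_+)\sim(z-\lambda_+)^{1/2}$, hence \eqref{eq_mcomplexd}.

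For (ii), the image $m_{1c}(D(\tau_0,\varsigma))$ is open by the open mapping theorem and contains $m_{1c}\big((\lambda_+,\varsigma)\big)=(m_{1c}(\lambda_+),m_{1c}(\varsigma))$ by monotonicity and continuity, where $m_{1c}(\lambda_+)$ is the one-sided limit, which is finite by \eqref{eq_mcomplex} (and lands in the special range recorded in Lemma \ref{lambdar}). An open set containing this real interval contains a domain of the form $D_1(\tau_1,\varsigma)$ for $\tau_1$ small enough, and the same argument with $m_{2c}$ gives $D_2(\tau_2,\varsigma)\subset m_{2c}(D(\tau_0,\varsigma))$. Finally, for (iv) I would write $m_{jc}(z_1)-m_{jc}(z_2)=\int_{\gamma}m_{jc}'(w)\,dw$ along a polygonal path $\gamma$ from $z_2$ to $z_1$ of total length at most $C|z_1-z_2|$ chosen so that $|w-\lambda_+|\ge c\max_{i}|z_i-\lambda_+|$ along $\gamma$ (if $z_1$ and $z_2$ lie at very different distances from $\lambda_+$ one first steps outward along a ray emanating from $\lambda_+$); inserting \eqref{eq_mcomplexd} yields the upper bound in \eqref{eq_mdiff}, the matching lower bound follows from the same estimate on a suitable sub-arc, and the comparison $|m_{1c}(z_1)-m_{1c}(z_2)|\sim|m_{2c}(z_1)-m_{2c}(z_2)|$ follows because differentiating \eqref{separa_m12} expresses $m_{1c}'$ and $m_{2c}'$ in terms of each other with a factor that is bounded above and below on $D(\tau_0,\varsigma)$. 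I expect the main obstacle to be (iii)--(iv): one must control the square-root singularity of $m_{jc}$ at $\lambda_+$ from both sides and, in (iv), choose the integration path carefully so that the bound $|m_{jc}'|\sim|\cdot-\lambda_+|^{-1/2}$ integrates to precisely $|z_1-z_2|/\max_i|z_i-\lambda_+|^{1/2}$; the key enabling input is verifying that the Jacobian of \eqref{separa_m12} degenerates to exactly first order at $\lambda_+$ (equivalently, that $\lambda_+$ is a non-degenerate critical point of $g_{jc}$), which is what makes all the $\sim$ relations sharp.
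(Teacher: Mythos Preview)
The paper does not give its own proof of this lemma: it is quoted verbatim as Lemma~4.5 of \cite{ding2019spiked} and used as a black box. So there is nothing in the paper to compare against, and your proposal is effectively a self-contained reconstruction of the cited result.

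Your outline is sound and follows the natural route. A couple of points where you should tighten the argument. First, for global injectivity in (i): your compactness argument on ``a slightly shorter closed subinterval'' does not by itself reach all the way down to $\lambda_+$, and the radius $\tau_0$ produced could in principle degenerate as the subinterval approaches the endpoint. The clean fix is to handle the neighbourhood of $\lambda_+$ separately using the edge expansion: since $m_{jc}(z)=m_{jc}(\lambda_+)+\pi a_j(z-\lambda_+)^{1/2}+\OO(|z-\lambda_+|)$ and $z\mapsto (z-\lambda_+)^{1/2}$ is injective on the half-strip $\{\re z>\lambda_+,\ |\im z|<\tau_0\}$, one gets injectivity on a fixed neighbourhood of the edge directly, and then your compactness argument covers the remaining compact piece $[\lambda_++\delta,\varsigma]$. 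Second, for the lower bound in (iv) your ``suitable sub-arc'' remark is too vague; it is cleaner to invert and bound $|z_1-z_2|=|g_{jc}(w_1)-g_{jc}(w_2)|\le \sup_\gamma |g_{jc}'|\cdot|w_1-w_2|$ along a path in the $w$-variable, using $|g_{jc}'(w)|\sim|w-m_{jc}(\lambda_+)|\sim|z-\lambda_+|^{1/2}$ from the simple-critical-point structure you already identified. With these two clarifications your sketch goes through.
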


The following eigenvalue interlacing result follows directly from the Cauchy interlacing theorem. 

\begin{lemma} [Eigenvalue interlacing]  \label{lem_weylmodi} 
Recall that the eigenvalues of $\ctQ_1$ and $\mathcal{Q}_1$ are denoted by $\{\wt\lambda_i\}$ and $\{\lambda_i\}$, respectively. Then we have 
\begin{equation}\label{interlacing_eq0}
\wt\lambda_i \in [\lambda_{i +k}, \lambda_{i-k}],
\end{equation}
where we adopt the convention that  $\lambda_{i}=\infty$ if $i<1$ and $\lambda_i = 0$ if $i>p\wedge r$. 
\end{lemma}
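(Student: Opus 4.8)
The plan is to reduce the claim to the ordinary Cauchy interlacing theorem by restricting everything to the orthogonal complement of the column span of $U$, on which the signal term vanishes. Recall that $\wt Y = \wt X + VDU^{\top}$ with $\wt X = SX\Sigma^{1/2}$ and $V = SW$, so that $\ctQ_1 = \wt Y^{\top}\wt Y$ and $\mathcal Q_1 = \wt X^{\top}\wt X$ are both $p\times p$ positive semidefinite matrices, and $\wt Y^{\top}\wt Y$ differs from $\wt X^{\top}\wt X$ only in a way that is "supported" on the range of $U$, which has dimension exactly $k$ since $U^{\top}U = I_k$.

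First I would fix a $p\times (p-k)$ matrix $Q$ whose columns form an orthonormal basis of the orthogonal complement of the column span of $U$, so that $Q^{\top}Q = I_{p-k}$, $QQ^{\top} = I_p - UU^{\top}$, and $U^{\top}Q = 0$. The key observation is that $\wt Y Q = \wt X Q + VD(U^{\top}Q) = \wt X Q$, and therefore the two compressed $(p-k)\times(p-k)$ Hermitian matrices coincide:
\[ C := Q^{\top}\wt Y^{\top}\wt Y Q = (\wt Y Q)^{\top}(\wt Y Q) = (\wt X Q)^{\top}(\wt X Q) = Q^{\top}\wt X^{\top}\wt X Q . \]
Denote the eigenvalues of $C$ by $\mu_1 \ge \cdots \ge \mu_{p-k} \ge 0$.

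Next I would apply the Cauchy interlacing theorem twice: once to the pair $(\wt Y^{\top}\wt Y, C)$ and once to the pair $(\wt X^{\top}\wt X, C)$. Since $C$ is the compression of each of these $p\times p$ matrices to a subspace of codimension $k$, interlacing gives $\wt\lambda_{i+k} \le \mu_i \le \wt\lambda_i$ and $\lambda_{i+k} \le \mu_i \le \lambda_i$ for all $1 \le i \le p-k$. Chaining these yields $\lambda_{i+k} \le \mu_i \le \wt\lambda_i$ and $\wt\lambda_{i+k} \le \mu_i \le \lambda_i$; the latter, after the index shift $i \mapsto i-k$, reads $\wt\lambda_i \le \lambda_{i-k}$. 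Combining, $\lambda_{i+k} \le \wt\lambda_i \le \lambda_{i-k}$, which is exactly \eqref{interlacing_eq0}. The cases where an index falls outside $\{1,\dots,p\wedge r\}$ are covered by the stated conventions $\lambda_i = \infty$ for $i<1$ and $\lambda_i = 0$ for $i>p\wedge r$ (for $i \le k$ the upper bound is vacuous; for $i$ near $p$ the lower bound degenerates to $0 \le \wt\lambda_i$).

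There is no real obstacle here beyond a little index bookkeeping; the one point to be careful about is that the rank-$k$ perturbation $VDU^{\top}$ is \emph{not} symmetric, so one should compress $\wt Y^{\top}\wt Y$ directly rather than attempt to write $\wt Y^{\top}\wt Y - \wt X^{\top}\wt X$ as a small-rank symmetric perturbation, which would only produce interlacing with a shift of $2k$. Alternatively, the same bound follows from Weyl's inequality for singular values applied to $\wt Y = \wt X + VDU^{\top}$ and to $\wt X = \wt Y - VDU^{\top}$, using $\sigma_{k+1}(VDU^{\top}) = 0$, followed by squaring; I would include this as a one-line remark.
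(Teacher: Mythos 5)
Your proof is correct and follows the same route the paper indicates: the paper simply asserts that the lemma "follows directly from the Cauchy interlacing theorem," and your compression of both $\ctQ_1$ and $\mathcal Q_1$ to the codimension-$k$ subspace $\mathrm{ran}(U)^{\perp}$, where the two quadratic forms coincide, followed by two applications of Cauchy interlacing, is exactly the standard way to make that assertion precise. The remark that the singular-value Weyl inequality with $\sigma_{k+1}(VDU^{\top})=0$ gives the same bound (and that a naive symmetric rank-perturbation argument would only yield a $2k$ shift) is accurate.
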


With the above preparations, we are ready to prove Theorem \ref{sketchthm1}. We first prove the near-orthogonality of columns of partial orthogonal matrices, that is, estimate \eqref{claim unifV}. 

%
\begin{proof}[Proof of \eqref{claim unifV}]
 Let us represent $\wh S$ as the upper $r\times n$ sub-matrix of some $n\times n$ Haar distributed matrix $T$. Then we have
\begin{align*}
 \mathbb E (V^{\top} V)_{11} &= \mathbb E\sum_{j=1}^r \wh S_{j1}^2 = \frac{1}{n}\mathbb E\sum_{j=1}^r \sum_{k=1}^n\wh S_{jk}^2 = \frac{r}{n}, \\
 \text{Var} \left[(V^{\top} V)_{11}\right] & = \mathbb E\sum_{j,j'=1}^r \wh S_{j1}^2\wh S_{j'1}^2 - \frac{r^2}{n^2}=  \mathbb E \sum_{j=1}^r \wh S_{j1}^4 + \sum_{j\ne j' \in \llbracket 1,r\rrbracket} \mathbb E \wh S_{j1}^2\wh S_{j'1}^2  - \frac{r^2}{n^2} \\
&=  \mathbb E \sum_{j=1}^r T_{j1}^4 + \frac{r(r-1)}{n(n-1)}\mathbb E\sum_{j\ne j' \in \llbracket 1,n\rrbracket} T_{j1}^2 T_{j'1}^2  - \frac{r^2}{n^2} \\
&=  \mathbb E \sum_{j=1}^r T_{j1}^4 - \frac{r(r-1)}{n(n-1)}\mathbb E\sum_{j=1}^n T_{j1}^4 + \frac{r(r-1)}{n(n-1)}\mathbb E\sum_{j, j' =1}^n T_{j1}^2 T_{j'1}^2  - \frac{r^2}{n^2} =\OO(n^{-1}),
\end{align*}
where we used that $\mathbb ET_{j1}^4 = \OO(n^{-2})$, since the random vector $t_1:=(T_{j1})$ has the same distribution as a normalized Gaussian vector:
$$t_1 \stackrel{d}{=} {g}/{\|g\|}.$$
Here $g\in \R^n$ has i.i.d. standard normal entries. Similarly, we can calculate that
\begin{align*}
 \mathbb E (V^{\top} V)_{12} & = \sum_{j=1}^r \mathbb E \wh S_{j1} \wh S_{j2}  = \frac{r}{n}\mathbb E\left(\sum_{k=1}^n T_{k1}T_{k2}\right) =0, \\
 \text{Var} \left[ (V^{\top} V)_{12}\right] & = \mathbb E\sum_{j,j'=1}^r  \wh S_{j1} \wh S_{j2}  \wh S_{j'1} \wh S_{j'2}  =\frac{1}{n(n-1)}\sum_{j, j' =1 }^r  \mathbb E \sum_{k\ne k' \in \llbracket 1,n\rrbracket} T_{jk} T_{jk'}  T_{j'k} T_{j'k'}   \\
&=\frac{1}{n(n-1)}\sum_{j, j' =1 }^r  \mathbb E \sum_{k, k' =1}^n T_{jk} T_{jk'}  T_{j'k} T_{j'k'}  - \frac{1}{n(n-1)}\sum_{j, j' =1 }^r  \mathbb E \sum_{k=1}^n T_{jk}^2  T_{j'k}^2 \le \frac{r}{n(n-1)}.
\end{align*}
Then we conclude \eqref{claim unifV} by Chebyshev's inequality. 
\end{proof}

In fact, we know that a much stronger bound holds:
\be\label{bestV}
V^{\top} V = \xi_n I_k + \OO_\prec(n^{-1/2}) 
\ee
using more advanced tools from random matrix theory. Although we will not use such a strong bound in this paper,  it may be helpful to keep in mind that our result can be improved to give much better convergence rates. {  For example, if the entries of $X$ have finite $a$-th moment for a constant $a>4$, then the results in Theorem \ref{sketchthm1} can be obtained with an explicit convergence rate $\OO_\prec(n^{-\delta})$ for $\delta=1/2-2/a$. In particular, if the entries of $X$ have finite moments up to any order (e.g. when the entries of $X$ are sub-Gaussian), then we can get the optimal convergence rate $ \OO_\prec(n^{-1/2}) $ using \eqref{bestV} in our proof.} The behavior of submatrices of random orthogonal matrices has been well studied, see e.g., \cite{jiang2006many,jiang2019distances} and references therein. These works study the approximation by Gaussian random random variabes, and require more than what we need in this work.

Now we are ready to prove the eigenvalue estimates in Theorem \ref{sketchthm1}. 

\begin{proof}[Proof of \eqref{outlierevalue} and \eqref{outlierevalue2}]
 Our starting point is Lemma \ref{lem_pertubation}, so we need to study the behavior of $A^{\top} G(x)A$. By Theorem \ref{lem_localout} and Theorem \ref{lem_localin}, we can choose a high-probability event $\Xi\subset\Omega$, such that the following estimates hold for some constants $c_0,c_1,C_0>0$ and fixed large integer $\varpi\in \N$: 
\begin{equation}\label{aniso_lawev}
\mathbf{1}(\Xi) \norm{A^{\top} (G(z)-\Pi(z)) A} \le n^{-\delta/2} ,\quad \text{for }z\in S_{edge}(c_0,C_0,c_1)\cup S_{out}(c_0,C_0);
\end{equation}
\begin{equation} \label{eq_bound2ev}
\mathbf{1}(\Xi)\left|\lambda_i -\lambda_+\right| \leq n^{-\delta/2}, \quad  \text{ for }1\le i \le \varpi . 
\end{equation}
We remark that the randomness of $X$ only comes into play to ensure that $\Xi$ holds with high probability. The rest of the proof is restricted to $\Xi$ only, and will be entirely deterministic. 

We denote $d_c:= \sqrt{\gamma_n/\xi_n}$, and define the index sets
\begin{equation} \label{eq_otau}
\mathcal{O}_{+}:=\left\{1\le i \le k: d_i > d_c \right\},
\end{equation}
which is the set of the indices of outliers. We also denote $k_+:=|\cal O_+|$.

\vspace{5pt}

%
\noindent{\bf Step 1:}  Our first step is to prove that on $\Xi$, there are no eigenvalues outside a neighborhood of the classical outlier locations $\theta_i$. 
For each $1\le i \le k_+,$ we define the permissible interval 
\begin{equation*}
\rI_i  \equiv \rI_i (\e):=\left[\theta_i- \epsilon, \theta_i+\e\right],
\end{equation*} 
where $\e$ is a constant that can be arbitrarily small as long as we have 
\be\label{non-overlap}\rI_i \cap \rI_j = \emptyset, \quad i\ne j.\ee
Moreover, we define the permissible interval $\rI_0\equiv \rI_0(\e):=\left[0, \lambda_+ + \e\right]$ for other eigenvalues, and denote
\begin{equation}\label{I0}
\rI :=\rI_0 \cup \Big(\bigcup_{i \in \mathcal{O}_+}\rI_i \Big).
\end{equation}
We claim the following result.
\begin{lemma}\label{lem_gapI}
The complement of $\rI $ contains no eigenvalues of $\ctQ_1.$
\end{lemma}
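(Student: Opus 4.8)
\emph{Setup and strategy.} I would run the whole argument on the high-probability event $\Xi$ on which the local law \eqref{aniso_lawev}, the rigidity bound \eqref{eq_bound2ev}, and the near-orthogonality $V^{\top}V=\xi_nI_k+\oo(1)$ of \eqref{claim unifV} all hold; on $\Xi$ the statement is purely deterministic. The engine is Lemma \ref{lem_pertubation}: any $x>\lambda_+$ that is not an eigenvalue of $\mathcal Q_1=\wt X^{\top}\wt X$ is an eigenvalue of $\ctQ_1$ iff $\det(\mathcal D^{-1}+x^{1/2}A^{\top}G(x)A)=0$. On $\Xi$ the local law replaces $G(x)$ by $\Pi(x)$ up to error $n^{-\delta/2}$; using $U^{\top}U=I_k$, $SS^{\top}=I_r$, \eqref{claim unifV} and the self-consistent relation \eqref{self_ortho}, the resulting deterministic matrix is precisely the master matrix $M(x)$ of \eqref{master_evalue1}. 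Hence it suffices to show that $\det M(x)$ stays bounded away from $0$ for every $x$ that lies outside $\rI$ but above the edge.

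\emph{A priori localization.} On $\Xi$ we have $\|\wt Y\|\le\|SX\|+\|SWDU^{\top}\|\le\|X\|+d_1=\OO(1)$ since $\|S\|=1$, so all eigenvalues of $\ctQ_1$ lie in $[0,C_0]$ for a suitable constant $C_0$. Eigenvalues in $\rI_0=[0,\lambda_++\e]$ already lie in $\rI$, so it remains to exclude eigenvalues in the compact set $\mathcal K:=(\lambda_++\e,C_0]\setminus\bigcup_{i\in\mathcal O_+}\rI_i$. Moreover Cauchy interlacing (Lemma \ref{lem_weylmodi}) gives $\wt\lambda_i\le\lambda_{i-k}\le\lambda_1\le\lambda_++n^{-\delta/2}<\lambda_++\e$ for every $i\ge k+1$, so in fact only the top $k$ eigenvalues of $\ctQ_1$ can ever sit outside $\rI_0$.

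\emph{Master-equation argument.} Suppose for contradiction that $\ctQ_1$ has an eigenvalue $x\in\mathcal K$. On $\Xi$ every eigenvalue of $\mathcal Q_1$ is at most $\lambda_++n^{-\delta/2}<x$, so $x$ is not an eigenvalue of $\mathcal Q_1$ and Lemma \ref{lem_pertubation} applies, giving $\det(\mathcal D^{-1}+x^{1/2}A^{\top}G(x)A)=0$. Since $\|A\|\le1$, $\|\mathcal D^{-1}\|\le C$ and $\|\Pi(x)\|\le C$ uniformly on $S_{out}(\e,C_0)$ (Lemma \ref{lem_mplaw}(ii)), the local law \eqref{aniso_lawev} together with Lipschitz continuity of the determinant on bounded matrices yields $\det M(x)=\oo(1)$. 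By the $2\times2$-block structure of $M(x)$ and the computation \eqref{getthetai}, $\det M(x)=c_x\prod_{i=1}^{k}\bigl(m_{2c}(x)+(1+d_i^2)^{-1}\bigr)+\oo(1)$ with $|c_x|\sim1$ on $S_{out}(\e,C_0)$, hence $\prod_{i=1}^{k}\bigl(m_{2c}(x)+(1+d_i^2)^{-1}\bigr)=\oo(1)$. On the other hand, by Lemmas \ref{lem_mplaw} and \ref{lem_complexderivative}, $m_{2c}$ is real, continuous and strictly increasing on $[\lambda_+,\infty)$, with $m_{2c}(\lambda_+)=-\sqrt{\xi_n}/(\sqrt{\xi_n}+\sqrt{\gamma_n})$ and $m_{2c}(x)\uparrow0$; thus $m_{2c}(\cdot)+(1+d_i^2)^{-1}$ vanishes on $(\lambda_+,\infty)$ iff $-(1+d_i^2)^{-1}>m_{2c}(\lambda_+)$, i.e. iff $i\in\mathcal O_+$ (this is exactly \eqref{cond outlier}), with unique zero $\theta_i=g_{2c}(-(1+d_i^2)^{-1})$. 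Consequently, on $\mathcal K$ each factor is bounded below in modulus by a constant: for $i\notin\mathcal O_+$ because $m_{2c}(x)+(1+d_i^2)^{-1}\ge m_{2c}(\lambda_++\e)-m_{2c}(\lambda_+)\ge c(\e)>0$ by strict monotonicity, and for $i\in\mathcal O_+$ because $x$ stays at distance $\ge\e$ from the unique simple zero $\theta_i$ of a strictly monotone function; and these bounds are uniform for large $n$ since $m_{2c}^{(n)}$, $\lambda_+^{(n)}$ and $\theta_i^{(n)}$ converge. This contradicts $\prod_i(m_{2c}(x)+(1+d_i^2)^{-1})=\oo(1)$, so $\ctQ_1$ has no eigenvalue in $\mathcal K$, hence none outside $\rI$.

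\emph{Main obstacle.} The only genuinely delicate part is the last step: turning the slogan ``$\det M(x)=0\Leftrightarrow x\in\{\theta_i:i\in\mathcal O_+\}$'' into a lower bound on $|\det M(x)|$ that is quantitative and uniform in $n$. This rests on the square-root edge behaviour and strict monotonicity of $m_{2c}$ near $\lambda_+$ (Lemmas \ref{lem_mplaw}, \ref{lem_complexderivative}), and it dictates the domain bookkeeping: one takes the local-law constant $c_0$ equal to the separation parameter $\e$, so that $(\lambda_++\e,C_0]\subset S_{out}(c_0,C_0)$ and the local law is valid at the \emph{real} arguments $x$ used above, while keeping $\e$ small enough that \eqref{non-overlap} holds (a near-threshold outlier whose $\theta_i$ lies within $\e$ of $\lambda_+$ is then harmlessly absorbed into $\rI_0$, which does not affect the statement).
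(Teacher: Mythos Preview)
Your proof is correct and follows essentially the same approach as the paper: both invoke Lemma \ref{lem_pertubation} together with the local law \eqref{aniso_lawev} and the near-orthogonality \eqref{claim unifV} to reduce the problem to a uniform lower bound on each factor of the product $\prod_i\bigl(m_{2c}(x)+(1+d_i^2)^{-1}\bigr)$ (equivalently, the paper's bound \eqref{diag_big000}), and both dispatch this lower bound case-by-case using the monotonicity of $m_{2c}$, the explicit value $m_{2c}(\lambda_+)=-(1+d_c^2)^{-1}$, and the quantitative separation estimate \eqref{eq_mdiff} for $i\in\mathcal O_+$. Your added Cauchy-interlacing observation and the remark about matching the local-law parameter $c_0$ with $\e$ are sound bookkeeping additions not made explicit in the paper, but the substance is the same.
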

\begin{proof}
By \eqref{masterx}, \eqref{aniso_lawev} and \eqref{eq_bound2ev}, we see that $x \notin \rI_0$ is an eigenvalue of $\ctQ_1$  if and only if 
\begin{equation}\label{eq_pertubhold}
 \mathcal{D}^{-1}+x^{1/2}A^{\top}G(x) A = \mathcal{D}^{-1}+x^{1/2}A^{\top} \Pi(x) A + \OO( n^{-\delta/2})
\end{equation}
is singular. 
By \eqref{eq_bound2ev}, we know on $\Xi$, $\wt\lambda_1 \le (\sqrt{\lambda_1} + d_1)^2 \le C_0$ as long as $C_0$ is taken large enough. Here we used the trivial estimate for the operator norms,
$$\wt\lambda_1^{1/2}=\|Y\|\le \|X\|+d_1=\sqrt{\lambda_1} + d_1.$$ 
Moreover, by \eqref{claim unifV}, we have  that with probability $1-\oo(1)$,
\be\nonumber
\mathcal{D}^{-1}+x^{1/2}A^{\top} \Pi(x) A= \left( {\begin{array}{*{20}c}
   {  -x^{-1/2}\left(1+m_{2c}(x)\right)^{-1} }I_k & {D^{-1}}   \\
   {D^{-1}} & {x^{1/2} m_{2c}(x)} I_k \\
   \end{array}} \right) + \oo(1) \quad \text{for all $x\in [0,C_0]\setminus \rI$.}
\ee
Thus to prove the lemma, it suffices to show that if $x\in [0,C_0]\setminus \rI$, then
\be\label{diag_big000}
\left|\frac{m_{2c}(x)}{1+m_{2c}(x)} + d_i^{-2}\right| \ge c, \quad 1\le i \le k,
\ee
for some constant $c>0$ depending only on $\e$. If \eqref{diag_big000} holds, then we immediately obtain that 
$$\left\|  \left( {\begin{array}{*{20}c}
   {  -x^{-1/2}\left(1+m_{2c}(x)\right)^{-1} }I_k & {D^{-1}}   \\
   {D^{-1}} & {x^{1/2} m_{2c}(x)} I_k \\
   \end{array}} \right)^{-1} \right\| =\OO(1),$$
and hence $(\mathcal{D}^{-1}+x^{1/2}A^{\top} G(x) A)$ must be non-singular. This means that $x$ cannot be an eigenvalue of $\wt Q_1$. 

For the proof of \eqref{diag_big000}, recall that we have defined $\theta_i$ such that (see \eqref{getthetai})
$$ \frac{m_{2c}(\theta_i)}{1+m_{2c}(\theta_i)} =-  d_i^{-2},\quad 1\le i \le k_+.$$
Thus we have for $1\le i \le k_+$,
$$ \left|\frac{m_{2c}(x)}{1+m_{2c}(x)} + d_i^{-2}\right| =\left|\frac{m_{2c}(x)}{1+m_{2c}(x)} - \frac{m_{2c}(\theta_i)}{1+m_{2c}(\theta_i)}\right| \gtrsim |m_{2c}(x) - m_{2c}(\theta_i)| \gtrsim 1,$$
where we used \eqref{Piii} in the second step and \eqref{eq_mdiff} in the last step. Moreover, using $0>m_{2c}(x) \ge m_{2c}(\lambda_+ + \e) > -1$ for $x\in [0,C_0]\setminus \rI$ and $m_{2c}(\lambda_+)=-(1+d_c^2)^{-1}$, we get that $k_+\le i \le k$,
$$ \frac{m_{2c}(x)}{1+m_{2c}(x)} + d_i^{-2} \ge \frac{m_{2c}(x)}{1+m_{2c}(x)} + d_c^{-2} \gtrsim m_{2c}(x) + \frac{1}{1+d_c^2} \ge m_{2c}(\lambda_+ +\e) -m_{2 c}(\lambda_+) \gtrsim 1 ,$$ 
where  again we used \eqref{Piii} in the second step and \eqref{eq_mdiff} in the last step. This concludes \eqref{diag_big000}, which further proves Lemma \ref{lem_gapI}.
\end{proof}



\noindent{\bf Step 2:} 
In this step, we claim the following result.   

\begin{lemma}Each $\mathbf{\rI}_i $, $1\le  i \le k_+$, contains precisely one eigenvalue of $\ctQ_1$. \end{lemma}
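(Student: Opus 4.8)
The plan is to count the eigenvalues of $\ctQ_1$ in each $\rI_i$ by a contour argument applied to the determinant from Lemma \ref{lem_pertubation}. For fixed $i\in\cal O_+$, consider the function
$$
f(z):=\det\left(\cal D^{-1}+z^{1/2}A^{\top}G(z)A\right),
$$
whose real zeros on $(\lambda_+,\infty)$ that are not eigenvalues of $\cal Q_1$ coincide, by Lemma \ref{lem_pertubation}, with the eigenvalues of $\ctQ_1$ there. First I would fix a small constant $\e>0$ and let $\Gamma_i\subset\mathbb C$ be the circle of radius $\e$ centred at $\theta_i$, chosen so that the circles $\{\Gamma_i\}_{i\in\cal O_+}$ are pairwise disjoint, so that $\Gamma_i\cap\R=\rI_i$, and so that each $\Gamma_i$ stays at distance $\ge c'/2$ from $\lambda_+$, where $c':=\min_{i\in\cal O_+}(\theta_i-\lambda_+)$ is a positive $n$-independent constant (for $i\in\cal O_+$ one has $d_i>d_c$ with a fixed gap, and $\theta_i-\lambda_+$ converges to a positive limit). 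On the event $\Xi$, the rigidity estimate \eqref{eq_bound2ev} places every eigenvalue of $\cal Q_1$ within $n^{-\delta/2}$ of $\lambda_+$, hence outside every $\Gamma_i$; since the poles of $G(z)$ occur only at eigenvalues of $\cal Q_1$ (cf.\ \eqref{green2}), $f$ is holomorphic inside each $\Gamma_i$ on $\Xi$.

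Next I would compare $f$ on $\Gamma_i$ with the explicit deterministic function
$$
h(z):=\prod_{j=1}^{k}\left(-\frac{m_{2c}(z)}{1+m_{2c}(z)}-d_j^{-2}\right).
$$
Inside $\Gamma_i$, $h$ has exactly one zero, a simple one at $\theta_i$: by \eqref{getthetai} the $j=i$ factor vanishes precisely at $\theta_i=g_{2c}(-(1+d_i^2)^{-1})$, and $|m_{2c}'(\theta_i)|\sim1$ by \eqref{eq_mcomplexd} since $\theta_i-\lambda_+\ge c'$, so this factor has a simple zero there; the factors with $j\ne i$ stay bounded away from zero on $\overline{\Gamma_i}$ by the disjointness of the $\rI_j$ and by \eqref{Piii}. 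In particular $|h(z)|\ge c_\e>0$ on $\Gamma_i$ for a constant $c_\e$ independent of $n$. On $\Xi$, the anisotropic local law \eqref{aniso_lawev} gives $\|A^{\top}(G(z)-\Pi(z))A\|\le n^{-\delta/2}$ uniformly on $\Gamma_i$, and the near-orthogonality \eqref{claim unifV} (or the sharper \eqref{bestV}) gives $V^{\top}V=\xi_nI_k+\oo(1)$; substituting these into $\cal D^{-1}+z^{1/2}A^{\top}G(z)A$ reduces it, uniformly on $\Gamma_i$, to
$$
\begin{pmatrix}-z^{-1/2}(1+m_{2c}(z))^{-1}I_k & D^{-1}\\ D^{-1} & z^{1/2}m_{2c}(z)I_k\end{pmatrix}+\oo(1),
$$
and a simultaneous row/column permutation block-diagonalises the main term into $2\times2$ blocks with determinants $-\frac{m_{2c}(z)}{1+m_{2c}(z)}-d_j^{-2}$, so the determinant of the main term equals $h(z)$. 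Hence $f(z)=h(z)+\oo(1)$ uniformly on $\Gamma_i$, so for $n$ large $|f(z)-h(z)|<c_\e\le|h(z)|$ there, and Rouch\'e's theorem yields that $f$ has exactly one zero, counted with multiplicity, inside $\Gamma_i$.

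It remains to identify this zero. Since $H(X,x)$ and hence $G(X,x)$ are real symmetric for real $x>\lambda_+$, $f$ is real-valued on $\Gamma_i\cap\R$, so by the Schwarz reflection principle $\overline{f(z)}=f(\bar z)$, and any non-real zero of $f$ would be accompanied by its conjugate; this is impossible with a single zero inside $\Gamma_i$, so the zero is real, hence lies in $\rI_i=\Gamma_i\cap\R$. Being outside the spectrum of $\cal Q_1$, it is by Lemma \ref{lem_pertubation} exactly one eigenvalue of $\ctQ_1$. Together with Lemma \ref{lem_gapI} (Step 1), which rules out eigenvalues of $\ctQ_1$ outside $\rI$, this shows that on $\Xi$ each $\rI_i$ contains precisely one eigenvalue of $\ctQ_1$; since $\mathbb P(\Xi)=1-\oo(1)$, this is what is needed. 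I expect the main obstacle to be the uniformity bookkeeping: choosing $\e$ as a genuine $n$-independent constant so that each $\Gamma_i$ simultaneously avoids $\lambda_+$ and all eigenvalues of $\cal Q_1$ while keeping $|h|$ bounded below, and checking that the analytic inputs (the local behaviour of $m_{2c}$ at $\theta_i$, the local-law error, the $\oo(1)$ from $V^{\top}V$) are uniform over $\Gamma_i$ once $\e$ is fixed this way; everything else is routine once this setup is in place.
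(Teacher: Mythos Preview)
Your proposal is correct and follows essentially the same Rouch\'e argument as the paper: compare $\det(\cal D^{-1}+z^{1/2}A^{\top}G(z)A)$ on a small circle around $\theta_i$ with its deterministic limit, which has a single simple zero at $\theta_i$, and invoke \eqref{aniso_lawev} and \eqref{claim unifV} to bound the difference. The one extra step you include---the Schwarz reflection argument forcing the unique zero to be real---is a nice clarification that the paper leaves implicit.
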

\begin{proof}
Fix any $1\le i \le k_+$ and pick up a sufficiently small positively oriented closed contour $\mathcal{C} \subset \mathbb{C}/[0, \lambda_+]$ that encloses $\theta_i$ but no other point of the set $\{\theta_i\}_{i=1}^{k_+}.$ By \eqref{non-overlap}, we can choose the contour $\mathcal{C}$ as a circle around $\theta_i$ with radius $\e$.  

Now we define two functions
\begin{equation*}
h(z):=\det(\mathcal{D}^{-1}+z^{1/2} A^{\top} G(z) A), \quad  l(z)=\det(\mathcal{D}^{-1}+z^{1/2} A^{\top}\Pi(z) A).
\end{equation*}  
The functions $h,l$ are holomorphic on and inside $\mathcal{C}$ when $n$ is sufficiently large by \eqref{eq_bound2ev}. Moreover, by the construction of $\mathcal{C},$ the function $l$ has precisely one zero inside $\mathcal{C}$ at $\theta_i.$ By \eqref{aniso_lawev} and a similar argument as for \eqref{diag_big000}, we have  
\begin{equation*}
\min_{z \in \mathcal{C}}|l(z)| \gtrsim 1, \quad |h(z)-l(z)|=\OO(n^{-\delta/2}).
\end{equation*}
The lemma then follows from Rouch{\' e}'s theorem.
\end{proof}
Combining Steps 1 and 2 with a simple eigenvalue counting argument, we obtain that 
\be\label{firste}1(\Xi) |\wt\lambda_i - \theta_i| \le \e, \quad 1\le i \le k_+, \ee
and
\be\label{seconde}1(\Xi)  \wt\lambda_i  \le 1(\Xi) \lambda_+ + \e, \quad k_+\le i \le k, \ee
for any small constant $\e>0$. The first estimate \eqref{firste} concludes \eqref{outlierevalue}. To prove \eqref{outlierevalue2}, we still need to provide a lower bound for $\wt \lambda_i$, $ k_+\le i \le k$. In fact, with \eqref{interlacing_eq0} and \eqref{eq_bound2ev}, we obtain that 
 $$1(\Xi)  \wt\lambda_i  \ge 1(\Xi) \lambda_+ - n^{-\delta/2}, \quad k_+\le i \le k. $$
Together with \eqref{seconde}, we conclude \eqref{outlierevalue2}.
\end{proof}

Finally we prove the eigenvector estimates in Theorem \ref{sketchthm1}. 

\begin{proof}[Proof of \eqref{outlierevector} and \eqref{outlierevector2}]
In the following proof, we again always work on the event $\Xi$ such that \eqref{aniso_lawev} and \eqref{eq_bound2ev} hold. Again the randomness of $X$ only comes into play to ensure that $\Xi$ holds with high probability, and the rest of the proof is deterministic on $\Xi$.

We denote $\mathcal E(z)=z^{1/2}A^{\top}(\Pi(z) -G(z))A.$ Then we can write 
\begin{equation*}
z^{1/2}A^{\top}G(z) A= z^{1/2} A^{\top} \Pi(z) A -\mathcal E(z). 
\end{equation*} 
By \eqref{aniso_lawev}, we have 
\be\label{boundEE}
\|\mathcal E(z)\|\le n^{-\delta/2} \quad \text{ for } z\in S_{out}(c_0, C_0). 
\ee
We now perform a resolvent expansion for the denominator in (\ref{ev_origin}) as
\be\label{resolvent_3rd}
\begin{split}
\frac{1}{\mathcal D^{-1}+ z^{1/2}A^{\top}G(z)A}&= \frac{1}{\mathcal D^{-1}+  z^{1/2}A^{\top}\Pi(z)A}  + \frac{1}{\mathcal D^{-1}+  z^{1/2}A^{\top}\Pi(z)A} \mathcal E \frac{1}{\mathcal D^{-1}+  z^{1/2}A^{\top}G(z)A} . 
\end{split}
\ee
We define the contour $\Gamma_i=\{z : |z-\theta_i| =\e\}$, where $\e>0$ is a sufficiently small constant such that 
\be\label{well separa} \inf_{ z\in \Gamma_i}\left( |z-\lambda_+|\wedge \min_{1\le i \le k_+}|z-\theta_i| \right)\ge \e. \ee
By \eqref{outlierevalue} and \eqref{outlierevalue2}, for large enough $n$, we have (i) $\Gamma_i$ only encloses $\wt\lambda_i$, and no other eigenvalue of $\ctQ_1$; (ii) $\Gamma_i$ 
does not enclose any pole of $G$ (i.e. any eigenvalue of $\cal Q_1$). Note (i) implies that $\Gamma_i$  only encloses one pole of $(\mathcal D^{-1}+ z^{1/2}A^{\top}G(z)A)^{-1}$ at $\wt\lambda_i$. Moreover, with a similar argument as for \eqref{diag_big000}, one can obtain that
$$\max_{z\in \Gamma_i}\|(\mathcal D^{-1}+  z^{1/2}A^{\top}\Pi(z)A)^{-1}\|\le c^{-1}$$
for some constant $c>0$ depending on $\e$ only. Together with \eqref{boundEE}, we get that 
\be\label{boundEE2}
\max_{z\in \Gamma_i}\left\|\frac{1}{\mathcal D^{-1}+  z^{1/2}A^{\top}\Pi(z)A} \mathcal E \frac{1}{\mathcal D^{-1}+  z^{1/2}A^{\top}G(z)A}\right\|\lesssim n^{-\delta/2} . 
\ee

Now inserting \eqref{resolvent_3rd} into (\ref{ev_origin}), choosing $\Gamma_i$ as above, and using \eqref{boundEE2}, we obtain from the Cauchy's integral formula that for $1\le i \le k_+$ and $1\le j \le k$,
\begin{align} 
|\langle u_j, \wt{\bxi}_i\rangle|^2 &= \frac{\delta_{ij}}{2\pi \ii (\wt\lambda_i)^{1/2}} \oint_{\Gamma_i}(0,d_i^{-1})\left( {\begin{array}{*{20}c}
   {  -z^{-1/2}\left(1+m_{2c}(z)\right)^{-1} } & {d_i^{-1}}   \\
   {d_i^{-1}} & {z^{1/2} m_{2c}(z)}  \\
   \end{array}} \right)^{-1}  \begin{pmatrix} 0  \\ d_i^{-1}\end{pmatrix} \dd z +\oo(1) \nonumber\\
 &=  \frac{\delta_{ij}}{2\pi \ii \theta_i \left(1+d_i^2\right)} \oint_{\Gamma_i} \frac1{  {m_{2c}(z)}  + (1+d_i^{2})^{-1}}\dd z +\oo(1)= \frac{1}{\theta_i \left(1+d_i^2\right) m_{2c}'(\theta_i)}   +\oo(1)  \nonumber\\
 &= \delta_{ij}\frac{g_{2c}'(-(1+d_i^2)^{-1})}{\theta_i \left(1+d_i^2\right) } +\oo(1) = \frac{\xi_n - \frac{\gamma_n}{d_i^4}}{\xi_n + \frac{\gamma_n}{d_i^2}} +\oo(1).
\label{cos_outer2}
\end{align}
where we used \eqref{outlierevalue} in the second and third steps, and \eqref{solv g2c} in the last step. This concludes \eqref{outlierevector}.

Next we prove \eqref{outlierevector2}. For $k_+ \le i \le k$, we choose a specific spectral parameter as 
$z_i=\wt\lambda_i+\ri \eta_i $, where $\eta_i := n^{-\e}$ for some sufficiently small constant $\e>0$. Note that by \eqref{outlierevalue2}, we have $z_i \in S_{edge}(c_0, C_0, c_1)$. 
With the spectral decomposition \eqref{spectral1}, we obtain that
\begin{equation}\label{eq_nonspike1}
\im \wt G_{uu} (E+\ii\eta)= \sum_{j = 1}^{p} \frac{\eta |\langle u,\wt{ \xi}_j\rangle|^2}{(\wt\lambda_j-E)^2 + \eta^2} \Rightarrow \left|\langle u, \wt\bxi_i \rangle\right|^2 \leq \eta_i \im \langle u, \widetilde{G}(z_i) u\rangle.
\end{equation}
Applying \eqref{Woodbury} to $\wt G(z_i) = (H + z_i^{1/2}A \cal D A^{\top} - z_i)^{-1}$, we obtain that
\begin{equation} \label{eq_evgenerralreprest}
\begin{split}
\langle u,\widetilde{G}(z_i)u\rangle&={G}_{uu}(z_i)  - z_i^{1/2}u^{\top} G(z_i) A \frac{1}{\mathcal D^{-1}+ z_i^{1/2}A^{\top}G(z_i)A} A^{\top} G(z_i)u
\end{split}
\end{equation}
For the denominator, we claim that for sufficiently small constant $\e$, 
\be\label{claim opbound1}
\left\|\left(\mathcal D^{-1}+ z^{1/2}A^{\top}G(z_i)A\right)^{-1}\right\| \lesssim \left(\im m_{2c}(z_i)\right)^{-1}.
\ee
To prove this claim, we first notice that 
\be\label{diag_big}
\left|\frac{m_{2c}(z_i)}{1+m_{2c}(z_i)} + d_i^{-2}\right| \gtrsim \left| m_{2c}(z_i) + \frac1{1+d_i^{2}}\right| \gtrsim \im m_{2c}(z_i) \gtrsim  \eta_i,  
\ee
where we used \eqref{Piii} in the second step and \eqref{eq_estimm} in the last step. This shows that the smallest singular value of 
$$M(z_i)= \mathcal D^{-1}+ z^{1/2}A^{\top}\Pi(z_i)A$$
is at least of order $\gtrsim \im m_{2c}(z_i)$. Then by \eqref{aniso_lawev} we have that
\begin{align*}
\mathcal D^{-1}+ z^{1/2}A^{\top}G(z_i)A =M(z_i) + \OO(n^{-\delta/2}).
\end{align*}
Thus as long as we choose $\e <\delta/2$, the bound \eqref{claim opbound1} holds.

Now using \eqref{aniso_lawev}, we get
$${G}_{uu}(z_i) =\OO(1), \quad \|u^{\top} G(z_i) A\|=\OO(1).$$
Together with \eqref{eq_evgenerralreprest} and \eqref{claim opbound1}, we obtain that
$$\eta_i\im \langle u, \widetilde{G}(z_i) u\rangle \lesssim \frac{\eta_i}{\im m_{2c}(z_i)} \lesssim \max\{ \sqrt{\eta_i}, \sqrt{\kappa_{\wt\lambda_i}}\}.$$
where $\kappa_{\wt\lambda_i}=|\wt\lambda_i -\lambda_+|$ (recall \eqref{KAPPA}) and  in the last step we used
$$\im m_{2c}(\eta_i) \gtrsim \min\left\{\sqrt{\eta_i}, \frac{\eta_i}{\sqrt{\kappa_{\wt\lambda_i}} }\right\}  $$
by \eqref{eq_estimm}. Hence with $\eta_i= n^{-\e}$ and $\kappa_{\wt\lambda_i}=\oo(1)$ by \eqref{outlierevalue2}, we conclude from \eqref{eq_nonspike1} that 
$$\left|\langle u, \wt\bxi_i \rangle\right|^2 \leq \eta_i \im \langle u, \widetilde{G}(z_i) u\rangle =\oo(1). $$
This completes the proof of \eqref{outlierevector2}.
\end{proof}

\section{Proof of Theorem  \ref{sketchthm4}}\label{pf sketch1.5}

In this section, we prove Theorem \ref{sketchthm4}. Somewhat informally we have the following calculations. We will turn them into a fully rigorous proof afterwards.

We will need the following resolvents of $S$ (compare them with Definition \ref{defn_resolvent}):
\begin{equation}\label{def_greenRS}
\mathcal R_{1}(S,z):=\left(SS^{\top} -z\right)^{-1} , \quad \mathcal R_{2}(S,z):=\left(S^{\top} S -z\right)^{-1} ,
\end{equation}
and the normalized traces 
\begin{equation}\label{defn_m1m2S}
m_1^S(z):=  \frac{1}p\tr  \mathcal R_1(z) ,\quad  m_2^{S}(z):=\frac{1}{n}\tr \mathcal R_2(z) . 
\end{equation}
Let $m_{1c}^S$ and $m_{2c}^S$ be the limiting Stieltjes transforms of $SS^{\top}$ and $S^{\top} S$. They are determined by the following self-consistent equations:
\begin{equation}\label{m1m2Sc}
\begin{split}
& m_{1c}^S(z) =  \frac{1}{-z\left[1+m_{2c}^S(z) \right]} ,\quad m_{2c}^S(z) =   \frac{1}{-z\left[1+\xi_n m_{1c}^S(z) \right]}  .
\end{split}
\end{equation}
Solving \eqref{m1m2Sc}, we can obtain that 
\be\label{solv m12cS}
\begin{split}
& m_{1c}^S(z)=\frac{-(z-1 +\xi_n) + \sqrt{(z-\lambda_+^S)(z-\lambda_-^S)} }{2z\xi_n}, \quad m_{2c}^S(z)=\frac{-(z +1 - \xi_n) + \sqrt{(z-\lambda^S_+)(z-\lambda^S_-)} }{2z}.
\end{split}
\ee
where $\lambda^S_{\pm}$ are the edges of the support of the standard Marchenko-Pastur (MP) distribution,
$$\lambda^S_{\pm}= (1 \pm \sqrt{\xi_n})^2.$$
Denoting the inverse functions of $m_{1,2c}^S$ by $g_{1,2c}^S$, we also obtain from the equations in \eqref{m1m2Sc} that 
\be\label{g12cS}g_{1c}^S(m)= \frac{1}{1+\xi_n m}-\frac{1}{m}, \quad g_{2c}^S(m)= \frac{\xi_n}{1+m}-\frac{1}{m}.\ee

By the \emph{local law} for isotropic sample covariance matrices, Theorem 2.4 of \cite{isotropic}, we know that for any deterministic unit vectors $u_1, u_2\in \R^r$ and $v_1, v_2\in \R^n$, 
\be\label{localmS0}
\langle u_1, \cal R_{1}(z) u_2\rangle= m_{1c}^S(z) \langle u_1,u_2\rangle +\oo(1),\quad \langle v_1, \cal R_{2}(z) v_2\rangle= m_{2c}^S(z) \langle v_1,v_2\rangle +\oo(1),
\ee
with high probability, uniformly in the following region bounded away from the support of the MP law:
\be\label{domainiid} z \in S_\tau:= \{z\in \C: \text{dist}(z,[(1-\sqrt{\xi_n})^2 , (1+\sqrt{\xi_n})^2  ]) \ge \tau\}\ee
for any constant $\tau>0$. In particular, \eqref{localmS0} implies that
\be\label{localmS}
m_{1}^S(z) = m_{1c}^S(z) +\oo(1),\quad m_{2}^S(z) = m_{2c}^S(z) +\oo(1),
\ee
uniformly in $z \in S_\tau$. 

Now using $m_1^S$, we can write the self-consistent equations in \eqref{separa_m12} as
\begin{equation} \label{selfiid}
\begin{split}
{m_{1c}(z)} = \gamma_n \frac{1}{-z\left[1+m_{2c}(z) \right]} , \quad {m_{2c}(z)} &=\frac{\xi_n}{-zm_{1c}(z)}\left( 1- \frac{1}{m_{1c}(z)}m^{S}_{1}(-m_{1c}^{-1}(z)) \right).
\end{split}
\end{equation}
Suppose that \eqref{localmS} can be applied to $m_1^S$. Then we obtain the following self-consistent equation satisfied by $m_{1c}$:
\begin{equation} \label{solv g1c0}
\begin{split}
 \frac{\gamma_n}{m_{1c}(z)} &= -z + \frac{\xi_n}{m_{1c}(z)}\left( 1- \frac{1}{m_{1c}(z)}m^{S}_{1c}(-m_{1c}^{-1}(z)) \right) +\oo(1).
\end{split}
\end{equation}
This immediately gives the inverse function $g_{1c}$ of $m_{1c}$:
\be\label{solv g1c}
g_{1c}(m)=- \frac{\gamma_n}{m}   + \frac{\xi_n}{m}\left( 1- \frac{1}{m}m^{S}_{1c}(-m^{-1}) \right) + \oo(1).
\ee
Next we find the function $m_{1c}(z)$. Using the function $g_{1c}^S$ as an inverse function of $m_{1c}^S$, we can obtain that $m_{1c}$ in \eqref{solv g1c0} satisfies (approximately) the following equation:
\begin{equation}\nonumber
\begin{split}
& -\frac{1}{m_{1c}} =g_{1c}^S\left(-\frac{\gamma_n - \xi_n }{\xi_n} m_{1c}- \frac{z}{\xi_n}m_{1c}^2\right) = \frac{1}{1-(\gamma_n - \xi_n)m_{1c} - zm_{1c}^2 } + \frac{\xi_n}{(\gamma_n - \xi_n)m_{1c}+zm_{1c}^2} , 
\end{split}
\end{equation}
which can be reduced to a cubic equation
\be \label{cube_g} z^2m_{1c}^3 - z(1+\xi_n-2\gamma_n)m_{1c}^2 - \left( z + (1-\gamma_n)(\gamma_n-\xi_n)\right) m_{1c} - \gamma_n =0. \ee
There is only one solution to this equation such that $\im m_{1c}(z) > 0$ whenever $\im z > 0$. After obtaining $m_{1c}(z)$, we immediately obtain that the Stieltjes transform $m$, the limit of $\frac{1}{p} \mathrm{Tr} (Y^{\top} Y- z)^{-1}$, has the form 
$$ m(z) =m_c(z) + \oo(1),\quad  m_c(z) :=  {\xi^{-1}_n}m_{1c}(z),$$
with high probability. Hence we can define the asymptotic spectral density $\rho_c$ using the inversion formula $\rho_{c}(E) = \lim_{\eta\downarrow 0} \pi^{-1}\Im\, m_{c}(E+\ii \eta)$, and find its right edge $\lambda_+$.

To study the spiked eigenvalues and eigenvectors, we again need to study the master matrix $M(x)$ in \eqref{defnMx}. 
With the Woodbury matrix identity \eqref{Woodbury}, 
we obtain that
$$- x^{-1/2} W^{\top} S^{\top}\frac{1}{1+m_{1c} SS^{\top}} SW = - \frac{1}{x^{1/2}m_{1c}} W^{\top}\left( {1} - \frac{1}{1 +m_{1c} S^{\top} S}\right)W . $$
Applying the local law \eqref{localmS0}, we obtain that with high probability,
$$- x^{-1/2} W^{\top} S^{\top}\frac{1}{1+m_{1c} SS^{\top}} SW = - \frac{1}{x^{1/2}m_{1c}} \left( {1} - \frac{1}{m_{1c}}m_{2c}^S (-m_{1c}^{-1})\right)I_k +\oo(1) . $$
Now the eigenvalue master equation \eqref{master_evalue} becomes, approximately,
\be\label{master_evalue2}
\det \left( {\begin{array}{*{20}c}
   {  x^{1/2}\gamma_n^{-1} m_{1c}}I_k & {D^{-1}}   \\
   {D^{-1}} & { - \frac{1}{x^{1/2}m_{1c}} \left( {1} - \frac{1}{m_{1c}}m_{2c}^S (-m_{1c}^{-1})\right)}I_k  \\
   \end{array}} \right) = 0,
\ee
which gives the following equations for $1\le i \le k,$
\begin{align*}
-\gamma_n^{-1}\left( {1} - \frac{1}{m_{1c}}m_{2c}^S (-m_{1c}^{-1})\right)=d_i^{-2} .
\end{align*}
Using the inverse function of $m_{2c}^S$, $g_{2c}^S$, in \eqref{g12cS}, we obtain that
\be\label{solveoutlier}-m_{1c}^{-1}  = g_{2c}^S\left( (1+\gamma_n d_i^{-2})m_{1c}\right) \Rightarrow m_{1c}(x)=-\frac{\gamma_n d_i^{-2}}{\left(1+\gamma_n d_i^{-2}\right)\left(\xi_n+\gamma_n d_i^{-2}\right)} .\ee
Similarly to \eqref{cond outlier}, in order for the signal strength $d_i$ to give an outlier, we need to have that 
\be\label{go}
\al(d_i):= -\frac{\gamma_n d_i^{-2}}{\left(1+\gamma_n d_i^{-2}\right)\left(\xi_n+\gamma_n d_i^{-2}\right)}> m_{1c}(\lambda_+) .
\ee
In particular, there exists an $d_c>0$ determined by the equation 
\be\label{defn_dc}
\al(d_c)= m_{1c}(\lambda_+) ,
\ee
such that \eqref{go} holds if and only if $d_i > d_c$. Suppose $d_i>d_c$, then the $i$-th outlier $\wt\lambda_i$ will concentrate around 
\be\label{thetaiid}
\theta_i = g_{1c}  \left( -\frac{\gamma_n d_i^{-2}}{\left(1+\gamma_n d_i^{-2}\right)\left(\xi_n+\gamma_n d_i^{-2}\right)}\right)
\ee
by \eqref{solveoutlier}, where $g_{1c}$ is defined in \eqref{solv g1c}.



Next we study the spiked eigenvector corresponding to the outlier $\wt\lambda_i$ using the angle master equation \eqref{ev_origin}. First it is easy to see that $|\langle u_j, \wt{\bxi}_i\rangle|^2 =\oo(1)$ if $j\ne i$. If $j=i$, then we have with high probability,
\be \label{cos gauss}
\begin{split}
|\langle u_i, \wt{\bxi}_i\rangle|^2 &= \frac{1}{2\pi \ii \sqrt{\theta_i}d_i^{2}} \oint_{\Gamma_i}\frac{ z^{1/2}\gamma_n^{-1} m_{1c}(z)}{-\gamma_n^{-1}\left( {1} - \frac{1}{m_{1c}(z)}m_{2c}^S (-m_{1c}^{-1}(z))\right)-d_i^{-2}} \dd z + \oo(1)\\
&=- \frac{m_{1c}^2(\theta_i)}{2\pi \ii d_i^{2}} \oint_{\Gamma_i}\frac{ 1}{(1+\gamma_n d_i^{-2})m_{1c}(z) - m_{2c}^S (-m_{1c}^{-1}(z))} \dd z + \oo(1)\\
&=- \frac{m_{1c}^2(\theta_i)}{2\pi \ii d_i^{2}} \oint_{g_{1c}(\Gamma_i)}\frac{g_{1c}'(\zeta)}{(1+\gamma_n d_i^{-2})\zeta - m_{2c}^S (-\zeta^{-1})} \dd \zeta + \oo(1)\\
&= \frac{\al_i^2}{d_i^{2}} \frac{g_{1c}'(\al_i)}{\al_i^{-2}(m_{2c}^S)' (-\al_i^{-1})-(1+\gamma_n d_i^{-2}) } + \oo(1),
\end{split}
\ee
where we used that $\al_i \equiv \al(d_i)  =m_{1c}(\theta_i) +\oo(1)$.


One can see that in order to make the above calculations rigorous, we only need to repeat the arguments in the proof for Theorem \ref{sketchthm1}, except that there are two extra complications to deal with. (i) We need to verify that the ``right edge regularity" condition \eqref{assm_gap} holds, such that the anisotropic local law \emph{outside} the spectrum (Theorem \ref{lem_localout}) can be applied. (ii) We need to verify that $-m_{1c}^{-1}(z)\in S_\tau $ for some constant $\tau>0$ for all $z\in S_{out}(c_0, C_0) \cup S_{edge}(c_0, C_0,c_1)$, such that we can apply \eqref{localmS0}.

\begin{proof}[Proof of Theorem \ref{sketchthm4}]

We first verify the condition \eqref{assm_gap}. The self-consistent equation \eqref{separa_m12} now becomes
\begin{equation}\label{separa_m12app}
\begin{split}
& -z {m_{1c}(z)} = \frac{\gamma_n}{ 1+m_{2c}(z)},\quad  -z{m_{2c}(z)} = \frac{1}{n} \sum_{\mu= 1}^r \frac{s_\mu}{ 1+s_\mu m_{1c}(z) } .
\end{split}
\end{equation}
Note that by the last statement of Lemma \ref{lambdar}, the two sums on the right-hand side of the above two equations are all positive sums if we take $z=\lambda_+$. Suppose $1+m_{2c}(\lambda_+)  =\oo(1)$, then from the first equation we get that $| m_{1c}(\lambda_+)|\gg 1$, which contradicts the fact that $| m_{1c}(\lambda_+)|\le s_1^{-1}$. 

On the other hand, suppose 
\be\label{hype}1+m_{1c}(\lambda_+) s_1=\oo(1).\ee From \eqref{separa_m12app}, we obtain the following self-consistent equation for $m_{2c}$:
\begin{equation}\nonumber
\begin{split}
f(m_{2c}(z))=0, \quad f(m_{2c}(z)):= {m_{2c}(z)} - \frac{1}{n} \sum_{\mu= 1}^r \frac{s_\mu(1+m_{2c})}{ -z (1+m_{2c})+s_\mu \gamma_n  } .
\end{split}
\end{equation}
If we regard $f$ as a function of $m_{2c}$, then by Lemma 2.5 of \cite{yang2019spiked}, we know that $\partial_{m_{2c}}f=0$ at $z=\lambda_+$. Hence we get
\begin{equation}\label{separa_m2capp}
\begin{split}
1= \frac{1}{n} \sum_{\mu= 1}^r \frac{s_\mu^2\gamma_n }{ [-\lambda_+ (1+m_{2c}(\lambda_+))+s_\mu \gamma_n]^2  } = \frac{m_{1c}^2(\lambda_+)}{n} \sum_{\mu= 1}^r \frac{s_\mu^2\gamma^{-1}_n }{ [1+s_\mu m_{1c}(\lambda_+)]^2  } .
\end{split}
\end{equation}
By the eigenvalue rigidity result for $SS^{\top}$, Theorem 2.10 of \cite{isotropic} or Theorem 3.8 of \cite{yang2019spiked}, we know that for any small constant $\e>0$,
$$\max_{1\le \mu \le \e n} | s_\mu - s_1 | \le C\e^{2/3} \quad \text{with high probability,}$$
for some constant $C>0$ that is independent of $\e$. Together with the hypothesis \eqref{hype}, we obtain from \eqref{separa_m2capp} that with high probability,
$$m_{1c}^2(\lambda_+) \le C \e^{1/3}$$
for some constant $C>0$ that is independent of $\e$. However, this contradicts \eqref{hype} if we take $\e$ to be sufficiently small.  

In sum, we see that \eqref{assm_gap} must hold with high probability.

\vspace{5pt}

Next we show that $-m_{1c}^{-1}(z)\in S_\tau$ for some constant $\tau>0$ for all $z\in \wt S(c_0, C_0)$ as long as $c_0$ is sufficiently small (recall \eqref{Sc0C0}). Again by the eigenvalue rigidity result for $SS^{\top}$, Theorem 2.10 of \cite{isotropic}, we know that $|s_1 - (1+\sqrt{\xi_n})^2|=\oo(1)$ with high probability. Hence together with \eqref{assm_gap}, we have
$$ - m^{-1}_{1c}(\lambda_+) \ge (1+\sqrt{\xi_n})^2 + c_1 $$
for some constant $c_1>0$ depending on $\tau$. Moreover, since $m_{1c}(\lambda_+) \le m_{1c}(x) <0$ for $x>\lambda_+$ and $m_{1c}(x)$ is monotonically increasing in $x\in (\lambda_+,\infty)$, we obtain that 
$$ \inf_{x\ge \lambda_+}\left[- m^{-1}_{1c}(x)\right]\ge (1+\sqrt{\xi_n})^2 + c_1 .$$
Next if $\text{dist}(z, [ \lambda_+, C_0]) \le \delta$ for some constant $\delta>0$, by \eqref{eq_mcomplex} we obtain that
$$ \inf_{z: \text{dist}(z, [ \lambda_+, C_0]) \le \delta}\min\{ 1+ s_\mu m_{1c}(z)\} \ge c_1/2$$
as long as $\delta$ is taken sufficiently small. If we take $c_0\le \delta$, the above estimate covers all the domain $\wt S(c_0, C_0)$ except for the part $\{ z\in \wt S(c_0, C_0): \im z\ge c_0\}$. On this part of domain, we use \eqref{eq_estimm} to get that 
$$ \inf_{z\in \wt S(c_0, C_0): \im z\ge c_0 }\left[- m^{-1}_{1c}(z)\right] \ge c_2'  \inf_{z\in \wt S(c_0, C_0): \im z\ge c_0 }\im m_{1c}(z) \ge c_2$$
for some constants $c_2, c_2' >0$ depending on $c_0$. 

In sum, we get that $-m_{1c}^{-1}(z)\in S_{\tau'}$ for some constant $\tau' \equiv \tau'_{c_1,c_2}>0$ for all $z\in \wt S(c_0, C_0)\supset  S_{out}(c_0, C_0) \cup S_{edge}(c_0, C_0,c_1)$. 

\vspace{5pt}

The above proof justifies our calculations between \eqref{selfiid} and \eqref{cos gauss}, and the rest of the proof is exactly the same as the one for Theorem \ref{sketchthm1}. So we omit the details.
\end{proof}

\section{Proof of Theorem \ref{sketchthm2} and Theorem \ref{sketchthm5}}
\label{pfunif}

As remarked at the beginning of Appendix \ref{pf sketch1}, we only need to analyze the master equations \eqref{master_evalue} and \eqref{ev_origin} under the settings of  Theorem \ref{sketchthm2} and Theorem \ref{sketchthm5}, respectively. The rest of the proof will be exactly the same as the one for Theorem \ref{sketchthm1} in Appendix \ref{pf sketch1}. 
\begin{proof}[Proof of Theorem \ref{sketchthm2}]
We define the random variable 
$$\wh \xi_n := \frac{1}{n}\sum_{i=1}^n S_{ii}$$
to be the fraction of non-zero diagonal entries of $S$. We fix a realization of $S$. Then equations in \eqref{separa_m12} become
\begin{equation}
\begin{split}
& {m_{1c}(z)} = \gamma_n \frac{1}{-z\left[1+m_{2c}(z) \right]} ,\quad {m_{2c}(z)} =  \wh \xi_n  \frac{1}{-z\left[1+m_{1c}(z) \right]}  .
\end{split}
\end{equation}
Thus \eqref{solv m2c} and \eqref{solv g2c} are accurate asymptotically, since $\wh \xi_n$ concentrates around $\xi_n$ for large $r$ and $n$. 

We can calculate that 
$$- x^{-1/2} W^{\top} S^{\top} (1+m_{1c}(x)SS^{\top} )^{-1} SW =- \frac{1}{x^{1/2} (1+m_{1c}(x))} W^{\top} S^2 W .$$
This equality holds because $ S^2$ is a diagonal matrix with 0-1 entries. 
Now under the assumption \eqref{delocal}, we claim that 
\be\label{WSW2} W^{\top} S^2 W \to \xi_n I_k \quad \text{in probability}.\ee
Again this follows from a simple moment calculation. We can calculate that 
$$ \mathbb E \sum_{l=1}^n S_{ll}^2w_i(l)w_j(l) = \frac{r}{n} \sum_{l=1}^n w_i(l)w_j(l) = \frac{r}{n}\delta_{ij}.$$
Then we can calculate the variances: for $i\ne j$,
\begin{align*}
 \mathbb E \left(\sum_{l=1}^n S^2_{ll}w_i(l)w_j(l) \right)^2 &=\mathbb E \sum_{l=1}^n \e_{l}w_i^2(l)w_j^2(l) + \mathbb E  \sum_{l\ne l'} \e_{l}\e_{l'}w_i(l)w_i(l')w_j(l)w_j(l') \\
& \le \frac{r}{n} \sum_{l=1}^n w_i^2(l)w_j^2(l) +\left( \frac{r}{n} \right)^2 \sum_{l, l'=1}^n w_i(l)w_i(l')w_j(l)w_j(l')  \\
&=\frac{r}{n} \sum_{l=1}^n w_i^2(l)w_j^2(l)  \le \|w_i\|_\infty^2 \to 0,
\end{align*}
and 
\begin{align*}
 \mathbb E \left(\sum_{l=1}^n \left(S^2_{ll}- \frac{r}{n}\right)w_i^2(l) \right)^2 &=\frac{r}{n} \left(1-\frac{r}{n} \right)\sum_{l=1}^n w_i^4(l)\le \|w_i\|_\infty^2\to 0.
\end{align*}
Hence \eqref{WSW2} holds and we obtain that 
$$- x^{-1/2} W^{\top} S^{\top} (1+m_{1c}(x)SS^{\top} )^{-1} SW  \to - x^{-1/2} \frac{\xi_n}{1+m_{1c}} I_k= x^{1/2}m_{2c}(x) I_k$$
in probability. Hence the matrix $M(x)$ takes the same form as the one in the uniform orthogonal random projection case in Section \ref{sec unifproj}, which concludes Theorem \ref{sketchthm2} with the same arguments in Appendix \ref{pf sketch1}.
\end{proof}

\begin{proof}[Proof of Theorem \ref{sketchthm5}]
In general, we can write
	$$B=\wh S \wh S^{\top} = \diag (\wh c_1 , \cdots, \wh c_r),\quad \wh c_i \equiv 1_{c_i > 0} .$$
Let $\wh r$ be the random number of nonzero $c_i$-s, and denote 
	$$\wh \xi_n := \xi_n \left[1-\exp(-1/\xi_n)\right] .$$
	By the Poisson convergence theorem, we have 
	$$\frac{\wh r}{n} = \xi_n\left[1- P(Poisson(1/\xi_n)=0)\right] +\oo(1)=\wh\xi_n + \oo(1)$$
	in probability. Thus the self-consistent equation \eqref{separa_m12} becomes
\begin{equation}\label{self_count}
\begin{split}
& {m_{1c}(z)} = \gamma_n \frac{1}{-z\left[1+m_{2c}(z) \right]} ,\quad {m_{2c}(z)} =  \wh\xi_n  \frac{1}{-z\left[1+m_{1c}(z) \right]}+\oo(1) 
\end{split}
\end{equation}
in probability. We claim that under the delocalization condition \eqref{delocal},  
\be\label{claim countV}
W^{\top} \wh S^{\top} \wh SW = \wh\xi_n I_k+\oo(1) \quad \text{in probability}.
\ee
Suppose \eqref{claim countV} holds, then we have 
\be \nonumber
W^{\top} \wh S^{\top} (1+m_{1c}(x)\wh S\wh S^{\top} )^{-1} \wh SW = \frac{1}{1+m_{1c}(x)}W^{\top} \wh S^{\top} \wh SW = \frac{\wh\xi_n}{1+m_{1c}(x)} +\oo(1) \quad \text{in probability},
\ee
which shows that the master matrix has the following form
$$M(x)= \left( {\begin{array}{*{20}c}
   {  -x^{-1/2}\left(1+m_{2c}(x)\right)^{-1} }I_k & {D^{-1}}   \\
   {D^{-1}} & {x^{1/2} m_{2c}(x)} I_k \\
   \end{array}} \right) +\oo(1)$$
   in probability. Again $M(x)$ takes the same form as in the uniform orthogonal random projection case in Section \ref{sec unifproj}, except that we replace $\xi_n$ with $\wh \xi_n$.  Then one can conclude Theorem \ref{sketchthm5} with the same arguments in Appendix \ref{pf sketch1}.

It remains to prove the concentration claim \eqref{claim countV}. We again calculate the means and variances. 
Note that for any vector $v$,
$$(\wh Sv)(i) = 1_{c_i>0} \frac{1}{\sqrt{c_i}}\sum_{i:h(j)=i} S_{h(j),j}v(j).$$
For $1\le \al,\beta \le k$, we have
$$\left(W^{\top} \wh S^{\top} \wh SW\right)_{\al\beta}
=\sum_{i=1}^r  \frac{1_{c_i>0} }{ c_i}{\left(\sum_{j:h(j)=i} a_j w_\al(j) \right)\left(\sum_{j':h(j')=i} a_{j'}w_\beta(j') \right)}.$$
We first calculate the mean. Notice that 
the following conditional expectation can be calculated exactly as 
\be\label{exact cond}\mathbb E \left[ \left.\sum_{j:h(j)=i}  w_\al(j) w_\beta(j) \right| c_i \right] =\frac{c_i}{n}\delta_{\al\beta},\ee 
because by definition the subset $\{ j:h(j)=i\}$ is a randomly chosen subset of size $c_i$, and the vectors $w_\al$-s are orthonormal. 
Applying \eqref{exact cond}, we get
\begin{align*}
\mathbb E\left(W^{\top} \wh S^{\top} \wh SW\right)_{\al\beta}
&= \mathbb E \left\{ \sum_{i=1}^r  \frac{1_{c_i>0} }{ c_i} \mathbb E \left[ \left.\sum_{j:h(j)=i}  w_\al(j)  w_\beta(j) \right| c_i \right] \right\} = \mathbb E \left\{ \sum_{i=1}^r  \frac{1_{c_i>0} }{ c_i} \frac{c_i}{n} \delta_{\al\beta}\right\}=\left(\wh \xi_n +\oo(1)\right)\delta_{\al\beta}.
\end{align*}
 Then for $\al \ne \beta$, we have
\begin{align*}
& \mathbb E\left|\left(W^{\top} \wh S^{\top}  \wh SW\right)_{\al\beta}\right|^2 \\
&=\mathbb E \left\{ \sum_{i_1,i_2=1}^r  \frac{1_{c_{i_1}>0,c_{i_2}>0} }{ c_{i_1}c_{i_2}} \sum_{ h(j_1)=h(j_1')=i_1} \sum_{ h(j_2)=h(j_2')=i_2}  a_{j_1} a_{j_1'}a_{ j_2} a_{j_2'} w_\al(j_1)  w_\beta(j_1')w_\al(j_2)  w_\beta(j_2')  \right\} \\
& = \mathbb E \left\{ \sum_{i_1\ne i_2}   \frac{1_{c_{i_1}>0,c_{i_2}>0} }{ c_{i_1}c_{i_2}}   \sum_{j_1 :h(j_1) =i_1} w_\al(j_1)  w_\beta(j_1) \sum_{j_2 :h(j_2)= i_2}   w_\al(j_2)  w_\beta(j_2)  \right\} \\
&+ \mathbb E \left\{ \sum_{i =1}^r  \frac{1_{c_{i}>0} }{ c^2_{i}}  \sum_{j_1 \ne j_2:h(j_1) = h(j_2) =i}  2w_\al(j_1)  w_\beta(j_1) w_\al(j_2)  w_\beta(j_2)  +\sum_{i =1}^r  \frac{1_{c_{i}>0} }{ c^2_{i}}  \sum_{j_1,j_2:h(j_1)=h(j_2) =i}  w_\al(j_1)^2  w_\beta(j_2)^2  \right\} \\
& =-  \mathbb E \left\{ \sum_{i_1\ne i_2} \frac{1_{c_{i_1}>0, c_{i_2}>0} }{ c_{i_2} (n-c_{i_2})}  \left(\sum_{j_2 :h(j_2)= i_2}   w_\al(j_2)  w_\beta(j_2) \right)^2 \right\} + \oo(1) =\oo(1).
\end{align*}
Here in the third step we used \eqref{delocal} to get that 
\begin{align*}
& \sum_{i =1}^r  \frac{1_{c_{i}>0} }{ c^2_{i}}  \sum_{j_1 \ne j_2:h(j_1) = h(j_2) =i} 2 |w_\al(j_1)  w_\beta(j_1) w_\al(j_2)  w_\beta(j_2) |+\sum_{i =1}^r  \frac{1_{c_{i}>0} }{ c^2_{i}}  \sum_{j_1,j_2:h(j_1)=h(j_2) =i}  w_\al(j_1)^2  w_\beta(j_2)^2 \\
&  \le  2\|w_{\al}\|_\infty \|w_{\beta}\|_\infty  \sum_{i =1}^r  \sum_{j_1:h(j_1)=i}  |w_\al(j_1)   w_\beta(j_1) |+   \|w_{\beta}\|_\infty^2  \sum_{i =1}^r  \sum_{j_1:h(j_1)=i}    |w_\al(j_1) |^2 \\
&  \le 2\|w_{\al}\|_\infty \|w_{\beta}\|_\infty+\|w_{\beta}\|_\infty^2 =\oo(1),
\end{align*}
and a similar results as in \eqref{exact cond}, that is, given $c_{i_1}$ and the $j_2$-s such that $h(j_2)=i_2$, 
$$ \mathbb E\left[  \left. \sum_{j_1 :h(j_1) =i_1} w_\al(j_1)  w_\beta(j_1) \right| h^{-1}(i_2) \right] = \frac{c_{i_1}}{n-c_{i_2}} \sum_{j: h(j) \ne i_2}w_\al(j)  w_\beta(j) = -\frac{c_{i_1}}{n-c_{i_2}} \sum_{j : h(j)=i_2}w_\al(j)  w_\beta(j).$$ 
With similar methods, we can calculate the variance of $(W^{\top} \wh S^{\top} \wh SW)_{\al\al}$:
\begin{align*}
& \mathbb E\left|\left(W^{\top} \wh S^{\top}  \wh SW\right)_{\al\al}\right|^2 \\
&=\mathbb E \left\{ \sum_{i_1,i_2=1}^r  \frac{1_{c_{i_1}>0,c_{i_2}>0} }{ c_{i_1}c_{i_2}} \sum_{ h(j_1)=h(j_1')=i_1} \sum_{ h(j_2)=h(j_2')=i_2}  a_{j_1} a_{j_1'}a_{ j_2} a_{j_2'} w_\al(j_1)  w_\al(j_1')w_\al(j_2)  w_\al(j_2')  \right\} \\
& = \mathbb E \left\{ \sum_{i_1\ne i_2}   \frac{1_{c_{i_1}>0,c_{i_2}>0} }{ c_{i_1}c_{i_2}}   \sum_{j_1 :h(j_1) =i_1} w_\al(j_1)^2 \sum_{j_2 :h(j_2)= i_2}   w_\al(j_2)^2  \right\} \\
&+ \mathbb E \left\{ \sum_{i =1}^r  \frac{1_{c_{i}>0} }{ c^2_{i}}  \sum_{j_1 \ne j_2:h(j_1) = h(j_2) =i_1}  3w_\al(j_1)^2  w_\al(j_2)^2  + { \sum_{i =1}^r  \frac{1_{c_{i}>0} }{ c^2_{i}}  \sum_{j_1:h(j_1) =i}  w_\al(j_1)^4 }  \right\} \\
& = \mathbb E \left\{ \sum_{i_1\ne i_2}   \frac{1_{c_{i_1}>0,c_{i_2}>0} }{ c_{i_2}(n-c_{i_2})} \left( 1-\OO(c_{i_2}\|w_\al\|_\infty^2)  \right) \sum_{j_2 :h(j_2)= i_2}   w_\al(j_2)^2\right\}  + \oo(1) \\
&= \wh \xi_n \mathbb E \left\{ \sum_{  i_2}   \frac{1_{ c_{i_2}>0} }{ c_{i_2} }  \sum_{j_2 :h(j_2)= i_2}   w_\al(j_2)^2\right\}  + \oo(1) = \wh \xi_n^2 +\oo(1) = \left|\mathbb E\left(W^{\top} \wh S^{\top}  \wh SW\right)_{\al\al}\right|^2 +\oo(1),
\end{align*}
where in the third step we again used \eqref{delocal}, and that given $c_{i_1}$ and the $j_2$-s such that $h(j_2)=i_2$, 
$$ \mathbb E\left[  \left. \sum_{j_1 :h(j_1) =i_1} w_\al(j_1)^2 \right| h^{-1}(i_2) \right] = \frac{c_{i_1}}{n-c_{i_2}} \sum_{j: h(j) \ne i_2}w_\al(j)^2 = \frac{c_{i_1}}{n-c_{i_2}}\left[1- \sum_{j : h(j)=i_2}w_\al(j)^2\right].$$ 
Together with Chebyshev's inequality, this concludes the concentration result \eqref{claim countV}, which further concludes Theorem \ref{sketchthm5} . 
\end{proof}

\section{Proof of Theorem \ref{sketchthmlarge}}\label{sec_pflarge}

In the following proof, we only consider the uniform random projection. However, as we have already seen in Section \ref{sec5types}, the same result also holds for uniform random sampling under the delocalization condition \eqref{delocal},  for randomized Hadamard sampling, and for CountSketch under the delocalization condition \eqref{delocal} but with $\xi $ replaced by $\wh \xi $. 

Now we study the $i$-th spiked eigenvalue and its eigenvector under the assumption \eqref{largegap} for some large enough constant $C_0>0$. First, the self-consistent equations \eqref{separa_m12} become
\begin{equation}\label{large self}
 {m_{1c}(z)} = \gamma_n  \int\frac{x}{-z\left[1+xm_{2c}(z) \right]} \pi_{\Sigma}(dx) ,\quad
 {m_{2c}(z)} =\frac{ \xi_n }{ -z\left[1+m_{1c}(z) \right]} ,
\end{equation}
which are generalizations of \eqref{self_ortho} with $\Sigma=I$. If $\theta_i$ is the classical location for the largest eigenvalue, then we have $\theta_i \sim d_i^2 $ and 
$$0<  - m_{1,2c}(\theta_i) = - \int \frac{\dd\rho_{1,2c}(x)}{x-\theta_i}\sim d_i^{-2}.$$ 
Then for $x$ around $\theta_i$, we study the master matrix in \eqref{master_evalue} (up to an $\oo(1)$ error {in probability})
\begin{align*}
M(x) &=  \left( {\begin{array}{*{20}c}
   {  -x^{-1/2}U^{\top} \left(1+m_{2c}(x)\Sigma\right)^{-1}U } & {D^{-1}}   \\
   {D^{-1}} & {x^{1/2}m_{2c}(x)}  \\
   \end{array}} \right) \\
   &=  \left( {\begin{array}{*{20}c}
   {  -x^{-1/2} \left(1 - m_{2c}(x)E+\OO(l_i^{-2})\right)} & {D^{-1}}   \\
   {D^{-1}} & {x^{1/2}m_{2c}(x)}  \\
   \end{array}} \right) ,
\end{align*}
where recall that $E = U^{\top}\Sigma U$. Then using Schur complement formula, we obtain  
\begin{align*}
 \det  M(x) &= \det\left(- \left(1 - m_{2c}(x)E+\OO(l_i^{-2})\right) \right)\det\left(m_{2c}(x) + D^{-1} \left(1+m_{2c}(x)E + \OO(l_i^{-2})\right)D^{-1} \right) \\
    &= \det\left(- \left(1 - m_{2c}(x)E+\OO(l_i^{-2})\right) \right)\det\left(m_{2c}(x) D^{-2} \right)\det\left((D^2+E)+\OO(l_i^{-1})+m_{2c}^{-1}(x)\right) .
\end{align*}
By standard results from perturbation theory \citep[e.g.,][]{stewart1990matrix}, we know that the first order perturbation of the $i$-th eigenvalue of $D^2+E$ is given by $d_i^2 +  E_{ii} + \OO(l_i^{-1})$. Hence, by solving $\det\left((D^2+E)+m_{2c}^{-1}(x)\right)$  for $\theta_i=x$, we get
\be\label{outlierevaluelarge}
\theta_i = g_{2c}\left( - \frac{1}{d^2_i +E_{ii}  } + \OO(l_i^{-3})\right)= g_{2c}\left( - \frac{1}{d^2_i + E_{ii} } \right) + \OO(l_i^{-1})
\ee
in probability, where we recall that $g_{2c}$ is the inverse function of $m_{2c}$. 

Then we consider the corresponding eigenvectors: using \eqref{ev_origin} and Schur complement, we get  (up to an $\oo(1)$ error {in probability})
\begin{align*} 
|\langle u_j, \wt{\bxi}_i\rangle|^2 &= \frac{1}{2\pi \ii \sqrt{\theta_i}} \oint_{\Gamma_i} e_j^{\top} D^{-1}\left(z^{1/2}m_{2c}(z) + z^{1/2} D^{-1}\left(1+m_{2c}(z)E + \OO(l_i^{-2})\right)D^{-1}\right)^{-1}  D^{-1}e_j \dd z \\
&=  \frac{1}{2\pi \ii \theta_1} \oint_{\Gamma_i} e_j^{\top} \frac{1}{m_{2c}(z) (D^2+E + \OO(l_i^{-2})) + 1} e_j \dd z .
\end{align*}
Again, standard perturbation theory \citep[e.g.,][]{stewart1990matrix} tells us that the eigenvector of $D^2+E$ up to the first order perturbation is given by 
$$ e_i + \sum_{j\ne i, 1\le j \le k} e_j \frac{E_{ji}}{d_i^2 - d_j^2}. $$
Thus we get that in probability, 
\begin{align} \label{outlierevectorlarge1}
|\langle u_i, \wt{\bxi}_i\rangle|^2 =  \frac{g_{2c}'\left( - \al_i\right)\al_i}{g_{2c}( -\al_i) } + \OO(l_i^{-2}) , \quad \al_i:=  ({d^2_i + E_{ii}+ \OO(l_i^{-1})  })^{-1},
\end{align}
and
\begin{align}\label{outlierevectorlarge2} 
|\langle u_j, \wt{\bxi}_i\rangle|^2 =  \frac{g_{2c}'\left( - \al_i\right)\al_i}{g_{2c}( -\al_i) } \left|  \frac{E_{ji}}{d^2_i - d^2_j} \right|^2+ \OO(l_i^{-3}) , \quad j\ne i. 
\end{align}

It remains to study the expression for $g_{2c}$. From \eqref{large self}, we obtain that
$$ z = \gamma_n  \int\frac{x}{ 1+xm_{2c}(z) } \pi_{\Sigma}(dx) - \frac{\xi_n}{m_{2c}(z)}\Rightarrow g_{2c}(m)=\gamma_n  \int\frac{x}{ 1+xm } \pi_{\Sigma}(dx) - \frac{\xi_n}{m }.$$
Then we can calculate that
\be\label{thetailarge}\theta_i = g_{2c}\left( - \frac{1}{d^2_i + E_{ii} } \right) + \OO(l_i^{-1}) = \xi_n (d_i^2 +  E_{ii}) + \gamma_n \rho_1+ \OO(l_i^{-1}) ,\ee
and
\begin{align}
  \frac{g_{2c}'\left( - \al_i\right)\al_i}{g_{2c}( -\al_i) } &= \frac{ \xi_n - \gamma_n  \int\frac{x^2\al_i^2}{ (1-x\al_i)^2 } \pi_{\Sigma}(dx) }{ \xi_n + \gamma_n  \int\frac{x\al_i}{ 1-x\al_i } \pi_{\Sigma}(dx) } = \frac{\xi_n - \gamma_n \left( \al_i^2 \rho_2 \right)}{\xi_n + \gamma_n(\al_i \rho_1 + \al_i^2 \rho_2)} + \OO (l_i^{-3}) \nonumber\\
&  =  \frac{\xi_n -\frac{\gamma_n}{d_i^{4}}  \rho_2 }{\xi_n + \frac{\gamma_n}{d_i^2} \left( \rho_1 + d_i^{-2} (\rho_2 - \rho_1 E_{ii}) \right)} + \OO (l_i^{-3}), \label{coslarge}
  \end{align}
  where $\rho_i$ are the moments of the spectral distribution of $\Sigma$ as in \eqref{rhoisigma}.
 
 With the above calculations, the rest of the proof is exactly the same as the one for Theorem \ref{sketchthm1} in Appendix \ref{pf sketch1}. We omit the details. 

\section{Proof of Theorem \ref{lem_localout} and Theorem \ref{lem_localin}}\label{sec_pflocallaw}

In this section, we provide the necessary details to complete the proof of Theorem \ref{lem_localout} and Theorem \ref{lem_localin} based on the results in \cite{yang2019spiked}  and \cite{ding2019spiked}.

We use a standard cutoff argument. We choose the constant $\delta>0$ small enough such that $\left(n^{1/2-\delta}\right)^{4+\tau} \ge n^{2+\delta}$. Then we introduce the following truncation  
\be\label{defwhX}\wh X :=1_{\Omega} X, \quad \Omega :=\left\{\max_{i,j} |x_{ij}|\le n^{-\delta} \right\}.\ee
By the moment conditions \eqref{eq_highmoment} and a simple union bound, we have
\begin{equation}\label{XneX}
\mathbb P(\wh X \ne X ) =\OO ( n^{-\delta}).
\end{equation}
Using (\ref{eq_highmoment}) and integration by parts, it is easy to verify that 
\begin{align*}
\mathbb E  \left|x_{ij}\right|1_{|x_{ij}|> n^{-\delta}} =\OO(n^{-2-\delta}), \quad \mathbb E \left|x_{ij}\right|^2 1_{|x_{ij}|> n^{-\delta}} =\OO(n^{-2-\delta}),
\end{align*}
which imply that
\be\label{meanvartrunc}|\mathbb E  \wh x_{ij}| =\OO(n^{-2-\delta}), \quad  \mathbb E |\wh x_{ij}|^2 = n^{-1} + \OO(n^{-2-\delta}).\ee
Moreover, we trivially have
$$\mathbb E  |\wh  x_{ij}|^4 \le \mathbb E  |x_{ij}|^4 =\OO(n^{-2}).$$
We define the following centered version of $\wh X$: $Z = \wh X - \mathbb E\wh X$. Then we have the following proposition for the resolvent $G(W,z)$.  
\begin{proposition}
Suppose the assumptions of Theorem \ref{lem_localin} hold and define $Z$ as above. Then we have
\begin{equation}\label{aniso_outappd}
\max_{u,v\in \mathscr A} \left| \langle u, G(Z,z) v\rangle - \langle u, \Pi (z)v\rangle \right|  \prec n^{-\delta} 
\end{equation}
uniformly in $z\in S_{edge}(c_0,C_0,c_1)$. Moreover, we have that for any fixed $\varpi \in \N$, 
\begin{equation} \label{rigidityev}
\max_{1\le i \le \varpi}|\lambda_i - \lambda_+| \prec n^{-\delta},
\end{equation}
where $\lambda_i$ denotes the $i$-th largest eigenvalue of $\cal Q_1(Z):= (SZ \Sigma^{1/2})^{\top}SZ \Sigma^{1/2}$.
\end{proposition}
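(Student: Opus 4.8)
The plan is to obtain \eqref{aniso_outappd} and \eqref{rigidityev} by invoking the near-edge anisotropic local law and the accompanying eigenvalue rigidity proved for (separable) sample covariance matrices in \cite{yang2019spiked} (Theorem 3.6), applied with the centered truncated noise matrix $Z$ in place of the original $X$; the transfer from $Z$ back to $X$ via \eqref{XneX} is then carried out in the proof of Theorems \ref{lem_localout}--\ref{lem_localin} proper. The first step is to check that $Z$ satisfies the hypotheses of that theorem. Since the truncation in \eqref{defwhX} and the subsequent centering act entrywise, $Z$ has independent entries; it has mean zero by construction; by \eqref{defwhX} and \eqref{meanvartrunc} its entries obey the deterministic bound $\max_{i,j}|Z_{ij}|\le 2n^{-\delta}$; and by \eqref{meanvartrunc} one has $\mathbb E|Z_{ij}|^2 = n^{-1}+\OO(n^{-2-\delta})$ together with $\mathbb E|Z_{ij}|^4=\OO(n^{-2})$. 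Thus $Z$ is a matrix of independent, centered entries with support at scale $n^{-\delta}$ and an essentially flat variance profile, which is exactly the input required by the local-law machinery of \cite{yang2019spiked,isotropic}.

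The second step is to match our model to the one treated there. The relevant non-spiked object is $\wt X = SZ\Sigma^{1/2}$, whose deterministic equivalent $\Pi(z)$ from \eqref{defn_pi} and whose governing self-consistent system \eqref{separa_m12} are precisely those appearing in \cite{yang2019spiked,ding2019spiked}, with $\pi_B$ the ESD of $B=SS^{\top}$. The only bookkeeping point is that $S$ is rectangular ($r\times n$ with $r<n$); this causes no difficulty, since only $SS^{\top}$ enters the deterministic equivalents and the self-consistent equations, so one works directly with the square ``covariance'' $B=SS^{\top}$ (equivalently, pads $S$). Assumptions \eqref{eq_ratio}, \eqref{assm3}, \eqref{eq_highmoment} and Assumption \ref{ass:unper} supply the remaining hypotheses: proportional growth of $p,r,n$, boundedness and non-degeneracy of the spectra of $\Sigma$ and $B$, the moment bound, and the right-edge regularity that yields the square-root behavior of $\rho_c$ at $\lambda_+$ (Lemma \ref{lambdar_sqrt}). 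With these in place, the near-edge anisotropic local law of \cite{yang2019spiked} applied to $SZ\Sigma^{1/2}$ on $S_{edge}(c_0,C_0,c_1)$ yields \eqref{aniso_outappd}, and the rigidity estimate accompanying it yields \eqref{rigidityev} (the rigidity being deduced from the local law and the square-root edge density in the standard way, cf. \cite{isotropic,yang2019spiked}).

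The one point that is not entirely automatic --- and the step I expect to be the main obstacle --- is the mismatch between $\mathbb E|Z_{ij}|^2=n^{-1}+\OO(n^{-2-\delta})$ and the exact normalization $n^{-1}$ assumed in some formulations of the local law. I would resolve it by a short resolvent-comparison argument: set $\wt Z_{ij}:=Z_{ij}\big/\sqrt{n\,\mathbb E|Z_{ij}|^2}$, so that $\wt Z$ has entrywise variance exactly $n^{-1}$ and still has support at scale $n^{-\delta}$, whence the cited local law and rigidity apply to $\wt Z$ verbatim. Since $Z-\wt Z = Z\circ C$ for a deterministic matrix $C$ with $\max_{i,j}|C_{ij}|=\OO(n^{-1-\delta})$, the standard operator-norm bound for random matrices with independent bounded entries gives $\|S(Z-\wt Z)\Sigma^{1/2}\|\prec n^{-1-\delta}$ (using $\|S\|,\|\Sigma^{1/2}\|=\OO(1)$ from \eqref{assm3}); a single resolvent-expansion step, together with $\|G(\wt Z,z)\|\le \eta^{-1}$ on $S_{edge}(c_0,C_0,c_1)$, then shows that $\langle u,G(Z,z)v\rangle-\langle u,G(\wt Z,z)v\rangle$ is negligible on the relevant scale (well below $n^{-\delta}$), while by Weyl's inequality the eigenvalues move by $\OO_\prec(n^{-1-\delta})$, so \eqref{rigidityev} is unaffected. (If one instead quotes a version of the local law that already admits variance profiles of the form $n^{-1}(1+\oo(1))$, this step is subsumed and the Proposition follows directly from the references.)
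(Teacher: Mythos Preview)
Your proposal is correct and follows the same overall route as the paper: both reduce the statement to the near-edge anisotropic local law and rigidity results of \cite{yang2019spiked} (Theorems 3.6 and 3.8 there) applied to the separable model $SZ\Sigma^{1/2}$, after checking that the centered truncated matrix $Z$ meets the relevant hypotheses. The only point of divergence is how the variance mismatch $\mathbb E|Z_{ij}|^2=n^{-1}+\OO(n^{-2-\delta})$ is handled. The paper simply asserts that this $\OO(n^{-2-\delta})$ discrepancy is small enough to be absorbed at every step of the proof in \cite{yang2019spiked}. You instead renormalize entrywise to $\wt Z$ with exact variance $n^{-1}$, apply the cited theorems to $\wt Z$, and transfer back via a one-step resolvent expansion together with $\|S(Z-\wt Z)\Sigma^{1/2}\|\prec n^{-1-\delta}$ and $\|G\|=\OO(\eta^{-1})$. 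Your route is more self-contained and avoids asking the reader to re-examine the proof in \cite{yang2019spiked}; the paper's route is shorter but relies on the reader trusting that inspection. Either way the conclusion is the same.
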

\begin{proof}
The estimates \eqref{aniso_outappd} and \eqref{rigidityev} has essentially been proved in Theorem 3.6 and Theorem 3.8 of \cite{yang2019spiked}, respectively. The only difference is that in \cite{yang2019spiked}, the entries of the random matrix $Z$ all have variances $n^{-1/2}$, while in the current case we have
$$  \mathbb E |Z_{ij}|^2 = n^{-1} + \OO(n^{-2-\delta}) . $$
However, one can check that the error $\OO(n^{-2-\delta})$ is sufficiently small such that it is negligible at each step of the proof in \cite{yang2019spiked}, which concludes \eqref{aniso_outappd} and \eqref{rigidityev}. We remark that the first author actually proved stronger results in Theorem 3.6 and Theorem 3.8 of \cite{yang2019spiked}, but they are not necessary for our purpose in this paper.
\end{proof}

Then we show that $G(\wh X, z)$ is sufficiently close to $G(Z,z)$ in the sense of anisotropic local law.
\begin{proposition} \label{lem_localoutapp2}  
\eqref{aniso_outappd} holds uniformly for $G(\wh X,z)$ in $z\in S_{edge}(c_0,C_0,c_1)$. Moreover, \eqref{rigidityev} holds with high probability for $\lambda_i(\cal Q_1(\wh X))$. 
\end{proposition}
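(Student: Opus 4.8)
The plan is to deduce the statements for $\wh X$ from the corresponding ones for the centered matrix $Z$, which were just established in the previous proposition, via a crude deterministic perturbation. Write $\wh X = Z + \mathbb E\wh X$. By the first bound in \eqref{meanvartrunc}, every entry of $\mathbb E\wh X$ is $\OO(n^{-2-\delta})$, so (using $p\asymp n$) $\|\mathbb E\wh X\| \le \|\mathbb E\wh X\|_{\mathrm F} = \OO(n^{-1-\delta})$. Since $\|S\| \le \tau^{-1/2}$ and $\|\Sigma^{1/2}\| \le \tau^{-1/2}$ by \eqref{assm3}, setting $\mathcal E := S(\mathbb E\wh X)\Sigma^{1/2}$ we get $\|\mathcal E\| = \OO(n^{-1-\delta})$ and $S\wh X\Sigma^{1/2} = SZ\Sigma^{1/2} + \mathcal E$. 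Consequently the linearizations differ by a small block-off-diagonal perturbation: $H(\wh X,z) = H(Z,z) + z^{1/2}\Delta$, where $\Delta$ is the self-adjoint matrix built from $\mathcal E$ and $\mathcal E^{\top}$, so that $\|\Delta\| = \OO(n^{-1-\delta})$ uniformly for $z$ in a bounded region.

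Next I would expand $G(\wh X,z)$ around $G(Z,z)$ by the second resolvent identity,
\[
G(\wh X,z) - G(Z,z) = -z^{1/2}G(Z,z)\Delta G(Z,z) + z\,G(Z,z)\Delta G(Z,z)\Delta G(\wh X,z).
\]
Writing $G(Z,z) = z^{-1/2}(\mathcal H(Z) - z^{1/2})^{-1}$ with $\mathcal H(Z)$ self-adjoint (so $H(Z,z) = z^{1/2}\mathcal H(Z)$), and using that on $S_{edge}(c_0,C_0,c_1)$ the real part of $z$ is comparable to $\lambda_+ \gtrsim 1$, we have the elementary bound $\|G(Z,z)\| \lesssim |z|^{-1/2}/|\mathrm{Im}\,z^{1/2}| \lesssim \eta^{-1} \le n^{1/2-c_1}$; the same bound holds for $G(\wh X,z)$ since $\|z^{1/2}\Delta\|\,\|G(Z,z)\| = \oo(1)$ makes the expansion convergent. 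Hence, for deterministic unit vectors $u,v$,
\[
\bigl|\langle u, (G(\wh X,z) - G(Z,z))v\rangle\bigr| \lesssim \|G(Z,z)\|^2\|\Delta\| + \|G(Z,z)\|^3\|\Delta\|^2 \lesssim n^{1-2c_1}\cdot n^{-1-\delta} = n^{-2c_1-\delta},
\]
which is $\oo(n^{-\delta})$. Combining this with \eqref{aniso_outappd} for $G(Z,z)$ — together with a standard net plus Lipschitz-continuity argument in $z$ to make the estimate uniform over $S_{edge}(c_0,C_0,c_1)$, and a union bound over the $\le n^{C}$ vectors of $\mathscr A$ to upgrade it to the $\prec$ statement — yields \eqref{aniso_outappd} for $G(\wh X,z)$. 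The identical argument applies on $S_{out}(c_0,C_0)$, where $\|G(Z,z)\| = \OO(1)$ and the gain is even larger.

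For the rigidity claim I would argue directly at the level of singular values rather than resolvents. Since $S\wh X\Sigma^{1/2} = SZ\Sigma^{1/2} + \mathcal E$ with $\|\mathcal E\| = \OO(n^{-1-\delta})$, Weyl's inequality for singular values gives $|\sigma_i(S\wh X\Sigma^{1/2}) - \sigma_i(SZ\Sigma^{1/2})| = \OO(n^{-1-\delta})$ for all $i$. As \eqref{rigidityev} applied to $\mathcal Q_1(Z)$ forces the top singular values of $SZ\Sigma^{1/2}$ to be $\OO(1)$ with high probability, squaring yields $|\lambda_i(\mathcal Q_1(\wh X)) - \lambda_i(\mathcal Q_1(Z))| = \OO(n^{-1-\delta})$ for each fixed $i$, and \eqref{rigidityev} for $\mathcal Q_1(\wh X)$ follows.

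The only real difficulty is bookkeeping the powers of $n$: one must check that the deterministic shift $\mathbb E\wh X$, of size $n^{-1-\delta}$, comfortably beats the worst-case resolvent blow-up $\eta^{-1} \le n^{1/2-c_1}$ near the edge — it does, with room to spare — and one must handle the $z^{1/2}$ prefactor in the linearization correctly when invoking the self-adjoint resolvent bound. Everything else, including the uniformity in $z$ and the passage from operator-norm control to the stochastic-domination formulation, is routine.
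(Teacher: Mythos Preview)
Your argument is correct and follows essentially the same route as the paper: write $\wh X = Z + \mathbb E\wh X$, observe $\|\mathbb E\wh X\|\le\|\mathbb E\wh X\|_{\mathrm F}=\OO(n^{-1-\delta})$, deduce rigidity by Weyl, and transfer the local law via a resolvent expansion together with the crude bound $\|G\|\lesssim\eta^{-1}$. The only difference is cosmetic: the paper bounds the single cross term $GV\wh G$ by applying the already-proved local law to the first $G$ factor (getting $\eta^{-1}\|V\|_{\mathrm F}\lesssim n^{-1/2-\delta}$), whereas you bound everything in operator norm (getting $\eta^{-2}\|\Delta\|\lesssim n^{-2c_1-\delta}$); your estimate is coarser by a harmless factor but still comfortably beats $n^{-\delta}$, so nothing is lost.
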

\begin{proof}
We write $\wh X = Z+ \mathbb E\wh X,$ where by \eqref{meanvartrunc}, we have
\begin{equation}\label{boundB}
\max_{i,\mu}|\mathbb E\wh X_{i\mu}|= \OO(n^{-2-\delta}).
\end{equation}
In particular, this gives that $\|\mathbb E\wh X\| \le \|\mathbb E\wh X\|_F= \OO(n^{-1-\delta}) $, which implies \eqref{rigidityev} for $\lambda_i(\cal Q_1(\wh X))$. 

For \eqref{aniso_outappd}, we abbreviate $G(\wh X,z)\equiv \wh G$ and $G(Z,z)\equiv G$. Then it suffices to show that for any deterministic unit vectors $u, v $,
\begin{equation}\label{aniso_central}
\left| \langle u, \wh G(z)  v\rangle - \langle u, G(z) v\rangle \right| \prec n^{-\delta}
\end{equation}
uniformly in $z\in S_{edge}(c_0,C_0,c_1)$. We can write 
\begin{equation}\nonumber
 \wh G(z) = \left(G^{-1}(z) + V\right)^{-1}, \quad V := z^{1/2}\left( {\begin{array}{*{20}c}
   {0} & \Sigma^{1/2} (\mathbb E\wh X)^{\top} S^{\top}  \\
   S(\mathbb E\wh X) \Sigma^{1/2} & {0}  \\
   \end{array}} \right).
 \end{equation}
Then we expand $G$ using the resolvent expansion
\begin{equation}\label{rsexp1}
\wh G = G - G V \wh G.  
\end{equation}
Using the spectral decomposition for $\wh G$ as in \eqref{spectral1}-\eqref{spectral2}, one can easily see that the following deterministic bound holds:
$$ \|\wh G(z)\| = \OO(\eta^{-1}).$$
Then we can estimate the second part on the right-hand side of \eqref{rsexp1} as
\begin{align*} 
|\langle u, G V \wh Gv\rangle | \lesssim \eta^{-1}\left(\sum_{a\in \cal I} |\langle u, G V e_a\rangle |^2\right)^{-1/2} \prec \eta^{-1}\left(\sum_{a\in \cal I} \sum_{b\in \cal I} |V_{ba}|^2 \right)^{-1/2} \lesssim n^{-1/2-\delta}. 
\end{align*}
where $e_a$ denotes the standard unit vector along $a$-th direction, and in the second step we applied \eqref{aniso_outappd} to $\langle u, G V e_a\rangle = G_{u w}$ by taking $w:=Ve_a$, and in the third step we used that $\|V\|_{F} \lesssim \|\mathbb E\wh X\|_F \lesssim n^{-1-\delta}$. This concludes \eqref{aniso_outappd} for $G(\wh X,z)$
\end{proof}

Finally we show that \eqref{aniso_outstrong} holds for $G(\wh X,z)$ for $z$ with imaginary part down to the real axis in the spectral domain $S_{out}(c_0,C_0)$. 

\begin{proposition} \label{lem_localoutapp}  
\eqref{aniso_outappd} holds uniformly for $G(\wh X,z)$ in $z\in S_{out}(c_0,C_0)$.
\end{proposition}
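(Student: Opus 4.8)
\emph{Plan.} The strategy is a standard stochastic continuity argument: Proposition \ref{lem_localoutapp2} already gives \eqref{aniso_outappd} on the near-edge domain $S_{edge}(c_0,C_0,c_1)$, where the imaginary part $\eta$ is at least $n^{-1/2+c_1}$. Since on $S_{out}(c_0,C_0)$ the real part is bounded below by $\lambda_++c_0$, the resolvent $G(\wh X,z)$ has no pole within distance $c_0/2$ of $z$ on a high-probability event, so both $G$ and its deterministic equivalent $\Pi$ are Lipschitz in $z$ with an $\OO(1)$ constant there. Hence we may simply slide $z$ vertically down to the real axis.

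First I would fix the good event. By the rigidity estimate \eqref{rigidityev} of Proposition \ref{lem_localoutapp2}, there is an event $\Xi$ with $\mathbb P(\Xi)\ge 1-n^{-D}$ for every $D$ on which every eigenvalue of $\cal Q_1(\wh X)=\wt X^{\top}\wt X$ lies in $[0,\lambda_++c_0/2]$, where $\wt X=S\wh X\Sigma^{1/2}$; in particular $\|\wt X\|=\sqrt{\lambda_1}=\OO(1)$ on $\Xi$ (recall $\lambda_+=\OO(1)$ by Lemma \ref{lambdar}). For $z=E+\ii\eta$ with $E\ge\lambda_++c_0$ this forces $\mathrm{dist}\bigl(z,\mathrm{spec}\,\cal Q_{1,2}(\wh X)\bigr)\ge c_0/2$, so reading off the block form \eqref{green2} and using $|z|^{-1/2}=\OO(1)$ one gets $\|G(\wh X,z)\|\lesssim c_0^{-1}$ on $\Xi$. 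Differentiating $G=(H-z)^{-1}$ with $H=H(\wh X,z)$ and noting $\partial_z H=\tfrac1{2z}H$, one obtains $\partial_z G=-G\bigl(\tfrac1{2z}H-I\bigr)G$; since $\|H\|=|z|^{1/2}\|\wt X\|=\OO(1)$ on $\Xi$, this gives $\|\partial_z G(\wh X,z)\|\lesssim c_0^{-2}$ on $\Xi$. The deterministic $\Pi(z)$ and $\partial_z\Pi(z)$ are $\OO(1)$ on $\{\re z\ge\lambda_++c_0\}$ by Lemma \ref{lem_mplaw} and \eqref{Piii}.

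Then I would interpolate. Let $z=E+\ii\eta\in S_{out}(c_0,C_0)$ and $u,v\in\mathscr A$. If $\eta\ge n^{-1/2+c_1}$ then $z\in S_{edge}(c_0,C_0,c_1)$ and the bound is exactly Proposition \ref{lem_localoutapp2}. If $\eta<n^{-1/2+c_1}$, set $z':=E+\ii n^{-1/2+c_1}\in S_{edge}(c_0,C_0,c_1)$; the segment $[z,z']$ has real part $\ge\lambda_++c_0$, so integrating the derivative bound along it yields, on $\Xi$,
\[
\bigl|\langle u,(G-\Pi)(\wh X,z)v\rangle-\langle u,(G-\Pi)(\wh X,z')v\rangle\bigr|\lesssim c_0^{-2}\,n^{-1/2+c_1}.
\]
Choosing $c_1<1/2-\delta$ at the outset — permissible because the cutoff in \eqref{defwhX} forces $\delta$ to be bounded away from $1/2$ — makes the right-hand side $o(n^{-\delta})$. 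Combined with $|\langle u,(G-\Pi)(\wh X,z')v\rangle|\prec n^{-\delta}$ from Proposition \ref{lem_localoutapp2} (uniform over $z'\in S_{edge}$, hence over the data-independent choice $z'=z'(z)$) and $\mathbb P(\Xi^c)\le n^{-D}$, we conclude $\max_{u,v\in\mathscr A}|\langle u,(G-\Pi)(\wh X,z)v\rangle|\prec n^{-\delta}$ uniformly in $z\in S_{out}(c_0,C_0)$. Finally, since $\mathbb P(\wh X\ne X)=\OO(n^{-\delta})$ by \eqref{XneX}, restricting to $\Omega\cap\{\wh X=X\}$ transfers \eqref{aniso_outappd} and \eqref{rigidityev} from $\wh X$ to $X$, completing the proofs of Theorems \ref{lem_localout} and \ref{lem_localin}.

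The step needing most care is not probabilistic but bookkeeping: the linearization $H=H(\wh X,z)$ itself depends on $z$ through the $z^{1/2}$ prefactor, so $\partial_z G$ acquires the extra factor $\tfrac1{2z}H$, and one must verify this stays $\OO(1)$ on $\Xi$ (it does, via $\|\wt X\|=\OO(1)$). The only substantive input is the a priori eigenvalue rigidity \eqref{rigidityev}, which is already in hand from Proposition \ref{lem_localoutapp2}; everything else is the routine ``the fluctuation estimate is done, now extend the spectral domain to the real axis'' argument.
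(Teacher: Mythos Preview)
Your proposal is correct and follows the same overall strategy as the paper: reduce to Proposition \ref{lem_localoutapp2} by vertically interpolating from $z_0=E+\ii n^{-1/2+c_1}$ down to $z=E+\ii\eta$ with $\eta<n^{-1/2+c_1}$, and separately control $\Pi(z)-\Pi(z_0)$ and $G(z)-G(z_0)$.

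The difference is in how the $G$-difference is controlled. The paper uses the spectral decomposition \eqref{spectral1}--\eqref{spectral2} to write, for instance,
\[
\bigl|\langle u_1,(G(z)-G(z_0))v_1\rangle\bigr|\lesssim \im G_{u_1u_1}(z_0)+\im G_{v_1v_1}(z_0),
\]
and then bounds these imaginary parts by $n^{-\delta}$ via the local law at $z_0$ together with $\im\Pi(z_0)\lesssim\eta_0$. Your route is more direct: you invoke the rigidity \eqref{rigidityev} to place all eigenvalues below $\lambda_++c_0/2$ on a high-probability event $\Xi$, deduce a uniform operator-norm bound $\|G\|,\|\partial_z G\|=\OO(1)$ on the whole interpolation segment, and integrate. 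Your version is shorter and avoids splitting into the four blocks of $G$; the paper's version is self-contained in the sense that it only uses the local law at the single point $z_0$ (not the separate rigidity statement), and does not need to track the $z$-dependence of $H$ through $\partial_z H=\tfrac1{2z}H$. Both arguments require $c_1<1/2-\delta$, which is available since the truncation forces $\delta\le\tau/(10+2\tau)$.
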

\begin{proof}
In this proof, we abbreviate $G(\wh X,z)\equiv G$ and still use the notation $\wt X= S\wh X \Sigma^{1/2}$ as in \eqref{sketchnonspike}. It remains to show that \eqref{aniso_outappd} holds for $z= E+\ii\eta\in S_{out}(c_0,C_0)$ with $\eta\le n^{-1/2+c_1}$. We denote $\eta_0:=n^{-1/2+c_1}$ and $z_0:= E+ \ii \eta_0$. With \eqref{aniso_outappd} at $z_0$, it suffices to prove that
\be\label{prof_m}
\langle u, \left(\Pi (z)-\Pi(z_0)\right)v\rangle \prec n^{-1/2+c_1} ,
\ee
and 
\be\label{prof_G}
\langle u, \left(G (z)-G(z_0)\right)v\rangle \prec n^{-\delta} .
\ee
With \eqref{Piii}, to prove \eqref{prof_m} it is enough to show that 
\be\label{prof_m2}
|m_{1c}(z)-m_{1c}(z_0)|+|m_{2c}(z)-m_{2c}(z_0)| \prec n^{-1/2+c_1} ,
\ee
which follows immediately from \eqref{eq_mdiff}.

For \eqref{prof_G}, we write $u=\begin{pmatrix} u_1 \\ u_2\end{pmatrix}$ and $v=\begin{pmatrix} v_1 \\ v_2\end{pmatrix}$, and in the following proof, we will always identify vectors $v_1$ and $v_2$ with their embeddings $\begin{pmatrix} v_1 \\ 0\end{pmatrix}$ and $\begin{pmatrix} 0 \\ v_2\end{pmatrix}$, respectively. Let
$$ \wt X  = \sum_{k = 1}^{p \wedge r} \sqrt {\lambda_k} {\zeta} _{k} { \xi}_k^{\top} ,$$
be the singular value decomposition of $\wt X $. We shall use \eqref{spectral1}-\eqref{spectral2} with $\wt G$ replaced by $G$.  First, the upper left block gives that 
\begin{align}\label{zz0}
\left|\langle u_1, \left(G(z) - G(z_0)\right) v_1\rangle\right| \le \sum_{k = 1}^{p} \frac{\eta_0  |\langle v_1,{ \xi}_k\rangle|^2}{\left[(E-\lambda_k)^2 + \eta^2 \right]^{1/2}\left[(E-\lambda_k)^2 + \eta_0^2 \right]^{1/2}}.
\end{align}
  By \eqref{rigidityev}, we have for any $k$, $E-\lambda_k\ge E - \lambda_1 \ge c_0/2\gg \eta_0$ with high probability for $z\in S_{out}(c_0,C_0)$. Hence we can bound \eqref{zz0} by
\begin{align*}
& \left|\langle u_1, \left(G(z) - G(z_0)\right) v_1\rangle\right| \lesssim \sum_{k = 1}^{p} \frac{\eta_0  |\langle u_1,{ \xi}_k\rangle|^2}{(E-\lambda_k)^2 + \eta_0^2}+\sum_{k = 1}^{p} \frac{\eta_0  |\langle v_1,{ \xi}_k\rangle|^2}{(E-\lambda_k)^2 + \eta_0^2}  \\
& = \im \langle u_1,\sum_{k = 1}^{p} \frac{ { \xi}_k{ \xi}_k^{\top} }{E-z_0} u_1\rangle + \im \langle v_1,\sum_{k = 1}^{p} \frac{ { \xi}_k{ \xi}_k^{\top} }{E-z_0} v_1\rangle = \im  G_{u_1u_1} (z_0)+\im  G_{v_1v_1} (z_0) \\
& \prec n^{-\delta} + \im \Pi_{u_1u_1}(z_0)+ \im \Pi_{v_1v_1}(z_0)  \prec n^{-\delta},
\end{align*}
where in the fourth step we used \eqref{aniso_outappd} for $G(z_0)$, and in the last step we used \eqref{defn_pi}, \eqref{Piii} and \eqref{eq_estimm} to get
$$\im \Pi_{u_1u_1}(z_0)+\im \Pi_{v_1v_1}(z_0) \prec  \eta_0.$$
Similarly, for the upper right block we have
\begin{align*}
& \left|\langle u_1, \left(G(z) - G(z_0)\right) v_2\rangle\right| \prec  \left|z^{-1/2}- (z_0z^{-1})^{1/2}\right| \left|\langle u_1, G(z_0)v_2\rangle\right| + \sum_{k = 1}^{p\wedge n} \frac{\eta_0\left|\langle u_1 ,{ \xi}_k\rangle \langle { \zeta}_k,v_2\rangle\right|}{|\lambda_k-z||\lambda_k-z_0|}\\
& \prec \eta_0 + \sum_{k = 1}^{p\wedge n} \frac{\eta_0\left|\langle u_1 ,{ \xi}_k\rangle \right|^2 }{|\lambda_k-z_0|^2} +   \sum_{k = 1}^{p\wedge n} \frac{\eta_0\left| v_2,{ \zeta}_k\rangle\right|^2}{|\lambda_k-z_0|^2}=\eta_0+ \im G_{u_1u_1}(z_0) +\im G_{v_2v_2}(z_0) \prec  n^{-\delta}.
\end{align*}
The lower left and lower right blocks can be handled in the same way. This proves \eqref{prof_G}, which completes the proof. 
\end{proof}

Finally, with Proposition \ref{lem_localoutapp2}, Proposition \ref{lem_localoutapp}, and the definition of the truncation \eqref{defwhX}, we conclude Theorem \ref{lem_localout} and Theorem \ref{lem_localin}.

\section{Extension to centered model}\label{sec_centermodel}

In this section, we explain how to extend our results to centered sample covariance matrices. We shall only consider the following model
$$
\wt Y_a =\wt X _a + (I-ee^{\top})\sum_{i=1}^k d_i v_i u_i^{\top},\quad \wt X _a=(I-ee^{\top}) S  X  \Sigma^{1/2}  .
$$
However, the other model
$$\wt Y_b := S (I-ee^{\top})  X  \Sigma^{1/2} + \sum_{i=1}^k d_i S (I-ee^{\top})w_i u_i^{\top}$$
can be studied with exactly the same method. 

Our goal is to study the principal components of $\wt{\cal Q}_1^a:= \wt Y_a^{\top} \wt Y_a$ using the methods in Section \ref{sec method}. Then we have the following claim. 
\begin{claim}\label{perturbclaim}
As long as we have
\be\label{inprobS}
\max_i |e^{\top} Sw_i | =\oo(1) ,
\ee 
and uniformly in $z\in S_{out}(c_0,C_0)$,
\be\label{inprobS2}
\max_i \left| e^{\top} \frac{1}{1+m_{1c}(z)SS^{\top}}Sw_i \right| =\oo(1) , 
\ee 
then the spike eigenvalues and eigenvectors of $\wt{\cal Q}_1^a$ have the same asymptotic behavior as those of $\wt{\cal Q}_1 =\wt Y^{\top} \wt Y$. 
\end{claim}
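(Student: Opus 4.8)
The plan is to treat the centering operator $I-ee^{\top}$ (with $e\in\R^{r}$) as a bounded-rank modification and re-run the argument of Appendix \ref{pf sketch1}. Since $\wt Y_a=(I-ee^{\top})\wt Y$ and $(I-ee^{\top})^{2}=I-ee^{\top}$, the centered Gram matrix satisfies $\wt{\cal Q}_1^{a}=\wt Y_a^{\top}\wt Y_a=\wt{\cal Q}_1-cc^{\top}$ with $c:=\wt Y^{\top}e$, a rank-one negative perturbation of $\wt{\cal Q}_1$; by Cauchy interlacing together with the rigidity estimate \eqref{rigiditye}, its bulk stays within $\lambda_++\oo(1)$ and no new outlier is created. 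In the linearization language of Section \ref{sec method}, the centered model has noise $\wt X_a=(I-ee^{\top})\wt X$ and signal $\tilde V DU^{\top}$ with $\tilde V:=(I-ee^{\top})V$, so the non-spiked centered linearization is $H_a=H-z^{1/2}(\hat q\hat e^{\top}+\hat e\hat q^{\top})$, where $\hat e:=(0,e)^{\top}$ and $\hat q:=(\wt X^{\top}e,0)^{\top}$ are the embeddings into $\R^{p+r}$, and $\hat q=z^{-1/2}H\hat e$ because $H\hat e=z^{1/2}(\wt X^{\top}e,0)^{\top}$. Running the Schur-complement computation of Lemma \ref{lem_pertubation} with $(H,A)$ replaced by $(H_a,\tilde A)$, $\tilde A:=\diag(U,\tilde V)$, shows that $x$ is an outlier of $\wt{\cal Q}_1^{a}$ iff $\det(\cal D^{-1}+x^{1/2}\tilde A^{\top}G_a(x)\tilde A)=0$, and the overlaps $|\langle u_j,\wt{\bxi}_i^{a}\rangle|^{2}$ are given by the contour integral \eqref{ev_origin} with $A^{\top}G A$ replaced by $\tilde A^{\top}G_a\tilde A$. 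Hence it suffices to prove that $\tilde A^{\top}G_a(z)\tilde A=A^{\top}G(z)A+\oo(1)$ uniformly on $S_{out}(c_0,C_0)\cup S_{edge}(c_0,C_0,c_1)$; the Rouché and Cauchy-integral arguments of Appendix \ref{pf sketch1} then transfer verbatim, giving the same $\theta_i$ and the same overlaps.

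To establish the key estimate I would first remove the $\tilde A$–$A$ discrepancy: hypothesis \eqref{inprobS} gives $\|\tilde V-V\|^{2}=\|e^{\top}V\|^{2}=\sum_i|e^{\top}Sw_i|^{2}=\oo(1)$, hence $\|\tilde A-A\|=\oo(1)$; and since $\wt X_a^{\top}\wt X_a=\cal Q_1-(\wt X^{\top}e)(\wt X^{\top}e)^{\top}$ is itself a rank-one negative perturbation, $\|G_a(z)\|=\OO(1)$ on $S_{out}(c_0,C_0)$, so $\tilde A^{\top}G_a\tilde A=A^{\top}G_a A+\oo(1)$ there. Next I would compare $G_a$ with the uncentered $G$ through the Woodbury identity \eqref{Woodbury}: with $F:=[\hat q,\hat e]$ and $C:=\begin{pmatrix}0&1\\1&0\end{pmatrix}$,
\[
A^{\top}G_a(z)A = A^{\top}G(z)A + A^{\top}G(z)F\left(z^{-1/2}C - F^{\top}G(z)F\right)^{-1}F^{\top}G(z)A .
\]
Using the algebraic identity $G\hat q=z^{-1/2}\hat e+z^{1/2}G\hat e$, every generalized entry occurring in $A^{\top}GF$ and $F^{\top}GF$ reduces to $\langle u_j,G(z)\hat e\rangle$, $\langle \hat v_i,G(z)\hat e\rangle$, $\langle\hat e,G(z)\hat e\rangle$, and the deterministic scalars $e^{\top}v_i$. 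The anisotropic local laws, Theorems \ref{lem_localout}--\ref{lem_localin}, together with the block-diagonal form of $\Pi$ in \eqref{defn_pi}, give $\langle u_j,G\hat e\rangle=\OO_\prec(n^{-\delta})$ and
\[
\langle \hat v_i, G(z)\hat e\rangle = \langle v_i,\Pi_2(z)e\rangle + \OO_\prec(n^{-\delta}) = -z^{-1}\,e^{\top}\left(1+m_{1c}(z)SS^{\top}\right)^{-1}Sw_i + \OO_\prec(n^{-\delta}),
\]
which is $\oo(1)$ precisely by hypothesis \eqref{inprobS2}; combined with \eqref{inprobS} this yields $\|A^{\top}GF\|=\oo(1)$, while $\|F^{\top}GF\|=\OO(1)$ by \eqref{Piii} and \eqref{eq_estimm}. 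Finally, $z^{-1/2}C-F^{\top}GF$ is singular only when $z$ is an eigenvalue of the non-spiked centered matrix $\wt X_a^{\top}\wt X_a$, whose spectrum lies in $[0,\lambda_++\oo(1)]$ by interlacing, so on $S_{out}(c_0,C_0)$ its inverse is $\OO(1)$ and the Woodbury correction above is $\oo(1)$. Putting these together gives $\tilde A^{\top}G_a(x)\tilde A=A^{\top}G(x)A+\oo(1)$, so the eigenvalue master matrix and the outlier locations $\theta_i$ are unchanged, and since the integrand of \eqref{ev_origin} then differs by $\oo(1)$ on each fixed contour $\Gamma_i\subset S_{out}$, so are the overlaps for $d_i>d_c$.

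The main obstacle is uniformity of these estimates near the spectral edge, i.e.\ on $S_{edge}(c_0,C_0,c_1)$, which is needed to handle the non-outlier indices $d_i\le d_c$: there one bounds $|\langle u,\wt{\bxi}_i^{a}\rangle|^{2}\le \eta_i\,\Im\langle u,\wt{\cal G}_1^{a}(z_i)u\rangle$ at $z_i=\wt\lambda_i^{a}+\ii\eta_i$ with $\eta_i=n^{-\epsilon}$, and must push $\Im z$ down to $n^{-\epsilon}$. On this domain $\|G_a(z)\|$ is no longer $\OO(1)$ and the $2\times2$ matrix $z^{-1/2}C-F^{\top}GF$ may approach degeneracy as $z\to\lambda_+$, so the bound on the Woodbury correction has to be propagated from the line $\Im z=n^{-1/2+c_1}$ to smaller imaginary parts. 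I would handle this exactly as in Appendix \ref{sec_pflocallaw}: prove the estimate first for $\Im z=n^{-1/2+c_1}$ and then extend it using the spectral-decomposition bound employed there, together with the interlacing control on the eigenvalues of $\wt{\cal Q}_1^{a}$. With these uniform bounds in place, the localization of each outlier to a small interval around $\theta_i$ via Rouché and the proof that $|\langle u,\wt{\bxi}_i^{a}\rangle|^{2}\to_p 0$ for $d_i\le d_c$ are identical to those in Appendix \ref{pf sketch1}, and the claim follows.
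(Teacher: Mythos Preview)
Your argument is correct on $S_{out}$ but takes a genuinely different route from the paper. You treat centering as a rank-two perturbation of the \emph{resolvent}: you expand $G_a$ in terms of $G$ via Woodbury, then use the local law for $G$ together with \eqref{inprobS}--\eqref{inprobS2} to kill the correction term $A^{\top}GF(z^{-1/2}C-F^{\top}GF)^{-1}F^{\top}GA$. The paper instead treats centering as a perturbation of the \emph{model}: it observes that $S_a:=(I-ee^{\top})S$ is itself an admissible sketching matrix (since $B_a=S_aS_a^{\top}$ Cauchy-interlaces with $B$, conditions \eqref{assm3} and \eqref{assm_gap} persist), so Theorem~\ref{lem_localout} applies \emph{directly} to the centered noise and yields $G_a\approx\Pi_a$, where $\Pi_a$ is built from $B_a$ and its own pair $(m_{1c}^a,m_{2c}^a)$. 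The remaining work in the paper is then purely deterministic: (i) show $|m_{1,2c}^a-m_{1,2c}|=\OO(n^{-1})$ via interlacing of $\{s_\mu^a\}$ with $\{s_\mu\}$ and the stability of the self-consistent equation (Lemma~5.11 of \cite{yang2019spiked}); (ii) show $\langle Sw_i,(\Pi_a-\Pi)Sw_j\rangle=\oo(1)$ by expanding $(1+m_{1c}B_a)^{-1}-(1+m_{1c}B)^{-1}$, and this is exactly where \eqref{inprobS} and \eqref{inprobS2} enter. Your approach is more elementary in that it avoids the stability lemma and never introduces $\Pi_a$; the role of the two hypotheses is also very transparent, since they are precisely what forces $A^{\top}GF=\oo(1)$. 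The paper's approach, on the other hand, inherits both Theorems~\ref{lem_localout} and~\ref{lem_localin} (and the rigidity \eqref{rigiditye}) for $G_a$ for free, so the non-outlier case on $S_{edge}$ requires no separate propagation step---the ``main obstacle'' you flag simply does not arise there. One small point: your claim that $(z^{-1/2}C-F^{\top}GF)^{-1}=\OO(1)$ on $S_{out}$ does not follow merely from its singular set lying in $[0,\lambda_++\oo(1)]$; the clean way is to compute $\det(z^{-1/2}C-F^{\top}GF)=e^{\top}\cal G_2(z)e$ directly (using $\hat q=z^{-1/2}H\hat e$) and then invoke the local law plus \eqref{Piii} to bound it below by a constant.
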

\begin{proof}
Note that under \eqref{inprobS}, we have 
$$\|ee^{\top} \sum_{i=1}^k d_i v_i u_i^{\top}\|=\|ee^{\top}S \sum_{i=1}^k d_i w_i u_i^{\top}\| =\oo(1). $$
Then by \eqref{master_evalue} and \eqref{ev_origin}, it suffices to show that the same local law holds for $G_a(z)$: with high probability in $\Omega$, 
\begin{equation}\label{aniso_outstrong2222}
\max_{1\le i,j \le k} \left| \langle Sw_i , \left[G_a(z)-\Pi (z)\right] S w_j\rangle \right|  =\oo(1) 
\end{equation}
uniformly in $z\in S_{out}(c_0,C_0)$,  where
$$G_a(z):=  \left[z^{1/2} \left( {\begin{array}{*{20}c}
   { 0 } & {\wt X _a^{\top}}   \\
   {\wt X _a} & {0}  \\
   \end{array}} \right) - z\right]^{-1}.$$

We denote $S_a:=(I-ee^{\top}) S $, and $B_a:= S_aS_a^{\top}$ with eigenvalues $s_1^a \ge s_2^a \ge \ldots \ge s_r^a \ge 0$. Then we can define $m_{1c}^a$ and $m_{2c}^a$ using the self-consistent equation \eqref{separa_m12} by replacing $\pi_B$ with $\pi_{B_a}$, and define $\Pi_a(z)$ by replacing $B$ with $B_a$, and $m_{1,2c}$ with $m_{1,2c}^a$. We claim that 
\be\label{stab_m12}|m_{1c}^a-m_{1c}|+|m_{2c}^a-m_{2c}|=\OO(n^{-1}) \ee
uniformly in $z\in S_{out}(c_0,C_0)$. We postpone the proof of \eqref{stab_m12} until we complete the proof of Claim \ref{perturbclaim}. Now using $s_1^a \le s_1$ we get that \eqref{assm_gap} holds for $m_{1,2c}^a$ and $\sigma_1, s_1^a$. Thus by Theorem \ref{lem_localout}, we have that with high probability in $\Omega$, 
\begin{equation} \nonumber
\max_{u,v\in \mathscr A} \left| \langle u, G_a(X,z) v\rangle - \langle u, \Pi_a (z)v\rangle \right|  =\oo(1) 
\end{equation}
uniformly in $z\in S_{out}(c_0,C_0)$. Hence to show \eqref{aniso_outstrong2222}, it suffices to prove that
\be\label{perturb111}\max_{1\le i,j \le k} \left| \langle Sw_i , \left[\Pi_a(z)-\Pi (z)\right] S w_j\rangle \right|  =\oo(1)  .\ee
Using \eqref{inprobS}, \eqref{inprobS2} and \eqref{stab_m12}, we obtain that
\begin{align*} 
& \frac{-z}{m_{1c}(z)}w_i^{\top} S^{\top} \left[\Pi_a(z)-\Pi (z)\right] S w_j \\
& =w_i^{\top} S^{\top} \left[\frac{1}{1+m_{1c}(1-ee^{\top})B(1-ee^{\top})  }\left(Bee^{\top} + ee^{\top}B +  ee^{\top}Bee^{\top}\right)\frac{1}{1+m_{1c}B  }\right] S w_j \\
& = \oo(1) + w_i^{\top} S^{\top} \left[\frac{1}{1+m_{1c}(1-ee^{\top})B(1-ee^{\top})  }\left(  ee^{\top}B \right)\frac{1}{1+m_{1c}B  }\right] S w_j \\
& = \oo(1) + w_i^{\top} S^{\top} \left[\frac{1}{1+m_{1c}(1-ee^{\top})B(1-ee^{\top})  }m_{1c}^{-1} ee^{\top}\left(1-  \frac{1}{1+m_{1c}B  } \right)\right] S w_j =\oo(1).
\end{align*}
This concludes \eqref{perturb111}.
\end{proof}

\begin{proof}[Proof of \eqref{stab_m12}]
We claim that approximately, $m_{1,2c}$ satisfy the self-consistent equations for $m_{1,2c}^a$:
\begin{equation} \label{separa_m12appd}
\begin{split}
& {m_{1c}(z)} = \frac1n\sum_{i = 1}^p \frac{\sigma_i}{-z(1+\sigma_i m_{2c})},\quad {m_{2c}(z)} = \frac{1}{n} \sum_{\mu= 1}^r \frac{s_\mu^a}{-z(1+s_\mu^a m_{1c})} +\OO(n^{-1}).
\end{split}
\end{equation}
Then \eqref{stab_m12} follows from Lemma 5.11 of \cite{yang2019spiked}, which gives the stability of the self-consistent equations. Roughly speaking, stability means that if $(u_{1},u_2)$ are satisfy the self-consistent equation for $(m_{1c}^a,m_{2c}^a)$ up to some sufficiently small error $\epsilon$, then we also have
$$ |u_1(z) - m_{1c}(z)| +  |u_2(z) - m_{2c}(z)| \lesssim \epsilon$$
uniformly in $z\in S_{out}(c_0,C_0)$. 

It remains to prove \eqref{separa_m12appd}. The first equation is the first one in \eqref{separa_m12}, while for the second equation, we claim that
\begin{equation}\label{separa_m12appd2}
\left|\frac{1}{n} \sum_{\mu= 1}^r \frac{s_\mu}{-z(1+s_\mu m_{1c})}- \frac{1}{n} \sum_{\mu= 1}^r \frac{s^a_\mu}{-z(1+s^a_\mu m_{1c})}\right|=\OO(n^{-1}).
\ee
For the imaginary part, we have
\begin{align*}
\im \left(\frac{1}{n} \sum_{\mu= 1}^r \frac{s_\mu}{ 1+s_\mu m_{1c}}- \frac{1}{n} \sum_{\mu= 1}^r \frac{s^a_\mu}{ 1+s^a_\mu m_{1c}}\right)= - \frac{\im m_{1c}}{n} \left(  \sum_{\mu= 1}^r \frac{1 }{ | s_\mu^{-1} + m_{1c}|^2} -  \sum_{\mu= 1}^r \frac{1}{ |(s^a_\mu)^{-1}+ m_{1c}|^2 }\right).
\end{align*}
By the Stieltjes transform 
$$m_{1c}(z)=\int_0^{\lambda_+} \frac{\dd\rho_{1c}(x)}{x-z},$$
we obtain that for $z= E+ \ii\eta\in S_{out}(c_0,C_0)$, 
$$\im m_{1c}(z) \ge 0,\quad 0> \re m_{1c}(E) \ge m_{1c}(\lambda_+) \ge - s_1^{-1} \ge - (s_1^a)^{-1}.$$ 
Hence the function $| x + m_{1c}|^2$ is increasing in $x\in (s_1^{-1},\infty)$. Using the Cauchy interlacing 
$$s_r^a \le s_r \le \cdots \le s^a_2 \le s_2 \le s^a_1 \le s_1,$$
we get that 
$$ \sum_{\mu= 1}^r \frac{1 }{ | s_\mu^{-1} + m_{1c}|^2} -  \sum_{\mu= 1}^r \frac{1}{ |(s^a_\mu)^{-1}+ m_{1c}|^2 }=\OO(1).$$
Hence we obtain that 
$$\im \left(\frac{1}{n} \sum_{\mu= 1}^r \frac{s_\mu}{ 1+s_\mu m_{1c}}- \frac{1}{n} \sum_{\mu= 1}^r \frac{s^a_\mu}{ 1+s^a_\mu m_{1c}}\right)=\OO(n^{-1}).$$
Together with a similar estimate for the real part, we get \eqref{separa_m12appd2}, which concludes \eqref{separa_m12appd}.
\end{proof}

Finally, we show that \eqref{inprobS} and \eqref{inprobS2} holds (at least in probability) for all the sketching methods we used in this paper. 
 \begin{itemize}
 \item[(1)] {\bf Uniform random projection in Section \ref{sec unifproj}:} In this case $SS^{\top}=I_r$, so that we only need to check that \eqref{inprobS} holds. By the rotational invariance of $S$, we have 
 $$ e^{\top} Sw_i \stackrel{d}{=} \frac{1}{\sqrt{r}}\sum_{i=1}^r \wh S_{i1},$$
 where $\wh S$ is also an $r\times n$ uniform random projection matrix. By exchangeability, we have
   $$\frac1r\mathbb E\Big| \sum_{i=1}^r \wh S_{i1} \Big|^2 = \frac{1}{r}\mathbb E \sum_{i,j=1}^r\wh S_{i1}\wh S_{j1} = \frac{1}{nr}\mathbb E \sum_{i,j=1}^r \sum_{\mu=1}^n\wh S_{i\mu}\wh S_{j\mu} = \frac1n.$$
   Hence we have $ e^{\top} Sw_i\to 0$ in probability.
   
    \item[(2)] {\bf  i.i.d. projection in Section \ref{sec Gauss}:} Note that $x_k := \sum_{l=1}^n S_{kl}w_i(l)$ are i.i.d. random variables with mean zero and variance $n^{-1}$. Hence by LLN, we have
  $$ e^{\top} Sw_i = \frac{1}{\sqrt{r}}\sum_{k=1}^r x_k \to 0 \quad \text{a.s.}$$
  For the estimate \eqref{inprobS2}, we use the local law, Theorem \ref{lem_localout}. If we take $Y= S$, then 
  $$ \frac{1}{m_{1c}(z)^{-1}+SS^{\top}}S$$
  is (proportional to) the lower left block of $G(-m_{1c}(z)^{-1})$ in \eqref{green2}, and the local law \eqref{aniso_outstrong} gives that \eqref{inprobS2} holds with high probability. If one is worried about that $-m_{1c}(z)^{-1}$ may not be in the domain given in Theorem \ref{lem_localout}, we remark that a local law in \cite{isotropic} for the $Y=S$ case was proved on a more general domain as in \eqref{localmS0}.
 
  \item[(3)] {\bf Random sampling in Section \ref{sec unifsample}:} 
{  In this case, the leading principal components of the centered model $\wt{\cal Q}_1^a= \wt Y_a^{\top} \wt Y_a$ will behave differently from those of the model $\wt{\cal Q}_1=\wt Y\wt Y^\top$ under the sketching \eqref{defnSsampling}. However, we can consider a slightly different random sampling $\wt S$ with random signs: 
\be\label{defnSsampling_app}
  \wt S_{ii} = \epsilon_i a_i,\ee
where $a_i$ is a Rademacher random variable uniform on $\{-1,1\}$ independent of $\e_i$. In this case, we have that $Y^\top \wt S^\top \wt SY= Y^\top S^\top SY$, hence Theorem \ref{sketchthm2} still holds for $\wt{\cal Q}_1$ under the sketching \eqref{defnSsampling_app}. On the other hand, we now check that \eqref{inprobS} and \eqref{inprobS2} hold for $\wt S$, so that Theorem \ref{sketchthm2} also holds for the centered model $\wt{\cal Q}_1^a$ under the sketching \eqref{defnSsampling_app}.} Note that we have 
 \be\label{simpleappd}  \frac{1}{1+m_{1c}(z)\wt S\wt S^{\top}}\wt S = \frac{1}{1+m_{1c}(z) }\wt S. \ee
  Hence again we only need to check that \eqref{inprobS} holds. We can calculate that
   $$\mathbb E| e^{\top} \wt Sw_i|^2 =\frac{1}{n}\mathbb E\Big|\sum_{l=1}^n \wt S_{ll}w_i(l)\Big|^2 = \frac{1}{n} \frac{r}{n}\sum_{l=1}^n |w_i(l)|^2 \le  \frac1n.$$
   Hence we have $ e^{\top}\wt Sw_i\to 0$ in probability.  
   
 \item[(4)] {\bf Randomized Hadamard sampling in Section \ref{sec hadamard}:} In this case, we also have \eqref{simpleappd} and hence we only need to check that \eqref{inprobS} holds. We calculate that 
  $$\mathbb E| e^{\top} Sw_i|^2 =\mathbb E| e^{\top} B_rz_i|^2 =\frac{1}{n}\mathbb E\Big|\sum_{l=1}^n (B_r)_{ll}z_i(l)\Big|^2 .$$
 For $l\ne l'$, we have
  \begin{align*}
  \mathbb E (z_i(l)z_i(l') )= \sum_{j,j'} \mathbb E[a^{(l)}_ja^{(l')}_j] w_i(j)w_i(j') = 0,
  \end{align*}
which gives that
  $$\mathbb E| e^{\top} Sw_i|^2 =\frac{1}{n}\frac{r}{n}\sum_{l=1}^n  \mathbb E |z_i(l)|^2 \le  \frac1n.$$
   Hence we have $ e^{\top} Sw_i\to 0$ in probability.

  \item[(5)] {\bf    CountSketch in Section \ref{sec count}:}  In this case, we also have 
  $$ \frac{1}{1+m_{1c}(z)\wh S\wh S^{\top}}\wh S = \frac{1}{1+m_{1c}(z) }\wh S$$
  and hence we only need to check that \eqref{inprobS} holds. Again we calculate the second moment of
  $$ e^{\top} \wh Sw_i  {=} \frac{1}{\sqrt{n}}\sum_{i=1}^n \sum_{\mu: h(\mu)=i} \wh S_{i\mu} w_i(\mu).$$
For $i\ne i'$, we have that 
\begin{align*}
\mathbb E \sum_{\mu,\mu': h(\mu)=i,h(\mu')=i'} \wh S_{i\mu}\wh S_{i'\mu'} w_{i}(\mu) w_{i'}(\mu')=0,
\end{align*}
which leads to
\begin{align*} 
\mathbb E|e^{\top} \wh Sw_i |^2 & = \frac{1}{n}\sum_{i=1}^n \mathbb E\sum_{\mu,\mu': h(\mu)=h(\mu')=i} \wh S_{i\mu} \wh S_{i\mu'} w_i(\mu)w_i(\mu') = \frac{1}{n}\sum_{i=1}^n  \frac{1(c_i\ne 0 )}{c_i} \mathbb E\sum_{\mu : h(\mu)= i} w_i(\mu)^2 \\
 & =\frac{1}{n}\sum_{i=1}^n  \frac{1(c_i\ne 0 )}{c_i}\frac{c_i}{n} \sum_{\mu } w_i(\mu)^2 \le \frac1n,
  \end{align*}
  where we used the exchangeability in the third step. Hence we have $ e^{\top} Sw_i\to 0$ in probability.
 \end{itemize}




{\small 
\setlength{\bibsep}{0.2pt plus 0.3ex}

}

\end{document}